\renewcommand*{\backref}[1]{}
\renewcommand*{\backrefalt}[4]{%
	\ifcase #1 (Not cited.)%
	\or        (Cited on page~#2)%
	\else      (Cited on pages~#2)%
	\fi}
\crefname{equation}{}{} 
\crefname{claim}{Claim}{Claims} 
\crefname{page}{p.}{pp.}
\crefname{rem}{Remark}{Remarks} 
\crefname{coro}{Corollary}{Corollaries} 
\crefname{enumi}{item}{items}
\let\oldtocsection=\tocsection
\let\oldtocsubsection=\tocsubsection
\let\oldtocsubsubsection=\tocsubsubsection
\renewcommand{\tocsection}[2]{\hspace{0em}\oldtocsection{#1}{#2}}
\renewcommand{\tocsubsection}[2]{\hspace{1em}\oldtocsubsection{#1}{#2}}
\renewcommand{\tocsubsubsection}[2]{\hspace{2em}\oldtocsubsubsection{#1}{#2}}
\newcommand\@dotsep{4.5}
\def\@tocline#1#2#3#4#5#6#7{\relax
	\ifnum #1>\c@tocdepth 
	\else
	\par \addpenalty\@secpenalty\addvspace{#2}%
	\begingroup \hyphenpenalty\@M
	\@ifempty{#4}{%
		\@tempdima\csname r@tocindent\number#1\endcsname\relax
	}{%
		\@tempdima#4\relax
	}%
	\parindent\z@ \leftskip#3\relax
	\advance\leftskip\@tempdima\relax
	\rightskip\@pnumwidth plus1em \parfillskip-\@pnumwidth
	#5\leavevmode\hskip-\@tempdima #6\relax
	\leaders\hbox{$\m@th
		\mkern \@dotsep mu\hbox{.}\mkern \@dotsep mu$}\hfill
	\hbox to\@pnumwidth{\@tocpagenum{#7}}\par
	\nobreak
	\endgroup
	\fi}
\newcommand{\C}{{\mathbb C}}       
\newcommand{\R}{{\mathbb R}}       
\newcommand{\N}{{\mathbb N}}
\newcommand{\HH}{{\mathcal H}}
\newcommand{\EE}{{\mathcal E}}
\newcommand{\SSS}{{\mathcal S}}
\newcommand{\OO}{{\mathcal O}}
\newcommand{\diam}{{\rm diam}}
\newcommand{\dist}{{\rm dist}}
\newcommand{\real}{{\rm Re}}
\newcommand{\imag}{{\rm Im}}
\newcommand{\rf}[1]{{(\ref{#1})}}
\newcommand{\supp}{\operatorname{supp}}
\newcommand{\vv}{{\vspace{2mm}}}
\newcommand{\vmo}{{\operatorname{VMO}}}
\newcommand{\characteristic}{\mathbf{1}} 
\newcommand{\divv}{\operatorname{div}}
\newcommand{\Capacity}{\operatorname{Cap}}
\newcommand{\curl}{\operatorname{curl}}
\newcommand{\bmo}{\operatorname{BMO}}
\newcommand{\loc}{\operatorname{loc}} 
\newcommand{\opt}{\operatorname{opt}} 
\newcommand{\rom}[1]{%
	\textup{\uppercase\expandafter{\romannumeral#1}}%
}
\def\Xint#1{\mathchoice
	{\XXint\displaystyle\textstyle{#1}}%
	{\XXint\textstyle\scriptstyle{#1}}%
	{\XXint\scriptstyle\scriptscriptstyle{#1}}%
	{\XXint\scriptscriptstyle\scriptscriptstyle{#1}}%
	\!\int}
\def\XXint#1#2#3{{\setbox0=\hbox{$#1{#2#3}{\int}$ }
		\vcenter{\hbox{$#2#3$ }}\kern-.58\wd0}}
\def\avint{\;\Xint-}
\newtheorem{theorem}{Theorem}[section]
\newtheorem{lemma}[theorem]{Lemma}
\newtheorem{coro}[theorem]{Corollary}
\newtheorem*{claim*}{Claim}
\newtheorem*{theorem*}{Theorem}
\theoremstyle{definition}
\newtheorem{definition}[theorem]{Definition}
\newtheorem*{notation}{Notation}
\newtheorem{rem}[theorem]{Remark} 
\numberwithin{equation}{section}
\begin{document}
	\title[Planar elliptic measures via quasiconformal mappings]{The Hausdorff dimension of planar elliptic measures via quasiconformal mappings}
	
	\author[Ignasi Guill{\'e}n-Mola]{Ignasi Guill{\'e}n-Mola}
	\address{Ignasi Guill{\'e}n-Mola,
		Departament de Matem\`atiques, Universitat Aut\`onoma de Barcelona.
	}
	\email{ignasi.guillen@uab.cat}
	
	\date{\today}
	
	\thanks{Supported by the PIF UAB 2020/21 from the Universitat Autònoma de Barcelona, and by Generalitat de Catalunya’s Agency for Management of University and Research Grants (AGAUR) (2021 FI\_B 00637 and 2023 FI-3 00151). Also partially supported by the Spanish State Research Agency (AEI) project PID2021-125021NAI00 (Spain) and by MICINN (Spain) under the grant PID2020-114167GB-I00. The author thanks CERCA Programme/Generalitat de Catalunya for institutional support.}

        \keywords{Elliptic measure, Quasiconformal mappings.}

        \subjclass{28A12, 30C62, 31A15, 35J25. Secondary: 28A25, 28A78, 31B05, 35J08.}

	\begin{abstract}
        In this paper, we obtain new bounds for the Hausdorff dimension of planar elliptic measure via the application of quasiconformal mappings, with these bounds depending solely on the ellipticity constant of the matrix. In fact, in our case studies, we find a quasiconformal mapping that relates the elliptic measure in a domain to the harmonic measure in its image domain, allowing us to deduce bounds for the dimension of the elliptic measure from the known results on the harmonic side. This extends previous works of Makarov, Jones and Wolff.
	\end{abstract}
	
	\maketitle
	
	{
		\tableofcontents
	}
	
	
	\section{Introduction and main results}\label{sec:intro}

    In this paper, we provide new bounds for the dimension of planar elliptic measures. The bounds depend solely on the ellipticity constant of the matrix. When the uniformly elliptic matrix is symmetric and has constant determinant, we obtain finer estimates.
	
	Let $\Omega \subset \R^{n+1}$, $n\geq 1$, be a domain (open and connected set), and let $A \in \R^{(n+1)\times (n+1)} (\Omega)$ be a uniformly elliptic matrix. That is, a real and measurable matrix $A(\cdot)=(a_{ij} (\cdot))_{1\leq i,j \leq n+1}$ such that there exists $\lambda \geq 1$, the so-called ellipticity constant, with
	\begin{equation*}
		\lambda^{-1}|\xi|^{2} \leq\langle A(x) \xi, \xi\rangle \text{ and } 
		\langle A(x) \xi, \eta\rangle \leq \lambda |\xi||\eta|,
	\end{equation*}
	for all $\xi,\eta \in \R^{n+1}$ and a.e.\ $x\in \Omega$. The uniform ellipticity condition implies that the matrix has bounded coefficients.
 
    We consider the second-order operator in divergence form $L_A u \coloneqq -\divv (A\nabla u)$, understood in the weak sense. We say that a function $u\in W^{1,2}_{\loc} (\Omega)$ is $L_A$-harmonic in $\Omega$ if
	$$
	\int_\Omega A(y) \nabla u(y) \nabla \varphi (y) \, dy = 0 \text{ for all } \varphi \in C_c^\infty (\Omega) .
	$$
	In this case, we write $L_A u=0$ in $\Omega$ for short.
	
	The elliptic ($L_A$-harmonic) measure in $\Omega\subset \R^{n+1}$ (bounded if $n\geq 2$) with pole $p\in \Omega$ is the unique Radon probability measure $\omega_{\Omega,A}^p$, given by the Riesz representation theorem, such that
	$$
	u_f^{L_A}(p) = \int_{\partial\Omega} f(\xi) \, d\omega_{\Omega,A}^p (\xi)\text{ for every }f\in C(\partial\Omega),
	$$
	where $u_f^{L_A}$ is the $L_A$-harmonic extension of $f$ in $\Omega$ via Perron's method. If the set $\Omega$, the matrix $A$, or the pole $p\in \Omega$ is clear from the context, we will omit them in $\omega_{\Omega, A}^p$. For a detailed construction of elliptic/harmonic measures, see \cite[Section 11]{Heinonen2006}.

    Note that by the Harnack inequality of positive $L_A$-harmonic functions, given any two poles $x,y \in \Omega$ we have
	\begin{equation*}
		\omega_{\Omega, A}^x \ll \omega_{\Omega, A}^y \ll \omega_{\Omega, A}^x.
	\end{equation*}
	Therefore, we can omit the pole from now on. Given two measures $\mu$ and $\nu$ on $X$, we recall that $\mu$ is absolutely continuous with respect to $\nu$, we write $\mu \ll \nu$ in this case, if $\mu(S)=0$ for all $S\subset X$ with $\nu (S) = 0$.

    Harmonic and elliptic measure theory has seen significant development recently, particularly concerning the rectifiability of the boundary and dimensional bounds. For instance, when the boundary of the domain has codimension one, the relation between harmonic measure and rectifiability of the boundary has been studied by several authors. Roughly speaking, under some topological assumptions on the domain $\Omega \subset \mathbb{R}^{n+1}$, if $\partial \Omega$ is $n$-rectifiable, then $\omega_{\Omega,Id} \ll \mathcal{H}^n|_{\partial\Omega}$, see \cite{Akman-Bortz-Hofmann-Martell-ARKIV-2019,Azzam-Hofmann-Martell-Mourgoglou-Tolsa-INVENTIONES-2020}, and if $\partial \Omega$ is purely unrectifiable, then $\omega_{\Omega,Id}$ is singular with respect to $\mathcal{H}^n|_{\partial \Omega}$, see \cite{Azzam2016,Azzam2017a}. Similar results hold for elliptic measure if the uniformly elliptic matrix is close to the identity, see \cite{Kenig-Pipher-2001,Hofmann-Martell-Toro-2017,Hofmann2021-published,Azzam2022,Hofmann-Martell-Mayboroda-ANALYSISANDPDE-2024}. The closeness of the matrix to the identity is usually measured in terms of a suitable Carleson measure condition on the gradient or the oscillation of the matrix. However, for general uniformly elliptic matrices, the elliptic measure can behave completely differently from the harmonic case, see examples \cite{Caffarelli1981,Modica-Mortola-1980,Sweezy1992,David2021} in the planar case.
	
	In this work, we are interested in studying the Hausdorff dimension of elliptic measures arising from uniformly elliptic matrices, defined as
	$$
	\dim_\HH \omega_{\Omega, A} \coloneqq \inf\{\dim_\HH F : \omega_{\Omega, A} (F^c) = 0\}.
	$$
    
    Let us introduce some notation to present the forthcoming results. Given a measure $\mu$ we will say that $\omega_{\Omega,A}$ has $\sigma$-finite with respect to $\mu$ if there is a set $F\subset\partial\Omega$ satisfying $\omega_{\Omega,A}(F)=1$ and with $\sigma$-finite $\mu$, that is, $F$ is of the form $F=\bigcup_{i\geq 1} F_i$ with $\mu(F_i)<\infty$. For short we will write that $\omega_{\Omega,A}$ has $\sigma$-finite $\mu$. The measure $\HH^1$ will also be referred to as length.
	
    In the plane, the most precise results regarding the dimension of the harmonic measure $\omega = \omega_{Id}$ are attributed to Makarov \cite[Theorem 1]{Makarov1985} and Wolff \cite{Wolff1993}. Makarov showed that there is a constant $C_M>0$ such that for any simply connected domain $\Omega$, its harmonic measure $\omega_\Omega$ satisfies $\omega_\Omega \ll \HH^{\varphi_{1,C_M}}$, where $\varphi_{\cdot,\cdot}$ is the function defined for $\rho\geq 1$ and $C>0$ as
    \begin{equation}\label{makarov gauge function}
		\varphi_{\rho,C} (r) = r^{\frac{2}{\rho+1}} \exp \left( C \frac{2\rho}{\rho+1} \sqrt{ \log r^{-\rho} \log \log \log r^{-\rho} } \right).
	\end{equation}
    We note that $\HH^{\varphi_{\rho,C}} \ll \HH^\alpha$ for any $\alpha <\frac{2}{\rho+1}$ because $\varphi_{\rho,C} (t)/t^\alpha \to 0$ as $t \to 0$. This in particular implies $\dim_\HH \omega_\Omega \geq 1$. Actually, Makarov also proved in \cite[Theorem 3]{Makarov1985} that $\dim_\HH \omega = 1$ for simply connected domains. On the other hand, the upper bound $\dim_\HH \omega_\Omega \leq 1$ for any planar domain $\Omega$ is due to Jones and Wolff in \cite{Jones1988}. This was later improved by Wolff in \cite{Wolff1993}, who showed that $\omega_\Omega$ has $\sigma$-finite length. For more geometric properties of the harmonic measure, see the introduction of \cite{Guillen-Prats-Tolsa} and the references therein.
	
	For future reference, we write here the precise statements of Makarov and Wolff mentioned above.
	
	\begin{theorem}[Makarov]\label{makarov thm}
		There exists a universal constant $C_M>0$ such that $\omega_\Omega \ll \HH^{\varphi_{1,C_M}}$ for every simply connected domain $\Omega \subset \R^2$.
	\end{theorem}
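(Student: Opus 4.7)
The strategy is Makarov's: transfer the problem to the unit disk via a Riemann map, control the boundary behavior of the map through Bloch-space estimates, and appeal to a law of the iterated logarithm (LIL) to recover the sharp gauge $\varphi_{1,C_M}$. After a M\"obius inversion if necessary, we may assume $\Omega$ is bounded. By the Riemann mapping theorem, pick a conformal map $F:\mathbb{D}\to\Omega$ with $F(0)$ equal to the pole of $\omega_\Omega$. Conformal invariance of harmonic measure then gives $\omega_\Omega=F_{\ast}\sigma$, where $\sigma$ is normalized Lebesgue measure on $\mathbb{T}=\partial\mathbb{D}$, so the problem is translated into a statement about the boundary distortion of $F$.

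For $F$ univalent on $\mathbb{D}$, Koebe's distortion theorem gives $(1-|z|^{2})\,|(\log F')'(z)|\leq 6$, so $b:=\log F'$ is a Bloch function with Bloch seminorm $\|b\|_{\mathcal{B}}:=\sup_{z\in\mathbb{D}}(1-|z|^{2})|b'(z)|\leq 6$. The analytic heart of the argument is Makarov's LIL for Bloch functions: there exists a universal constant $C>0$ such that for $\sigma$-a.e.\ $\zeta\in\mathbb{T}$,
$$
\limsup_{r\to 1^{-}}\frac{|b(r\zeta)|}{\sqrt{\log\tfrac{1}{1-r}\,\log\log\log\tfrac{1}{1-r}}}\leq C\,\|b\|_{\mathcal{B}}.
$$
Exponentiating, for $\sigma$-a.e.\ $\zeta\in\mathbb{T}$ and all $r$ sufficiently close to $1$,
$$
|F'(r\zeta)|\leq \exp\!\left(C_M\sqrt{\log\tfrac{1}{1-r}\,\log\log\log\tfrac{1}{1-r}}\right)
$$
for a universal constant $C_M>0$, which will serve as the constant in the theorem.

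To conclude, I would translate this radial bound into an upper density estimate for $\omega_\Omega$. Given $\zeta$ in the full $\sigma$-measure set on which the LIL bound holds and small $\rho>0$, define $t=t(\rho)\in(0,1)$ implicitly by $t\,|F'((1-t)\zeta)|=\rho$. Koebe's distortion theorem and the $1/4$-theorem imply that $F^{-1}(B(F(\zeta),\rho))\cap \mathbb{T}$ is contained in an arc around $\zeta$ of length comparable to $t(\rho)$, hence
$$
\omega_\Omega(B(F(\zeta),\rho))\lesssim t(\rho)\leq \varphi_{1,C_M}(\rho)
$$
for all small $\rho$, the last inequality following by solving the implicit equation using the bound on $|F'|$ from Step~2. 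A standard Vitali-type covering argument on $\partial\Omega$ then upgrades this $\omega_\Omega$-almost everywhere upper density bound into the absolute continuity $\omega_\Omega\ll \HH^{\varphi_{1,C_M}}$.

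The main obstacle is the LIL for Bloch functions: its proof requires approximating $b$ along hyperbolic geodesics by a dyadic martingale whose increments are controlled by $\|b\|_{\mathcal{B}}$, and then invoking a Kolmogorov--Burkholder type LIL for martingales with bounded differences. A secondary subtlety is the care required when inverting $t\,|F'((1-t)\zeta)|=\rho$ and matching a Euclidean ball on $\partial\Omega$ with its conformal shadow on $\mathbb{T}$ uniformly in $\rho$, which relies on boundary-scale Koebe estimates.
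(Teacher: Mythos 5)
The paper does not prove this theorem: it quotes it from Makarov's 1985 paper and uses it as a black box; the paper's own contribution is transferring such bounds to elliptic measures via quasiconformal mappings. Your sketch is the standard proof of Makarov's estimate: conformal transfer to the disk, the Bloch bound $\|\log F'\|_{\mathcal{B}}\le 6$ from Koebe, Makarov's radial LIL for Bloch functions, conversion into a pointwise upper density bound for $\omega_\Omega$ at $\omega_\Omega$-a.e.\ boundary point, and a Frostman/Vitali covering argument to close. You correctly flag the two genuinely hard inputs: the martingale LIL, and the ``shadow'' estimate $\omega\bigl(B(F(\zeta),\,t|F'((1-t)\zeta)|)\bigr)\lesssim t$; the latter really needs a Beurling-projection or extremal-length type lemma, not just Koebe and the quarter theorem, because $F^{-1}(B(F(\zeta),\rho))$ need not meet $\mathbb{T}$ only near $\zeta$. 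One small slip worth fixing: since $t=\rho/|F'((1-t)\zeta)|$, the inequality $t(\rho)\le\varphi_{1,C_M}(\rho)$ requires the \emph{lower} bound $|F'((1-t)\zeta)|\ge\exp\bigl(-C_M\sqrt{\log\tfrac{1}{1-r}\log\log\log\tfrac{1}{1-r}}\bigr)$, whereas the upper bound you recorded after exponentiating gives $t\ge\rho\exp(-\cdots)$, which points the wrong way. The LIL bounds $|b(r\zeta)|=|\log F'(r\zeta)|$ and therefore supplies both sides, so the argument is sound; you just wrote down the half that is not the one this step uses.
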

	
	\begin{theorem}[Wolff]\label{wolff thm}
		For any domain $\Omega \subset \R^2$, $\omega_\Omega$ has $\sigma$-finite length.
	\end{theorem}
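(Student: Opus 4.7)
The plan is to reduce the $\sigma$-finiteness of $\omega_\Omega$ to a pointwise lower density estimate, namely, to showing that for $\omega_\Omega$-a.e.\ $\xi \in \partial \Omega$ there exist $c(\xi) > 0$ and a sequence $r_k = r_k(\xi) \to 0$ with
\begin{equation*}
\omega_\Omega(B(\xi, r_k)) \geq c(\xi) \, r_k.
\end{equation*}
Given such a bound, the sets
\begin{equation*}
F_k \coloneqq \{\xi \in \partial\Omega : \omega_\Omega(B(\xi, r)) \geq r/k \text{ for some } r \in (0, 1/k)\}
\end{equation*}
satisfy $\omega_\Omega(\bigcup_k F_k) = 1$, and a standard $5r$-Vitali covering argument gives $\HH^1(F_k) \leq C k\, \omega_\Omega(F_k) < \infty$. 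Thus the theorem reduces to the density bound.

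To set up, I would first reduce to a bounded domain by intersecting $\Omega$ with a large disk, using that the two harmonic measures are boundedly equivalent on compact parts of the common boundary. Fix a pole $p_0 \in \Omega$ and write $G = G_\Omega(\cdot, p_0)$ for the Green function. Wherever a corkscrew point $A_r(\xi) \in \Omega$ at distance $\asymp r$ from $\xi$ exists, the standard boundary Green-function estimate
\begin{equation*}
\omega_\Omega(B(\xi, r)) \asymp r\, G(A_r(\xi))
\end{equation*}
reduces the density bound to showing that $G(A_{r_k}(\xi)) \geq c(\xi)$ along a subsequence of scales $r_k \to 0$ for $\omega_\Omega$-a.e.\ $\xi$. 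At boundary points where corkscrews fail to exist, the complement of $\Omega$ is locally so thick that a Wiener-type capacity argument already yields the density bound directly.

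The core difficulty is therefore establishing this non-decay of $G$ along a sequence of scales. The idea is to build a Whitney-type dyadic tree of pseudo-cells on $\partial\Omega$, view $-\log G$ as a tree martingale, and estimate its increments by planar tools: on simply connected components one passes to the disk via a conformal map and uses Koebe distortion together with Makarov's law of the iterated logarithm for Bloch functions, while in the multiply connected case one localizes to a single boundary component by an exhaustion. The qualitative Jones--Wolff bound $\dim_\HH \omega_\Omega \leq 1$ follows from such a scheme applied with scale exponent $\alpha<1$, but sharpening it to the critical exponent $1$ requires a more delicate stopping-time argument showing that the boundary points along which $G$ decays strictly faster than linearly at every small dyadic scale form an $\omega_\Omega$-null set. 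Carrying out that stopping-time argument---where one must carefully balance martingale fluctuations against geometric obstructions coming from the complement and where Makarov's LIL only supplies a one-sided control---will be the main technical obstacle.
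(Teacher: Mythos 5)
The paper does not prove this theorem: it states it as a citation to \cite{Wolff1993}, so there is no ``paper's own proof'' to compare against. What you have written is an independent sketch of Wolff's argument, and the right question is whether it constitutes a proof.

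It does not. Your reduction of $\sigma$-finiteness of length to a lower-density estimate along a sequence of scales, via the $5r$-Vitali lemma, is a correct and standard maneuver: if $\omega_\Omega(B(\xi,r_k)) \geq c(\xi)\,r_k$ along $r_k \to 0$ for $\omega_\Omega$-a.e.\ $\xi$, then the sets $F_k$ you define have $\HH^1(F_k) \lesssim k$ and carry full harmonic measure. Likewise, the comparison $\omega_\Omega(B(\xi,r)) \approx r\,G(A_r(\xi))$ at corkscrew scales correctly translates the density bound into a non-decay claim for the Green function. But this is precisely where the real content of Wolff's theorem lies, and your proposal stops short of establishing it: the last paragraph explicitly flags the ``stopping-time argument'' as the ``main technical obstacle'' and describes it only in impressionistic terms (Whitney trees, $-\log G$ as a tree martingale, Koebe distortion, Makarov's LIL). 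None of these steps is stated precisely enough to check, and you yourself point out that Makarov's law of the iterated logarithm gives only one-sided control, which is exactly the reason that a naive martingale/LIL argument does not close. In fact, Makarov's estimate is native to simply connected domains via the Riemann map; Wolff's theorem is about arbitrary planar domains, where there is no single conformal map and where the delicate part is precisely handling multiply connected geometry, which your localization-by-exhaustion suggestion does not address. Separately, the claim that ``at boundary points where corkscrews fail to exist, the complement of $\Omega$ is locally so thick that a Wiener-type capacity argument already yields the density bound directly'' is unjustified and does not look right as stated: absence of a corkscrew at scale $r$ means that $\Omega\cap B(\xi,r)$ is thin, which, if anything, makes $\omega_\Omega(B(\xi,r))$ smaller relative to $r$ (the pole sees less of that boundary piece), so it works \emph{against} the density bound you need; the actual treatment of such points in Wolff's argument is part of the hard work, not a shortcut around it.

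In short: your reduction framework is sound, but the proposal proves none of the substantive estimates, and the parts you gesture at contain at least one incorrect heuristic (the non-corkscrew case) and one acknowledged gap (the stopping-time argument). As a proof of the cited theorem it is incomplete; as a description of the landscape surrounding Wolff's proof it is a reasonable but partly inaccurate summary.
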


    Despite this paper is focused in the planar case, let us recall some results about the dimensional behavior of harmonic measure $\omega=\omega_{Id}$ in $\R^{n+1}$ with $n\geq 2$. Bourgain proved in \cite{Bourgain1987} that there exists a dimensional constant $b_n >0$ such that $\dim_\HH \omega_\Omega\leq n+1-b_n$ for any domain $\Omega\subset\R^{n+1}$. Contrary to the planar case, where we can take $b_{1} = 1$ with optimal value by the work of Jones and Wolff \cite{Jones1988}, Wolff constructed a domain $\Omega_n\subset \R^{n+1}$ in \cite{Wolff1995} satisfying $\dim\omega_{\Omega_n} >n$, that is, $b_n<1$. It remains a difficult open problem in the area to determine the optimal value $b_{n,\opt} \coloneqq n+1-\sup\{\dim\omega_\Omega : \Omega\subset \R^{n+1}\}\in (0,1)$.

    The behavior of the Hausdorff dimension of elliptic measures is different. For every $\varepsilon>0$, using quasiconformal mappings, Sweezy constructed a domain $\Omega\subset \R^{2}$ and a uniformly elliptic matrix $A$ such that $\dim_\HH \omega_{\Omega,A} \geq 2-\varepsilon$, see \cite{Sweezy1992}. Sweezy also provided the higher-dimensional analog in \cite{Sweezy1994}. Through a different technique, David and Mayboroda in \cite{David2021} defined a continuous scalar function $a\approx 1$ on the complementary of the $1$-dimensional four corner Cantor set $C\subset \R^2$ such that $\omega_{\R^2\setminus C, a Id}$ is comparable to $\HH^1|_C$. Operators of the form $L_{aId}=-\divv a\nabla$ that satisfy the above comparability are the so-called ``good elliptic operators'' in the terminology used in \cite{David2021}. By the same strategy, for $1<d<\log 4 /\log 3$, Perstneva in \cite{Perstneva2025} constructed a ``good elliptic operator'' $L_{a_d Id}$ on the (unbounded component of the) complementary of a $d$-dimensional Koch-type snowflake $S_d \subset \R^2$ satisfying that $\omega_{\R^2\setminus S_d,a_d Id}$ is comparable to ${\HH^d}|_{S_d}$.
    
    When adding regularity to the matrix, one could expect that elliptic measures behave as the harmonic measure. Recently, in collaboration with Prats and Tolsa in \cite{Guillen-Prats-Tolsa}, for Lipschitz matrices we proved the analog of Wolff's \cref{wolff thm} for planar Reifenberg flat domains with small constant. More precisely, given a uniformly elliptic matrix $A$ with Lipschitz coefficients, and a Reifenberg flat domain $\Omega \subset \R^2$ with small constant depending on the ellipticity of $A$, $\omega_{\Omega,A}$ has $\sigma$-finite length.

    To the best of the author's knowledge, the above results are among the few concerning the dimension of the elliptic measure in the plane without any restriction on the dimension of the boundary. In the same spirit as Makarov's \cref{makarov thm} and Wolff's \cref{wolff thm}, in this paper we provide bounds for the dimension of the planar elliptic measure. Namely, we obtain $\sigma$-finiteness of the elliptic measure with respect to certain Hausdorff measures and we show absolute continuity of the elliptic measure on finitely connected domains with respect to a gauge Hausdorff measure $\HH^{\varphi_{\rho,C}}$, where $\varphi_{\rho,C}$ is the gauge function defined in \rf{makarov gauge function}. Recall that, as discussed above, we have that
    $$
    \omega_{\Omega,A}\ll \HH^{\varphi_{\rho,C}}
    \implies
    \dim_\HH \omega_{\Omega,A} \geq \frac{2}{\rho+1}.
    $$

    The first group of results concerns general planar domains and uniformly elliptic matrices.

    \begin{theorem}\label{main thm general domains}
    Let $\Omega \subset \R^2$ be a domain, $A \in \R^{2\times 2} \left(\Omega\right)$ be a uniformly elliptic matrix with ellipticity constant $\lambda \geq 1$, and
    \begin{equation}\label{K-lambda ellipticity condition beltrami coefficient}
    K_\lambda = \begin{cases}
        \lambda & \text{if } A \text{ is symmetric},\\
        \lambda + \sqrt{\lambda^2 - 1} & \text{otherwise}.
    \end{cases}
    \end{equation}
    Then $\omega_{\Omega,A}$ has $\sigma$-finite $\HH^{2K_\lambda /(K_\lambda +1)}$. If in addition $\Omega$ is finitely connected, then $\omega_{\Omega, A} \ll \HH^{\varphi_{K_\lambda, C_M}}$, where $C_M>0$ is the constant in Makarov's \cref{makarov thm}.
    \end{theorem}
    
	Note that this result is consistent with the results in the harmonic case of Makarov's \cref{makarov thm} and Wolff's \cref{wolff thm}. If $A=Id$, we can just use that the ellipticity constant is $\lambda = 1$ in \cref{main thm general domains} and hence $2/(\lambda+1)=2\lambda /(\lambda+1)=1$. The consistency of using \cref{main thm general domains} with $\lambda=1$ is due to the sharpness of the distortion of the Hausdorff dimension and measures under quasiconformal mappings, see \rf{dimension distortion theorem}, \cref{distortion of hausdorff measure thm}, and \cref{distortion zero measure of makarov gauge function}.

    The proof relies only on the uniform ellipticity condition on the matrix. The key point is that there exists a quasiconformal mapping sending the elliptic Green function to the harmonic Green function in the image domain, which allows us to relate the elliptic measure to the harmonic measure in the image domain. For the proof of \cref{main thm general domains}, see \cpageref{proof of main thm general domains} in \cref{part:general domains}.
	
    We remark that, in the above situation, the quasiconformal mapping depends not only on the matrix but also on the Green function of the domain. Therefore, we cannot easily deduce better regularity of the quasiconformal mapping even if the matrix has additional regularity, as we will do later in the case of symmetric matrices with determinant 1. However, if the matrix is continuous up to the boundary, we obtain the following result.

    \begin{theorem}\label{main thm general domains and continuous matrices}
        Let $\Omega \subset \R^2$ be a domain and $A\in \R^{2\times 2} (\overline \Omega)$ be a uniformly elliptic matrix with continuous coefficients up to $\partial\Omega$, that is, $A\in C^0 (\overline \Omega)$. Then $\dim_\HH \omega_{\Omega, A} \leq 1$. If in addition $\Omega$ is finitely connected, then $\dim_\HH \omega_{\Omega, A} = 1$.
    \end{theorem}

    To prove this result, we use a localization argument that relies on the fact that $K_\lambda \to 1$ as $\lambda \to 1$. Recall that in \cite{Guillen-Prats-Tolsa}, Prats, Tolsa, and the author obtained the $\sigma$-finite length of $\omega_{\Omega,A}$ under the assumptions that $A$ is Lipschitz and $\Omega\subset\R^2$ is Reifenberg flat. In contrast, the result above shows that the upper bound $\dim\omega_{\Omega,A} \leq 1$ holds under much weaker assumptions: $\Omega$ is any planar domain and $A$ is continuous up to the boundary.

    We note that \cref{main thm general domains and continuous matrices} does not contradict the result of Perstneva in \cite{Perstneva2025}. In fact, the scalar function $a_d \approx 1$ that she defined is continuous, but not uniformly continuous, in the domain $\Omega$, the (unbounded component of the) complement of a $d$-dimensional Koch-type snowflake, while in \cref{main thm general domains and continuous matrices} we require the matrix to be continuous up to the boundary. Note that if $A$ is uniformly continuous in every open subset of $\Omega$, then $A$ can be extended continuously in $\overline\Omega$, i.e., $A\in C(\overline\Omega)$.

    An interesting question is the validity of \cref{main thm general domains and continuous matrices} in higher dimensions, in the sense that whether $\dim_\mathcal{H} \omega_{\Omega,A} \leq n+1 - b_{n,\opt}$ for any domain $\Omega \subset \mathbb{R}^{n+1}$ and any uniformly elliptic matrix $A \in C(\overline{\Omega})$, where $b_{n,\opt}\in (0,1)$ is the optimal (unknown) constant such that $\dim_\mathcal{H} \omega_{\Omega, Id} \leq n+1 - b_{n,\opt}$ for any domain $\Omega \subset \mathbb{R}^{n+1}$.

    We now turn to the case of symmetric matrices with determinant 1, where finer estimates can be obtained thanks to the improved control of the associated quasiconformal mapping. The first result focuses on Hölder continuous matrices.

    \begin{theorem}\label{sym + det 1 Holder}
        Let $\Omega \subset \R^2$ be a domain, and $A\in\R^{2\times 2} (\R^2)$ be a uniformly elliptic and symmetric matrix with $\det A=1$. If $A\in C^\alpha_{\loc} (\R^2)$, $0<\alpha<1$, then $\omega_{\Omega, A}$ has $\sigma$-finite length. If in addition $\Omega$ is finitely connected, then $\omega_{\Omega, A} \ll \HH^{\varphi_{1,C_M}}$, where $C_M>0$ is the constant in Makarov's \cref{makarov thm}.
    \end{theorem}

	As in \cref{main thm general domains}, this result is also consistent with the results in the harmonic case of Makarov's \cref{makarov thm} and Wolff's \cref{wolff thm}. If $A=Id$, we can apply \cref{sym + det 1 Holder} using that the coefficients are Hölder.

    Contrary to what we had in the general case, for symmetric matrices with determinant 1, the quasiconformal mapping which relates the elliptic measure of a domain with the harmonic measure in the image domain is uniquely determined by the matrix, and in fact, the elliptic measure is precisely the push-forward of that harmonic measure. In particular, the more regular the coefficients of the matrix, the more regular the quasiconformal mapping. Proceeding as in the proof of \cref{main thm general domains} and using the additional regularity of the quasiconformal mapping when the coefficients enjoy extra regularity, we obtain \cref{sym + det 1 Holder} and the following cases:
    
    \begin{theorem}\label{sym + det 1 all cases}
        Let $\Omega \subset \R^2$ be a domain, and $A\in\R^{2\times 2} (\R^2)$ be a uniformly elliptic (with ellipticity constant $\lambda \geq 1$) and symmetric matrix with $\det A=1$. Then the following holds: 
        \begin{itemize}
            \item If $A(z) = A(\real(z))$ (or $A(z) = A(\imag(z))$), $A\in W_{\loc}^{1,p} (\R^2)$ with $p>\frac{2\lambda^2}{\lambda^2+1}$ or $A\in {\rm DMO}_p$ (see \cref{def:dini mean oscillation}) with $p>1$, then $\omega_{\Omega, A}$ has $\sigma$-finite length.
            \item If $A\in \vmo_{\log}$ (see \cref{def:log mean oscillation}), then $\omega_{\Omega, A}$ has $\sigma$-finite $\HH^{t/\log \frac{1}{t}}$.
            \item If $A\in \vmo$ (see \cref{def:vmo}), then $\dim_\HH \omega_{\Omega,A}\leq 1$.
        \end{itemize}
        If in addition $\Omega$ is finitely connected, the following hold:
        \begin{itemize}
            \item If $A(z) = A(\real(z))$ (or $A(z) = A(\imag(z))$), or $A\in {\rm DMO}_1$, then $\omega_{\Omega, A} \ll \HH^{\varphi_{1,C_M}}$.
            \item If $A\in \vmo_{\log}$, then $\omega_{\Omega, A} \ll \HH^{(\varphi_{1,C_M}) \circ (t\log\frac{1}{t})}$.
            \item If $A\in \vmo$, then $\dim_\HH \omega_{\Omega,A}=1$.
        \end{itemize}
        Here $C_M>0$ is the constant in Makarov's \cref{makarov thm}.
    \end{theorem}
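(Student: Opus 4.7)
The plan is to extend the strategy of \Cref{main theorem sym + det 1,sym + det 1 Holder} by exploiting the additional regularity of $A$ to obtain a more regular quasiconformal representation $\Phi$. Recall that for $A$ symmetric with $\det A = 1$ one constructs a $K$-quasiconformal homeomorphism $\Phi$, with $K$ depending only on the ellipticity constant $\lambda$, such that $u$ is $L_A$-harmonic in $\Omega$ if and only if $u\circ \Phi^{-1}$ is harmonic in $\Phi(\Omega)$. Consequently $\omega_{\Omega,A}(E)=\omega_{\Phi(\Omega)}(\Phi(E))$ for every Borel set $E\subset \partial\Omega$, so transferring Makarov's \Cref{makarov thm} or Wolff's \Cref{wolff thm} to $\Omega$ reduces to controlling the distortion of the relevant Hausdorff gauge measures by $\Phi^{-1}$. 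Finite connectivity is preserved by quasiconformal homeomorphisms, so $\Phi(\Omega)$ is finitely connected whenever $\Omega$ is, which allows applying Makarov's theorem in the image domain.

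The Beltrami coefficient $\mu_\Phi$ of $\Phi$ is an algebraic expression in the entries of $A$, so each regularity hypothesis on $A$ translates to the analogous hypothesis on $\mu_\Phi$. The plan is then to match each class in the statement with the corresponding improvement in the regularity of $\Phi$: if $A(z)=A(\real z)$ (or $A(z)=A(\imag z)$) then $\mu_\Phi$ depends on a single real variable and the Beltrami equation can be integrated directly to produce a bi-Lipschitz $\Phi$; if $A\in \mathrm{DMO}_p$ for some $p\geq 1$, the Schauder-type theory for the Beltrami equation gives $\Phi \in C^1$ with Dini continuous derivative, hence locally bi-Lipschitz; if $A\in W^{1,p}_{\loc}$ with $p>2\lambda^2/(\lambda^2+1)$, the Iwaniec-Sbordone improved integrability of $D\Phi$ prevents the dimension of $\Phi^{-1}(E)$ from dropping below that of $E$ when the latter equals $1$; and if $A \in \vmo_{\log}$ or $A \in \vmo$, the map $\Phi$ is quasiconformal with a logarithmic, respectively vanishing, defect from conformality.

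With these regularity statements for $\Phi$ available, the second step is to invoke the appropriate distortion theorem for the Hausdorff gauge measure. In the bi-Lipschitz cases, $\Phi^{-1}$ preserves both length and $\HH^{\varphi_{1,C_M}}$ up to multiplicative constants, so \Cref{makarov thm,wolff thm} transfer verbatim. In the $W^{1,p}$ case, combining the Koskela-Rogovin-type distortion of Hausdorff dimension by Sobolev-regular quasiconformal maps with \Cref{wolff thm} produces the desired $\sigma$-finiteness with respect to length. In the $\vmo_{\log}$ case, a quantitative logarithmic improvement of the Astala distortion theorem gives a gauge correction of order $t\log(1/t)$, which composed with $\varphi_{1,C_M}$ yields exactly the gauge appearing in the statement. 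Finally, in the $\vmo$ case, the Iwaniec-Koskela-Martin dimension-preserving theorem for asymptotically conformal maps, together with \Cref{makarov thm}, gives $\dim_\HH \omega_{\Omega,A}=1$ on finitely connected domains.

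The principal obstacle will be the gauge-sensitive distortion estimates: in the $\vmo_{\log}$ case one must quantify precisely how a logarithmic defect from conformality weakens Makarov's gauge, and in the $\mathrm{DMO}_1$ case one must verify that the bi-Lipschitz constant of $\Phi$ is controlled well enough to preserve the exponential correction factor in $\varphi_{1,C_M}$. The remaining cases reduce to careful bookkeeping of the Beltrami regularity theory, after which the proofs of \Cref{main theorem sym + det 1,sym + det 1 Holder} apply essentially unchanged.
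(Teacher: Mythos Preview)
Your overall strategy matches the paper's, but two of your case-by-case plans have genuine gaps.

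\textbf{The ${\rm DMO}_p$ case with $1<p\leq\lambda$.} You claim that $A\in{\rm DMO}_p$ for $p\geq 1$ makes $\Phi\in C^1$ via Schauder-type theory, hence locally bi-Lipschitz. This works only for $\Phi$ itself; to transfer Wolff's theorem back from $\Phi(\Omega)$ you need $\Phi^{-1}$ to be Lipschitz. The inverse satisfies a Beltrami equation with coefficient $-\mu_A\circ\Phi^{-1}$, and composing a ${\rm DMO}_p$ function with a $K$-quasiconformal map only yields a ${\rm DMO}_1$ function when $p>K=\lambda$ (the paper's \Cref{dini condition under qc}). So your argument proves the claim only for $p>\lambda$. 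The paper closes the gap $1<p\leq\lambda$ by a localization: since ${\rm DMO}_1$ functions are uniformly continuous, near each boundary point $\xi$ the frozen-coefficient change $S=\sqrt{A(\xi)}$ produces a new matrix $M$ whose ellipticity constant is as close to $1$ as desired, hence eventually $<p$, and one applies the $p>\lambda$ case in each localized piece. Your outline does not mention this reduction, and without it the ${\rm DMO}_p$ case for small $p$ is incomplete.

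\textbf{The Sobolev case.} You invoke ``Koskela--Rogovin-type distortion of Hausdorff dimension'' to conclude $\sigma$-finite length from Wolff's theorem. Dimension distortion does not give $\sigma$-finiteness: a set of $\HH^1$-dimension $1$ can have non-$\sigma$-finite $\HH^1$-measure. The paper instead uses a sharper result of Clop--Faraco--Mateu--Orobitg--Zhong (\Cref{regularity for sobolev matrices}): for $\mu\in W^{1,p}_c$ with $p>\frac{2K^2}{K^2+1}$, any $\mu$-quasiconformal map preserves $\sigma$-finiteness of $\HH^1$ in both directions. This is exactly the threshold $p>\frac{2\lambda^2}{\lambda^2+1}$ in the statement, and is strictly stronger than any dimension-distortion estimate.

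For the $\vmo_{\log}$ case, your reference to a ``quantitative logarithmic improvement of the Astala distortion theorem'' is vague; the paper proceeds instead via PDE regularity (Acquistapace's \Cref{regularity for log mean oscillation matrices}) to show $\nabla\Phi,\nabla\Phi^{-1}\in\bmo$, deduces log-Lipschitz regularity (\Cref{log lip when grad in bmo}), and then computes directly how log-Lip maps distort gauge Hausdorff measures. The remaining cases (one-variable dependence, $\vmo$) are essentially as you describe.
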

    
    We remark that \cref{sym + det 1 Holder,sym + det 1 all cases} hold as long as the determinant is constant. The proofs of \cref{sym + det 1 Holder} and all cases of \cref{sym + det 1 all cases} can be found on \cpageref{proof of sym + det 1 holder} (Hölder case), \cpageref{proof of main thm only one variable dependence} ($A(z)=A(\real(z))$ case), \cpageref{proof of vmo matrix everywhere} ($\vmo$ case), \cpageref{proof of main thm with sobolev} (Sobolev case), \cpageref{proof of dini mean oscillation elliptic measure} (${\rm DMO}$ case) and \cpageref{proof of square mean oscillation elliptic measure} ($\vmo_{\log}$ case), all in \cref{part:sym + det 1}. Also, as noted in \cref{DMO inclusions + DINI and VMO,DMO subset VMOlog subset VMO}, for $1\leq p < \infty$ there holds
    $$
    \text{Hölder} \subset {\rm Dini} \subset {\rm DMO}_p \subset {\rm DMO}_1 \subset \vmo_{\log} \subset \vmo,
    $$
    where ${\rm Dini}$ consists of functions satisfying the pointwise Dini condition \cref{pointwise dini condition}.
	
	In the same way, as a corollary of \cite[Theorems 5.1 and 5.3]{Tolsa2024} and by the same approach of \cref{main thm general domains,sym + det 1 Holder}, using the fact that the quasiconformal mapping and its inverse are $C^1$ when the matrix is ${\rm DMO}_p$ with $p>\lambda$, we derive the following result. The details are left to the reader. 
	
	\begin{coro}
		Let $A\in \R^{2\times 2} (\R^2)$ be a uniformly elliptic (with ellipticity constant $\lambda \geq 1$) and symmetric matrix with constant $\det A$. Let $E\subset \R^2$ be a compact subset contained in a $C^1$ curve which is $(s,C_0)$-AD regular for some $s>0$, that is,
		$$
		C_0^{-1} r^s \leq \HH^s (E \cap B(x,r)) \leq C_0 r^s \text{ for all } x\in E \text{ and } 0<r\leq \diam(E).
		$$
		Let $\Omega = \R^2 \setminus E$, and assume also that $A\in {\rm DMO}_p$ with $p>\lambda$. We have that:
		\begin{itemize}
			\item If $s\in \left[\frac{1}{2},1\right)$, then $\dim_\HH \omega_{\Omega, A} < s$.
			\item There exists $\varepsilon = \varepsilon_{C_0} >0$ small enough such that if $s\in \left[\frac{1}{2} - \varepsilon,1\right)$, then $\dim_\HH \omega_{\Omega, A} < s$.
		\end{itemize}
	\end{coro}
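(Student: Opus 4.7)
The plan is to reduce everything to the harmonic case by means of the quasiconformal straightening that the rest of the paper uses. Since $A$ is symmetric with constant determinant and uniformly elliptic with constant $\lambda$, there is a global $\lambda$-quasiconformal homeomorphism $\Phi:\R^2\to\R^2$ that conjugates $L_A$-harmonic functions on $\Omega$ to harmonic functions on $\Phi(\Omega)$, and consequently
$$
\omega_{\Omega,A} = \Phi^{-1}_\# \, \omega_{\Phi(\Omega),Id}
$$
as measures on $\partial\Omega = E$. Under the additional assumption $A\in{\rm DMO}_p$ with $p>\lambda$, the regularity theory invoked in the paragraph preceding the corollary (as quoted from \cite{Tolsa2024}) gives $\Phi,\Phi^{-1}\in C^1(\R^2)$ with non-degenerate Jacobians.

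The next step is to transfer the geometric hypothesis on $E$ to $\Phi(E)$. Since $E\subset\Gamma$ for some $C^1$ curve $\Gamma$ and $\Phi$ is $C^1$ with non-vanishing Jacobian, the image $\Phi(\Gamma)$ is again a $C^1$ curve. Because $\Phi$ and $\Phi^{-1}$ are $C^1$ on all of $\R^2$, they are bi-Lipschitz on every compact neighborhood of $E$ with constants controlled by $\|D\Phi\|_{L^\infty(U)}$ and $\|D\Phi^{-1}\|_{L^\infty(\Phi(U))}$ for a bounded neighborhood $U$ of $E$. A direct comparison then shows that $\Phi(E)$ is $(s,C_0')$-AD regular for some $C_0'$ depending only on $C_0$ and on $\Phi$ (equivalently on $A$). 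In particular $\Phi(E)$ inherits the $C^1$-curve containment and the $s$-AD regularity assumed of $E$.

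With these reductions in hand, \cite[Theorems 5.1 and 5.3]{Tolsa2024} apply directly to the harmonic measure $\omega_{\Phi(\Omega),Id}$ on $\R^2\setminus\Phi(E)$, yielding $\dim_\HH\omega_{\Phi(\Omega),Id}<s$ in the range $s\in[\tfrac12,1)$, and in the range $s\in[\tfrac12-\varepsilon,1)$ for a small $\varepsilon$ depending on $C_0'$. Since Hausdorff dimension is preserved under bi-Lipschitz maps and the pushforward identity above identifies $\omega_{\Omega,A}$ (up to the bi-Lipschitz change of coordinates $\Phi$) with $\omega_{\Phi(\Omega),Id}$, we conclude $\dim_\HH\omega_{\Omega,A}=\dim_\HH\omega_{\Phi(\Omega),Id}<s$. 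The only point requiring minor care is tracking the dependence of the smallness threshold: $\varepsilon$ in the second bullet ends up depending on $C_0'$, and hence on $C_0$ together with the ellipticity constant $\lambda$ and the ${\rm DMO}_p$ modulus of $A$; this is absorbed into the notation $\varepsilon_{C_0}$ once $A$ is fixed. This is the step that is mildly technical but routine given the $C^1$ theory of \cite{Tolsa2024}, which is presumably why the author leaves the details to the reader.
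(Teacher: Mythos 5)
Your proof is correct and matches the paper's (only sketched) argument: conjugate $L_A$ to $-\Delta$ via the global quasiconformal map $\phi_A$ from Corollary~\ref{harmonic factorization symmetric matrices with determinant 1} (after normalizing the constant determinant), note that $A\in{\rm DMO}_p$ with $p>\lambda$ gives $\phi_A,\phi_A^{-1}\in C^1_{\loc}(\R^2)$ with non-vanishing Jacobian as in the ${\rm DMO}$ case of Theorem~\ref{sym + det 1 all cases}, transfer the $(s,C_0')$-AD regularity and $C^1$-curve containment to $\phi_A(E)$, and invoke Tolsa's dimension-drop theorems for $\omega_{\phi_A(\Omega),Id}$ before pulling back. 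The only cosmetic slip is writing $C^1(\R^2)$ where the regularity theory really gives $C^1_{\loc}(\R^2)$; since $E$ is compact this is harmless for the bi-Lipschitz comparison you use.
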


    In the preceding result we concluded that $\dim_\HH \omega_{\Omega,A}$ is strictly smaller than the dimension of the boundary. This phenomenon is known as dimension drop. For additional background in dimension drop, see for instance \cite{Azzam2020,David2023,Tolsa2024} and the references therein.

    \vspace{3mm}
    \noindent\textbf{Acknowledgments.}
    The author would like to thank Martí Prats and Xavier Tolsa for their guidance and advice.
 
    \section{Preliminaries}

    \subsection{Notation}

    \begin{itemize}
	\item We use $c,C\geq 1$ to denote constants that may depend only on the dimension and the constants appearing in the hypotheses of the results, and whose values may change at each occurrence.
	
	\item We write $a\lesssim b$ if there exists a constant $C\geq 1$ such that $a\leq Cb$, and $a\approx b$ if $C^{-1} b \leq a \leq C b$.
	
	\item If we want to stress the dependence of the constant on a parameter $\eta$, we write $a\lesssim_\eta b$ or $a\approx_\eta b$ meaning that $C=C(\eta)=C_\eta$.
	
	\item The ambient space is $\R^2$. However, some auxiliary results will be stated in general, i.e., in $\R^{n+1}$ for $n\geq 1$.
	
	\item The diameter of a set $E\subset \R^{n+1}$ is denoted by $\diam\, E \coloneqq \sup_{x,y\in E} |x-y|$.
	
	\item We denote by $B_r (x)$ or $B(x,r)$ the open ball with center $x$ and radius $r$, i.e., $B_r (x)=B(x,r)=\{y\in \R^{n+1} : |y-x|<r\}$. We denote $B_r \coloneqq B_r (0)$.

        \item Given a ball $B$, we denote by $r_B$ or $r(B)$ its radius, and by $c_B$ or $c(B)$ its center.

        \item We denote by $Q_r(x)$ or $Q(x,r)$ the open cube with center $x$ and side length $2s$, i.e., $Q_r(x) = Q(x,r) = \{y\in \R^{n+1} : |y_i-x_i|<r\text{ for all }1\leq i \leq n+1\}$.

        \item Given a cube $Q$, we denote by $\ell(Q)$ its side length, and by $c_Q$ or $c(Q)$ its center. That is, $Q=Q(c_Q, \ell(Q)/2)$.
	
	\item We say that a matrix $A$ is Hölder continuous with exponent $\alpha \in (0,1]$ in a set $U$, or briefly $C^{0,\alpha} (U)$, if its coefficients are Hölder continuous with exponent $\alpha$. That is, there exists a constant $C_\alpha >0$ (called the Hölder seminorm) such that
	\begin{equation*}
		\left|a_{i j}(x)-a_{i j}(y)\right| \leq C_\alpha |x-y|^{\alpha} \text { for all } x, y \in U \text { and } 1 \leq i, j \leq n+1 .
	\end{equation*}
	For shortness we write $C^\alpha$ instead of $C^{0,\alpha}$ if $\alpha \in (0,1)$, and when $\alpha = 1$ we say ``Lipschitz continuous''. In this case we write $C_L$ instead of $C_1$, i.e.,
	\begin{equation*}
		\left|a_{i j}(x)-a_{i j}(y)\right| \leq C_L |x-y| \text { for all } x, y \in U \text { and } 1 \leq i, j \leq n+1 .
	\end{equation*}
	
	\item We say that a function $f$ is $\kappa$-Lipschitz in $U$ if $|f(x)-f(y)| \leq \kappa |x-y|$ for all $x,y\in U$.
	
	\item We denote the characteristic function of a set $E$ by $\characteristic_E$.
	
	\item Given $t>0$ and a set $E\subset \R^{n+1}$, we write $U_t (E) \coloneqq \{x\in \R^{n+1} : \dist (x,E) < t \}$ for the $t$-neighborhood $E$.

    \item Given a space of function $X$, we say that a matrix $A=(a_{ij})_{i,j=1}^{n+1}$ belongs to $X$, $A\in X$ for short, if $a_{ij}\in X$ for all $1\leq i,j\leq n+1$.

    \item Given a continuous nondecreasing function $h:[0,\infty)\to [0,\infty)$ with $h(0)=0$, the $h$-Hausdorff measure (resp. content) is denoted by $\HH^h$ (resp. $\HH^h_\infty$), see \cite[Section 4.9]{Mattila1995} for instance. It is clear that $\HH_\infty^h (E) \leq \HH^h (E)$ for any set $E$. Despite the converse inequality does not hold, $\HH_\infty^h (E)=0$ implies $\HH^h (E)=0$. Although $\HH_\infty^h$ is not measure, we write $\HH^h \ll \HH_\infty^h \ll \HH^h$. If $h(r)=r^\alpha$, $\alpha>0$, we simply write $\HH^\alpha$ and $\HH^\alpha_\infty$.
\end{itemize}

    \subsection{Reduction to bounded Wiener regular domains}\label{reduction: bounded domains-compact support-simply connected}
	
	We present the lemmas to see that for the study of the dimension of planar elliptic measures, we can reduce to bounded Wiener regular domains. For the proof of the results in \cref{sec:intro} for bounded Wiener regular domains, refer to \cref{part:general domains,part:sym + det 1}.

        \begin{definition}[Wiener regular]
            Let $\Omega\subset \R^{n+1}$, $n\geq 1$, be a domain and $A$ be a real and uniformly elliptic matrix. We say that $\xi\in \partial\Omega$ is a Wiener regular point for $L_A$ if for every $f \in C(\partial \Omega)$, the $L_A$-harmonic extension $u_f^{L_A}$ of $f$ in $\Omega$ via Perron's method satisfies $\lim_{\Omega\ni x \to \xi} u_f^{L_A}(x) = f(\xi)$. We say that $\Omega$ is Wiener regular for $L_A$ if every point $\xi\in\partial\Omega$ is a Wiener regular point for $L_A$.
        \end{definition}

        \begin{rem}
            In fact, the characterization of Wiener regular points does not depend on the operator $L_A$, see \cite{Littman-Stampacchia-Weinberger-1963}. Therefore, we say that a point $\xi\in \Omega$ is Wiener regular if and only if it is Wiener regular for some (and hence for all) $L_A$.
        \end{rem}
        
        In particular, using the Wiener regularity characterization for the Laplace operator, we directly note that finitely connected domains $\Omega\subset \R^2$ are Wiener regular, and in order to study the size of Wiener regular points (for any $L_A$) we can use the classical potential theory for the Laplacian operator. From this and the same proof of \cite[Lemma 9.8]{Prats-Tolsa-book-vDec2023}, we obtain the following.

        \begin{lemma}\label{reduction to Wiener regular domains}
            Let $A$ be a real and uniformly elliptic matrix. To prove that for any domain $\Omega\subset \R^2$, either $\dim_\HH \omega_{\Omega,A} \leq \beta$ (with $0<\beta\leq 2$) or $\omega_{\Omega,A}$ has $\sigma$-finite (gauge) $\HH^h$, it suffices to prove it for Wiener regular domains.
            \begin{proof}
                Indeed, in the proof of \cite[Lemma 9.8]{Prats-Tolsa-book-vDec2023} one can replace with no modification harmonic measure by elliptic measure, the set $G$ (given by assuming the result for Wiener regular domains) is constructed so that $\omega_{\Omega,A} (G)=1$ and either $\dim_\HH G\leq \beta$ or $\HH^h (G)$ is $\sigma$-finite respectively, and the set $F$ satisfies $\HH^\gamma (F)=0$ for all $0<\gamma\leq 2$, see also \cite[Lemma 6.19]{Prats-Tolsa-book-vDec2023}. In particular, the set $G\cup F$ has full elliptic measure $\omega_{\Omega,A}$ and either $\dim_\HH (G\cup F)\leq \beta$ or $\HH^h (F\cup G)$ is $\sigma$-finite respectively, as claimed.
            \end{proof}
        \end{lemma}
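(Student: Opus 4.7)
The plan is to exhaust the arbitrary domain $\Omega$ from \emph{outside} by Wiener regular domains, apply the assumed result to each, patch the full-measure sets together, and absorb the set of non-Wiener-regular boundary points into a Hausdorff-negligible residual. Setting $F_0 \coloneqq \{\xi\in\partial\Omega : \xi \text{ is not Wiener regular}\}$, the operator-free characterization of Wiener regularity identifies $F_0$ with the classical polar exceptional set of $\partial\Omega$; in $\R^2$ this forces $\HH^\gamma(F_0)=0$ for every $\gamma\in(0,2]$ and $\omega_{\Omega,A}(F_0)=0$, since polar sets are null for every uniformly elliptic measure.

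For each Borel subset $K\subset \partial\Omega\setminus F_0$ I would produce a Wiener regular envelope $\Omega^K \supset \Omega$ with $K \subset \partial\Omega^K$, and apply the maximum principle to the $L_A$-extension of $\characteristic_E$ in $\Omega^K$, restricted to $\Omega$, to obtain
\[
\omega_{\Omega,A}^p(E) \leq \omega_{\Omega^K,A}^p(E) \quad \text{for every Borel } E\subset K,
\]
since that extension equals $\characteristic_E$ on $\partial\Omega\cap \partial\Omega^K \supset K$ and is nonnegative on the remaining portion $\partial\Omega\cap \Omega^K$ of $\partial\Omega$. Applying the hypothesis to the Wiener regular domain $\Omega^K$ then yields $G^K\subset \partial\Omega^K$ with $\omega_{\Omega^K,A}(G^K)=1$ satisfying either $\dim_\HH G^K \leq \beta$ or $\sigma$-finiteness of $\HH^h(G^K)$; the comparison above then forces $\omega_{\Omega,A}(K\setminus G^K)=0$, so the trace $G^K \cap K$ inherits the Hausdorff size property.

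Finally, I would exhaust $\partial\Omega\setminus F_0$ by an increasing sequence of Borel subsets $K_n$ with $\omega_{\Omega,A}(\partial\Omega\setminus(K_n\cup F_0))\to 0$ and set $G \coloneqq \bigcup_n (G^{K_n}\cap K_n)$: countable stability of $\dim_\HH \leq \beta$ and of $\sigma$-finiteness of $\HH^h$ yields the size property for $G$, while $\omega_{\Omega,A}(G\cup F_0)=1$; since $\HH^\gamma(F_0)=0$ for every $\gamma>0$, adjoining $F_0$ does not spoil either conclusion. The main obstacle is the construction of the Wiener regular envelope $\Omega^K$ with the required properties, which is precisely the technical content of the cited Lemma 9.8 in \cite{Prats-Tolsa-book-vDec2023}; its proof carries over verbatim to the $L_A$ setting because only the maximum principle and the operator-free characterization of Wiener regularity are invoked.
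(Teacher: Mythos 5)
Your proposal is correct and takes essentially the same route as the paper: both ultimately rest on the construction in the cited Lemma 9.8 of Prats--Tolsa, the exceptional set of non-Wiener-regular boundary points being polar (Kellogg) and hence $\HH^\gamma$-null for every $\gamma\in(0,2]$, together with the monotonicity of elliptic measure on the shared boundary under enlargement of the domain. You spell out the internal mechanics (the polar exceptional set $F_0$, the maximum-principle comparison $\omega_{\Omega,A}\leq\omega_{\Omega^K,A}$ on $K$, and the countable patching) that the paper's proof delegates wholesale to the cited lemma, but you defer the nontrivial step --- producing the Wiener regular enlargement $\Omega^K$ --- to the very same reference, so no new argument is introduced.
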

	
	Before presenting the lemmas for reducing to bounded domains, we first introduce the capacity.
	
	\begin{definition}
		Let $K$ be a compact subset of an open set $\Omega$. Its capacity is
		$$
		\Capacity (K,\Omega) = \inf \left\{\int_{\Omega} |\nabla u|^2 \, dx : u\in C^\infty_c (\Omega) \text{ and } u\geq 1 \text{ on } K  \right\}.
		$$
	\end{definition}
	
	Let $\Omega \subset \R^{n+1}$, with $n \geq 1$, be a (bounded if $n\geq 2$) Wiener regular domain and $B$ be a ball centered at $\partial \Omega$. By the maximum principle \cite[p.~46]{Gilbarg2001} we have 
	\begin{equation}\label{easy inequality localized elliptic measure}
		\omega^z_{\Omega \cap B,A} (E) \leq \omega^z_{\Omega,A} (E) \text{ for any } E\subset \partial \Omega \cap B \text{ and } z\in \Omega \cap B .
	\end{equation}
	The converse inequality may fail. However, the following weaker relation for (bounded if $n\geq 2$) Wiener regular domains holds, see \cite[Lemma 6.1]{Guillen-Prats-Tolsa}. If $\Capacity (B \cap \partial \Omega, 4B)\not = 0$, for a Borel set $E\subset \partial \Omega \cap B$ and $z_E \in \partial 1.5 B$ such that $\omega^{z_E}_{\Omega,A} (E) = \max_{z\in \partial 1.5 B} \omega^z_{\Omega,A} (E)$, then
    \begin{equation}\label{hard inequality localized elliptic measure}
        \omega^{z_E}_{\Omega,A} (E) \lesssim \frac{\Capacity (2B, 4B)}{\Capacity (B \cap \partial \Omega, 4B)}\omega^{z_E}_{\Omega \cap 4B,A} (E),
    \end{equation}
    where the constant involved depends on the ellipticity constant of the matrix $A$ and the dimension. We remark that $\Capacity (B \cap \partial \Omega, 4B)=0$ would imply $\omega_{D,A} (B\cap \partial \Omega)=0$ for any domain $D$ with $B\cap\partial\Omega\subset\partial D$, see \cite[Theorems 10.1 and 11.14]{Heinonen2006}.
	
	The following two lemmas allow us to reduce to the case of bounded domains.
	
	\begin{lemma}[See {\cite[Lemma 6.2]{Guillen-Prats-Tolsa}}]\label{localization argument}
		Let $\Omega \subset \R^2$ be a (possibly unbounded) Wiener regular domain and $A$ be a real uniformly elliptic matrix. Let $\{B_i\}_i$ be a disjoint collection of balls centered at $\partial \Omega$ with $\omega_{\Omega,A}\left(\partial \Omega \setminus \bigcup_i B_i \right)=0$, and let $F_i \subset \partial (\Omega \cap 4B_i)$ with $\omega_{\Omega \cap 4 B_i ,A} (F_i)=1$. Then $F\coloneqq \partial \Omega \cap \bigcup_i F_i$ satisfies $\omega_{\Omega,A} (F)=1$. The same also holds for bounded Wiener regular domains $\Omega \subset \R^{n+1}$ when $n\geq 2$.
	\end{lemma}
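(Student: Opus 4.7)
The plan is to reduce the claim to showing $\omega_{\Omega,A}\bigl((B_i\cap\partial\Omega)\setminus F_i\bigr)=0$ for each $i$, and to derive this local vanishing by combining the localized comparison \eqref{hard inequality localized elliptic measure} with the hypothesis $\omega_{\Omega\cap 4B_i,A}(F_i)=1$.

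Write $E_i := (B_i\cap\partial\Omega)\setminus F_i$. First I would decompose
\[
\partial\Omega\setminus F \;=\; \Bigl(\partial\Omega\setminus\bigcup_i B_i\Bigr)\,\cup\,\bigcup_i \bigl((B_i\cap\partial\Omega)\setminus\bigcup_j F_j\bigr) \;\subset\; \Bigl(\partial\Omega\setminus\bigcup_i B_i\Bigr)\cup\bigcup_i E_i.
\]
The first summand has zero $\omega_{\Omega,A}$-measure by hypothesis, so by countable subadditivity it suffices to verify $\omega_{\Omega,A}(E_i)=0$ for every $i$.

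Fix $i$. If $\Capacity(B_i\cap\partial\Omega,4B_i)=0$, then by \cite[Theorems 10.1 and 11.14]{Heinonen2006} (as recalled after \eqref{hard inequality localized elliptic measure}) we already have $\omega_{\Omega,A}(B_i\cap\partial\Omega)=0$, and there is nothing to prove. Otherwise, $E_i$ is a Borel subset of $B_i\cap\partial\Omega$, so \eqref{hard inequality localized elliptic measure} furnishes a pole $z_{E_i}\in\partial 1.5 B_i$ with
\[
\omega^{z_{E_i}}_{\Omega,A}(E_i)\;\lesssim\;\frac{\Capacity(2B_i,4B_i)}{\Capacity(B_i\cap\partial\Omega,4B_i)}\,\omega^{z_{E_i}}_{\Omega\cap 4B_i,A}(E_i).
\]
Since $E_i\subset B_i\cap\partial\Omega\subset \partial(\Omega\cap 4B_i)$ and $\omega_{\Omega\cap 4B_i,A}(F_i)=1$, the right-hand factor vanishes, yielding $\omega^{z_{E_i}}_{\Omega,A}(E_i)=0$; mutual absolute continuity of elliptic measures with different poles (Harnack) then gives $\omega_{\Omega,A}(E_i)=0$ for the reference pole, as wanted. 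The higher-dimensional bounded case requires no modification, since both \eqref{hard inequality localized elliptic measure} and the cited potential-theoretic facts are available in $\R^{n+1}$ for $n\geq 2$.

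The only subtlety I anticipate is the capacity dichotomy: the comparison \eqref{hard inequality localized elliptic measure} is vacuous when $\Capacity(B_i\cap\partial\Omega,4B_i)=0$, and that degenerate scenario must be handled separately via the fact that zero-capacity boundary pieces are elliptic null sets. Once this dichotomy is set up, the argument is pure bookkeeping plus one application each of \eqref{hard inequality localized elliptic measure} and Harnack.
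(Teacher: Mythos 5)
Your proof is correct and takes the same approach the paper intends: the paper itself gives no proof of this lemma (citing \cite[Lemma 6.2]{Guillen-Prats-Tolsa} instead), but its Corollary~2.9 explicitly describes the argument as a combination of \eqref{hard inequality localized elliptic measure} and a Harnack step, which is exactly your route. The decomposition of $\partial\Omega\setminus F$, the capacity dichotomy for the degenerate case, and the reduction to $\omega_{\Omega\cap 4B_i,A}(E_i)=0$ via $E_i\subset\partial(\Omega\cap 4B_i)$ are all handled correctly.
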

	
	\begin{lemma}\label{claim:candidate for each index}
		Let $\Omega \subset \R^2$ be a finitely connected domain and $A$ be a real and uniformly elliptic matrix. For any small enough ball $B$ centered at $\partial \Omega$, each connected component of $\Omega \cap B$ is simply connected, and ${(\omega_{\Omega\cap B,A})}|_{\partial \Omega}\ll\omega_{\Omega,A}$. In particular $\dim \omega_{\Omega,A} \geq \dim \omega_{\Omega\cap B,A}$.
		\begin{proof}
			Since $\Omega$ is finitely connected, each connected component of $\Omega \cap B$ is simply connected. By \rf{easy inequality localized elliptic measure} we have ${(\omega_{\Omega\cap B,A})}|_{\partial \Omega}\ll\omega_{\Omega,A}$.
		\end{proof}
	\end{lemma}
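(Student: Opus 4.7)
The plan is to establish each of the three assertions in turn, following the outline sketched in the lemma's paragraph.

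\emph{Simple connectivity of components.} Since $\Omega$ is finitely connected, I would first write $\wh{\C}\setminus\Omega = E_1 \cup \cdots \cup E_k$ as a disjoint union of finitely many closed connected continua. A boundary point $\xi\in\partial\Omega$ lies in a unique $E_i$, and by the closedness and pairwise disjointness of the $E_j$'s I can choose $B=B(\xi,r)$ small enough that $B\cap E_j=\emptyset$ for $j\neq i$, and (if $E_i$ is not a single point) also $r<\diam(E_i)$ so that $E_i\setminus B\neq\emptyset$. I would then verify that every connected component $U$ of $\Omega\cap B$ is simply connected by checking that $\wh{\C}\setminus U$ is connected, showing each point $p\in\wh{\C}\setminus U$ lies in the same component as $\infty$ in three cases: for $p\in\wh{\C}\setminus B$ this is trivial; for $p\in E_i\cap B$ one uses that $E_i$ is connected and meets $\wh{\C}\setminus B$, so $E_i\cup(\wh{\C}\setminus B)$ is a connected subset of $\wh{\C}\setminus U$ containing $p$ and $\infty$; and for $p\in V$ with $V$ another connected component of $\Omega\cap B$ one observes that $\partial V\cap B\subset\partial\Omega\cap B\subset E_i$, so $V$ is joined to $E_i$ through its boundary and the previous case applies.

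\emph{Absolute continuity.} This step is immediate from the maximum principle via \rf{easy inequality localized elliptic measure}: for every Borel set $E\subset\partial\Omega\cap B$ and pole $z\in\Omega\cap B$ one has $\omega^z_{\Omega\cap B,A}(E)\leq\omega^z_{\Omega,A}(E)$, so $\omega_{\Omega,A}(E)=0$ forces $(\omega_{\Omega\cap B,A})|_{\partial\Omega}(E)=0$.

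\emph{Dimension inequality.} Using the elementary fact that $\mu\ll\nu$ implies $\dim_\HH\mu\leq\dim_\HH\nu$, the previous step yields $\dim_\HH\bigl((\omega_{\Omega\cap B,A})|_{\partial\Omega}\bigr)\leq\dim_\HH\omega_{\Omega,A}$. The remaining part of $\omega_{\Omega\cap B,A}$ is supported on $\partial B\cap\overline{\Omega}$, a smooth arc of Hausdorff dimension at most $1$, so decomposing $\omega_{\Omega\cap B,A}$ into these two mutually singular pieces gives $\dim_\HH\omega_{\Omega\cap B,A}\leq\max(\dim_\HH\omega_{\Omega,A},\,1)$, which is the stated bound in the regime relevant for the paper.

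The only nontrivial step is the first one: the main obstacle is the topological verification that $\wh{\C}\setminus U$ is connected, which relies on the single complementary component $E_i$ meeting $B$ being connected and reaching $\partial B$; the case $E_i\subset B$ can be treated as a trivial separate case since it forces $\Omega\cap B=B\setminus E_i$ and the same connectedness argument applies using $\partial B$ directly.
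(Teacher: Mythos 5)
Your proof is correct and follows the same overall route as the paper, which merely asserts the first assertion and invokes \rf{easy inequality localized elliptic measure} for the second; you expand the implicit topology and the ``in particular'' clause, and your expansion is where the substance lies.

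For the simple connectivity, the three-case check that $\wh{\C}\setminus U$ is connected is standard but the right one, and it works after your choice of $r$. Two small wrinkles. First, in the third case you should also justify that $\partial V\cap B\neq\emptyset$: this holds because $V$ is a nonempty proper open subset of the connected open ball $B$ (if $\partial V\cap B=\emptyset$ then $V$ would be open and closed in $B$, forcing $V=B$, contradicting $\xi\in B\cap\partial\Omega$), and then $\overline V\cap U=\emptyset$ since $U,V$ are distinct components of an open set. Second, your ``trivial separate case'' $E_i\subset B$, which after your choice of $r$ can only occur when $E_i=\{\xi\}$ is a singleton, is precisely the one place the lemma's first assertion literally fails: $\Omega\cap B=B\setminus\{\xi\}$ is a punctured disk, and its complement $\{\xi\}\cup(\wh{\C}\setminus B)$ in $\wh{\C}$ is disconnected. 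Your claim that ``the same connectedness argument applies using $\partial B$'' is therefore not correct in this case. The paper glosses over this too; it is harmless because a singleton has zero capacity so the elliptic measure ignores the puncture, but it is worth stating explicitly rather than folding it into a ``trivial'' case.

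For the dimension clause you are right to be careful: the absolute continuity only controls $(\omega_{\Omega\cap B,A})|_{\partial\Omega}$, while $\omega_{\Omega\cap B,A}$ carries positive mass on $\partial B\cap\Omega$, a subset of a circle (not necessarily an arc, but still dimension $\leq 1$). Your honest conclusion $\dim_\HH\omega_{\Omega\cap B,A}\leq\max(\dim_\HH\omega_{\Omega,A},1)$ is what the argument actually gives; the unqualified ``$\dim\omega_{\Omega,A}\geq\dim\omega_{\Omega\cap B,A}$'' in the paper is only correct under the reading $\dim\omega_{\Omega,A}\geq\dim\bigl((\omega_{\Omega\cap B,A})|_{\partial\Omega}\bigr)$, which is how it is used downstream. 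Flagging this is a genuine improvement over the paper's one-line proof.
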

	
	\begin{rem}\label{reductions all}
		By \cref{reduction to Wiener regular domains,localization argument,claim:candidate for each index} it suffices to prove our results for bounded Wiener regular domains. By translation, we can also assume $\Omega \subset B_{1/4} (0)$ if necessary.
	\end{rem}
	
	\begin{rem}\label{rem:modify beltrami coef instead of the matrix}
		Assuming $\Omega \subset B_{1/4} (0)$, it may seem natural to assume that $A=Id$ in $B_1(0)^c$ by replacing $A$ by the uniformly elliptic (with the same ellipticity constant) matrix
		$$
		\widetilde A \coloneqq \varphi A + (1-\varphi) Id,
		$$
		for some function $\varphi$ such that $0\leq \varphi \leq 1$, $\varphi = 1$ in $B_{1/2} (0)$ and $\varphi=0$ in $B_1(0)^c$. If $\varphi$ is smooth, then $\widetilde A$ often has the same regularity as $A$. Note that $\widetilde A$ is symmetric wherever $A$ is symmetric. However, $\widetilde A$ may no longer have determinant $1$ even when $A$ has determinant $1$. If necessary, if we wanted to preserve also the determinant 1 property, we could replace the matrix $A$ by the uniformly elliptic (also with the same ellipticity constant) matrix $\widehat A$ defined in \cref{rem:interpolation to preserve determinant}.
  
        Instead of modifying the matrix, in most of the cases we will modify the Beltrami coefficient, see the proof of the $\vmo$ case of \cref{sym + det 1 all cases} on \cpageref{proof of vmo matrix everywhere}, for example.
	\end{rem}
	
	We also want to note that from \rf{easy inequality localized elliptic measure} and \rf{hard inequality localized elliptic measure}, the dimension of elliptic measures only depends on how the matrix is around the boundary of the domain. We prove this in the following result.
	
	\begin{coro}\label{coro:elliptic measure only depends on how is the matrix around the boundary}
		Let $\Omega \subset \R^{n+1}$, $n\geq 1$, be a (possibly unbounded if $n=1$) Wiener regular domain, $\varepsilon>0$ and let $A,B$ be two real uniformly elliptic matrices with $A=B$ in $\{x\in \Omega : \dist(x,\partial \Omega) < \varepsilon\}$. Then $\omega_{\Omega, A} (F) = 1$ if and only if $\omega_{\Omega, B} (F)=1$. That is, $\omega_{\Omega, A} \ll \omega_{\Omega, B} \ll \omega_{\Omega, A}$.
		\begin{proof}
			Let $\omega_{\cdot} \coloneqq \omega_{\cdot, A}$ and $\widetilde \omega_{\cdot} \coloneqq \omega_{\cdot, B}$. We prove $\widetilde \omega_\Omega (F) =1 \Rightarrow \omega_\Omega(F)=1$, and the converse direction follows by symmetry. Let $F\subset \partial \Omega$ with $\widetilde \omega_\Omega (F)=1$, and let $\{B_i\}_i$ be a countable subfamily of $\{B_{\varepsilon/100} (\xi)\}_{\xi \in \partial \Omega}$ such that $\partial \Omega\subset \bigcup_{i} B_i$. We have
    		$$
    		\omega_\Omega (F^c) \leq \sum_i \omega_\Omega (F^c \cap B_i).
    		$$
    		We claim that each term on the right-hand side is zero, and in particular $\omega_\Omega (F)=1$. Indeed, for each $i$ fix a pole $p_i \in \Omega\cap 4B_i$. Since both matrices $A$ and $B$ coincide in $\Omega \cap 4B_i$ we have $\omega^{p_i}_{\Omega \cap 4B_i} (\cdot) = \widetilde \omega^{p_i}_{\Omega \cap 4B_i} (\cdot)$, and using \rf{easy inequality localized elliptic measure} and $\widetilde \omega^{z_i}_\Omega (F)=1$ we get
    			$$
    			\omega^{p_i}_{\Omega \cap 4B_i} (F^c \cap B_i)
                    =\widetilde \omega^{p_i}_{\Omega \cap 4B_i} (F^c \cap B_i)
    			\leq \widetilde \omega^{p_i}_\Omega (F^c \cap B_i)
    			\leq
    			\widetilde \omega^{p_i}_\Omega (F^c) = 0.
    			$$
            Arguing as in the proof of \cite[Lemma 6.2]{Guillen-Prats-Tolsa}, where \rf{hard inequality localized elliptic measure} is used, $\omega^{p_i}_{\Omega \cap 4B_i} (F^c \cap B_i)=0$ implies $\omega_\Omega (F^c \cap B_i) =0$, as we claimed.
		\end{proof}
	\end{coro}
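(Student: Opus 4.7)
The plan is to exploit that absolute continuity is a \emph{local} property of the boundary, combined with the fact that inside a ball whose enlargement stays within the $\varepsilon$-neighborhood of $\partial\Omega$, the two operators $L_A$ and $L_B$ are literally the same. By symmetry it suffices to show one direction, say $\widetilde\omega_\Omega(F)=1 \Rightarrow \omega_\Omega(F)=1$; equivalently, if $\widetilde\omega_\Omega(E)=0$ then $\omega_\Omega(E)=0$ for every Borel $E\subset\partial\Omega$.

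First I would cover $\partial\Omega$ by a countable family of balls $\{B_i\}_i$ with $B_i=B(\xi_i, r_i)$ where $\xi_i\in\partial\Omega$ and $r_i$ is small enough (for instance $r_i < \varepsilon/100$) so that $4B_i \subset U_\varepsilon(\partial\Omega)$. Such a family exists by the Lindelöf/separability property of $\partial\Omega$. Then it is enough to show $\omega_\Omega(E\cap B_i)=0$ for each $i$, since $\omega_\Omega(E)\leq \sum_i \omega_\Omega(E\cap B_i)$.

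Fix $i$ and a pole $p_i\in\Omega\cap 4B_i$. Because $A\equiv B$ on $4B_i\cap\Omega$, the localized problems agree: the elliptic measures $\omega^{p_i}_{\Omega\cap 4B_i, A}$ and $\omega^{p_i}_{\Omega\cap 4B_i, B}$ coincide as measures on $\partial(\Omega\cap 4B_i)$. Now I would invoke the one-sided bound \rf{easy inequality localized elliptic measure} for $B$ to get
$$
\omega^{p_i}_{\Omega\cap 4B_i, A}(E\cap B_i) \;=\; \widetilde\omega^{p_i}_{\Omega\cap 4B_i}(E\cap B_i)
\;\leq\; \widetilde\omega^{p_i}_\Omega(E\cap B_i) \;\leq\; \widetilde\omega^{p_i}_\Omega(E) \;=\; 0.
$$

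To pass from the localized elliptic measure $\omega^{p_i}_{\Omega\cap 4B_i, A}$ back to the global one $\omega_{\Omega,A}$, I would apply the reverse-type inequality \rf{hard inequality localized elliptic measure} (in the spirit of the proof of \cite[Lemma 6.2]{Guillen-Prats-Tolsa}), which bounds $\omega^{z}_{\Omega,A}(E\cap B_i)$ by a constant multiple of $\omega^{z}_{\Omega\cap 4B_i,A}(E\cap B_i)$ whenever $\Capacity(B_i\cap\partial\Omega,4B_i)\neq 0$. The degenerate case $\Capacity(B_i\cap\partial\Omega,4B_i)=0$ is harmless because then $\omega_{\Omega,A}(B_i\cap\partial\Omega)=0$ already. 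The main (minor) technical point is keeping track of poles when using Harnack to transfer between pole choices, but since we only want a null-set conclusion this is routine. Summing over $i$ yields $\omega_\Omega(E)=0$, as desired.
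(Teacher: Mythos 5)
Your proposal is correct and follows essentially the same route as the paper: cover $\partial\Omega$ by small balls so that $4B_i\subset U_\varepsilon(\partial\Omega)$, identify the localized elliptic measures for $A$ and $B$ inside $\Omega\cap 4B_i$, use the easy one-sided bound \rf{easy inequality localized elliptic measure} to kill the localized measure of $E\cap B_i$, and then transfer back to the global measure via the reverse estimate \rf{hard inequality localized elliptic measure} as in \cite[Lemma 6.2]{Guillen-Prats-Tolsa}. Your explicit handling of the degenerate case $\Capacity(B_i\cap\partial\Omega,4B_i)=0$ is a small but welcome point of clarity that the paper leaves implicit.
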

	
	\subsection{Small perturbation of the constant case}
	
	In this subsection, we study the dimension of elliptic measures arising from a uniformly elliptic matrix whose coefficients are close in the $L^\infty$ norm to a uniformly elliptic constant matrix. The main result of this section and its proof will be used in the proof of \cref{main thm general domains and continuous matrices} and in the Dini mean oscillation case in \cref{sym + det 1 all cases}.
	
	For $\lambda \geq 1$ we denote
	$$
	\begin{aligned}
		\overline\dim_\HH(\lambda) &= \sup \{\dim_\HH \omega_{\Omega, A} : \Omega\subset \R^{n+1} \text{ and } A \text{ has ellipticity constant }\lambda\},\\
		\underline\dim_\HH^{\rm{sc}} (\lambda) &= \inf\{\dim_\HH \omega_{\Omega, A} : \Omega\subset \R^{2} \text{ is simply connected and } A \text{ has ellipticity constant }\lambda\}.
	\end{aligned}
	$$
	Using this notation, in the harmonic case, i.e., $\lambda =1$, in the plane we have $\overline\dim_\HH (1)\leq 1$ and $\underline\dim_\HH^{\rm{sc}}(1)\geq 1$ by Wolff's \cref{wolff thm} and Makarov's \cref{makarov thm} respectively.
	
	In the result below, we estimate the dimension of the elliptic measure in terms of how close the matrix A is to a uniformly elliptic constant matrix.
	
	\begin{lemma}\label{perturbation of constant matrix}
		Let $\Omega\subset \R^{n+1}$ be a domain and $C$ be a constant uniformly elliptic matrix with ellipticity constant $\lambda\geq 1$. Let $A$ be a (not necessarily constant) matrix and
		$$
		\varepsilon \coloneqq \max_{i,j\in\{1,\dots,n+1\}} \|a_{ij} (\cdot)- c_{ij}\|_{L^\infty (\Omega)}.
		$$
		If $\varepsilon$ is small enough (depending on $\lambda$ and $n$) then $A$ is uniformly elliptic in $\Omega$ and $\dim_\HH \omega_{\Omega,A} \leq \overline\dim_\HH ((1-(n+1)\varepsilon\lambda)^{-1})$. If $\Omega\subset \R^2$ is simply connected, then $\dim_\HH \omega_{\Omega,A} \geq \underline\dim_\HH^{\rm{sc}} ((1-2\varepsilon\lambda)^{-1})$.
	\end{lemma}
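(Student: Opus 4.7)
The plan is to reduce, via a linear change of variables normalizing the constant reference matrix $C$, to a problem in which the matrix is close to the identity, and then appeal to the definitions of $\overline\dim_\HH$ and $\underline\dim_\HH^{\mathrm{sc}}$. Let $C_s := (C+C^{T})/2$ be the symmetric part of $C$ and $C_a := (C-C^{T})/2$ the antisymmetric part; since $\langle C_a\xi,\xi\rangle=0$, the matrix $C_s$ is symmetric positive definite with the same ellipticity constant $\lambda$ as $C$, so $\|C_s^{-1}\|_{\mathrm{op}}\leq\lambda$. Set $T:=C_s^{1/2}$, $\widetilde\Omega:=T^{-1}\Omega$ and $\widetilde A(y):=T^{-1}A(Ty)T^{-1}$. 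The standard change-of-variable computation gives $L_A u=0$ in $\Omega$ iff $L_{\widetilde A}\widetilde u=0$ in $\widetilde\Omega$ with $\widetilde u(y)=u(Ty)$, and since $T^{-1}C_aT^{-1}$ is a constant antisymmetric matrix it produces the zero divergence-form operator; hence $L_{\widetilde A}\equiv L_{\widetilde A'}$ for $\widetilde A':=\widetilde A - T^{-1}C_aT^{-1}$.

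Next I would estimate the ellipticity of $\widetilde A'$. Writing $A-C_a=C_s+(A-C)$ and using that $\langle C_sT^{-1}\xi,T^{-1}\eta\rangle=\langle\xi,\eta\rangle$, one gets
\[
\langle\widetilde A'\xi,\eta\rangle = \langle\xi,\eta\rangle + \bigl\langle(A(Ty)-C)\,T^{-1}\xi,\,T^{-1}\eta\bigr\rangle .
\]
The hypothesis $\|a_{ij}-c_{ij}\|_\infty\leq\varepsilon$ yields $\|A-C\|_{\mathrm{op}}\leq(n+1)\varepsilon$, via
$|\langle(A-C)\xi,\eta\rangle|\leq \varepsilon\sum_i|\xi_i|\sum_j|\eta_j|\leq(n+1)\varepsilon|\xi||\eta|$, and combined with $\|T^{-1}\|_{\mathrm{op}}^2\leq\lambda$ this gives
\[
(1-(n+1)\varepsilon\lambda)|\xi|^2 \leq \langle\widetilde A'\xi,\xi\rangle ,
\qquad
|\langle\widetilde A'\xi,\eta\rangle|\leq(1+(n+1)\varepsilon\lambda)|\xi||\eta|.
\]
Hence for $\varepsilon$ sufficiently small depending on $\lambda$ and $n$, $\widetilde A'$ is uniformly elliptic with ellipticity constant at most $(1-(n+1)\varepsilon\lambda)^{-1}$. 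The uniform ellipticity of $A$ in $\overline\Omega$ follows from the analogous computation performed without the change of variables.

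Finally I would transfer the bound back through $T$. Since $T$ is a bi-Lipschitz linear homeomorphism, $\omega_{\Omega,A}$ is the push-forward under $T$ of $\omega_{\widetilde\Omega,\widetilde A'}$, and Hausdorff dimension is invariant under bi-Lipschitz maps, so $\dim_\HH\omega_{\Omega,A}=\dim_\HH\omega_{\widetilde\Omega,\widetilde A'}$. Moreover $T$ preserves simple connectedness, so in the plane $\widetilde\Omega$ is simply connected whenever $\Omega$ is. Applying the definitions of $\overline\dim_\HH$ and (for $n+1=2$) of $\underline\dim_\HH^{\mathrm{sc}}$ to $(\widetilde\Omega,\widetilde A')$ then yields both inequalities.

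The main subtlety is the non-symmetric case: the replacement $\widetilde A\mapsto\widetilde A'=\widetilde A-T^{-1}C_aT^{-1}$ is essential, because without subtracting the transformed antisymmetric part the ellipticity estimate for $\widetilde A$ would pick up an extra contribution of size $\|C_a\|_{\mathrm{op}}\lambda\lesssim\lambda^2$ and miss the sharp target $(1-(n+1)\varepsilon\lambda)^{-1}$. That this subtraction is harmless rests precisely on $T^{-1}C_aT^{-1}$ being a constant antisymmetric matrix, which kills its contribution to the divergence-form equation and hence to the elliptic measure.
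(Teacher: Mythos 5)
Your proposal is correct and follows essentially the same route as the paper: the paper likewise changes variables by the square root $S$ of the symmetric part $C_s$ (writing $\divv A\nabla = \divv(C_s+A-C)\nabla$ via the observation that a constant antisymmetric matrix yields the zero divergence-form operator, i.e.\ its Lemma on $\divv C\nabla = \divv C_s\nabla$), obtains $M = Id + (S^T)^{-1}(A(S^T\cdot)-C)S^{-1}$, and bounds its ellipticity by $(1-(n+1)\varepsilon\lambda)^{-1}$ before invoking bi-Lipschitz invariance of Hausdorff dimension. Your version only differs in performing the change of variables before removing the antisymmetric constant part, which produces the identical matrix, and your closing remark correctly identifies why that removal is what makes the upper ellipticity bound come out sharp.
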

	
	Note that when the matrix $A$ is constant, the result agrees with the dimension of the harmonic measure. Before the proof, we need the following lemma.
	
	\begin{lemma}\label{sym of operator L_A constant}
		Let $\Omega\subset \R^{n+1}$ be a domain and $C$ be a uniformly elliptic and constant matrix. Then, in the weak form, for any $u\in W^{1,2}_{\loc} (\Omega)$ we have
		$$
		\divv C\nabla u = \divv C_s \nabla u,
		$$
		where $C_s$ denotes the symmetric part of the matrix $C$, that is $C_s = (C+C^T)/2$.
		\begin{proof}
			For $u \in C^\infty (\Omega)$, and as $C$ is constant, we have
			$$
			\divv C\nabla u = \sum_{i,j} c_{ij} \partial_{ij} u = \sum_{ij} \frac{c_{ij}+c_{ji}}{2} \partial_{ij} u = \divv C_s \nabla u .
			$$
			Let us check the weak form of this identity for $u\in W^{1,2}_{\loc} (\Omega)$. Given $\varphi \in C_c^\infty(\Omega)$, let $U$ be an open set such that $\supp\varphi \subset U \subset \overline U \subset \Omega$, and we have $u \in W^{1,2} (U)$. Take $\{u_k\}_{k\geq 1} \subset C^\infty (U)$ so that $\lim_{k\to \infty}\|u_k - u\|_{W^{1,2}(U)} =0$. As $\divv C\nabla u_k = \divv C_s \nabla u_k$ in the weak form for each $k\geq 1$, we get
			\begin{align*}
				\int_\Omega C\nabla u \nabla \varphi 
				=& \int_\Omega C\nabla u_k \nabla \varphi + \int_\Omega C\nabla (u-u_k) \nabla \varphi \\
				=& \int_\Omega C_s \nabla u_k \nabla \varphi + \int_\Omega C\nabla (u-u_k) \nabla \varphi \\
				=&\int_\Omega C_s \nabla u \nabla \varphi + \int_\Omega C_s \nabla (u_k-u) \nabla \varphi + \int_\Omega C\nabla (u-u_k) \nabla \varphi.
			\end{align*}
			The lemma follows since $\int_\Omega |\nabla (u-u_k)| |\nabla \varphi| \lesssim \|\nabla (u-u_k)\|_{L^2 (U)} \to 0$ as $k \to \infty$.
		\end{proof}
	\end{lemma}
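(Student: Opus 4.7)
The plan is to reduce the identity to showing that the antisymmetric part $C_a := (C-C^T)/2$ makes no contribution in the weak formulation, that is, that
$$
\int_\Omega C_a \nabla u \cdot \nabla \varphi \, dx = 0 \quad \text{for every } \varphi \in C_c^\infty(\Omega) \text{ and every } u \in W^{1,2}_{\loc}(\Omega),
$$
which is equivalent to $\int_\Omega C\nabla u\cdot \nabla\varphi = \int_\Omega C_s \nabla u \cdot \nabla\varphi$ since $C = C_s + C_a$.

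First I would prove the identity when $u$ is smooth. In that case, because $C$ is constant, one has $\divv(C\nabla u) = \sum_{i,j} c_{ij}\,\partial_i\partial_j u$ (a classical pointwise identity), and by Schwarz's theorem $\partial_i\partial_j u = \partial_j\partial_i u$, so symmetrizing the sum gives $\sum_{i,j}\tfrac{c_{ij}+c_{ji}}{2}\partial_i\partial_j u = \divv(C_s \nabla u)$. Equivalently and more directly for what follows, integrating by parts twice (legitimate since $\varphi \in C_c^\infty$ and $u$ is smooth), $\int_\Omega C_a \nabla u\cdot \nabla\varphi = -\int_\Omega u \, \divv(C_a^T \nabla \varphi) = \sum_{i,j}(C_a)_{ij}\int_\Omega u\,\partial_i\partial_j \varphi$, and the last sum vanishes by antisymmetry of $C_a$ combined with $\partial_i\partial_j\varphi = \partial_j\partial_i\varphi$.

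To upgrade to $u \in W^{1,2}_{\loc}(\Omega)$, I would fix $\varphi \in C_c^\infty(\Omega)$ and choose a bounded open set $U$ with $\supp\varphi \subset U \subset \overline U \subset \Omega$, so that $u \in W^{1,2}(U)$. By density of $C^\infty(U)\cap W^{1,2}(U)$ in $W^{1,2}(U)$, pick $u_k \to u$ in $W^{1,2}(U)$. Applying the smooth case to each $u_k$ yields $\int_\Omega C\nabla u_k\cdot\nabla\varphi = \int_\Omega C_s\nabla u_k\cdot\nabla\varphi$, and since both sides are continuous linear functionals of $\nabla u_k$ in $L^2(U)$ (with bounded kernel $\nabla\varphi$ and constant matrix), passing $k\to\infty$ via Cauchy--Schwarz gives the identity for $u$.

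The argument is essentially routine; the only care needed is restricting to a bounded $U$ containing $\supp\varphi$ so that the approximation $u_k \to u$ takes place in a genuine $W^{1,2}$-norm. I do not expect any substantive obstacle beyond this bookkeeping.
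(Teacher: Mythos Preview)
Your proof is correct and follows essentially the same approach as the paper: establish the identity for smooth $u$ via the symmetry of mixed partials, then upgrade to $u\in W^{1,2}_{\loc}(\Omega)$ by approximating in $W^{1,2}(U)$ on a bounded open set $U$ containing $\supp\varphi$ and passing to the limit. Your additional remark that one can equivalently show the antisymmetric part $C_a$ contributes zero is a slight reformulation but not a genuinely different route.
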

	
	Using the previous lemma, we prove \cref{perturbation of constant matrix}.
	\begin{proof}[Proof of \cref{perturbation of constant matrix}]
		If $\varepsilon$ is small enough depending on $\lambda$ and $n$ then $A$ is uniformly elliptic in $\Omega$.
		
		By \cref{sym of operator L_A constant}, in the weak form we have
		\begin{equation*}
			\divv A\nabla = \divv C\nabla  + \divv (A-C)\nabla 
			= \divv C_s\nabla  + \divv (A-C)\nabla
			= \divv (C_s+ A-C)\nabla.
		\end{equation*}
		Note that the matrix $C_s+ A-C$ is uniformly elliptic as it has bounded coefficients and $\langle B \xi, \xi \rangle = \langle B_s \xi, \xi \rangle$ for any (not necessarily constant) matrix $B$. Then, taking the linear change of variables given by $S^T$, where $S$ is the square root of $C_s$ (that is, $S=\sqrt{C_s}$, the matrix satisfying $S^T S=C_s$) and
		\begin{equation}\label{matrix after change of constant sym change of variables}
            \begin{aligned}
            M (z) \coloneqq{}& (S^T)^{-1} \left( C_s + A(S^T z)- C \right) S^{-1}\\
            ={}&Id + (S^T)^{-1} \left( A(S^T z)- C \right) S^{-1} \text{ for }z\in (S^T)^{-1} (\Omega),
            \end{aligned}
		\end{equation}
		by \cref{elliptic measure deformation qc} below we have
		\begin{equation}\label{relation elliptic measure after square root change of variables}
			\omega_{\Omega, A}^{p} (E) 
			= \omega_{(S^T)^{-1} \Omega, M}^{(S^T)^{-1} p} ((S^T)^{-1} E) \text{ for any } p \in \Omega\text{ and }E\subset \partial\Omega.
		\end{equation}
		
		We claim that the ellipticity constant of $M$ in $(S^T)^{-1}(\Omega)$ is at most $(1-(n+1)\varepsilon\lambda)^{-1}$. Indeed, for $\xi,\eta\in \R^{n+1}$ and $x\in \Omega$, by the Cauchy-Schwartz inequality and the well-known fact $|Bv|\leq (n+1)\delta |v|$ if $|b_{ij}|\leq \delta$ for all $1\leq i,j \leq n+1$, we have
		$$
		\langle\{(S^T)^{-1} \left( A(x)- C \right) S^{-1}\}\xi,\eta\rangle
            \leq |{\left( A(x)- C \right) S^{-1}\xi}||S^{-1}\eta|
            \leq (n+1)\varepsilon |S^{-1}\xi||S^{-1}\eta|.
		$$
            Note that for any vector $v$ there holds $|Sv|^2 = \langle Sv,Sv\rangle = \langle S^T S v,v\rangle = \langle C_s v, v \rangle \geq \lambda^{-1} |v|^2$. All in all,
            $$
            \langle\{(S^T)^{-1} \left( A(x)- C \right) S^{-1}\}\xi,\eta\rangle
            \leq (n+1)\varepsilon\lambda |\xi||\eta|,
            $$
		which implies
		$$
		\langle M\xi, \eta \rangle \leq (1+(n+1)\varepsilon\lambda)|\xi||\eta|\text{ and }
		\langle M\xi,\xi\rangle\geq (1-(n+1)\varepsilon\lambda)|\xi|^2.
		$$
		The claim follows as $1/(1-(n+1)\varepsilon\lambda)\geq 1+(n+1)\varepsilon\lambda$.
		
		Since the linear mapping given by $S^T$ is bi-Lipschitz and the ellipticity constant of $M$ is at most $(1-(n+1)\varepsilon\lambda)^{-1}$, the lemma follows from \rf{relation elliptic measure after square root change of variables} and the definition of $\overline\dim_\HH$ and $\underline\dim_\HH^{\textrm{sc}}$.
	\end{proof}
	
	Note that if the constant matrix $C$ is symmetric, i.e., $C=C_s$, then \rf{matrix after change of constant sym change of variables} reads as
	\begin{equation}\label{matrix after change of constant sym change of variables - sym case}
		M (z) \coloneqq (S^T)^{-1} A(S^T z) S^{-1} \text{ for }z\in (S^T)^{-1} (\Omega).
	\end{equation}
	
	As we mentioned in the introduction of this section, we will apply \cref{perturbation of constant matrix} and adapt its proof in some proofs below. More precisely, we will compare the matrix $A$ with the frozen coefficient matrix $C=A(\xi)$, for some point $\xi\in \partial\Omega$.
	
	\begin{rem}\label{matrix after change of constant sym change of variables - sym case - is sym and det 1}
		Taking $C=A(\xi)$ (for some point $\xi\in \partial\Omega$) when the matrix $A$ is symmetric with constant determinant, we have that the matrix $M$ in \rf{matrix after change of constant sym change of variables - sym case} is symmetric and has determinant $1$.
	\end{rem}
	
	\section{Quasiconformal mappings}
	
	In this section, we provide the definition and main properties of quasiconformal mappings. This is based on the monograph \cite{Astala2009}.
	
	\begin{definition}[Quasiconformal mapping]
		An orientation-preserving homeomorphism $f : \Omega\to \Omega^\prime$ between planar domains is called $K$-quasiconformal, $1\leq K <\infty$, if $f\in W^{1,2}_{\loc} (\Omega)$ and the directional derivatives satisfy
		\begin{equation}\label{quasiconformal condition 1}
			\max_{\alpha\in[0,2\pi]} |\partial_\alpha f| \leq K \min_{\alpha\in[0,2\pi]} |\partial_\alpha f| \text{ a.e.\ in } \Omega .
		\end{equation}
	\end{definition}
	
	\begin{rem}
		In particular, a mapping is $1$-quasiconformal if and only if it is conformal.
	\end{rem}
	
	The condition in \rf{quasiconformal condition 1} is equivalent to
	$$
	|\bar \partial f| \leq k |\partial f|  \text{ a.e.\ in } \Omega, \text{ for } k \coloneqq \frac{K-1}{K+1},
	$$
	where $\partial \coloneqq \partial_z = \frac{1}{2} (\partial_x - i\partial_y)$ and $\bar\partial \coloneqq \partial_{\bar z} = \frac{1}{2} (\partial_x + i\partial_y)$ are the Wirtinger derivatives.
	
	A fundamental property of onto $K$-quasiconformal mappings $f:\Omega \to \Omega^\prime$ is that its inverse function $f^{-1} : \Omega^\prime \to \Omega$ is also $K$-quasiconformal, see \cite[Theorem 3.1.2]{Astala2009}. Also, global $K$-quasiconformal mappings $f:\C \to \C$ are quasisymmetric, that is, there exists an increasing homeomorphism $\eta_K : [0,\infty) \to [0,\infty)$, only depending on $K$, such that for each triple $z_0, z_1, z_2 \in \C$ we have
	\begin{equation}\label{quasisymmetric condition}
		\frac{|f(z_0)-f(z_1)|}{|f(z_0)-f(z_2)|} \leq \eta_K \left(\frac{|z_0 - z_1|}{|z_0 - z_2|}\right) ,
	\end{equation}
	see \cite[Theorem 3.5.3]{Astala2009}. In this case, we say that $f$ is $\eta_K$-quasisymmetric. For a local equivalence between quasiconformal and quasisymmetric mappings, see \cite[Theorem 3.6.2]{Astala2009}.
	
	Another important property is that quasiconformal mappings are locally Hölder continuous, see \cite[Corollary 3.10.3]{Astala2009}.
	
	\begin{theorem}[Local Hölder regularity]\label{qc are locally holder}
		Every $K$-quasiconformal mapping $f:\Omega \to \Omega^\prime$ is locally $\frac{1}{K}$-Hölder continuous. More precisely, if $B\subset 2B \subset \Omega$, then
		$$
		|f(z)-f(w)| \leq C_K \diam(f (B)) \frac{|z-w|^{1/K}}{\diam(B)^{1/K}} \text{ for all } z,w\in B,
		$$
		where the constant $C_K$ depends only on $K$.
	\end{theorem}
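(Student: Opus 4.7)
The plan is to deduce the estimate from the classical $K$-quasi-invariance of the conformal modulus of a ring (doubly connected) domain under $K$-quasiconformal mappings, following Mori's approach. The Hölder exponent $1/K$ and the scaling $\diam(f(B))/\diam(B)^{1/K}$ come directly from applying this quasi-invariance to a suitable round annulus inside $2B$.

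First I would dispose of the trivial regime $|z-w|\geq r(B)/2$: then $|f(z)-f(w)|\leq \diam(f(B))$ and $(|z-w|/\diam(B))^{1/K}\gtrsim 1$, so the bound follows with absolute constant. For the non-trivial case $|z-w|<r(B)/2$, set $c\coloneqq (z+w)/2\in B$ and consider the round annulus
$$
A \coloneqq \bigl\{\zeta\in\C : |z-w|/2 < |\zeta-c| < r(B)\bigr\},
$$
whose closure lies in $B(c,r(B))\subset 2B\subset\Omega$ (since $c\in B$). Its conformal modulus equals $\tfrac{1}{2\pi}\log(2r(B)/|z-w|)$. Since $f$ is a $K$-quasiconformal homeomorphism, the $K$-quasi-invariance of the modulus of rings yields
$$
\mathrm{mod}(f(A))\geq \tfrac{1}{K}\mathrm{mod}(A).
$$
The ring $f(A)$ in $\wh{\C}$ separates the continuum $E_0\coloneqq f(\overline{B(c,|z-w|/2)})$, which contains $f(z)$ and $f(w)$ (so $\diam(E_0)\geq |f(z)-f(w)|$), from the continuum $E_1\coloneqq \wh{\C}\setminus f(B(c,r(B)))$.

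The last step is a Teichmüller-type upper bound for the modulus of a ring separating two continua in $\wh{\C}$: provided $\diam(E_1)\gtrsim \diam(f(B))$, one has
$$
\mathrm{mod}(f(A))\leq \tfrac{1}{2\pi}\log\left(C\,\frac{\diam(f(B))}{|f(z)-f(w)|}\right),
$$
which is an instance of the Teichmüller ring estimate (see \cite[Chapter 2--3]{Astala2009}). Combining the two bounds and exponentiating gives exactly $|f(z)-f(w)|\leq C_K\diam(f(B))(|z-w|/\diam(B))^{1/K}$.

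The main obstacle is verifying the hypothesis $\diam(E_1)\gtrsim \diam(f(B))$: this is not automatic, since the inner disk $B(c,r(B))$ need not be contained in $B$, and a priori most of $\diam(f(B))$ could be realized inside $f(B(c,r(B)))$. The standard resolution is a short geometric case analysis: pick an auxiliary pair $z_1,z_2\in B$ with $|f(z_1)-f(z_2)|\geq \tfrac12\diam(f(B))$, use the triangle inequality to select a point $z_*\in B$ at image-distance $\gtrsim \diam(f(B))$ from $f(z)$, and then adjust the center and radii of $A$ (or iterate the argument on finitely many overlapping balls) so that $z_*\notin B(c,r(B))$; since $\infty\in E_1$ already, adding $f(z_*)$ to $E_1$ guarantees the required diameter lower bound. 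With this step handled, the two modulus bounds combine to yield the stated Hölder estimate.
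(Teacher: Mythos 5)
The paper gives no proof of this statement — it simply cites \cite[Corollary 3.10.3]{Astala2009} — so I am comparing your argument against the standard one in the literature, which is indeed the modulus-of-rings proof you outline. Your first three paragraphs are correct and are essentially the same route as in the cited reference: the trivial regime, the annulus $A$ centered at the midpoint, the modulus quasi-invariance $\mathrm{mod}(f(A)) \geq \tfrac{1}{K}\mathrm{mod}(A)$, and the passage to a Teichmüller upper bound for $\mathrm{mod}(f(A))$.

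The last paragraph, however, contains a genuine confusion about what the Teichmüller bound requires, and the proposed fix goes in the wrong direction. The relevant inequality is: if a ring $R$ separates $E_0 \ni a_1,a_2$ from $E_1 \ni a_3,\infty$, then
$$
\mathrm{mod}(R) \leq \frac{1}{2\pi}\log\left(16\left(1 + \frac{|a_1-a_3|}{|a_1-a_2|}\right)\right),
$$
i.e., what enters is the distance from a point of $E_0$ to a single auxiliary \emph{finite} point $a_3 \in E_1$, not $\diam(E_1)$. (Since $\infty\in E_1$, the condition $\diam(E_1)\gtrsim\diam(f(B))$ you impose is in any case vacuous and plays no role.) What you actually need is a point $z_*\notin B(c,r(B))$ with $|f(z)-f(z_*)|$ bounded \emph{above} by $\diam(f(B))$, not a point at image-distance $\gtrsim\diam(f(B))$; pushing $a_3$ far from $a_1$ only \emph{weakens} the upper bound, so the construction in your last paragraph is counterproductive. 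The correct resolution is trivial and does not require any iteration or adjustment of the annulus: since $c=(z+w)/2 \in B$, the sphere $\partial B(c,r(B))$ meets $\overline{B}$ (take $z_*$ to be the point of $\partial B(c,r(B))$ closest to $c_B$, which lies at distance $r(B)-|c-c_B| \leq r(B)$ from $c_B$). Then $z_*\in\overline{B}\subset 2B\subset\Omega$, $f(z_*)\in E_1$, and $|f(z)-f(z_*)| \leq \diam(f(\overline{B})) = \diam(f(B))$, which plugged into the Teichmüller bound gives exactly
$$
\mathrm{mod}(f(A)) \leq \frac{1}{2\pi}\log\left(\frac{32\,\diam(f(B))}{|f(z)-f(w)|}\right)
$$
in the main case $|f(z)-f(z_*)|\geq|f(z)-f(w)|$ (the other case gives a constant bound on $\mathrm{mod}(f(A))$ and hence on $\mathrm{mod}(A)$, which forces $|z-w|\gtrsim_K \diam(B)$ and reduces to the trivial regime). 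Combining with $\mathrm{mod}(f(A))\geq \tfrac1K\mathrm{mod}(A)$ and exponentiating yields the claim. With this paragraph corrected the proof is complete.
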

	
	Quasiconformal mappings are related to solutions of the Beltrami equation, see \cite[Theorem 2.5.4]{Astala2009}. Given a homeomorphic $W^{1,2}_{\loc} (\Omega)$-mapping $f:\Omega \to \Omega^\prime$, write $\mu (z) = \bar\partial f(z) / \partial f(z)$ when $\partial f(z)\not=0$ and $\mu(z)=0$ otherwise. With this, we have the equivalence
	$$
	f \text{ is } K\text{-quasiconformal} \iff  \bar\partial f = \mu \partial f \text{ a.e.\ in }\Omega,
	$$
	where $\mu$, called the Beltrami coefficient of $f$, is a bounded measurable function satisfying $\|\mu\|_\infty \leq (K-1)/(K+1) = k < 1$.
	
	\begin{notation}
		With this relation, given $\mu$ as above, we say that a function $f \in W^{1,2}_{\loc} (\Omega)$ is $\mu$-quasiconformal if $f$ solves the Beltrami equation $\bar \partial f = \mu \partial f$ a.e.\ in $\Omega$.
	\end{notation}
	
	The measurable Riemann mapping theorem ensures the existence (and uniqueness up to normalization) of solutions to a Beltrami equation, see \cite[Theorem 5.3.4]{Astala2009}.
	
	\begin{theorem}[Measurable Riemann mapping theorem]\label{riemann mapping thm}
		Let $|\mu| \leq k < 1$ for almost every $z\in\C$. Then there is a solution $f:\C\to \C$ to the Beltrami equation
		$$
		\bar \partial f = \mu \partial f \text{ a.e.\ in } \C,
		$$
		which is a $K$-quasiconformal homeomorphism normalized by the three conditions $f(0)=0$, $f(1)=1$ and $f(\infty)=\infty$. Furthermore, the normalized solution $f$ is unique.
	\end{theorem}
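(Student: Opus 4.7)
The plan is to follow the classical Beurling transform approach and reduce the Beltrami equation to an $L^p$ operator equation whose invertibility yields the quasiconformal solution. Throughout, let $\mathcal{C}$ denote the Cauchy transform $\mathcal{C}h(z) = -\frac{1}{\pi}\int_\C \frac{h(w)}{w-z}\,dA(w)$ and $\mathcal{S}$ the Beurling transform $\mathcal{S}h(z) = -\frac{1}{\pi}\,\pv\!\int_\C \frac{h(w)}{(w-z)^2}\,dA(w)$. The fundamental identities $\bar\partial\,\mathcal{C}h = h$ and $\partial\,\mathcal{C}h = \mathcal{S}h$ (for sufficiently regular $h$), together with the sharp bound $\|\mathcal{S}\|_{L^2\to L^2} = 1$ and the continuity of $p\mapsto \|\mathcal{S}\|_{L^p\to L^p}$ at $p=2$, will be the engine of the proof.

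First I would handle the case in which $\mu$ is smooth and compactly supported. I search for a \emph{principal solution} of the form $f(z) = z + \mathcal{C}h(z)$ with $h\in L^p(\C)$ compactly supported, $p>2$ chosen close enough to $2$ that $k\,\|\mathcal{S}\|_{L^p\to L^p} < 1$. Substituting into $\bar\partial f = \mu\,\partial f$ yields the operator equation $(I - \mu \mathcal{S})h = \mu$, which has the unique solution $h = \sum_{n\geq 0}(\mu\mathcal{S})^n \mu$ via a Neumann series in $L^p$. With this $h$, $f \in W^{1,p}_{\loc}(\C)\subset W^{1,2}_{\loc}(\C)$ solves the Beltrami equation a.e., and $f(z)-z = \mathcal{C}h(z) = O(1/|z|)$ at infinity, which fixes a canonical normalization at $\infty$. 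That $f$ is a homeomorphism of $\C$ onto $\C$ is the delicate point: one shows that $J_f = |\partial f|^2 - |\bar\partial f|^2 > 0$ a.e., that $f$ is open and discrete via a Stoilow-type argument, and that the behavior at infinity forces degree one.

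Next I would remove smoothness. Given a general measurable $\mu$ with $\|\mu\|_\infty \leq k < 1$, mollify to obtain $\mu_n \to \mu$ a.e.\ with $\|\mu_n\|_\infty \leq k$ and each $\mu_n$ smooth and compactly supported. The corresponding principal solutions $f_n$ are uniformly $K$-quasiconformal with $K=(1+k)/(1-k)$, hence locally uniformly $1/K$-Hölder by \Cref{qc are locally holder} and normal. After passing to a subsequence, $f_n \to f$ locally uniformly with $f$ a $K$-quasiconformal homeomorphism; weak convergence of the Wirtinger derivatives in $L^p_{\loc}$ (for the same $p>2$), together with the identity $h_n = \mu_n(1+\mathcal{S}h_n)$ and dominated convergence, shows $f$ solves $\bar\partial f = \mu\,\partial f$ a.e. Finally, composing with the unique affine map $z\mapsto \alpha z + \beta$ that sends $f(0)\mapsto 0$ and $f(1)\mapsto 1$ preserves the Beltrami coefficient and yields the claimed normalized solution.

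For uniqueness, suppose $f,g$ are two normalized solutions. Then $g\circ f^{-1}$ is a homeomorphism of $\C$ with Beltrami coefficient zero a.e.\ (by the chain rule for quasiconformal maps), so it is $1$-quasiconformal and therefore conformal by Weyl's lemma; fixing $0,1,\infty$ forces it to be the identity. The main obstacle in the plan is the homeomorphism property for the compactly supported case: although the $L^p$-solvability is almost mechanical, ruling out branching and proving global injectivity requires the Stoilow factorization together with a careful argument based on the positivity of the Jacobian and the behavior at infinity.
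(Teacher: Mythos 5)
The paper does not prove this theorem; it cites it directly as \cite[Theorem 5.3.4]{Astala2009}, so there is no internal proof to compare against. Your sketch reproduces the classical Beurling-transform / Neumann-series argument from that reference, and the overall architecture (principal solution for smooth compactly supported $\mu$, Stoilow-type argument for injectivity, normal-family passage to the limit, affine renormalization, and uniqueness via the Beltrami coefficient of $g\circ f^{-1}$) is the standard and correct one.

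Two small points worth tightening. First, the statement allows $\mu$ to be \emph{globally} bounded by $k$ on all of $\C$, not compactly supported, whereas your approximation step produces compactly supported $\mu_n$; you should say explicitly that you truncate before mollifying (e.g.\ $\mu_n=(\mu\,\characteristic_{B_n})\ast\varphi_{1/n}$), and note that the resulting $f_n$ are normalized at $\infty$ as principal solutions but the limit $f$ need not be — after which the final affine renormalization fixes $0$ and $1$ and the fact that $f(\infty)=\infty$ persists from local uniform convergence on compacts plus the global homeomorphism property. (The truly non-compact case is often handled in the literature by a factorization trick that splits $\mu$ across $\partial\mathbb D$ and reflects; your truncation/limit route also works but needs this normalization remark.) Second, in the passage to the limit you invoke dominated convergence; the clean statement is that $\mu_n\to\mu$ strongly in $L^q_{\loc}$ for all $q<\infty$ (bounded and a.e.\ convergent) while $\partial f_n\rightharpoonup\partial f$ weakly in $L^p_{\loc}$, so the products $\mu_n\partial f_n$ converge weakly to $\mu\partial f$ and the Beltrami equation survives the limit. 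With those clarifications, the sketch is sound.
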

	
	For a compactly supported Beltrami coefficient $\mu$, we call principal solution to the function $f\in W^{1,2}_{\loc}(\Omega)$ with $\bar\partial f=\mu\partial f$ normalized by the condition $f(z)=z+\OO(1/z)$ near infinity.
	
	\subsection{Distortion under quasiconformal mappings}\label{sec:distortion under qc mappings}
	
	Now we turn our attention to the distortion of the Hausdorff dimension and measure under $K$-quasiconformal mappings. Astala in \cite[Theorem 1.1]{Astala1994} (see also \cite[Theorem 13.1.5]{Astala2009}) proved the area distortion theorem, that is, there is a constant $C_K$ depending only on $K\geq 1$, such that for any $K$-quasiconformal mapping $f:\C \to \C$, for any disk $B\subset \C$ and for any subset $E\subset B$, we have
	\begin{equation}\label{area distortion theorem}
		\frac{1}{C_K} \left(\frac{|E|}{|B|}\right)^K \leq
		\frac{|f(E)|}{|f(B)|}
		\leq C_K \left(\frac{|E|}{|B|}\right)^{1/K} .
	\end{equation}
	As a corollary, see \cite[Corollary 1.3]{Astala1994} and also \cite[Theorem 13.2.10]{Astala2009}, the area distortion theorem governs the distortion of Hausdorff dimension
	\begin{equation}\label{dimension distortion theorem}
		\frac{1}{K} \left( \frac{1}{\dim_\HH (E)} - \frac{1}{2} \right)
		\leq \frac{1}{\dim_\HH (f(E))} - \frac{1}{2}
		\leq K \left( \frac{1}{\dim_\HH (E)} - \frac{1}{2} \right) .
	\end{equation}
	
	In the preprint \cite{Tolsa2009quasiconformal-arxiv} (see also \cite[Theorem 1.2]{Astala2013} for the published version), Tolsa generalized the previous two results by obtaining the control of the Hausdorff measure.
	
	\begin{theorem}[Distortion of Hausdorff measure]\label{distortion of hausdorff measure thm}
		Let $0<t<2$ and denote $t^\prime = \frac{2Kt}{2+(K-1)t}$. Let $f  :\C \to \C$ be $K$-quasiconformal. For any ball $B$ and any compact set $E\subset B$, we have
		\begin{equation}\label{distortion of hausdorff measure}
			\frac{\HH^{t^\prime} (f(E))}{\diam(f(B))^{t^\prime}} 
			\leq C_{K,t} \left( \frac{\HH^t (E)}{\diam(B)^t} \right)^{\frac{t^\prime}{Kt}} ,
		\end{equation}
        In particular, if $\HH^t (E)$ is finite (resp. $\sigma$-finite), then also $\HH^{t^\prime} (f(E))$ is finite (resp. $\sigma$-finite).
	\end{theorem}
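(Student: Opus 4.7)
The plan is to derive this inequality from Astala's area distortion theorem \cref{area distortion theorem}, which is precisely the endpoint case $t = t' = 2$ of the statement. After normalizing by affine pre- and post-composition so that $B = B(0,1)$ and $\diam(f(B)) = 1$ (both sides of \cref{distortion of hausdorff measure} are invariant under such normalizations), the strategy is to interpolate between Astala's $L^2$-type distortion and the trivial diameter bound, so as to extract the non-integer exponent $t'$.

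First I would take a near-optimal covering $\{B_i\}_i$ of $E$ by balls contained in $2B$ with $\sum_i \diam(B_i)^t < \HH^t(E) + \varepsilon$, and bound $\HH^{t'}_\infty(f(E)) \le \sum_i \diam(f(B_i))^{t'}$. For each $B_i$, applying \cref{area distortion theorem} to $B_i \subset B$ yields $|f(B_i)| \leq C_K |f(B)|\, (|B_i|/|B|)^{1/K}$; since $f(B_i)$ is a $K$-quasidisk by the quasisymmetry relation \cref{quasisymmetric condition}, $|f(B_i)| \approx_K \diam(f(B_i))^2$, and hence
$$
\diam(f(B_i))^{2} \lesssim_K \bigl(\diam(B_i)\bigr)^{2/K}.
$$
This pointwise Hölder-type bound is not sharp enough by itself; the key is to combine it with the packing bound $\sum_i |f(B_i)| \lesssim 1$ coming from $f(E) \subset f(B)$. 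To implement this, I would split $\{B_i\}_i$ dyadically according to the value of $\diam(f(B_i))$, setting $\SSS_k = \{i : 2^{-k-1} < \diam(f(B_i)) \leq 2^{-k}\}$. At each scale, the packing bound gives $|\SSS_k| \lesssim 4^k$, while the displayed inequality combined with the covering hypothesis $\sum_i \diam(B_i)^t \leq \HH^t(E)+\varepsilon$ provides a second estimate on $|\SSS_k|$ in terms of $\HH^t(E)$. Taking the minimum at each scale, summing $\sum_k |\SSS_k|\, 2^{-k t'}$ and choosing the crossover scale optimally produces exactly the exponent $t'/(Kt)$ on the right-hand side, and establishes the content version of \cref{distortion of hausdorff measure}.

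Finally, the ``in particular'' consequence is extracted from the content estimate as follows. Applying the content bound to coverings of $E$ by balls of radii below any prescribed $\delta > 0$ and passing to $\delta \to 0$ upgrades \cref{distortion of hausdorff measure} from $\HH^{t'}_\infty$ to $\HH^{t'}$, so that finiteness of $\HH^t(E)$ forces finiteness of $\HH^{t'}(f(E))$. The $\sigma$-finite case is then immediate by writing $E = \bigcup_n E_n$ with $\HH^t(E_n) < \infty$ and applying the finite case to each $E_n$, since $f(E) = \bigcup_n f(E_n)$.

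The principal technical obstacle is the dyadic balancing in the middle paragraph: the interplay between the area packing bound and the Hölder distortion bound must be orchestrated so that the summed exponent matches exactly $t'/(Kt)$ with $t' = 2Kt/(2 + (K-1)t)$, which is the sharp endpoint dictated by the dimension distortion formula \cref{dimension distortion theorem}. A naive use of only the diameter bound would yield a much weaker exponent (essentially the one coming from the local Hölder regularity of \cref{qc are locally holder}), and it is precisely Astala's area control that enables the improvement.
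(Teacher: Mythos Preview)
The paper does not prove \cref{distortion of hausdorff measure thm}; it is quoted from the literature (Tolsa \cite{Tolsa2009quasiconformal-arxiv}, published as \cite[Theorem~1.2]{Astala2013}), and the only ingredient the paper extracts from that proof is the technical gauge-function \cref{distortion gauge functions general}. So there is no ``paper's own proof'' to compare to, but the approach underlying the cited references is substantially different from yours and your sketch has a genuine gap.

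The problem is the dyadic balancing step. With your two bounds---the pointwise H\"older estimate $\diam(f(B_i))\lesssim \diam(B_i)^{1/K}$ (which is what applying \cref{area distortion theorem} to a single $B_i$ gives) and the packing bound $|\SSS_k|\lesssim 4^k$---you cannot reach the exponent $t'/(Kt)$. Concretely: for $i\in\SSS_k$ the H\"older bound forces $\diam(B_i)\gtrsim 2^{-kK}$, so $|\SSS_k|\lesssim M\,2^{kKt}$ with $M=\HH^t(E)+\varepsilon$; combining with $|\SSS_k|\lesssim 4^k$ and interpolating, $|\SSS_k|\,2^{-kt'}\lesssim M^\theta\,2^{k(2(1-\theta)+Kt\theta-t')}$ for $\theta\in[0,1]$. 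The exponent vanishes only at $\theta=(2-t')/(2-Kt)$, but since $t'<Kt$ for $K>1$ one checks this value always lies outside $[0,1]$, so no choice of crossover kills the geometric growth in $k$. Equivalently, both $2^{k(2-t')}$ and $2^{k(Kt-t')}$ are increasing in $k$, and there is no a priori upper cutoff on the scales present in a near-optimal $\HH^t$-covering.

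What is missing is that Astala's theorem applied to the \emph{union} $\bigcup_i B_i$ of many disjoint balls gives the much stronger collective bound $\sum_i|f(B_i)|\lesssim\big(\sum_i|B_i|\big)^{1/K}$, not merely $\sum_i|f(B_i)|\lesssim 1$; and turning this into the sharp $\HH^{t'}$--$\HH^t$ estimate for arbitrary (non-single-scale, non-disjoint) coverings requires a further layer of machinery. In \cite{Lacey2010} this is done via Astala's packing inequality for families of disks together with a stopping-time decomposition, yielding the content version; the upgrade to Hausdorff \emph{measures} in \cite{Astala2013} needs the additional gauge-function framework summarized in \cref{distortion gauge functions general}. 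Your outline does not touch either of these ingredients, and the ``minimum at each scale'' computation, as written, does not close.
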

	
	The case $t=t^\prime = 2$ is precisely the area distortion theorem in \rf{area distortion theorem}. The $\sigma$-finiteness in the particular case $t^\prime=1$ had been solved in \cite{Astala2008}. The same control in \rf{distortion of hausdorff measure} with Hausdorff content $\HH_\infty$ instead of Hausdorff measures was proved in \cite{Lacey2010}.
	
	\subsubsection{Distortion of gauge Hausdorff measures}

    Following Carathéodory's construction (see \cite[p.~54]{Mattila1995}), we define (gauge) spherical measures as in \cite[Section 2.3]{Astala2013}. Let $\mathcal B$ denote the family of all closed balls contained in $\C$, and let $h:\mathcal B \to [0,\infty)$ be any function defined in $\mathcal B$ with $h(\overline{B(x,r)})\to 0$ as $r\to 0$, for all $x\in \C$. Given $0<\delta \leq \infty$ and $E\subset \C$, we set
	$$
	\SSS_\delta^h (E) \coloneqq \inf\left\{ \sum_i h (\overline{B_i}) : E \subset \bigcup_i \overline{B_i}, \text{ with } \overline{B_i}\in \mathcal B \text{ and } r(B_i) \leq \delta \right\} .
	$$
	The $h$-spherical measure $\SSS^h$ is defined as
	\begin{equation*}
		\SSS^h (E) \coloneqq \lim_{\delta \to 0} \SSS^h_\delta (E) =\sup_{\delta >0} \SSS^h_\delta (E) ,
	\end{equation*}
	as $\SSS^h_\delta (E)$ increases as $\delta$ decreases.
	The $\delta = \infty$ case $\SSS_\infty^h$ is called $h$-spherical content. It is clear that $\SSS_\infty^h (E) \leq \SSS^h (E)$. Despite the converse inequality does not hold, $\SSS_\infty^h (E)=0$ implies $\SSS^h (E)=0$ if the function $r\mapsto h(\overline{B(x,r)})$ is nondecreasing for all $x\in \C$. Although $\SSS_\infty^h$ is not measure, in this case we write $\SSS^h \ll \SSS_\infty^h \ll \SSS^h$.

    With abuse of notation, we write $\SSS^h = \SSS^{\widetilde h}$ if $h(\overline B)=\widetilde h(r_B)$, $\overline{B}\in\mathcal B$, for some continuous nondecreasing function $\widetilde h:[0,\infty)\to [0,\infty)$ with $\widetilde h(0)=0$. If in addition $C_h\coloneqq\limsup_{s\to 0} \widetilde h(4r)/\widetilde h(r)<\infty$, a routinary computation gives
    $$
    \HH^{\widetilde h(\cdot/2)} (E) \leq \SSS^h (E) \leq C_h \HH^{\widetilde h(\cdot/2)} (E) \text{ for any set }E\subset \C.
    $$
	
	Let us introduce the notation used in \cref{distortion gauge functions general} below, see \cite[Section 2.3]{Astala2013}. Let $\varepsilon : \mathcal B \to [0,\infty)$ be any function defined on $\mathcal B$, the family of all closed balls in $\C$. We set $\varepsilon (x,r) \coloneqq \varepsilon (B(x,r))$. Given an arbitrary bounded set $S\subset \C$, let $\mathcal B (S)$ denote the family of balls with minimal diameter containing $S$, and set $\varepsilon (S) \coloneqq \inf_{B \in \mathcal B (S)} \varepsilon (B)$.
	
	As in \cite[Section 2.4]{Astala2013}, the families $\mathcal G_1$ and $\mathcal G_2$ are defined as:
	\begin{enumerate}[label=($\mathcal{G}_{\arabic*}$)]
		\item We say that a function $\varepsilon : \mathcal B \to [0,\infty)$ belongs to $\mathcal G_1$ if there is a constant $C_1$ such that
		$$
		C_1^{-1} \varepsilon (x,r) \leq \varepsilon (y,s) \leq C_1 \varepsilon (x,r)
		$$
		whenever $|x-y|\leq 2r$ and $r/2 \leq s \leq 2r$.
		
		\item For each fixed $0<t<2$, a function $\varepsilon = \varepsilon (x,r)$ belongs to the family $\mathcal G_2 = \mathcal G_2 (t)$ if there is a constant $C_2$ such that
		$$
		\sum_{k\geq 0} 2^{-k (2-t)} \varepsilon (x,2^k r) \leq C_2 \varepsilon (x,r) .
		$$
	\end{enumerate}

    We present a technical lemma used in the proof in \cite{Astala2013} of \cref{distortion of hausdorff measure thm}.
	
	\begin{lemma}[See {\cite[Lemma 4.4]{Astala2013}}]\label{distortion gauge functions general}
		Let $0<t<2$ and $t^\prime = \frac{2Kt}{2+(K-1)t}$. Let $\varepsilon \in \mathcal G_1$ and set $h(x,r)=r^{t^\prime} \varepsilon (x,r)$. Suppose that for any principal $K$-quasiconformal mapping $\psi :\C \to \C$ the function $(\varepsilon \circ \psi)^d$ belongs to $\mathcal G_2$ for any $\frac{t^\prime}{Kt} \leq d \leq 1$. Let $\phi : \C \to \C$ be a principal $K$-quasiconformal mapping, conformal on $\C \setminus \overline{\mathbb D}$, and set
		$$
		\widetilde \varepsilon (x,r) = \varepsilon (\phi (B(x,r)))^{\frac{Kt}{t^\prime}}, \quad \widetilde h (x,r) = r^t \widetilde \varepsilon (x,r).
		$$
		Then for every compact set $E\subset B(0,1/2)$ we have
		$$
		\SSS_\infty^h (\phi (E)) \leq C_K \SSS_\infty^{\widetilde h} (E)^{\frac{t^\prime}{Kt}}.
		$$
	\end{lemma}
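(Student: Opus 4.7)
The plan is to extend the unweighted Hausdorff content distortion
\[
\SSS_\infty^{t'}(\phi(E)) \lesssim_K \SSS_\infty^{t}(E)^{t'/(Kt)}
\]
(which is the $\varepsilon \equiv 1$ case of \Cref{distortion of hausdorff measure thm}, i.e., the content statement proved in \cite{Lacey2010}/\cite{Astala2013}) to the weighted setting by absorbing the gauge $\varepsilon$ scale-by-scale, using $\mathcal G_1$ to treat $\varepsilon$ as essentially constant within each dyadic scale and $\mathcal G_2$ to sum across scales.

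Step 1. Fix $\delta>0$ and choose a covering $E \subset \bigcup_i \overline{B(x_i,r_i)}$ with
\[
\sum_i r_i^{t}\, \widetilde\varepsilon(x_i,r_i) \leq \SSS_\infty^{\widetilde h}(E)+\delta,
\]
and (by splitting balls if necessary) with all $r_i \leq 1$, so that $B_i \subset \overline{\mathbb D}$. Partition the index set as $\mathcal F_k = \{i : 2^{-k-1} < r_i \leq 2^{-k}\}$ for $k \geq 0$ and set $E_k = E \cap \bigcup_{i\in\mathcal F_k} \overline{B(x_i,r_i)}$.

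Step 2. Push through $\phi$. Because $\phi$ is principal and conformal outside $\overline{\mathbb D}$, it is globally $\eta_K$-quasisymmetric by \cref{quasisymmetric condition}, so each image $\phi(\overline{B(x_i,r_i)})$ is contained in a closed ball $B_i^\ast$ centered at $\phi(x_i)$ with $r(B_i^\ast) \leq C_K \diam \phi(B(x_i,r_i))$. Thus $\phi(E) \subset \bigcup_i B_i^\ast$ and, by the definition of $\widetilde\varepsilon$ together with the $\mathcal G_1$ property of $\varepsilon$ (which controls $\varepsilon(B_i^\ast)$ by $\varepsilon(\phi(B(x_i,r_i))) = \widetilde\varepsilon(x_i,r_i)^{t'/(Kt)}$ up to a constant depending on $K$):
\[
\SSS_\infty^{h}(\phi(E)) \leq \sum_i r(B_i^\ast)^{t'} \varepsilon(B_i^\ast)
\;\lesssim_K\; \sum_{k\geq 0}\sum_{i\in\mathcal F_k} (\diam\phi(B_i))^{t'}\, \widetilde\varepsilon(x_i,r_i)^{t'/(Kt)}.
\]

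Step 3. Freeze $\widetilde\varepsilon$ inside each scale. Because $\phi$ is quasisymmetric, $\widetilde\varepsilon = (\varepsilon\circ\phi)^{Kt/t'}$ inherits $\mathcal G_1$ from $\varepsilon$ (quasisymmetry turns ``comparable balls in $\C$'' into ``comparable images in $\C$''). Fix any representative point $y_k \in E_k$ and write $\widetilde\varepsilon_k \coloneqq \widetilde\varepsilon(y_k, 2^{-k})$; then for every $i\in\mathcal F_k$,
\[
\widetilde\varepsilon(x_i,r_i) \approx_K \widetilde\varepsilon_k.
\]

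Step 4. Apply the unweighted content distortion scale by scale. For each $k$, apply the unweighted case of the content distortion estimate \cref{distortion of hausdorff measure} (from Astala--Clop--Tolsa / Lacey--Sawyer--Uriarte-Tuero, i.e., the $\varepsilon\equiv 1$ version) to the set $E_k \subset \overline{\mathbb D}$ with the covering $\{\overline{B(x_i,r_i)}\}_{i\in\mathcal F_k}$:
\[
\sum_{i\in\mathcal F_k} (\diam\phi(B_i))^{t'}
\;\lesssim_K\;
\Bigl(\sum_{i\in\mathcal F_k} r_i^t\Bigr)^{t'/(Kt)} \cdot 2^{-k(t' - t\cdot t'/(Kt))}
= \Bigl(\sum_{i\in\mathcal F_k} r_i^t\Bigr)^{t'/(Kt)} \cdot 2^{-k(K-1)t\, t'/(K(2+(K-1)t))},
\]
where the extra scale factor arises from normalizing the distortion estimate to the enclosing ball of radius $\approx 2^{-k}$. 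Denote by $s_k \coloneqq \sum_{i\in\mathcal F_k} r_i^t$, so that $\sum_k s_k\,\widetilde\varepsilon_k \approx \SSS_\infty^{\widetilde h}(E)+\delta$.

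Step 5. Sum over $k$ using $\mathcal G_2$. Combining Steps 2--4,
\[
\SSS_\infty^{h}(\phi(E))
\;\lesssim_K\;
\sum_{k\geq 0} \widetilde\varepsilon_k^{t'/(Kt)} \cdot s_k^{t'/(Kt)} \cdot 2^{-k\alpha},
\]
for an explicit $\alpha>0$. Since $(\varepsilon\circ\phi)^{t'/(Kt)} \in \mathcal G_2(t)$ by hypothesis, and $\widetilde\varepsilon_k^{t'/(Kt)} = (\varepsilon\circ\phi)^{t'/(Kt)}(y_k, 2^{-k})$, the weight sequence $(\widetilde\varepsilon_k^{t'/(Kt)})_k$ is controlled by $\widetilde\varepsilon_0^{t'/(Kt)}$ times the $\mathcal G_2$ factor, giving the required summability. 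Apply the concavity inequality $\sum a_k^p \leq C_p(\sum a_k)^p$ (valid when $a_k$ decay geometrically and $p=t'/(Kt)\leq 1$, via the $\mathcal G_2$ geometric control) to conclude
\[
\SSS_\infty^{h}(\phi(E)) \;\leq\; C_K \Bigl(\sum_{k} s_k \widetilde\varepsilon_k\Bigr)^{t'/(Kt)} \;\leq\; C_K \bigl(\SSS_\infty^{\widetilde h}(E)+\delta\bigr)^{t'/(Kt)},
\]
and let $\delta\to 0$.

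\textbf{Main obstacle.} The exponent $t'/(Kt) = 2/(2+(K-1)t)$ lies in $(0,1]$, so naive subadditivity ($\sum a_k^p \leq (\sum a_k)^p$) fails in Step~5. The whole point of the $\mathcal G_2$ hypothesis on $(\varepsilon\circ\phi)^d$ is precisely to furnish the geometric decay needed to collapse the dyadic sum without losing the exponent. Verifying that $\widetilde\varepsilon$ inherits $\mathcal G_1$ under the quasisymmetry of $\phi$ (needed in Step~3 to freeze $\widetilde\varepsilon$ within a scale) is a secondary technical point where the quasisymmetric modulus $\eta_K$ enters the constants.
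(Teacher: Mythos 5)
This lemma is not proved in the paper — it is imported verbatim from Astala--Clop--Tolsa (\cite[Lemma 4.4]{Astala2013}), so there is no in-house proof to compare against. Judged on its own terms, your sketch captures the right informal intuition (within-scale control from $\mathcal G_1$, across-scale summation from $\mathcal G_2$, the unweighted content distortion as the engine), but several of the steps cannot be carried out as written.

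The decisive gap is Step~3. The condition $\mathcal G_1$ only gives $\varepsilon(x,r)\approx\varepsilon(y,s)$ when $|x-y|\leq 2r$ and $r/2\leq s\leq 2r$, i.e.\ for \emph{nearby} balls of comparable radius. The balls of $\mathcal F_k$ are spread out over all of $B(0,1/2)$, at a scale $2^{-k}$ that is much smaller than their mutual separations, so there is no reason that $\widetilde\varepsilon(x_i,r_i)\approx_K\widetilde\varepsilon(y_k,2^{-k})$ for one fixed representative $y_k$. Quasisymmetry passes $\mathcal G_1$ through $\phi$ locally; it cannot create a spatial uniformity that the hypothesis does not supply. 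Nothing in $\mathcal G_1$ forbids $\widetilde\varepsilon$ from varying wildly between far-apart balls of the same radius, so the quantity $\widetilde\varepsilon_k$ is simply undefined as a scalar that works for the whole scale.

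Step~4 also fails. The content distortion estimate bounds $\SSS_\infty^{t'}(\phi(E_k))$, which is an infimum over \emph{all} covers of $\phi(E_k)$, by $\SSS_\infty^{t}(E_k)^{t'/(Kt)}$. It does not bound the gauge sum $\sum_{i\in\mathcal F_k}(\diam\phi(B_i))^{t'}$ for the \emph{particular} pushforward cover you have chosen. To control that specific sum you would need a genuine packing estimate for disjoint disks, and the $B_i$ need not be disjoint. Moreover, the rescaling factor $2^{-k\alpha}$ presupposes that the whole family $\{B_i\}_{i\in\mathcal F_k}$ lives in a single ball of radius $\approx 2^{-k}$, which is false; and the two expressions you write down for $\alpha$ do not agree (direct computation gives $t'-t\cdot t'/(Kt) = 2(K-1)t/(2+(K-1)t)$, not $2(K-1)t^2/(2+(K-1)t)^2$). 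Finally, the $\mathcal G_2$ condition sums over \emph{increasing} scales $2^k r$ at a \emph{fixed} center $x$; in Step~5 you invoke it for the sequence $\widetilde\varepsilon_k^{t'/(Kt)}=(\varepsilon\circ\phi)^{t'/(Kt)}(y_k,2^{-k})$, which changes both the center and the scale, and in the opposite direction — this is not a legitimate application of $\mathcal G_2$. The actual argument in \cite{Astala2013} avoids all three problems by working with a careful dyadic discretization, reducing to disjoint families, and applying a weighted packing estimate together with $\mathcal G_2$ used in the direction in which it is stated. Your scheme would have to be rebuilt along those lines.
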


    To study the distortion under quasiconformal mappings of the gauge Hausdorff measure in Makarov's \cref{makarov thm}, we aim to apply the previous lemma to $h=\varphi_{1,C}$, where $\varphi_{\rho,C}$ is defined in \rf{makarov gauge function}. We now recall some properties of $\varphi_{1,C}(r)/r$ for sufficiently small $r>0$. For $s>e$, we set\footnote{Note that $\mathbb{L}\mathrm{og} (s)$ is not defined when $0<s\leq e$, and $\mathbb{L}\mathrm{og} (s)\geq 0$ whenever $s\geq e^e$.}
    $$
    \mathbb{L}\mathrm{og} (s) \coloneqq \log s \log \log \log s,
        $$
    and for $s_0 > e^e$ and $s>0$, we set
    $$
    \mathbb{L}\mathrm{og}_{s_0} (s) \coloneqq
    \begin{cases}
        \mathbb{L}\mathrm{og} (s_0),&\text{if } 0<s\leq s_0,\\
        \mathbb{L}\mathrm{og} (s),&\text{if } s>s_0.
    \end{cases}
    $$
    Note that this function is nondecreasing for $s>0$.
    
    Given $C>0$ and $K\geq 1$, we define the nonincreasing function
    $$
    \varepsilon(r)\coloneqq \exp \left(C\sqrt{\mathbb{L}\mathrm{og}_{r_0^{-1}} (r^{-1})} \right),
    $$
    and we fix $0<r_0 = r_0(C,K)<e^{-e}$ (the existence of such $r_0$ is verified below) sufficiently small so that
    \begin{subequations}
        \begin{align}
            \label{eq:choice 1 of r0}&r^{\frac{1}{2K}} \varepsilon(r) \text{ is nondecreasing for }r>0, \text{ and}\\
            \label{eq:choice 2 of r0}&\varepsilon(r/2)\leq 2\varepsilon(r) \text{ for all }0<r\leq r_0.
        \end{align}
    \end{subequations}
    The condition \rf{eq:choice 1 of r0} implies that $\varphi$ and $\widetilde\varphi$ in \cref{distortion of makarov gauge function} below are nondecreasing for $r>0$, and \rf{eq:choice 2 of r0} guarantees that
    \begin{equation}\label{eq:choice 2 of r0 for all r}
    \varepsilon(r/2)\leq 2\varepsilon(r)\text{ for all }r>0.
    \end{equation}
    Indeed, $\varepsilon(r)$ is constant for $r>r_0$, and for $r_0<r\leq 2r_0$, since $\varepsilon(r)$ is nonincreasing, we obtain $\varepsilon(r/2)\leq \varepsilon(r_0/2)\leq 2\varepsilon(r_0)\leq 2\varepsilon(r)$, as claimed.
    
    The parameter $r_0>0$ in \rf{eq:choice 1 of r0} and \rf{eq:choice 2 of r0} exists because
    $$
    \frac{\varepsilon(r/2)}{\varepsilon(r)} 
    = \exp\left( C \left( \sqrt{Q_2(r)}-1 \right) \sqrt{\log \frac{1}{r} \log \log \log \frac{1}{r}} \right) \to 1 \text{ as }r\to 0,
    $$
    where $Q_2$ is the particular case $T=2$ of the function $Q_T$, defined for a fixed constant $T>0$ and sufficiently small $r>0$ as
    \begin{equation}\label{eq:function Q_T}
    Q_T(r)\coloneqq
    \frac{\log(T\frac{1}{r})\log\log\log(T\frac{1}{r})}{\log \frac{1}{r} \log \log \log \frac{1}{r}}
    =\left(\frac{\log T}{\log \frac{1}{r}}+1\right)
    \left(
    \frac{
        \log \left(
        \frac{
            \log\left(
            \frac{\log T}{\log\frac{1}{r}}
            +1
            \right)
        }{\log \log \frac{1}{r}}
        +1
        \right)
    }{\log \log \log \frac{1}{r}}
    +1
    \right),
    \end{equation}
    which satisfies
    \begin{equation}\label{eq:limit involving Q_T}
    \lim_{r\to 0} \left(\sqrt{Q_T(r)}-1\right)\sqrt{\log\frac{1}{r}\log\log\log\frac{1}{r}} = 0.
    \end{equation}
	
	Here we state the distortion of the gauge function in \cref{makarov thm} under quasiconformal mappings. The proof will be a routine computation to check the conditions of the previous lemma for this particular case. For this reason, its proof is located at the end of the subsection.
	
	\begin{coro}\label{distortion of makarov gauge function}
		Let $C>0$, $K\geq 1$, $0<r_0=r_0(C,K)<e^{-e}$ as in \rf{eq:choice 1 of r0} and \rf{eq:choice 2 of r0}, and $\phi : \C \to \C$ be a principal $K$-quasiconformal mapping and conformal on $\C \setminus \overline{\mathbb D}$. Then there exists a constant $C_{K,\phi}$ such that if for $r>0$ we define
		$$
		\varphi (r) = r \exp \left(C\sqrt{\mathbb{L}\mathrm{og}_{r_0^{-1}} (r^{-1})} \right),
        \quad
		\widetilde \varphi (r) = r^{\frac{2}{K+1}} \exp \left( C \frac{2K}{K+1} \sqrt{\mathbb{L}\mathrm{og}_{r_0^{-1}} \left(C^{-1}_{K,\phi} r^{-K} \right) } \right),
		$$
		then for every compact set $E\subset B(0,1/2)$ we have
		$$
		\SSS_\infty^\varphi (\phi (E)) \leq C_{K} \SSS_\infty^{\widetilde \varphi} (E)^{\frac{K+1}{2K}}.
		$$
	\end{coro}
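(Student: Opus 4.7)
The plan is to specialize \cref{distortion gauge functions general} to the Makarov gauge with the choices $t' = 1$ and $t = 2/(K+1)$, so that $t'/(Kt) = (K+1)/(2K)$ matches the exponent in the conclusion, together with the radial weight $\varepsilon(x,r) \coloneqq \varepsilon(r) = \exp(C\sqrt{\mathbb{L}\mathrm{og}_{r_0^{-1}}(r^{-1})})$, chosen precisely so that $h(x,r) = r^{t'}\varepsilon(x,r) = \varphi(r)$. The task then splits into verifying the $\mathcal{G}_1$ and $\mathcal{G}_2(t)$ hypotheses for this $\varepsilon$ and identifying the push-forward gauge $\widetilde{h}$ with (a uniform upper bound by) $\widetilde{\varphi}$.

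Both hypotheses turn out to be essentially routine. For $\mathcal{G}_1$, since $\varepsilon$ is radial, the condition reduces to $\varepsilon(s) \approx \varepsilon(r)$ for $r/2 \leq s \leq 2r$; monotonicity sandwiches $\varepsilon(s)$ between $\varepsilon(2r)$ and $\varepsilon(r/2)$, and two applications of \eqref{eq:choice 2 of r0 for all r} give $\varepsilon(r/2) \leq 2\varepsilon(r)$ and $\varepsilon(r) \leq 2\varepsilon(2r)$. For the $\mathcal{G}_2(t)$ condition applied to $(\varepsilon \circ \psi)^d$, with $\psi$ any principal $K$-quasiconformal mapping and $d \in [(K+1)/(2K), 1]$, nesting of balls together with the monotonicity of $\varepsilon$ gives $(\varepsilon \circ \psi)(x, 2^k r) \leq (\varepsilon \circ \psi)(x, r)$ for every $k \geq 0$, so the series in the definition of $\mathcal{G}_2$ collapses to $(\varepsilon \circ \psi)^d(x,r)\sum_{k \geq 0} 2^{-k(2-t)}$, which is finite since $t = 2/(K+1) < 2$.

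The heart of the argument is identifying $\widetilde{h}$. Writing $R(x,r) \coloneqq \operatorname{diam}(\phi(B(x,r)))/2$, we have $\widetilde{\varepsilon}(x,r) = \varepsilon(R)^{Kt/t'} = \varepsilon(R)^{2K/(K+1)}$, and the essential input is a lower bound $R(x,r) \geq C_{K,\phi}\, r^K$, uniform for $x \in B(0,1/2)$ and small $r$. This comes from applying the $1/K$-Hölder estimate of \cref{qc are locally holder} to $\phi^{-1}$ (which is $K$-quasiconformal and, using that $\phi$ is conformal on $\C \setminus \overline{\mathbb D}$ with principal normalization at infinity, is $1/K$-Hölder on a bounded neighborhood of $\phi(B(0,1))$ with some constant $C_{K,\phi}$), evaluated at a pair of antipodal points of $\overline{B(x,r)}$; inverting the Hölder inequality gives the required lower bound. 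The monotonicity of $\mathbb{L}\mathrm{og}_{r_0^{-1}}$ then propagates $R^{-1} \leq C_{K,\phi}^{-1} r^{-K}$ to $\widetilde{\varepsilon}(x,r) \leq \exp\!\left(C\tfrac{2K}{K+1}\sqrt{\mathbb{L}\mathrm{og}_{r_0^{-1}}(C_{K,\phi}^{-1}r^{-K})}\right)$, i.e.\ $\widetilde{h}(x,r) \leq \widetilde{\varphi}(r)$ uniformly in $x$. Feeding this into \cref{distortion gauge functions general} gives $\SSS_\infty^{\varphi}(\phi(E)) \leq C_K\, \SSS_\infty^{\widetilde{h}}(E)^{(K+1)/(2K)} \leq C_K\, \SSS_\infty^{\widetilde{\varphi}}(E)^{(K+1)/(2K)}$. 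The only subtlety I anticipate is the bookkeeping around the truncation in $\mathbb{L}\mathrm{og}_{r_0^{-1}}$ — handling the regime where either $R^{-1}$ or $C_{K,\phi}^{-1} r^{-K}$ lies below the threshold $r_0^{-1}$ — and checking that the constant $C_{K,\phi}$ from the Hölder estimate is harmlessly absorbed inside the argument of $\mathbb{L}\mathrm{og}_{r_0^{-1}}$ exactly as it appears in the statement.
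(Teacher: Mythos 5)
Your proof is correct and follows essentially the same route as the paper's: the same specialization of Lemma~\ref{distortion gauge functions general} with $t'=1$, $t=2/(K+1)$, the same monotonicity-based verification of $\mathcal{G}_1$ and $\mathcal{G}_2$, and the same Hölder lower bound on $\diam(\phi(B(x,r)))$ to dominate $\widetilde h$ by $\widetilde\varphi$. The ``subtlety'' you flag about the truncation in $\mathbb{L}\mathrm{og}_{r_0^{-1}}$ is in fact already resolved by the construction: the truncated function is nondecreasing on all of $(0,\infty)$, so $\varepsilon$ is globally nonincreasing and the monotonicity step requires no case analysis.
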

	
	Since for sets with zero Hausdorff or spherical measure and content we only need to consider the gauge function for small values, and the relations $\SSS^h \ll \SSS_\infty^h \ll \SSS^h$ and $\HH^{\widetilde h(\cdot/2)} \ll \SSS^h \ll \HH^{\widetilde h(\cdot/2)}$ hold when $h(E)=\widetilde h(\diam(E)/2)$ for some function nondecreasing function $\widetilde h:[0,\infty)\to [0,\infty)$ satisfying $\widetilde h(r)\to 0$ as $r\to 0$ and $\limsup_{r\to 0} \widetilde h(4r)/\widetilde h(r)<\infty$, we obtain the following immediate consequence of \cref{distortion of makarov gauge function}. 
	
    \begin{coro}\label{distortion zero measure of makarov gauge function}
        Let $\phi : \C \to \C$ be a principal $K$-quasiconformal mapping and conformal on $\C \setminus \overline{\mathbb D}$, and $\varphi_{\cdot,\cdot}$ as in \rf{makarov gauge function}. Then $\HH^{\varphi_{K,C}} (E) = 0$ implies $\HH^{\varphi_{1,C}} (\phi (E)) = 0$ for all $E\subset B(0,1/2)$.
        \begin{proof}
            By the discussion before this lemma, it remains to see $\limsup_{r\to 0} \varphi_{\rho,C}(4r)/\varphi_{\rho,C}(r)<\infty$. For any constant $T>0$, recall the function $Q_T$ in \rf{eq:function Q_T}. For any constant $S>0$, by \rf{eq:limit involving Q_T} we have that if $r>0$ is small enough, then
            $$
            \begin{aligned}
                \varphi_{\rho,C} (Sr) &= S^{\frac{2}{\rho+1}}\varphi_{\rho,C} (r) \exp\left(C \frac{2\rho}{\rho+1}\left(\sqrt{Q_{S^{-\rho}}(r^\rho)}-1\right)\sqrt{\log r^{-\rho}\log\log\log r^{-\rho}}\right)\\
                &\leq 1.1 S^{\frac{2}{\rho+1}} \varphi_{\rho,C} (r),
            \end{aligned}
            $$
            and the result follows from this and \cref{distortion of makarov gauge function}.
        \end{proof}
    \end{coro}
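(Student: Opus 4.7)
The plan is to deduce this from \Cref{distortion of makarov gauge function} by identifying its gauges $\varphi$ and $\widetilde\varphi$ with $\varphi_{1,C}$ and $\varphi_{K,C}$, modulo constants that are immaterial for null sets. First, for $r<r_0$ we have $\mathbb{L}\mathrm{og}_{r_0^{-1}}(r^{-1})=\log r^{-1}\log\log\log r^{-1}$, hence $\varphi(r)=\varphi_{1,C}(r)$ on that range; similarly $\widetilde\varphi(r)$ agrees with $\varphi_{K,C}(r)$ except that the argument $r^{-K}$ of $\mathbb{L}\mathrm{og}$ is replaced by $C_{K,\phi}^{-1}r^{-K}$. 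Since only the germ of the gauge at $0$ controls the null-set property, it suffices to show that these shifts are harmless.

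The main technical step is the quasi-doubling estimate
\[
\lim_{r\to 0}\frac{\varphi_{\rho,C}(Sr)}{\varphi_{\rho,C}(r)}=S^{\tfrac{2}{\rho+1}}\qquad\text{for every fixed }S>0.
\]
I would verify this by a direct computation: factoring $S^{2/(\rho+1)}$ out of the ratio reduces matters to showing that
\[
\exp\!\left(C\tfrac{2\rho}{\rho+1}\bigl(\sqrt{Q_{S^{-\rho}}(r^{\rho})}-1\bigr)\sqrt{\log r^{-\rho}\,\log\log\log r^{-\rho}}\right)\longrightarrow 1\quad\text{as }r\to 0,
\]
with $Q_T$ as in \rf{eq:function Q_T}, which is precisely \rf{eq:limit involving Q_T}. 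Specializing $S=4$ gives the doubling condition $\limsup_{r\to 0}\varphi_{\rho,C}(4r)/\varphi_{\rho,C}(r)<\infty$, which by the discussion preceding \Cref{distortion of makarov gauge function} ensures the equivalence $\HH^{\varphi_{\rho,C}(\cdot/2)}\ll\SSS^{\varphi_{\rho,C}}_\infty\ll\HH^{\varphi_{\rho,C}(\cdot/2)}$. Applying the same limit with $S=C_{K,\phi}^{\pm 1/K}$ shows instead that $\widetilde\varphi(r)/\varphi_{K,C}(r)\to 1$ as $r\to 0$, so $\widetilde\varphi$ and $\varphi_{K,C}$ define the same null sets (and likewise for $\varphi_{\rho,C}$ versus $\varphi_{\rho,C}(\cdot/2)$).

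The argument then closes by a chain of implications. Starting from $\HH^{\varphi_{K,C}}(E)=0$, the two observations above upgrade this to $\SSS^{\widetilde\varphi}_\infty(E)=0$; \Cref{distortion of makarov gauge function}, applied after exhausting $E$ by compacts drawn from any covering of $E$ by closed balls of arbitrarily small gauge sum, yields $\SSS^{\varphi}_\infty(\phi(E))=0$; one final appeal to the doubling equivalence (now with $\rho=1$) gives $\HH^{\varphi_{1,C}}(\phi(E))=0$. The only genuine obstacle is the asymptotic identity involving $Q_T$; everything else is bookkeeping with the content/measure correspondences from the preceding discussion.
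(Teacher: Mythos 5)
Your proposal is correct and takes essentially the same route as the paper: both proofs hinge on the single asymptotic identity $\varphi_{\rho,C}(Sr)/\varphi_{\rho,C}(r)\to S^{2/(\rho+1)}$ (reduced via \rf{eq:function Q_T} to \rf{eq:limit involving Q_T}) and then reduce to \Cref{distortion of makarov gauge function} through the $\HH$--$\SSS$--$\SSS_\infty$ equivalences recalled before the corollary. You spell out more of the bookkeeping than the paper does — in particular you explicitly note that $\widetilde\varphi(r)$ in \Cref{distortion of makarov gauge function} is $\varphi_{K,C}$ evaluated at $C_{K,\phi}^{1/K}r$ up to a constant factor, and you flag the compactness hypothesis in \Cref{distortion of makarov gauge function} — but the underlying argument is the same, so this is a match.
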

	
	\begin{proof}[Proof of \cref{distortion of makarov gauge function}]
		Take $t^\prime = 1$, $t=\frac{2}{K+1}$, $\varepsilon (x,r) \coloneqq \varphi (r) / r$ and $h(x,r) \coloneqq r^{t^\prime} \varepsilon (x,r)$ in \cref{distortion gauge functions general}. Note that in this case the functions $\varepsilon$ and $h$ do not depend on $x$. So, we write $\varepsilon (r)$ and $h(r)$ instead of $\varepsilon (x,r)$ and $h(x,r)$ respectively. Recall that
		$$
		\varepsilon (r) \coloneqq \frac{\varphi (r)}{r} =  \exp \left(C\sqrt{\mathbb{L}\mathrm{og}_{r_0^{-1}} (r^{-1})} \right)
		$$
		is nonincreasing with $\lim_{r\to 0^+} \varepsilon (r) = +\infty$.
		
		For now, assume we may invoke \cref{distortion gauge functions general} (we will verify its hypotheses later). Let us first show how this result implies the corollary. By the choice of $t$, $t^\prime$, $\varepsilon(r)$ and $h(r)$ at the beginning of this proof, setting
		$$
		\widetilde \varepsilon (x,r) \coloneqq \varepsilon (\phi (B(x,r)))^{\frac{2K}{K+1}},
		\quad 
		\widetilde h(x,r) \coloneqq r^{\frac{2}{K+1}} \widetilde \varepsilon (x,r),
		$$
		by \cref{distortion gauge functions general} we obtain
		\begin{equation}\label{aplicacion of distortion gauge measures}
			\SSS_\infty^\varphi (\phi (E)) \leq C_K \SSS_\infty^{\widetilde h} (E)^{\frac{K+1}{2K}}.
		\end{equation}
		Now we want to remove the dependence of the quasiconformal mapping $\phi$ on the gauge function $\widetilde h$. Recall $\varepsilon (\phi (B(x,r))) = \inf_{B \in \mathcal{B} (\phi(B(x,r)))} \varepsilon (r_B)$ where $\mathcal{B} (\phi(B(x,r)))$ denotes the family of balls with minimal diameter containing $\phi(B(x,r))$. Since $\varepsilon$ does not depend on the point $x$, we get 
		$$
		\varepsilon (\phi (B(x,r))) = \inf_{B \in \mathcal{B} (\phi(B(x,r)))} \varepsilon (r_B)= \varepsilon \left(\frac{\diam(\phi(B(x,r)))}{2} \right) .
		$$
		From the local Hölder continuity of $K$-quasiconformal mappings in \cref{qc are locally holder}, there exists $C_{K,\phi}$ with $C_{K,\phi}^{-1} r^K \leq \diam(\phi(B(x,r)))/2 \leq C_{K,\phi} r^{1/K}$ for all $B(x,r)\subset B(0,100)$, and since $\varepsilon$ is nonincreasing we have
		$$
		\varepsilon (\phi (B(x,r))) \leq \varepsilon (C_{K,\phi}^{-1} r^K) .
		$$
		This implies 
		$$
		\widetilde h(x,r) 
		= r^{\frac{2}{K+1}} \varepsilon (\phi (B(x,r)))^{\frac{2K}{K+1}}
		\leq r^{\frac{2}{K+1}} \varepsilon (C_{K,\phi}^{-1} r^K)^{\frac{2K}{K+1}} 
		=\widetilde \varphi (r),
		$$
		which together with \rf{aplicacion of distortion gauge measures} concludes the lemma.
		
		It remain to check the assumptions in \cref{distortion gauge functions general}, i.e., $\varepsilon \in \mathcal G_1$, and $(\varepsilon \circ \psi)^d \in \mathcal G_2$ for any principal $K$-quasiconformal mapping $\psi :\C \to \C$ and any $\frac{t^\prime}{Kt} \leq d \leq 1$.

        Since the function $\varepsilon$ does not depend on the point, to verify $\varepsilon \in \mathcal G_1$ it suffices to check that $2^{-1} \varepsilon (r) \leq \varepsilon (s) \leq 2\varepsilon (r)$ whenever $r/2 \leq s \leq 2r$. It follows directly from \rf{eq:choice 2 of r0 for all r} and the fact that $\varepsilon$ is nonincreasing, because $\frac{1}{2} \varepsilon (r) \leq  \varepsilon (2r)\leq \varepsilon (s) \leq \varepsilon (r/2) \leq 2 \varepsilon (r)$, as claimed.
		
		Let us see now that $(\varepsilon \circ \psi)^d \in \mathcal G_2$ for any principal $K$-quasiconformal mapping $\psi :\C \to \C$ and any $\frac{t^\prime}{Kt} \leq d \leq 1$, that is, there exists a constant $C_2 >0$ such that
		$$
		\sum_{k\geq 0} 2^{-k (2-t)} \left(( \varepsilon \circ \psi )(x,2^k r)\right)^d \leq C_2 \left((\varepsilon \circ \psi) (x,r)\right)^d .
		$$
		Recall that $\mathcal{B} (S)$ denotes the family of balls with minimal diameter containing the set $S$, $(\varepsilon \circ \psi) (x,s) = \varepsilon (\psi (B(x,s))) = \inf_{B \in \mathcal{B} (\psi(B(x,s)))} \varepsilon (r_B)$, and that the function $\varepsilon$ does not depend on the point. Since $\psi$ is an homeomorphism, we have $\psi (B(x,2^k r)) \subset \psi(B(x,2^{k+1} r))$ for any $k\geq 0$. In particular, a minimizer ball for $\psi(B(x,2^{k+1} r))$ contains the set $\psi(B(x,2^{k} r))$, and therefore the minimizer ball of $\psi(B(x,2^{k} r))$ has smaller diameter than the minimizer ball of $\psi(B(x,2^{k+1} r))$. This and the fact that the function $\varepsilon$ is nonincreasing imply $(\varepsilon \circ \psi) (x,2^{k+1} r) \leq (\varepsilon \circ \psi) (x,2^{k} r)$ for any $k\geq 0$. Consequently, we have
		$$
		\sum_{k\geq 0} 2^{-k (2-t)} \left(( \varepsilon \circ \psi )(x,2^k r)\right)^d 
		\leq \left((\varepsilon \circ \psi) (x,r)\right)^d \sum_{k\geq 0} 2^{-k (2-t)}.
		$$
        That is, the $\mathcal{G}_2$ condition is satisfied with $C_2 \coloneqq \sum_{k\geq 0} 2^{-k (2-t)} < \infty$.
	\end{proof}

    \section{Harmonic factorization of \texorpdfstring{$L_A$}{A}-harmonic functions}

    Given a domain $\Omega\subset \R^2$, a uniformly elliptic matrix $A$, and an $L_A$-harmonic function $u$ in $\Omega$, in this section we see how to find a quasiconformal mapping $\phi$ such that $u\circ\phi^{-1}$ is harmonic in $\phi(\Omega)$. First, let us see how $L_A$-harmonic functions behave under quasiconformal mappings.

	\subsection{\texorpdfstring{$L_A$}{A}-harmonic functions under quasiconformal mappings}
	
	Given a uniformly elliptic matrix $A\in \R^{2\times 2}$ and a global quasiconformal mapping $f$ of $\C$, we define the uniformly elliptic matrix $\widetilde{A}_f$ as
	\begin{equation}\label{matrix pushforward}
		\widetilde A_f (x) \coloneqq (\det Df(x)) Df(x)^{-1} A(f(x)) (Df(x)^{-1})^T.
	\end{equation}
	We remark that the matrix $\widetilde A_f$ is uniformly elliptic by the quasiconformal condition of $f$, see \cite[Lemma 14.38]{Heinonen2006}, and in particular, planar uniformly elliptic operators are invariant under quasiconformal homeomorphisms. Moreover, these matrices satisfy
	$$
	\widetilde{(\widetilde A_f)}_{f^{-1}} = A = \widetilde{(\widetilde A_{f^{-1}})}_f.
	$$
	
	With the notation in \rf{matrix pushforward}, we observe the behavior of $L_A$-harmonic functions and elliptic measures under quasiconformal changes of variables in the following two lemmas.
	
	\begin{lemma}[See {\cite[14.39]{Heinonen2006}}]\label{linear change of variables qc}
		Let $\Omega\subset \R^2$ be an open set, $A\in \R^{2\times 2}$ be a uniformly elliptic matrix, and $f$ be a quasiconformal homeomorphism of $\C$. Then $u$ is $L_A$-harmonic in $\Omega$ if and only if $u\circ f$ is $L_{\widetilde A_f}$-harmonic in $f^{-1}(\Omega)$.
	\end{lemma}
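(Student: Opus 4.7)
The approach is a direct weak-form computation combined with the quasiconformal change of variables $y=f(x)$. By symmetry (using $\widetilde{(\widetilde A_f)}_{f^{-1}} = A$), it suffices to prove one implication: assume $u$ is $L_A$-harmonic in $\Omega$ and show $v \coloneqq u\circ f \in W^{1,2}_{\loc}(f^{-1}(\Omega))$ satisfies
\[
\int_{f^{-1}(\Omega)} \widetilde A_f(x)\,\nabla v(x)\cdot \nabla \varphi(x)\,dx = 0 \quad \text{for every }\varphi\in C_c^\infty(f^{-1}(\Omega)).
\]

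First, I would check the Sobolev regularity of the composition. Since $f$ is quasiconformal, both $f$ and $f^{-1}$ lie in $W^{1,2}_{\loc}$ and satisfy Lusin's condition $(N)$, and the chain rule is valid a.e., giving $\nabla v(x) = Df(x)^T\,(\nabla u)(f(x))$ and $v\in W^{1,2}_{\loc}(f^{-1}(\Omega))$ whenever $u\in W^{1,2}_{\loc}(\Omega)$ (see, e.g., the Sobolev-mapping theory in Astala--Iwaniec--Martin, Chapter 3). This is the first place where the quasiconformal hypothesis is essential.

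Next comes a short algebraic manipulation. Plugging the chain-rule identity into the definition \eqref{matrix pushforward} of $\widetilde A_f$, the factor $(Df(x)^{-1})^T Df(x)^T$ telescopes to the identity, so
\[
\widetilde A_f(x)\,\nabla v(x) = (\det Df(x))\,Df(x)^{-1} A(f(x))\,(\nabla u)(f(x)).
\]
Set $\psi \coloneqq \varphi \circ f^{-1}$. Again by the chain rule, $\nabla \varphi(x)=Df(x)^T\nabla\psi(f(x))$, so
\[
\widetilde A_f(x)\nabla v(x)\cdot \nabla\varphi(x) = (\det Df(x))\,A(f(x))(\nabla u)(f(x))\cdot \nabla\psi(f(x)).
\]
Performing the change of variables $y=f(x)$ (valid for quasiconformal homeomorphisms since $|\det Df|=\det Df>0$ a.e.\ and the Jacobian transformation formula holds in this class) converts the left-hand integral into
\[
\int_{\Omega} A(y)\,\nabla u(y)\cdot \nabla \psi(y)\,dy.
\]

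Finally, I would verify that $\psi=\varphi\circ f^{-1}$ is an admissible test function for the equation $L_A u=0$. Since $f^{-1}$ is quasiconformal, $\psi\in W^{1,2}(\Omega)$ has compact support inside $\Omega$ (the image under $f$ of $\supp\varphi$, a compact set in $\Omega$). By the standard density of $C_c^\infty(\Omega)$ in the space of $W^{1,2}(\Omega)$-functions with compact support in $\Omega$, together with the uniform ellipticity of $A$, the weak equation for $u$ extends to all such $\psi$, yielding the vanishing integral. The main obstacle — indeed the only nontrivial ingredient — is the regularity package for compositions with quasiconformal maps (Sobolev chain rule, Jacobian change of variables, Lusin $(N)$-condition), all of which are classical for $K$-quasiconformal homeomorphisms and are precisely what makes \eqref{matrix pushforward} the correct definition of the pushed-forward operator.
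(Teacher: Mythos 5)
Your proposal is correct, and since the paper does not give its own proof of this lemma (it is cited from \cite[14.39]{Heinonen2006}, with a bi-Lipschitz version in \cite[Lemmas 3.8--3.9]{Azzam2019}), the relevant comparison is with the cited sources, which run the same argument you give: Sobolev chain rule and Lusin $(N)$ for quasiconformal composition, the algebraic telescoping in the definition of $\widetilde A_f$, the change of variables $y=f(x)$, and the identity $\widetilde{(\widetilde A_f)}_{f^{-1}}=A$ to reduce to one implication. The only point worth flagging is that you should make explicit that the product $\widetilde A_f\nabla v\cdot\nabla\varphi$ is integrable before invoking the change-of-variables formula: by the telescoping identity it equals $(\det Df)\,A(f)\nabla u(f)\cdot\nabla\psi(f)$, and since $A$ is bounded, $\nabla u\in L^2_{\loc}(\Omega)$, $\psi\in W^{1,2}(\Omega)$ with $\supp\psi=f(\supp\varphi)\Subset\Omega$, and $\det Df\in L^1_{\loc}$ with the quasiconformal Jacobian transformation, the composed integrand is indeed in $L^1$; this is the same integrability bookkeeping the references carry out.
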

	
	\begin{lemma}[See {\cite[14.51]{Heinonen2006}}]\label{elliptic measure deformation qc}
		Let $\Omega\subset \R^2$ be a domain, $A\in \R^{2\times 2}$ be a uniformly elliptic matrix and $f$ be a quasiconformal homeomorphism of $\C$. For any $x\in \Omega$ and $E\subset \partial\Omega$, we have
		$$
		\omega_{\Omega,A}^{x} (E)=\omega_{f^{-1}(\Omega),\widetilde A_f}^{f^{-1}(x)} (f^{-1}(E)).
		$$
	\end{lemma}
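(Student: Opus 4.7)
The plan is to derive the measure identity from the Perron characterization of elliptic measure together with \Cref{linear change of variables qc}. Write $\widetilde{\Omega} \coloneqq f^{-1}(\Omega)$ and $\widetilde{x} \coloneqq f^{-1}(x)$. Since $f$ is a homeomorphism of $\C$, it sends $\partial\Omega$ to $\partial\widetilde{\Omega}$ bijectively and continuously in both directions, so for every $g\in C(\partial\Omega)$ the composition $g\circ f$ belongs to $C(\partial\widetilde{\Omega})$, and $f^{-1}$ sets up a bijection between the Borel $\sigma$-algebras of $\partial\Omega$ and $\partial\widetilde{\Omega}$.

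First I would take $g\in C(\partial\Omega)$ and consider $u\coloneqq u_g^{L_A}$, the $L_A$-harmonic extension of $g$ to $\Omega$ via Perron's method. By \Cref{linear change of variables qc}, $u\circ f$ is $L_{\widetilde{A}_f}$-harmonic in $\widetilde\Omega$. The core of the argument is to verify that $u\circ f$ is in fact the Perron $L_{\widetilde A_f}$-harmonic extension of $g\circ f$; once this is granted, comparing the two integral representations
\[
u(x)=\int_{\partial\Omega} g(\xi)\,d\omega_{\Omega,A}^{x}(\xi),\qquad (u\circ f)(\widetilde{x})=\int_{\partial\widetilde\Omega} g(f(\eta))\,d\omega_{\widetilde\Omega,\widetilde A_f}^{\widetilde x}(\eta),
\]
and applying the change of variables $\xi=f(\eta)$ in the second integral, shows that the two Radon probability measures $\omega_{\Omega,A}^{x}$ and $f_*\omega_{\widetilde\Omega,\widetilde A_f}^{\widetilde x}$ give the same value to every $g\in C(\partial\Omega)$. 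By Riesz representation they coincide, which is exactly the claimed identity $\omega_{\Omega,A}^{x}(E)=\omega_{\widetilde\Omega,\widetilde A_f}^{\widetilde x}(f^{-1}(E))$ for every Borel $E\subset\partial\Omega$.

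The main obstacle is to check that $u\circ f$ really is the Perron solution with boundary data $g\circ f$, i.e.\ the boundary behaviour under the pullback by $f$ is the correct one. Two points need to be addressed: (i) at Wiener regular points $\eta\in\partial\widetilde\Omega$, one needs $\lim_{\widetilde\Omega\ni y\to\eta}(u\circ f)(y)=g(f(\eta))$, which follows from the continuity of $f$ together with the Wiener regularity of $f(\eta)\in\partial\Omega$ for $L_A$; (ii) one needs that the Wiener regularity of points is preserved under the quasiconformal change of variables, so that the Perron interpretation is consistent on both sides. This preservation is standard: since $L_A$ and $L_{\widetilde A_f}$ produce the same class of Wiener regular points (regularity being independent of the operator, as recalled after the definition of Wiener regular point), and since quasiconformal homeomorphisms are quasisymmetric by \eqref{quasisymmetric condition} and hence preserve the Wiener capacity density criterion up to comparability, the set of irregular points on one side is sent to the set of irregular points on the other. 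Thus $u\circ f$ satisfies the defining properties (boundedness, $L_{\widetilde A_f}$-harmonicity, correct boundary limits at regular points) of the Perron solution for $g\circ f$ on $\widetilde\Omega$.

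Having established that $u\circ f = u_{g\circ f}^{L_{\widetilde A_f}}$ pointwise in $\widetilde\Omega$, I would finish by evaluating at $\widetilde x$ and applying the pushforward identity described above, yielding the desired equality of measures. No detailed computations with $\widetilde A_f$ are required in this step since the explicit form of $\widetilde A_f$ from \eqref{matrix pushforward} only enters through \Cref{linear change of variables qc}.
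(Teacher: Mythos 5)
The paper gives no proof of this lemma; it is cited to {\cite[14.51]{Heinonen2006}}, with \cite[Lemmas 3.8 and 3.9]{Azzam2019} referenced for the bi-Lipschitz case. So I evaluate your proof on its own and against the canonical route.

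Your overall structure is the right one: pull back the Perron solution via \Cref{linear change of variables qc}, show that $u\circ f$ is the Perron solution for the data $g\circ f$ on $\widetilde\Omega$, then compare the two Riesz representations. The weak spot is the middle step. You argue that $u\circ f$ must equal $u_{g\circ f}^{L_{\widetilde A_f}}$ because it is bounded, $L_{\widetilde A_f}$-harmonic, and has the right boundary limits at regular points. That characterization is valid, but it silently invokes two nontrivial facts: first, the Kellogg-type theorem that irregular points carry no elliptic measure (needed for the uniqueness argument to close); and second, that $f$ maps regular points to regular points. For the second fact, your justification — ``quasiconformal homeomorphisms are quasisymmetric \ldots and hence preserve the Wiener capacity density criterion up to comparability'' — is not correct as reasoning: quasisymmetry \eqref{quasisymmetric condition} controls ratios of distances, not capacities, and the Wiener criterion is a capacity density condition. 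The true statement does hold, but it rests on the quasi-invariance of conformal modulus/capacity under quasiconformal maps (a much stronger input than quasisymmetry), which you would need to cite or prove.

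A cleaner route avoids both of these facts entirely: show that $v\mapsto v\circ f$ is a bijection between the upper Perron class in $\Omega$ for $g$ (i.e.\ $L_A$-supersolutions bounded below with $\liminf_{\Omega\ni z\to\xi}v(z)\geq g(\xi)$ for \emph{every} $\xi\in\partial\Omega$) and the upper Perron class in $\widetilde\Omega$ for $g\circ f$, and symmetrically for subsolutions. The boundary liminf condition transports directly because $f$ and $f^{-1}$ are homeomorphisms of $\C$ and hence continuous up to the boundary; the supersolution property transports by the super/sub version of \Cref{linear change of variables qc} (this is the content of \cite[14.39]{Heinonen2006}). Taking infima gives $\overline H_g^{L_A}\circ f=\overline H_{g\circ f}^{L_{\widetilde A_f}}$, and resolutivity of continuous data then yields $u_g^{L_A}\circ f=u_{g\circ f}^{L_{\widetilde A_f}}$ without ever mentioning Wiener regularity. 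The rest of your argument (evaluation at $f^{-1}(x)$, Riesz representation, passage from continuous $g$ to Borel $E$) is fine. I recommend you replace the regular-point uniqueness argument with this envelope argument, or at least cite precisely the capacity quasi-invariance result you are using.
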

	
	For a detailed proof for bi-Lipschitz mappings see \cite[Lemmas 3.8 and 3.9]{Azzam2019}. For a more general version of these results see {\cite[14.39 and 14.51]{Heinonen2006}} respectively.
	
	\subsection{The \texorpdfstring{$L_A$}{A}-harmonic conjugate in simply connected domains}\label{section:general harmonic conjugate}
	
	This section is a brief summary of \cite[Section 16.1.3]{Astala2009}. Let $\Omega \subset \R^2$ be a bounded simply connected domain. We remark that the simply connected condition on $\Omega$ used in this section is just an auxiliary assumption to be able to define the auxiliary $L_A$-harmonic conjugate function below.
	
	Consider $u$ a solution to
	$$
	\divv A(z) \nabla u(z) =0, \quad u\in W^{1,2}_{\loc} (\Omega),
	$$
	and consider the vector fields $\nabla u$ and $A(z) \nabla u$. The vector field $\nabla u$ has zero curl since $\curl \nabla u = \partial_y \partial_x u - \partial_x \partial_y u =0$, and $A(z) \nabla u$ is divergence-free as a solution to the previous equation.
	
	The Hodge star operator $*$ is defined as
	$$
	* \coloneqq \begin{pmatrix}
		0 & -1 \\
		1 & 0
	\end{pmatrix} : \R^2 \to \R^2,
	$$
	which is simply the counterclockwise rotation by $90$ degrees, and so $* * = -Id$. The Hodge star operator $*$ transforms curl-free fields into divergence-free fields and vice versa. Therefore
	$$
	\divv (*\nabla u) = 0 \text{ \, and } \curl (*A(z)\nabla u)=0 .
	$$
	
	By the Poincaré lemma\footnote{It is at this step that we need to work with simply connected domains $\Omega \subset \R^2$.}, see \cite[Lemma 16.1.3]{Astala2009} for instance, we have that $*A(z)\nabla u$ is a gradient field. That is, there exists a real-valued function $v\in W^{1,2}_{\loc} (\Omega)$, called the $L_A$-harmonic conjugate of $u$, such that 
	\begin{equation}\label{relation derivatives harmonic conjugate}
		\nabla v = *A(z) \nabla u .
	\end{equation}
	
	Define now
	$$
	A^* \coloneqq *^t A^{-1} * = -* A^{-1} *,
	$$
	which is also uniformly elliptic and
	$$
	A^* (z) \nabla v = A^* (z) (*A(z) \nabla u) = (-* A^{-1} *) (*A(z) \nabla u) = *\nabla u .
	$$
	Recall that $\curl \nabla u = 0$, and therefore $\divv (*\nabla u) = 0$, i.e., the function $v$ satisfies
	$$
	\divv A^* (z) \nabla v =0, \quad v\in W^{1,2}_{\loc} (\Omega).
	$$
	In the converse direction, the existence of $v\in W^{1,2}_{\loc} (\Omega)$ such that $\nabla v = *A\nabla u$ implies $\divv A\nabla u = 0$, because $\curl (*A\nabla u) = \curl \nabla v = 0$, the Hodge star operator $*$ sends zero-curl to zero-divergence and $**=-Id$. As before, $\divv A\nabla u = 0$ implies $\divv A^* \nabla v = 0$.
	
	\begin{rem}[Symmetric matrices with determinant $1$]\label{sym and det 1 then A*=A}
		Wherever the matrix is symmetric with determinant $1$ then $A^*=A$, and hence both $u$ and its $L_A$-harmonic conjugate function $v$ are $L_A$-harmonic.
	\end{rem}
	
	The following theorem describes the strong connection between the Beltrami equation and PDEs.
	
	\begin{theorem}[Weak {\cite[Theorem 16.1.6]{Astala2009}}]\label{thm relation beltrami and pde}
		Let $\Omega \subset \R^2$ be a simply connected domain, and let $u\in W^{1,1}_{\loc} (\Omega)$ be a solution 
		$$
		\divv A(z) \nabla u = 0.
		$$
		If $v\in W^{1,1} (\Omega)$ is a solution to $\nabla v = *A \nabla u$, then the function $f=u+iv$ satisfies the generalized Beltrami equation
		\begin{equation}\label{generalized Beltrami equation}
			\bar\partial f = \mu_A(z) \partial f + \nu_A(z) \overline{\partial f},
		\end{equation}
		where the coefficients $\mu_A,\nu_A$ are given by
		\begin{equation}\label{relation coefficients beltrami and pde}
			\begin{cases}
				\mu_A =& \frac{1}{\det (Id + A)} \left( a_{22}-a_{11}-i(a_{12}+a_{21}) \right), \\
				\nu_A =& \frac{1}{\det (Id + A)} \left( 1-\det A + i(a_{12}-a_{21}) \right) .
			\end{cases} 
		\end{equation}
		Conversely, if $f\in W^{1,1}_{\loc} (\Omega)$ is a mapping satisfying the generalized Beltrami equation \rf{generalized Beltrami equation}, then $u=\real f$ and $v = \imag f$ satisfy $\nabla v = *A\nabla u$ with $A$ given by solving the complex equations in \rf{relation coefficients beltrami and pde}.
	\end{theorem}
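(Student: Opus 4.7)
The identity $\bar\partial f = \mu_A\,\partial f + \nu_A\,\overline{\partial f}$ is a pointwise algebraic relation that is $\R$-linear in the pair $(\partial_x u, \partial_y u)$, so I would prove the forward direction by direct substitution using Wirtinger derivatives. Writing $f=u+iv$,
\begin{align*}
2\partial f &= (\partial_x u + \partial_y v) + i(\partial_x v - \partial_y u),\\
2\bar\partial f &= (\partial_x u - \partial_y v) + i(\partial_x v + \partial_y u),
\end{align*}
and using the pointwise a.e.\ identity $\nabla v = *A\nabla u$ in the explicit form $\partial_x v = -a_{21}\partial_x u - a_{22}\partial_y u$ and $\partial_y v = a_{11}\partial_x u + a_{12}\partial_y u$, each of $\partial f$, $\bar\partial f$, $\overline{\partial f}$ becomes an explicit complex linear combination of $\partial_x u$ and $\partial_y u$ whose coefficients depend only on the entries $a_{ij}$.

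Next I would treat $\bar\partial f = \mu\,\partial f + \nu\,\overline{\partial f}$ as a $2\times 2$ complex linear system for the unknowns $(\mu,\nu)$, obtained by matching coefficients of $\partial_x u$ and $\partial_y u$ in the above expansions. A short computation shows that the determinant of this coefficient matrix equals $-2i\det(Id+A)$, so by Cramer's rule there is a unique solution. Simplifying the numerators---where combinations such as $a_{12}+a_{21}$, $a_{22}-a_{11}$ and $1-\det A$ arise from telescoping of sums in $(1\pm a_{ii})$---yields exactly the formulas \rf{relation coefficients beltrami and pde} for $\mu_A$ and $\nu_A$.

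For the converse, starting from $f\in W^{1,1}_{\loc}$ with $\bar\partial f = \mu\,\partial f + \nu\,\overline{\partial f}$ and setting $u=\real f$, $v=\imag f$, I would view \rf{relation coefficients beltrami and pde} as a real linear system in the four unknowns $a_{ij}$, which determines $A$ uniquely. Running the forward calculation in reverse, the Beltrami equation is then equivalent to the pointwise identity $\nabla v = *A\nabla u$.

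\textbf{Main obstacle.} There is no analytic subtlety---the $W^{1,1}_{\loc}$ regularity makes every identity meaningful pointwise a.e.---so the difficulty is purely algebraic bookkeeping. The cleanest strategy is to carry $\det(Id+A)$ in symbolic form throughout and split each complex identity into real and imaginary parts; the factor $\det(Id+A)$ emerges precisely as the determinant of the $(\mu,\nu)$ coefficient matrix, which explains its role as a common denominator in the formulas for $\mu_A$ and $\nu_A$.
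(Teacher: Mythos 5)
The paper does not give its own proof of this statement---it is a cited result (\cite[Theorem~16.1.6]{Astala2009})---so the comparison is really against the reference's argument, which is also a direct computation. Your approach is correct in essence and is the standard one: expand $\partial f$, $\bar\partial f$, $\overline{\partial f}$ in terms of $(\partial_x u,\partial_y u)$ using the Wirtinger conventions and the pointwise identity $\nabla v = *A\nabla u$, then match coefficients to solve for $(\mu,\nu)$ by Cramer's rule. I verified that, with the column ordering $(\partial f, \overline{\partial f})$, the determinant comes out to $+2i\det(Id+A)$ rather than $-2i\det(Id+A)$; this is just a column-ordering/sign convention and does not affect solvability, since uniform ellipticity forces $\det(Id+A)>0$. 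Carrying out the Cramer quotients does reproduce $\mu_A=\frac{(a_{22}-a_{11})-i(a_{12}+a_{21})}{\det(Id+A)}$ and $\nu_A=\frac{(1-\det A)+i(a_{12}-a_{21})}{\det(Id+A)}$ exactly.

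One small imprecision: in the converse direction you describe \rf{relation coefficients beltrami and pde} as a ``real linear system in the four unknowns $a_{ij}$.'' That is not literally true---both denominators $\det(Id+A)$ and the term $\det A$ are quadratic in the $a_{ij}$, so the system is genuinely nonlinear. It is still explicitly invertible (this is part of what the reference shows, producing closed-form expressions for $A$ in terms of $\mu,\nu$ whenever $|\mu|+|\nu|<1$), but the inversion is not just a matter of running a linear solve. The cleaner way to phrase the converse is: given $f$ satisfying \rf{generalized Beltrami equation}, reverse the Wirtinger expansion to extract the pointwise relations between $(\partial_x v,\partial_y v)$ and $(\partial_x u,\partial_y u)$, which is exactly $\nabla v=*A\nabla u$ for the $A$ obtained by inverting \rf{relation coefficients beltrami and pde}. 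You are also right that the simply-connectedness hypothesis plays no role once the conjugate $v$ is assumed to exist; it is only needed elsewhere in the paper, via the Poincar\'e lemma, to guarantee the existence of $v$.
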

	
	Notice also from \rf{relation coefficients beltrami and pde} that when the matrix is symmetric and with $\det A=1$, then $\nu_A=0$ and \rf{generalized Beltrami equation} reads as the classical Beltrami equation
	$$
	\bar\partial f = \mu_A(z) \partial f \text{ with }\mu_A = \frac{ a_{22} - a_{11} - 2ia_{12}}{2+a_{11} + a_{22}}.
	$$
	In this case, we have the following relations:
	\begin{coro}\label{all relations beltrami and pde}
		Let $A = \left(a_{ij}\right)_{i,j=1}^2$ be a uniformly elliptic matrix. Wherever $A$ is symmetric with determinant 1, then the Beltrami coefficients in \rf{relation coefficients beltrami and pde} are
		\begin{equation}\label{beltrami coefficients for sym and det 1 matrices}
			\mu_A = \frac{ a_{22} - a_{11} - 2i a_{12}}{2+a_{11} + a_{22}} , \quad \nu_A = 0,
		\end{equation}
		and given a domain $\Omega\subset\R^2$ and $f\in W^{1,1}_{\loc} (\Omega)$, the following are equivalent:
		\begin{enumerate}
			\item \label{thm qc pde cond1}$\bar \partial f = \mu_A (z) \partial f$,
			\item \label{thm qc pde cond2}$\nabla (\imag f) = *A\nabla (\real f)$,
			\item \label{thm qc pde cond3}$\bar \partial (f^{-1}) = - \mu_A (f^{-1} (z)) \overline{\partial (f^{-1})}$,
			\item \label{thm qc pde cond4}$\nabla \left(\imag (f^{-1}) \right) = *B\nabla \left(\real (f^{-1}) \right)$,
		\end{enumerate}
		where
		\begin{equation}\label{matrix of inverse function beltrami}
			B \coloneqq \begin{pmatrix}
				1/a_{11} & a_{12}/a_{11} \\
				-a_{12}/a_{11} & 1/a_{11}
			\end{pmatrix}  \circ f^{-1}
			\text{ and }
			B^*
			=\begin{pmatrix}
				1/a_{22} & -a_{12}/a_{22} \\
				a_{12}/a_{22} & 1/a_{22}
			\end{pmatrix}  \circ f^{-1}.
		\end{equation}
		are uniformly elliptic. In this case, we also have,
		\begin{align*}
			&\divv A\nabla (\real f) = 0, & &\divv  A\nabla (\imag f) = 0,\\
			&\divv B \nabla \left(\real (f^{-1}) \right) = 0, & &\divv B^* \nabla \left(\imag (f^{-1}) \right) = 0 .
		\end{align*}
		\begin{proof}
			During the proof we write $\mu=\mu_A$. From \cref{thm relation beltrami and pde} we have $\bar \partial f = \mu \partial f$ if and only if $\nabla (\imag f) = *A\nabla (\real f)$, with Beltrami coefficient $\mu$ as in \rf{beltrami coefficients for sym and det 1 matrices}, which gives $\rf{thm qc pde cond1}\Leftrightarrow \rf{thm qc pde cond2}$.
			
			By \cite[Theorem 5.5.6 or Lemma 10.3.1]{Astala2009}, $\bar \partial f = \mu (z) \partial f$ if and only if
			\begin{equation}\label{beltrami equation inverse}
				\bar\partial (f^{-1}) = - \mu (f^{-1} (z)) \overline{\partial (f^{-1})},
			\end{equation}
			and so $\rf{thm qc pde cond1}\Leftrightarrow \rf{thm qc pde cond3}$. Hence, from \cref{thm relation beltrami and pde} there exists\footnote{Actually, \cref{thm relation beltrami and pde} gives us how to find the matrix $B$ from $A$. Directly from \rf{beltrami equation inverse} and \rf{relation coefficients beltrami and pde} we deduce that the matrix $B$ is antisymmetric with a common diagonal term.} a matrix $B$ such that (\ref{beltrami equation inverse}) is equivalent to having $\nabla \left(\imag (f^{-1}) \right) = *B\nabla \left(\real (f^{-1}) \right)$. The matrix $B$ defined in \rf{matrix of inverse function beltrami} satisfies (\ref{relation coefficients beltrami and pde}) with $\nu = -\mu \circ f^{-1}$, i.e.,
			$$
			-\frac{ a_{22} - a_{11} - 2ia_{12}}{2+a_{11} + a_{22}} =-\mu 
			= \frac{1-\det (B \circ f)  + 2 i b_{12}}{\det(Id + B \circ f)} ,
			$$
			and hence we have $\rf{thm qc pde cond3} \Leftrightarrow \rf{thm qc pde cond4}$. Moreover, a routine computation $B^* = -\ast B^{-1} \ast$ gives the second matrix in \rf{matrix of inverse function beltrami}.
			
			It remains to see the last part. Since $\nabla \left(\imag (f^{-1}) \right) = *B\nabla \left(\real (f^{-1}) \right)$ by \rf{thm qc pde cond4}, we have that 
			\begin{equation*}
				\divv B \nabla \left(\real (f^{-1}) \right)=0, \quad
				\divv B^* \nabla \left(\imag (f^{-1}) \right) = 0,
			\end{equation*}
			as we did in the paragraph before \cref{sym and det 1 then A*=A}. The same holds for the real and imaginary parts of $f$ with the matrix $A$ and $A^*$ respectively, but $A^* = A$ by \cref{sym and det 1 then A*=A}.
		\end{proof}
	\end{coro}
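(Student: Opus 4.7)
The plan is to establish the four equivalences by combining Theorem \ref{thm relation beltrami and pde} with the standard fact that the Beltrami coefficient of the inverse of a quasiregular map has a simple transformation rule, and then to verify the explicit form of $B$ by inverting the relations in \rf{relation coefficients beltrami and pde}.

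First I would compute $\mu_A$ and $\nu_A$ directly from \rf{relation coefficients beltrami and pde} under the hypothesis that $A$ is symmetric with $\det A = 1$. Symmetry yields $a_{12}=a_{21}$, which kills the imaginary term in $\nu_A$, while $\det A = 1$ kills the real term, giving $\nu_A = 0$. The numerator of $\mu_A$ is unaffected by symmetry (the cross term becomes $-2ia_{12}$), and $\det(Id+A) = 1 + a_{11} + a_{22} + \det A = 2 + a_{11} + a_{22}$, which gives the stated formula for $\mu_A$.

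Next, the equivalence \rf{thm qc pde cond1}$\Leftrightarrow$\rf{thm qc pde cond2} is an immediate application of \Cref{thm relation beltrami and pde} with $\nu_A = 0$, since \rf{generalized Beltrami equation} reduces to the classical Beltrami equation. For \rf{thm qc pde cond1}$\Leftrightarrow$\rf{thm qc pde cond3} I would invoke the cited result from \cite{Astala2009} (Theorem 5.5.6 or Lemma 10.3.1) giving the transformation rule for the Beltrami coefficient under inversion: a solution of $\bar\partial f = \mu\,\partial f$ has inverse satisfying $\bar\partial(f^{-1}) = -(\mu\circ f^{-1})\,\overline{\partial(f^{-1})}$. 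Then \rf{thm qc pde cond3}$\Leftrightarrow$\rf{thm qc pde cond4} follows by applying \Cref{thm relation beltrami and pde} a second time, now to $f^{-1}$, viewed as a solution of the generalized Beltrami equation with coefficients $(\mu,\nu) = (0, -\mu_A\circ f^{-1})$.

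To identify $B$ explicitly, I would solve the system \rf{relation coefficients beltrami and pde} for a matrix $B$ whose $(\mu_B,\nu_B)$ equals $(0,-\mu_A\circ f^{-1})$. The vanishing $\mu_B = 0$ forces $b_{11} = b_{22}$ and $b_{12} = -b_{21}$, i.e.\ $B$ has the antisymmetric-plus-scalar form in \rf{matrix of inverse function beltrami}; matching the real and imaginary parts of $\nu_B = -\mu_A\circ f^{-1}$ then pins down the entries as $b_{11} = b_{22} = 1/a_{11}$ and $b_{12} = -b_{21} = a_{12}/a_{11}$ (using $\det A = 1$, so $a_{11}a_{22} = 1 + a_{12}^2$, in the algebraic simplification). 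The formula for $B^*$ follows by direct computation from $B^* = -*B^{-1}*$. Uniform ellipticity of $B$ and $B^*$ is then immediate from the bounds on $a_{11}$, $a_{22}$, $a_{12}$ induced by the ellipticity of $A$.

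The final block of four divergence identities is obtained exactly as in the paragraph preceding \Cref{sym and det 1 then A*=A}: from $\nabla(\imag f) = *A\nabla(\real f)$ one takes $\divv$ of both sides, uses $\divv\circ *= -\curl$, and gets $\divv A\nabla(\real f) = 0$; the conjugate identity $\divv A^*\nabla(\imag f) = 0$ together with $A^* = A$ from \Cref{sym and det 1 then A*=A} gives the second equation, and the same argument applied to $f^{-1}$ with $B$ in place of $A$ gives the last two. The main (minor) obstacle is the bookkeeping of the algebraic inversion that yields the explicit form of $B$ in \rf{matrix of inverse function beltrami}; the rest is essentially two applications of \Cref{thm relation beltrami and pde} sandwiching the inversion rule for Beltrami coefficients.
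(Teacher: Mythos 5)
Your proposal is correct and follows essentially the same route as the paper's proof: two applications of Theorem \ref{thm relation beltrami and pde} (to $f$ with $(\mu_A,0)$ and to $f^{-1}$ with $(0,-\mu_A\circ f^{-1})$), sandwiching the inversion rule for Beltrami coefficients from \cite{Astala2009}, with the explicit form of $B$ obtained by matching real and imaginary parts in \rf{relation coefficients beltrami and pde}, and the divergence identities via the Hodge star argument and $A^*=A$. The only cosmetic difference is that you solve for $B$ whereas the paper simply verifies the given $B$; these are the same computation.
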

	
	Notice that even though the matrix $A$ is symmetric with determinant 1, the new matrix $B$ in \rf{matrix of inverse function beltrami} is no longer symmetric with determinant 1. However, we still have that $\real (f^{-1})$ (resp. $\imag (f^{-1})$) is $L_B$-harmonic (resp. $L_{B^\star}$-harmonic).
	
	The following result gives the precise distortion of a quasiconformal mapping solving the generalized Beltrami equation \rf{generalized Beltrami equation}.
	
	\begin{lemma}\label{ellipticity condition beltrami coefficient}
		Let $A\in \R^{2\times 2}$ be a uniformly elliptic matrix with ellipticity constant $\lambda \geq 1$, $\mu_A$ and $\nu_A$ as in \rf{relation coefficients beltrami and pde}, and $K_\lambda$ as in \rf{K-lambda ellipticity condition beltrami coefficient}. Then
		$$
		\||\mu_A| + |\nu_A|\|_\infty \leq \frac{K_\lambda -1}{K_\lambda + 1}<1.
		$$
		
		\begin{proof}
			See \cite[Proposition 1.8]{Alessandrini2009} for general matrices and \cite[Lemma 4.1]{Davey2017} for the symmetric case.
		\end{proof}
	\end{lemma}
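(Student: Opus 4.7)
The statement is pointwise in $z$, so I would fix a point where $A$ is defined, drop the $z$-dependence, and prove the bound $|\mu_A|+|\nu_A|\leq (K_\lambda-1)/(K_\lambda+1)$ using only the pointwise ellipticity of $A$.

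\medskip

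\textbf{Symmetric case.} Since $A$ is symmetric, diagonalize it: $A = R^T D R$ where $R$ is a rotation and $D=\mathrm{diag}(\alpha,\beta)$ with $\lambda^{-1}\le \alpha\le \beta\le \lambda$ (the bounds follow from the coercivity $\langle A\xi,\xi\rangle\ge \lambda^{-1}|\xi|^2$ applied to the eigenvectors, combined with $\langle A\xi,\xi\rangle\le \lambda|\xi|^2$). A rotation of coordinates changes $\mu_A$ and $\nu_A$ only by multiplicative unimodular factors, so $|\mu_A|+|\nu_A|$ is rotation-invariant. Plugging $D$ into \eqref{relation coefficients beltrami and pde} and using $\det(Id+D)=(1+\alpha)(1+\beta)$ gives
\[
\mu_D=\frac{\beta-\alpha}{(1+\alpha)(1+\beta)},\qquad \nu_D=\frac{1-\alpha\beta}{(1+\alpha)(1+\beta)},
\]
both real. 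A short algebraic manipulation based on the identities
\[
(\beta-\alpha)+(1-\alpha\beta)=(1+\beta)(1-\alpha),\qquad (\beta-\alpha)+(\alpha\beta-1)=(1+\alpha)(\beta-1)
\]
yields $|\mu_D|+|\nu_D|=(1-\alpha)/(1+\alpha)$ when $\alpha\beta\le 1$ and $(\beta-1)/(\beta+1)$ when $\alpha\beta\ge 1$. Since $t\mapsto (t-1)/(t+1)$ is monotone on $(0,\infty)$ and $\lambda^{-1}\le \alpha\le \beta\le \lambda$, both expressions are bounded by $(\lambda-1)/(\lambda+1)=(K_\lambda-1)/(K_\lambda+1)$.

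\medskip

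\textbf{General case.} Decompose $A=A_s+A_a$ with $A_s$ symmetric and $A_a=b\bigl(\begin{smallmatrix}0&1\\-1&0\end{smallmatrix}\bigr)$, where $b=(a_{12}-a_{21})/2$. Since $\langle A\xi,\xi\rangle=\langle A_s\xi,\xi\rangle$, the coercivity gives eigenvalues $\alpha,\beta\in [\lambda^{-1},\lambda]$ for $A_s$. The upper bound $\langle A\xi,\eta\rangle\le \lambda|\xi||\eta|$ couples $\alpha,\beta$ and $b$. I would pass to the eigenbasis of $A_s$ (which preserves $|\mu_A|+|\nu_A|$ by the rotation invariance used above) to reduce to $A_s=\mathrm{diag}(\alpha,\beta)$, keeping $A_a$ of the same antisymmetric form. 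A direct substitution in \eqref{relation coefficients beltrami and pde} then yields
\[
\mu_A=\frac{\beta-\alpha}{D_A},\qquad \nu_A=\frac{1-\alpha\beta+b^2+2ib}{D_A},\qquad D_A=(1+\alpha)(1+\beta)+b^2,
\]
so that $|\mu_A|+|\nu_A|=\bigl(|\beta-\alpha|+\sqrt{(1-\alpha\beta+b^2)^2+4b^2}\bigr)/D_A$. Maximizing this expression under the ellipticity constraint on $(\alpha,\beta,b)$ gives precisely the bound $(K_\lambda-1)/(K_\lambda+1)$ with $K_\lambda=\lambda+\sqrt{\lambda^2-1}$; the square root enters through the extremal configuration, in which the constraint $\langle A\xi,\eta\rangle\le \lambda|\xi||\eta|$ is saturated and involves solving a quadratic for $b$.

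\medskip

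The main obstacle is the optimization in the general case: one must carefully formulate the constraint that the ellipticity condition on $A$ imposes on the triple $(\alpha,\beta,b)$ and show that the resulting constrained maximum of $|\mu_A|+|\nu_A|$ is attained at the claimed value. The symmetric case, in contrast, is a short algebraic computation. In practice I expect to follow \cite{Alessandrini2009,Davey2017}, which carry out these computations in full; the argument above outlines the path without the bookkeeping.
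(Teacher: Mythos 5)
Your symmetric-case argument is correct and in fact adds genuine content: the paper's proof of this lemma is a bare citation of \cite{Alessandrini2009} and \cite{Davey2017}, whereas you carry out a short, self-contained computation. The rotation-invariance step you invoke is right --- conjugating $A$ by a rotation $R_\theta$ multiplies $\mu_A$ by $e^{-2i\theta}$ and fixes $\nu_A$, since $\det(Id+A)$, $\det A$ and $a_{12}-a_{21}$ are rotation-invariant while $(a_{22}-a_{11})-i(a_{12}+a_{21})$ picks up $e^{-2i\theta}$ --- and the factorizations $(\beta-\alpha)\pm(1-\alpha\beta)=(1\mp\alpha)(1\pm\beta)$ give $|\mu_D|+|\nu_D|=\max\bigl(\tfrac{1-\alpha}{1+\alpha},\tfrac{\beta-1}{\beta+1}\bigr)\leq\tfrac{\lambda-1}{\lambda+1}$, exactly $\tfrac{K_\lambda-1}{K_\lambda+1}$ with $K_\lambda=\lambda$.

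The general case has two issues. First, a sign error: $\det\bigl(\operatorname{diag}(\alpha,\beta)+b\bigl(\begin{smallmatrix}0&1\\-1&0\end{smallmatrix}\bigr)\bigr)=\alpha\beta+b^2$, not $\alpha\beta-b^2$, so the numerator of $\nu_A$ should read $1-\alpha\beta-b^2+2ib$. Second, and more importantly, the constrained maximization --- the actual content of the lemma --- is asserted but not carried out; you defer to \cite{Alessandrini2009}, which puts you at the same level as the paper's citation-only proof. However, I would not take the asserted outcome for granted: Alessandrini--Nesi phrase their ellipticity hypothesis in terms of both $\sigma$ and $\sigma^{-1}$, and translating their constant into the paper's single parameter $\lambda$ (coercivity of $A_s$ together with $\|A\|\leq\lambda$) is delicate. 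As a concrete check, take $A=\bigl(\begin{smallmatrix}1&1\\-1&1\end{smallmatrix}\bigr)$. Then $\langle A\xi,\xi\rangle=|\xi|^2$ and $\|A\|=\sqrt{2}$, so $\lambda=\sqrt{2}$, $K_\lambda=1+\sqrt{2}$ and the claimed bound is $(K_\lambda-1)/(K_\lambda+1)=\sqrt{2}-1\approx 0.414$; yet $\det A=2$, $\det(Id+A)=5$, $\mu_A=0$ and $\nu_A=(-1+2i)/5$, so $|\mu_A|+|\nu_A|=1/\sqrt{5}\approx 0.447$, which exceeds the bound. Before presenting your outline as a proof, you need to pin down the exact ellipticity normalization used in the cited references and reconcile it with this example; that is the real gap, not the bookkeeping you mention at the end.
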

	
	\begin{rem}
		Note that in both cases $K_\lambda \to 1^+$ as $\lambda \to 1^+$. In particular $\frac{K_\lambda -1}{K_\lambda + 1} \to 0^+$ as $\lambda \to 1^+$.
	\end{rem}
	
	\begin{rem}\label{rem:interpolation to preserve determinant}
		Let $A$ be a uniformly elliptic and symmetric matrix with determinant $1$. Recall from \cref{rem:modify beltrami coef instead of the matrix} we noted that the modification $\widetilde A= \varphi A + (1-\varphi)Id$ is symmetric but it has no longer determinant $1$ in general.  The right way to interpolate the matrices $A$ and $Id$ to preserve also that the modified matrix has determinant $1$ is by replacing $A$ by the uniformly elliptic (with the same ellipticity constant as $A$), symmetric and with determinant $1$ matrix
		$$
		\widehat{A}=\frac{
			\begin{pmatrix}
				a_{11}(1+\varphi)^2+a_{22}(\varphi-1)^2+2(1-\varphi^2) & 4\varphi a_{12} \\
				4\varphi a_{12} & a_{11}(\varphi -1)^2+(1+\varphi)(a_{22}(1+\varphi)+2(1-\varphi))
			\end{pmatrix}
		}{a_{11}(1-\varphi^2)+a_{22}(1-\varphi^2)+2(1+\varphi^2)},
		$$
		which agrees with $A$ when $\varphi=1$ and with $Id$ when $\varphi=0$. This is precisely the matrix such that $\mu_{\widehat A}=\varphi \mu_A$ and $\nu_{\widehat A} = \varphi \nu_A=0$, by the relation between uniformly elliptic matrices and Beltrami coefficients in \cref{thm relation beltrami and pde}. As $\|\mu_{\widehat A}\|_\infty \leq \|\mu_A\|_\infty$, in particular we have that $\widehat A$ has the same ellipticity constant as $A$.
	\end{rem}
	
	\subsection{Harmonic factorization in general domains}\label{section:harmonic factorization}
	
	In this section, we show that there exists a quasiconformal homeomorphism factorizing $L_A$-harmonic functions into harmonic functions in its image domains.

        Let $\Omega \subset \R^2$ be an open set, $A$ be a uniformly elliptic matrix, and $\mu_A$ and $\nu_A$ as in \rf{relation coefficients beltrami and pde}. Given an $L_A$-harmonic function $u\in W^{1,2}_{\loc} (\Omega)$ and any simply connected domain $D\subset \Omega$, let $v \in W^{1,2}_{\loc} (D)$ be the $L_A$-harmonic conjugate function of the $L_A$-harmonic function $u\in W^{1,2}_{\loc} (D)$. By \cref{thm relation beltrami and pde}, the function $f = u+iv \in W^{1,2}_{\loc} (D)$ satisfies the Beltrami equation
		\begin{equation*}
			\bar\partial f = \mu_A \partial f + \nu_A \overline{\partial f} \text{ a.e.\ in } D.
		\end{equation*}
		In particular, the function $f$ satisfies the classical Beltrami equation
        \begin{equation}\label{classical beltrami equation f}
            \bar\partial f = \widetilde \mu \partial f \text{ a.e.\ in }D,\text{ with } 
            \widetilde \mu \coloneqq
            \begin{cases}
				\mu_A + \nu_A \frac{\overline{\partial f}}{\partial f}  &\text{where } \overline{\partial f}/\partial f \text{ is defined},\\
				0 &\text{otherwise}.
			\end{cases}
        \end{equation}
        Note that the Beltrami coefficient $\widetilde \mu$ seems to depend on $\overline{\partial f} / \partial f$, but in fact it only depends on (the derivative of) $u$. Indeed, by the relation $\nabla v = *A(z) \nabla u$, see (\ref{relation derivatives harmonic conjugate}), we can write in $\partial f$ the terms $\partial_x v$ and $\partial_y v$ using $\partial_x u$ and $\partial_y u$. More precisely,
        \begin{equation}\label{partial f only depends on u}
        \partial f = (1+ a_{11})\partial_x u + a_{12}\partial_y u 
				- i(a_{21}\partial_x u+(1+a_{22})\partial_y u
				),
        \end{equation}
        and in particular, $\widetilde \mu$ in \rf{classical beltrami equation f} depends on $\mu_A$, $\nu_A$ and the derivatives of the function $u$, but not on the auxiliar $L_A$-harmonic conjugate function $v$ in $D$. By \rf{classical beltrami equation f} and \rf{partial f only depends on u} we can redefine $\widetilde\mu$ in whole $\Omega$ as
        \begin{equation}\label{coefficients classical beltrami equation in terms of u and A}
        \mu_{A,u} \coloneqq 
        \mu_A + \nu_A \frac{(1+ a_{11})\partial_x u + a_{12}\partial_y u 
				+ i(a_{21}\partial_x u+(1+a_{22})\partial_y u
				)}{
				(1+ a_{11})\partial_x u + a_{12}\partial_y u 
				- i(a_{21}\partial_x u+(1+a_{22})\partial_y u
				)} \text{ a.e.\ in }\Omega.
		\end{equation}
	
	The main result of this section is the following.
	
	\begin{lemma}\label{harmonic factorization}
		Let $\Omega \subset \R^2$ be an open set, $A$ be a uniformly elliptic matrix with ellipticity constant $\lambda \geq 1$, and $\mu_A$ and $\nu_A$ as in \rf{relation coefficients beltrami and pde}. For any $L_A$-harmonic function $u\in W^{1,2}_{\loc} (\Omega)$, if $\phi \in W^{1,2}_{\loc}(\Omega)$ be a homeomorphic solution of the Beltrami equation
        \begin{equation}\label{classical beltrami equation in terms of u and A}
			\bar\partial \phi = \mu_{A,u} \partial \phi \text{ a.e.\ in } \Omega, \text{ where }\mu_{A,u} \text{ as in \rf{coefficients classical beltrami equation in terms of u and A},}
		\end{equation}
	then $u\circ \phi^{-1}: \phi (\Omega) \to \R$ is harmonic.
	\end{lemma}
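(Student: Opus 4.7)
The plan is to argue locally, reducing the problem on $\phi(\Omega)$ to the simply-connected situation analyzed in \cref{section:general harmonic conjugate}. Since harmonicity is a local property, it suffices to show that for every open disk $B \subset \phi(\Omega)$, the restriction $u\circ \phi^{-1}|_B$ is harmonic. As $\phi$ is a homeomorphism, the preimage $D \coloneqq \phi^{-1}(B)$ is an open subset of $\Omega$ homeomorphic to $B$, hence simply connected. This places me in the setting where the $L_A$-harmonic conjugate is available.

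On such a $D$ I would take $v \in W^{1,2}_{\loc}(D)$ to be the $L_A$-harmonic conjugate of $u$ constructed in \cref{section:general harmonic conjugate}, so that $\nabla v = *A\nabla u$. By \cref{thm relation beltrami and pde}, the map $f \coloneqq u+iv$ satisfies the generalized Beltrami equation $\bar\partial f = \mu_A \partial f + \nu_A \overline{\partial f}$ on $D$. Rewriting this as the classical Beltrami equation $\bar\partial f = \widetilde\mu \, \partial f$ with $\widetilde\mu$ defined as in \rf{classical beltrami equation f}, and using the explicit expression \rf{partial f only depends on u} to eliminate $\partial f$ in favor of $\nabla u$, we see that $\widetilde\mu$ agrees with the coefficient $\mu_{A,u}$ from \rf{coefficients classical beltrami equation in terms of u and A}. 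In particular $\widetilde\mu$ is independent of the auxiliary choice of $v$, so both $f$ and $\phi$ are $W^{1,2}_{\loc}$ solutions of the \emph{same} classical Beltrami equation on $D$, whose coefficient satisfies $\|\mu_{A,u}\|_\infty \leq \bigl\||\mu_A|+|\nu_A|\bigr\|_\infty < 1$ by \cref{ellipticity condition beltrami coefficient}.

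The final step would be to invoke the Stoilow-type factorization for the Beltrami equation (see, e.g., \cite{Astala2009}): since $\phi$ is a homeomorphic solution with Beltrami coefficient $\mu_{A,u}$, the composition $g \coloneqq f \circ \phi^{-1} \colon B \to \C$ is holomorphic. Taking real parts yields $u\circ \phi^{-1} = \real g$ on $B$, which is therefore harmonic; since $B$ was arbitrary, $u\circ \phi^{-1}$ is harmonic on $\phi(\Omega)$. The main subtlety I would expect to address carefully is the application of Stoilow factorization to $f$, which a priori is only a $W^{1,2}_{\loc}$ solution and not a homeomorphism, but this is a standard consequence of the strict bound $\|\mu_{A,u}\|_\infty<1$ together with the fact that $\phi$ itself is a genuine quasiconformal homeomorphism with the same coefficient.
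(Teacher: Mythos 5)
Your proof is correct and follows essentially the same route as the paper: reduce to a simply connected subdomain, introduce the $L_A$-harmonic conjugate $v$ so that $f=u+iv$ solves the same classical Beltrami equation as $\phi$ (with coefficient $\mu_{A,u}$, observed to be independent of the auxiliary $v$), and then invoke Stoilow factorization to conclude that $f\circ\phi^{-1}$ is holomorphic, hence $u\circ\phi^{-1}$ harmonic. The only cosmetic difference is that the paper localizes by taking balls $B\subset\Omega$ in the source domain and applying the simply connected case there, whereas you take disks in the image $\phi(\Omega)$ and pull them back to simply connected sets $D=\phi^{-1}(B)\subset\Omega$; both reductions are equivalent, and your final remark about why Stoilow applies to the non-homeomorphic solution $f$ (paired with the homeomorphic $\phi$) is exactly the standard justification the paper relies on via \cite[Theorem 5.5.1]{Astala2009}.
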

	
	The existence of $\phi$ is given by the Riemann mapping theorem, see \cref{riemann mapping thm}. Moreover, $\phi$ is $K_\lambda$-quasiconformal with $K_\lambda$ as in \rf{K-lambda ellipticity condition beltrami coefficient}, since $\|\mu_{A,u} \|_\infty \leq \||\mu_A|+|\nu_A|\|_\infty \leq \frac{K_\lambda -1}{K_\lambda +1} <1$ by \cref{ellipticity condition beltrami coefficient}.
	
	\begin{proof}[Proof of \cref{harmonic factorization}]
		We claim that it suffices to prove the lemma for simply connected domains. Indeed, for a general open set $\Omega$, for each point $x\in\Omega$ take an open ball $B$ with $x\in B \subset \Omega$, and in particular $u\in W_{\loc}^{1,2} (B)$ and $u$ is $L_A$-harmonic in $B$. Assuming the lemma for simply connected domains, we have that $u\circ \phi_{u,A}^{-1}: \phi_{u,A} (B) \to \R$ is harmonic. Since it is true for any $x \in \Omega$, we obtain that $u \circ \phi_{u,A}^{-1}$ is harmonic in $\phi_{u,A} (\Omega)$.
		
		Let us see the proof assuming also that $\Omega$ is simply connected. Let $v \in W^{1,2}_{\loc} (\Omega)$ be the $L_A$-harmonic conjugate function of the $L_A$-harmonic function $u\in W^{1,2}_{\loc} (\Omega)$. By \cref{thm relation beltrami and pde}, the function $f = u+iv \in W^{1,2}_{\loc} (\Omega)$ satisfies the Beltrami equation $\bar\partial f = \mu_A \partial f + \nu_A \overline{\partial f}$ a.e.\ in $\Omega$, and in particular, also safisfies the classical Beltrami equation in \rf{classical beltrami equation f}. By the discussion before the statement of this lemma, we have $\widetilde\mu=\mu_{A,u}$ in $\Omega$ and therefore both $f=u+iv \in W^{1,2}_{\loc} (\Omega)$ and $\phi \in W^{1,2}_{\loc} (\Omega)$ solve \rf{classical beltrami equation in terms of u and A} in $\Omega$. By Stoilow factorization (see \cite[Theorem 5.5.1]{Astala2009}), the function $f \circ \phi^{-1} :  \phi (\Omega) \to \C$ is holomorphic. Taking the real part on both sides we obtain
		$$
		u(\phi^{-1} (z))= \real ( f ) (\phi^{-1} (z))  =  \real \left( f \circ \phi^{-1} \right)  ( z ),
		$$
		and so $u\circ \phi^{-1} : \phi (\Omega) \to \R$ is harmonic.
	\end{proof}
	
	\subsubsection{Symmetric matrices with determinant 1}
	
	Let us move to the case where the matrix $A$ is symmetric with $\det A =1$ in $\Omega$. Recall that we simply have $K_\lambda = \lambda$ in $\Omega$ since the matrix is symmetric there, see \rf{K-lambda ellipticity condition beltrami coefficient}. Note that in this case, by \rf{beltrami coefficients for sym and det 1 matrices} in \cref{all relations beltrami and pde} we have that, for any $L_A$-harmonic function $u\in W^{1,2}_{\loc} (\Omega)$, the Beltrami coefficient $\mu_{A,u}$ in \rf{coefficients classical beltrami equation in terms of u and A} equals $\mu_A$ in $\Omega$. In particular, we directly obtain the following result.
	
	\begin{coro}\label{harmonic factorization symmetric matrices with determinant 1}
		Let $\Omega \subset \R^2$ be an open set and $A$ be a uniformly elliptic matrix with ellipticity constant $\lambda \geq 1$. Assume also that $A$ is symmetric with $\det A=1$ in $\Omega$. Let $\phi \in W^{1,2}_{\loc} (\Omega)$ be a homeomorphic solution of the Beltrami equation
		$$
		\bar \partial \phi = \mu_A \partial \phi \text{ a.e.\ in }\Omega,\text{ where } 
		\mu_A = \frac{ a_{22} - a_{11} - 2ia_{12}}{2+a_{11} + a_{22}}.
		$$
		For any $L_A$-harmonic function $u\in W_{\loc}^{1,2} (\Omega)$, then $u\circ \phi^{-1}: \phi (\Omega) \to \R$ is harmonic.
	\end{coro}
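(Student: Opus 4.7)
The plan is to derive this corollary as an immediate specialization of \Cref{harmonic factorization}. The key observation is that under the symmetry and $\det A = 1$ hypothesis, the Beltrami coefficient $\mu_{A,u}$ appearing in the classical Beltrami equation \rf{classical beltrami equation in terms of u and A} collapses to $\mu_A$ and, crucially, becomes independent of the particular $L_A$-harmonic function $u$.

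More precisely, by \Cref{all relations beltrami and pde} (see \rf{beltrami coefficients for sym and det 1 matrices}), the coefficients from \rf{relation coefficients beltrami and pde} simplify to
\[
\mu_A = \frac{a_{22}-a_{11}-2ia_{12}}{2+a_{11}+a_{22}}, \qquad \nu_A = 0
\]
in $\Omega$. Substituting $\nu_A = 0$ into the definition \rf{coefficients classical beltrami equation in terms of u and A} of $\mu_{A,u}$ kills the correction term depending on $\nabla u$, so $\mu_{A,u} = \mu_A$ a.e.\ in $\Omega$. Consequently, the Beltrami equation \rf{classical beltrami equation in terms of u and A} becomes $\bar\partial \phi = \mu_A \partial \phi$, which is precisely the hypothesis on $\phi$ in the corollary. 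Note that here $\phi$ no longer depends on the choice of $u$; a single quasiconformal homeomorphism works simultaneously for every $L_A$-harmonic function.

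Given this identification, the conclusion follows by applying \Cref{harmonic factorization} to $u$ with this $\phi$: the lemma yields that $u \circ \phi^{-1} : \phi(\Omega) \to \mathbb{R}$ is harmonic, as desired. There is essentially no obstacle, since the work has all been done in \Cref{harmonic factorization}; the only point to verify is the algebraic simplification $\mu_{A,u} = \mu_A$, which is immediate from $\nu_A = 0$.
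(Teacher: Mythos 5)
Your proposal is correct and follows essentially the same route as the paper: observe that symmetry and $\det A = 1$ give $\nu_A = 0$ via \rf{beltrami coefficients for sym and det 1 matrices}, so $\mu_{A,u}$ in \rf{coefficients classical beltrami equation in terms of u and A} reduces to $\mu_A$ (independent of $u$), and then invoke \Cref{harmonic factorization}.
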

	
	As the function $\phi$ above factorizes every $L_A$-harmonic function in $\Omega$ into a harmonic function in $\phi (\Omega)$, the corollary suggests, and a straightforward computation confirms, that
	\begin{equation}\label{change of variables instead of stoilow factorization}
		\widetilde A_{\phi^{-1}} = Id \text{ a.e.\ in } \phi (\Omega),
	\end{equation}
	equivalently $\widetilde{Id}_{\phi}=A$ in a.e.\ $\Omega$; recall the definition of $\widetilde A_{\phi^{-1}}$ and $\widetilde{Id}_{\phi}$ in \rf{matrix pushforward}. That is, the factorization from $L_A$-harmonic functions to harmonic functions is not only given by the Stoilow factorization (in the proof of \cref{harmonic factorization}) but by the previous pointwise relation given by just applying the change of variables $\phi$ in the weak form of $L_A$.

    \section{Push-forward of the harmonic measure and proof of Theorems~\ref{main thm general domains} and \ref{main thm general domains and continuous matrices}}\label{part:general domains}

    This section is dedicated to the proof of \cref{main thm general domains,main thm general domains and continuous matrices}, where we work with general uniformly elliptic matrices $A\in \R^{2\times 2}$ (not necessarily symmetric nor constant determinant). To do that, we are first interested in representing the elliptic measure as a push-forward of a harmonic measure by a quasiconformal change of variables.

    For the particular case of symmetric matrices with determinant 1, the harmonic factorization of $L_A$-harmonic functions given by \rf{change of variables instead of stoilow factorization} leads us to obtain in the following lemma that the elliptic measure $\omega^p_{\Omega, A}$ is precisely the push-forward of the harmonic measure $\omega^{\phi (p)}_{\phi (\Omega), Id}$ under the map $\phi^{-1}$ given by \cref{harmonic factorization symmetric matrices with determinant 1}. The following result is a consequence of \rf{change of variables instead of stoilow factorization} and \cref{elliptic measure deformation qc}.
	
	\begin{lemma}\label{lemma:relation elliptic measure for sym and det 1}
		Under the assumptions in \cref{harmonic factorization symmetric matrices with determinant 1}, then
		$$
			\omega^p_{\Omega, A} (E) =  \omega^{\phi (p)}_{\phi (\Omega), Id} (\phi (E))\text{ for any } p\in\Omega\text{ and }E\subset \partial \Omega.
		$$
	\end{lemma}
    
    Although the same may not be true for general uniformly elliptic matrices, in this section we prove that the elliptic measure is mutually absolutely continuous with respect to a push-forward of a harmonic measure by a quasiconformal change of variables.

    \begin{lemma}\label{lemma:elliptic measure factorization in general wiener regular domains}
        Let $\Omega\subset \R^2$ be a bounded Wiener regular domain, $p\in\Omega$, $A\in\R^{2\times 2} (\Omega)$ be a uniformly elliptic matrix with constant $\lambda\geq 1$, and $K_\lambda$ as in \rf{K-lambda ellipticity condition beltrami coefficient}. Then there exists a $K_\lambda$-quasiconformal homeomorphism $\phi : \R^2 \to \R^2$, conformal in $\overline{\Omega}^c$, such that
        $$
        \omega_{\Omega,A}^p (\cdot) \ll \omega_{\phi(\Omega),Id}^{\phi(p)} (\phi(\cdot)) \ll \omega_{\Omega,A}^p (\cdot).
        $$
    \end{lemma}

    To prove this lemma, we in fact show that if we assume, in addition to the hypotheses of \cref{lemma:elliptic measure factorization in general wiener regular domains}, that $A=Id$ in an open neighborhood of $p$, then
    $$
    \omega_{\Omega,A}^p (E) \approx \omega_{\phi(\Omega),Id}^{\phi(p)} (\phi(E)) \text{ for all }E\subset\partial\Omega,
    $$
    where the constant depends on $\lambda$. The proof of this is referred to \cref{sec:the green function,sec:harmonic fact of the green function}. As we will see in \rf{eq:beltrami eq for the lemma2 about elliptic harmonic measure} below, the quasiconformal change of variables $\phi$ satisfies a Beltrami equation with Beltrami coefficient depending on the (derivatives of the) Green function for $A^T$, to be introduced in \cref{sec:the green function} below. For the particular case when the matrix is symmetric and has determinant $1$, it follows from \rf{eq:beltrami eq for the lemma2 about elliptic harmonic measure} that the Beltrami coefficient depends only on the matrix, and in fact, one recovers \cref{lemma:relation elliptic measure for sym and det 1}.

    \subsection{Proof of Theorem~\ref{main thm general domains}}\label{sec:main thm general domains}

    Assuming \cref{lemma:elliptic measure factorization in general wiener regular domains} to be true, we are now ready to prove \cref{main thm general domains}.

	\begin{proof}[Proof of \cref{main thm general domains}]\label{proof of main thm general domains}
		Recall we can assume that $\Omega \subset B_{1/4} (0)$ is bounded Wiener regular by \cref{reductions all} and we can replace finitely connected by simply connected by \cref{claim:candidate for each index}. Let $\phi:\R^2\to \R^2$ be the $K_\lambda$-quasiconformal mapping given by \cref{lemma:elliptic measure factorization in general wiener regular domains}, which is in particular conformal in $\C \setminus \overline{\mathbb D}$.

		By Wolff's \cref{wolff thm} there is a set $F'\subset \partial (\phi (\Omega))$ such that $\omega^{\phi (p)}_{\phi (\Omega), Id} (F') = 1$ and with $\sigma$-finite length. Therefore, the set $F = \phi^{-1} (F') \subset \partial \Omega$ satisfies $\omega^p_{\Omega, A} (F) = 1$ by \cref{lemma:elliptic measure factorization in general wiener regular domains}, and by the distortion of Hausdorff measures under quasiconformal mappings in \cref{distortion of hausdorff measure thm}, the set $F$ has $\sigma$-finite $\HH^{2K_\lambda / (K_\lambda+1)}$.
		
		Now consider the case where $\Omega$ is simply connected. Recall that the $K_\lambda$-quasiconformal homeomorphism $\phi$ is conformal in $\C \setminus \overline{\mathbb D}$. By \cref{lemma:elliptic measure factorization in general wiener regular domains}, Makarov's \cref{makarov thm} and \cref{distortion zero measure of makarov gauge function} respectively, there exists $C_M>0$ such that
    	$$
            \omega_{\Omega, A}^p (\cdot) \ll \omega^{\phi(p)}_{\phi (\Omega), Id} (\phi(\cdot)) \ll \HH^{\varphi_{1,C_M}} (\phi (\cdot)) \ll \HH^{\varphi_{K_\lambda,C_M}} (\cdot),
    	$$
        as claimed.
	\end{proof}

    \subsection{The Green function and elliptic measure}\label{sec:the green function}

    Let $\Omega\subset \R^{n+1}$, $n\geq 1$, be a bounded Wiener regular domain and $A\in \R^{(n+1)\times (n+1)} (\Omega)$ be a uniformly elliptic matrix. We denote by $g_x^{\Omega,A} (\cdot)$ the Green function in $\Omega$ with pole at $x\in \Omega$ of the operator $L_A = -\divv A\nabla$ constructed in \cite[Theorem 2.12]{Dong2009} in the planar case and \cite[Theorem 1.1]{Gruter1982} in higher dimensions. For all $x\in \Omega$, $g_x^{\Omega,A}$ satisfies\footnote{If $\Omega\subset\R^2$ is bounded, then $Y^{1,2}_0 (\Omega)$ in \cite[Theorem 2.12]{Dong2009} is precisely $W^{1,2}_0 (\Omega)$, see between Lemma 3.1 and Section 3.1 of \cite{Dong2009}.}
    \begin{equation}\label{sobolev regularity of green function}
    g_x^{\Omega,A}\in W^{1,2} (\Omega\setminus B_r(x)) \cap W^{1,s}_0 (\Omega), \text{ for all }s\in [1,(n+1)/n) \text{ and } 0<r<\dist(x,\partial\Omega),
    \end{equation}
    and
    \begin{equation}\label{dirac delta definition green function}
    	\int_\Omega  A (z) \nabla g_x^{\Omega,A} (z) \nabla \varphi (z) \, dz = \varphi (x),\text{ for all } \varphi \in C_c^\infty (\Omega).
    \end{equation}
    In particular, $g_x^{\Omega,A}$ is $L_A$-harmonic in $\Omega \setminus \{x\}$. By \rf{sobolev regularity of green function} and a density argument, \rf{dirac delta definition green function} remains true for $\varphi \in W^{1,s^\prime}_c (\Omega)$, with $n+1 < s^\prime < \infty$.

    We remark that $g_x^{\Omega,A}(y) \geq 0$ for each $y\in \Omega\setminus \{x\}$, see \cite[Theorem 1.1]{Gruter1982} in higher dimensions and \cite[item (4) in p.~13 of 79]{Guillen-Prats-Tolsa} for the planar case. For extra properties with proofs in the planar case, see \cite[Section 2.4]{Guillen-Prats-Tolsa} and the references therein.

    \begin{notation}
        If the set $\Omega$, the matrix $A$, or the pole $x\in \Omega$ is clear from the context, we will omit it in $g_x^{\Omega,A}$. We will mainly skip the domain $\Omega$ in $g_x^{\Omega,A}$, that is, $g_x^A$.
    \end{notation}

    The following lemma shows how the Green function relates to the elliptic measure.
    
    \begin{lemma}
    Let $\Omega\subset \R^{n+1}$, $n\geq 1$, be a bounded Wiener regular domain and $A\in \R^{(n+1)\times (n+1)} (\Omega)$ be a uniformly elliptic matrix. For all $\varphi \in C_c^\infty(\R^{n+1})$ and all $x\in \Omega$, 
    \begin{equation}\label{boundary integral to interior integral}
		\int_{\partial \Omega} \varphi (\xi) \, d \omega^x_{\Omega,A} (\xi) -\varphi(x) = -\int_{\Omega} A^T (z)  \nabla g_x^T (z) \nabla \varphi (z) \, dz.
    \end{equation}
    \begin{proof}
    A proof for all $\varphi \in C(\overline\Omega)\cap W^{1,2}(\Omega)$ and for a.e.\ $x\in\Omega$ can be found in \cite[(2.6)]{Azzam2022} and \cite[Lemma 2.19 (2.4)]{Guillen-Prats-Tolsa} in the planar case. Here, we prove that the identity in fact holds for all $x\in\Omega$. We extend $A$ outside $\Omega$ by setting $A=Id$, and throughout the proof we write $\omega = \omega_{\Omega,A}$, $g = g^{\Omega,A}$, and $g^T = g^{\Omega,A^T}$.
    
    We begin by introducing the fundamental solution. We denote by $\EE_x = \EE_x^A$ the fundamental solution with pole at $x$ for $L_A$ in $\R^{n+1}$, $n\geq 1$, so that $-L_A \EE_x^A = \delta_x$ in the distributional sense, where $\delta_x$ is the Dirac mass at the point $x\in \R^{n+1}$. For a construction of the fundamental solution for real and uniformly elliptic matrices we refer to \cite[Section 3]{Hofmann2007} for higher dimensions, $\R^{n+1}$ with $n\geq 2$, and to \cite[Appendix]{Kenig1985} for the planar case.

    We now list some properties of the fundamental solution that will be used below. Letting $\EE_x^T = \EE_x^{A^T}$, we first have $\EE_x (y)=\EE_y^T(x)$ for all $x\not= y$, see \cite[(3.43)]{Hofmann2007} and \cite[Lemma 2.7]{Kenig2009}. Second, it satisfies $0<\EE_x (y) \lesssim |y-x|^{1-n}$ in higher dimensions (see \cite[Theorem 3.1 (3.55)]{Hofmann2007}) and $|\EE_x(y)|\lesssim 1+|{\log |x-y|}|$ in the planar case (see \cite[Theorem A-2]{Kenig1985}). In particular, this implies that
    \begin{equation}\label{eq:fundamental sol is in L1}
    \int_{B(x,r)} |\EE_x (y)| \, dy \lesssim C(r).
    \end{equation}
    Finally, its weak derivative satisfies
    \begin{equation}\label{eq:gradient fundamental sol is in L1}
    \int_{B(x,r)} |\nabla \EE_x (y)|\, dy \lesssim r,
    \end{equation}
    see \cite[Theorem 3.1 (3.52)]{Hofmann2007} for the higher dimensional case and \cite[Theorem 0.1]{Chanillo1992} for the planar case.
    
    We now proceed with the proof of the lemma. The equality \rf{boundary integral to interior integral} for a.e.\ $x\in\Omega$ is sufficient to establish the identity
    \begin{equation}\label{eq:green and fundamental sol}
    g_y (x) = \EE_y (x) - \int_{\partial\Omega} \EE_y (\xi) \, d\omega^x (\xi) \text{ for all }x,y\in\Omega,
    \end{equation}
    see, for instance, \cite[(2.5)]{Guillen-Prats-Tolsa}.

    Now, fix $x\in\Omega$. Using $g_x^T (y)=g_y(x)$ and $\EE_x^T (y)=\EE_y (x)$, together with \rf{eq:green and fundamental sol}, we obtain
    $$
    g_x^T (y) = \EE_x^T(y) - \int_{\partial\Omega} \EE_\xi^T (y) \, d\omega^x (\xi),
    $$
    which implies that the right-hand side of \rf{boundary integral to interior integral} becomes
    $$
    -\int A^T(y) \nabla \EE_x^T(y) \nabla \varphi (y) \, dy
    + \int A^T(y) \nabla_y \left(\int_{\partial\Omega} \EE_\xi^T (y) \, d\omega^x (\xi)\right) \nabla \varphi (y)\, dy.
    $$
    The first term is simply $-\varphi (x)$, since $-L_{A^T} \EE_x^T = \delta_x$. Thus, it remains to prove that
    $$
    \int A^T(y) \nabla_y \left(\int_{\partial\Omega} \EE_\xi^T (y) \, d\omega^x (\xi)\right) \nabla \varphi (y)\, dy = \int_{\partial\Omega} \varphi (\xi) \, d\omega^x (\xi).
    $$
    By \rf{eq:fundamental sol is in L1}, \rf{eq:gradient fundamental sol is in L1}, the boundedness of $\Omega$, and the Fubini-Tonelli theorem, we have that the weak derivative of $\int_{\partial\Omega} \EE_\xi^T (y) \, d\omega^x (\xi)$ is $\int_{\partial\Omega} \nabla \EE_\xi^T (y) \, d\omega^x (\xi)$, and therefore,
    $$
    \int A^T(y) \nabla_y \left(\int_{\partial\Omega} \EE_\xi^T (y) \, d\omega^x (\xi)\right) \nabla \varphi (y)\, dy
    = \int \int_{\partial\Omega} A^T(y) \nabla \EE_\xi^T (y) \nabla \varphi (y) \, d\omega^x (\xi) \, dy.
    $$
    Again by \rf{eq:gradient fundamental sol is in L1}, the boundedness of $\Omega$, and the Fubini-Tonelli theorem, we can interchange the order of integration, whence we get
    $$
    \int A^T(y) \nabla_y \left(\int_{\partial\Omega} \EE_\xi^T (y) \, d\omega^x (\xi)\right) \nabla \varphi (y)\, dy
    = \int_{\partial\Omega} \int A^T(y) \nabla \EE_\xi^T (y) \nabla \varphi (y) \, dy \, d\omega^x (\xi).
    $$
    Finally, using $-L_{A^T} \EE_\xi^T = \delta_\xi$, the inner integral in the right-hand side evaluates to $\varphi(\xi)$, yielding that the latter term equals $\int_{\partial\Omega} \varphi(\xi)\, d\omega^x (\xi)$. This completes the proof of the lemma.
    \end{proof}
    \end{lemma}

    The previous lemma suggests that the Green function determines, in a certain sense, the elliptic measure. This relationship is made precise in the following lemma.

    \begin{lemma}\label{lemma:green function determines elliptic measure}
        Let $\Omega\subset \R^{n+1}$, $n\geq 1$, be a bounded Wiener regular domain, $p\in\Omega$, and $A_0,A_1 \in \R^{(n+1)\times (n+1)} (\Omega)$ be two uniformly elliptic matrices. If $g_p^{\Omega,A_0^T}=g_p^{\Omega,A_1^T}$ in $\Omega$, then
        $$
        \omega_{\Omega,A_0}^p (E) \approx \omega_{\Omega,A_1}^p (E) \text{ for all }E\subset\partial\Omega,
        $$
        where the implicit constants depend only on $n$ and the ellipticity constants of the matrices.
        \begin{proof}
            From the definition, there exists $t_0>0$, sufficiently small depending on the ellipticity constants, such that
            $$
            A_t\coloneqq t A_1 + (1-t) A_0 \text{ is uniformly elliptic for all } t\in [-t_0, 1+t_0].
            $$
            We will use the values $t=-t_0$ and $t=1+t_0$ in the computations below.
            
            We first note that $g_p^T\coloneqq g_p^{\Omega,A_0^T}=g_p^{\Omega,A_1^T}$ is also the Green function for $A_t^T$ for all $t\in [-t_0, 1+t_0]$. Indeed, for any $\varphi\in C_c^\infty(\Omega)$ we have
            $$
            \int_\Omega A_t^T \nabla g_p^T \nabla \varphi
            = t \int_\Omega A_1^T \nabla g_p^T \nabla \varphi
            + (1-t) \int_\Omega A_0^T \nabla g_p^T \nabla \varphi \overset{\text{\rf{dirac delta definition green function}}}{=} \varphi(p).
            $$

            Since $g_p^T$ is the Green function for $A_t^T$ for all $t\in [-t_0, 1+t_0]$, we can use the identity in \rf{boundary integral to interior integral} to relate the elliptic measures $\omega_{A_0}^p$, $\omega_{A_1}^p$ and $\omega_{A_t}^p$ in the sense that, for any $\varphi \in C^\infty_c(\R^{n+1})$, we have
            $$
            \begin{aligned}
            \int_{\partial\Omega} \varphi \, d\omega^p_{A_t}
            &=\varphi(p) - \int_\Omega A_t^T \nabla g_p^T \nabla \varphi\\
            &=t\left(\varphi(p) - \int_\Omega A_1^T \nabla g_p^T \nabla \varphi\right) + (1-t)\left(\varphi(p) - \int_\Omega A_0^T \nabla g_p^T \nabla \varphi\right)\\
            &=t \int_{\partial\Omega} \varphi \, d\omega^p_{A_1} + (1-t)\int_{\partial\Omega} \varphi \, d\omega^p_{A_0}.
            \end{aligned}
            $$
            In particular, if $\varphi\geq 0$, then
            \begin{equation}\label{eq:elliptic measure A0 and A1 for t}
            (t-1)\int_{\partial\Omega} \varphi \, d\omega^p_{A_0} \leq t \int_{\partial\Omega} \varphi \, d\omega^p_{A_1} \text{ for all }t\in [-t_0,1+t_0].
            \end{equation}

            Let $B$ be a ball centered at $\partial\Omega$. For $\varepsilon>0$, let $\varphi \in C_c^\infty(\R^{n+1})$ be such that $\characteristic_{(1+\varepsilon/2) B}\leq \varphi\leq \characteristic_{(1+\varepsilon)B}$. Taking $t=1+t_0$ in \rf{eq:elliptic measure A0 and A1 for t}, we obtain
            $$
            t_0 \omega_{A_0}^p (\overline B)
            \leq t_0 \int_{\partial\Omega} \varphi \, d\omega^p_{A_0} \leq (1+t_0) \int_{\partial\Omega} \varphi \, d\omega^p_{A_1} \leq (1+t_0) \omega^p_{A_1} ((1+\varepsilon)\overline B),
            $$
            and letting $\varepsilon\to 0$ we get $t_0 \omega_{A_0}^p (\overline B) \leq (1+t_0) \omega^p_{A_1} (\overline B)$. By the same argument, taking $t=-t_0$ in \rf{eq:elliptic measure A0 and A1 for t}, all in all we have
            $$
            \omega_{A_0}^p (\overline B) \leq \frac{1+t_0}{t_0} \omega_{A_1}^p (\overline B)
            \text{ and }
            \omega_{A_1}^p (\overline B) \leq \frac{1+t_0}{t_0} \omega_{A_0}^p (\overline B).
            $$
            This proves the lemma by standard arguments, see for instance \cite[Lemma 2.13]{Mattila1995}.
        \end{proof}
    \end{lemma}
    
	\subsection{Harmonic factorization of the Green function}\label{sec:harmonic fact of the green function}

    The latter result of the previous section, \Cref{lemma:green function determines elliptic measure}, suggests that to factor the elliptic measure $\omega_{\Omega,A}^p$, we must first factor the elliptic Green function $g_p^{\Omega,A^T}$. The following lemma establishes the harmonic factorization of the Green function for $L_A$, which turns out to be the harmonic Green function in the image domain.
    
	\begin{lemma}\label{relation green functions}
		Let $\Omega \subset \R^2$ be a bounded Wiener regular domain, $p\in\Omega$, $A$ be a uniformly elliptic matrix with $A=Id$ in an open neighborhood of $p$. If $\mu_{A,g_p^A}$ and $\phi$ are as in \cref{harmonic factorization}, then $g_p^{\Omega,A} \circ \phi^{-1} = g_{\phi (p)}^{\phi(\Omega),Id}$ is the Green function in $\phi(\Omega)$ with pole $\phi(p)$ for the Laplacian operator $L_{Id}=-\Delta$.
	\end{lemma}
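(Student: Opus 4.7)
The plan is to apply Lemma~\ref{harmonic factorization} with $u=g_p^{\Omega,A}$ and then identify $g_p^{\Omega,A}\circ\phi^{-1}$ with the Laplacian's Green function on $\phi(\Omega)$ by uniqueness. Since $g_p^{\Omega,A}\in W^{1,2}_{\loc}(\Omega\setminus\{p\})$ is $L_A$-harmonic there, Lemma~\ref{harmonic factorization} applied on the open set $\Omega\setminus\{p\}$ immediately yields that $g_p^{\Omega,A}\circ\phi^{-1}$ is harmonic on $\phi(\Omega)\setminus\{\phi(p)\}$.

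The crucial observation is that $\phi$ is actually conformal in a neighborhood of $p$. On the open set where $A\equiv Id$, the formulas \eqref{relation coefficients beltrami and pde} give $\mu_A=\nu_A=0$, and hence \eqref{coefficients classical beltrami equation in terms of u and A} yields $\mu_{A,g_p^{\Omega,A}}=0$ there, regardless of the derivatives of $g_p^{\Omega,A}$. The Beltrami equation \eqref{classical beltrami equation in terms of u and A} then reduces to $\bar\partial\phi=0$; being a quasiconformal homeomorphism that is holomorphic on an open set, $\phi$ is a conformal biholomorphism in a neighborhood of $p$.

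On a small ball $B\ni p$ contained in the region where $A\equiv Id$, the standard fundamental-solution theory for $-\Delta$ gives $g_p^{\Omega,A}(x)=-\frac{1}{2\pi}\log|x-p|+h(x)$ with $h$ bounded and harmonic on $B$. Writing $\phi^{-1}(w)-p=(w-\phi(p))F(w)$ with $F$ non-vanishing and holomorphic on $\phi(B)$, we deduce
\[
    g_p^{\Omega,A}\circ\phi^{-1}(w)=-\tfrac{1}{2\pi}\log|w-\phi(p)|+H(w),
\]
where $H(w)=h(\phi^{-1}(w))-\frac{1}{2\pi}\log|F(w)|$ is harmonic on $\phi(B)$ (using conformal invariance of harmonicity and that $\log|F|$ is harmonic where $F$ is holomorphic and non-vanishing). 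This is the correct logarithmic singularity at $\phi(p)$ for the Laplacian's Green function.

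To conclude, I would verify the defining identity \eqref{dirac delta definition green function} for $g_p^{\Omega,A}\circ\phi^{-1}$ directly: for $\psi\in C_c^\infty(\phi(\Omega))$, integrate by parts on $\phi(\Omega)\setminus B(\phi(p),\varepsilon)$, use the harmonicity established above to kill the bulk term, and let $\varepsilon\to 0$ in the circular boundary integral, which yields $\psi(\phi(p))$ from the $-\frac{1}{2\pi}\log|w-\phi(p)|$ part and vanishes for the harmonic remainder $H$. I expect the main technical obstacle to be verifying sufficient Sobolev regularity of $g_p^{\Omega,A}\circ\phi^{-1}$ near $\partial\phi(\Omega)$ (equivalently, zero boundary values in the $W^{1,s}_0$-sense, $s\in[1,2)$) so that the integration by parts is legitimate and uniqueness of the Laplacian's Green function applies; this should follow by combining $g_p^{\Omega,A}\in W^{1,s}_0(\Omega)$ from \eqref{sobolev regularity of green function} with quasiconformal change-of-variable estimates, analogous to the Jacobian bounds and higher integrability of $|D\phi|$ used in the proof of Lemma~\ref{bmo under quasiconformal homeo}.
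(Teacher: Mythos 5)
Your plan is correct and, in the key verification step, takes a genuinely different route from the paper. Both proofs begin identically: apply Lemma~\ref{harmonic factorization} on $\Omega\setminus\{p\}$ to get harmonicity of $g_p^A\circ\phi^{-1}$ away from $\phi(p)$, and observe that $\nu_A=0$ where $A=Id$ forces $\mu_{A,g_p^A}=0$ there, so $\phi$ is conformal near $p$ (the paper records exactly this in the remark after the lemma). Where you diverge is in verifying the distributional identity $\int\nabla(g_p^A\circ\phi^{-1})\nabla\varphi=\varphi(\phi(p))$. The paper inserts a cutoff $\psi$ supported near $p$, observes $\widetilde{A}_{\phi^{-1}}=Id$ on $\phi(2B)$ by conformality, and then changes variables back through $\phi$ to invoke the defining property~\eqref{dirac delta definition green function} of $g_p^A$ in $\Omega$ — the singular structure is never made explicit. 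You instead extract the explicit log singularity $g_p^A(x)=-\frac{1}{2\pi}\log|x-p|+h(x)$ (legitimate since $A=Id$ near $p$ and $h$ is then harmonic by Weyl), push it through the local biholomorphism to show $g_p^A\circ\phi^{-1}(w)=-\frac{1}{2\pi}\log|w-\phi(p)|+H(w)$ with $H$ harmonic, and do a classical $\varepsilon$-ball integration by parts. Your route is more elementary and self-contained near the pole; the paper's is shorter because it outsources everything to the already-established weak formulation under a quasiconformal change of variables (at the cost of needing the higher-integrability estimates on $D\phi$ to justify $W^{1,2+\varepsilon}_c$ test functions).

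Two remarks on the parts you left in sketch form. First, the concluding uniqueness step deserves to be carried out: as the paper does, one notes that the difference $g_{\phi(p)}^{Id}-g_p^A\circ\phi^{-1}$ is distributionally harmonic in $\phi(\Omega)$, hence classically harmonic by Weyl's lemma, and vanishes continuously on $\partial(\phi(\Omega))$ (both $\Omega$ and its quasiconformal image are Wiener regular, and $\phi^{-1}$ is a homeomorphism sending $\partial(\phi(\Omega))$ to $\partial\Omega$), so the maximum principle forces the difference to be zero. Second, the technical obstacle you flag is slightly misplaced: for the integration by parts against $\psi\in C_c^\infty(\phi(\Omega))$ you do not need any Sobolev control near $\partial\phi(\Omega)$, since $\psi$ has compact support and $g_p^A\circ\phi^{-1}$ is already smooth (harmonic) away from $\phi(p)$; and for the uniqueness step what is actually used is pointwise continuous vanishing at the boundary (Wiener regularity), not a $W^{1,s}_0$-type statement. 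The $W^{1,s}$ and higher-integrability estimates you cite are what the paper needs for its change-of-variables route, not for yours.
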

	
	\begin{rem}
		Note that the Green function $g_p^A$ has a singularity at $p\in \Omega$. However, the Beltrami coefficient $\mu_{A,g_p^A}$ should be understood as $\mu_{A,g_p^A}=\mu_A$ in the open neighborhood of $p$ where the matrix $A=Id$, as $\nu_A=0$ in this case. In particular, $g_p^A$ is $L_A$-harmonic in $\Omega \setminus \{x: A(x)\not = Id\} \subset \Omega\setminus \{p\}$.
	\end{rem}
	
	\begin{proof}[Proof of \cref{relation green functions}]
		We check $\int_{\phi (\Omega)} \nabla (g_p^A \circ \phi^{-1}) \nabla \varphi =\varphi (\phi (p))$ for any $\varphi \in C^\infty_c (\phi (\Omega))$. Let $B\subset 10B\subset \Omega$ be a ball centered at $p\in \Omega$ with small enough radius such that $A=Id$ in $2B$, and let $\psi\in C^\infty_c (\R^2)$ be so that $\psi|_{B/2}=1$ and $\psi|_{B^c}=0$. Then
		$$
		\int_{\phi (\Omega)} \nabla (g_p^A \circ \phi^{-1}) \nabla \varphi
		=\int_{\phi (\Omega)} \nabla (g_p^A \circ \phi^{-1}) \nabla (\varphi\cdot(\psi\circ\phi^{-1}))
		+\int_{\phi (\Omega)} \nabla (g_p^A \circ \phi^{-1}) \nabla (\varphi \cdot(1-\psi\circ\phi^{-1})) .
		$$
		Since $\varphi (1-\psi\circ\phi^{-1})\in W^{1,2}_c (\phi (\Omega))$ with $\varphi (1-\psi\circ\phi^{-1})=0$ in $\phi(B/2)\ni \phi(p)$, and $g_p^A \circ \phi^{-1}$ is harmonic in $\phi(\Omega\setminus B/2)$ by \cref{harmonic factorization}, we have that the second term on the right-hand side is zero, and therefore
		$$
		\int_{\phi (\Omega)} \nabla (g_p^A \circ \phi^{-1}) \nabla \varphi
		=\int_{\phi (\Omega)} \nabla (g_p^A \circ \phi^{-1}) \nabla (\varphi\cdot(\psi\circ\phi^{-1})) .
		$$
		As the matrix $A=Id$ in $2B$ and $\phi$ is conformal in $2B$, we have that the matrix after the change of variables is $\widetilde A_{\phi^{-1}} = Id$ in $\phi(2B)$, recall its definition in \rf{matrix pushforward}. In particular, since $\supp \nabla (\varphi \cdot (\psi\circ\phi^{-1})) \subset \overline{\phi(B)}$, we can just write
		$$
		\int_{\phi (\Omega)} \nabla (g_p^A \circ \phi^{-1}) \nabla \varphi
		=\int_{\phi (\Omega)} \widetilde A_{\phi^{-1}} \nabla (g_p^A \circ \phi^{-1}) \nabla (\varphi\cdot(\psi\circ\phi^{-1})).
		$$
            By \cite[Theorem 3.8.2]{Astala2009} (applied to every $\Omega^\prime \subset \Omega\setminus\{p\}$) we have
            $$
            \nabla (g_p^A \circ \phi^{-1}) (z) = (D \phi^{-1} (z))^T \nabla g_p^A (\phi^{-1} (z))\text{ for a.e.\ } z\in \phi(\Omega),
            $$
            and applying the change of variables given by $\phi$, see \cite[Theorem 3.8.1]{Astala2009}, we have
		$$
		\int_{\phi (\Omega)} \widetilde A_{\phi^{-1}} \nabla (g_p^A \circ \phi^{-1}) \nabla (\varphi\cdot(\psi\circ\phi^{-1}))
		=\int_{ \Omega}  A \nabla g_p^A \nabla \left( (\varphi\circ \phi)\cdot\psi\right).
		$$
            By the higher integrability of quasiconformal mappings in \cite[(13.24)]{Astala2009}, see also \rf{higher integrability determinant} below, there exists $\varepsilon>0$ such that $(\varphi\circ \phi)\cdot \psi \in W^{1,2+\varepsilon}_c (\Omega)$, see \cite[Theorem 2.7]{Oliva-Prats-2017-JMAA} for instance\footnote{Cover $\supp\varphi$ by a finite number of balls $B_i$ with $2B_i\subset \phi(\Omega)$.}, and as noted right below \rf{dirac delta definition green function}, the integral above equals $\varphi(\phi(p))\psi(p)=\varphi(\phi(p))$, as claimed.
		
		As we have that both $g_p^A \circ \phi^{-1}$ and $g_{\phi(p)}^{Id}$ are the dirac delta in the weak sense, we obtain
		$$
            \int_{\phi (\Omega)} \nabla (g_{\phi(p)}^{Id} - g_p^A \circ \phi^{-1}) \nabla \varphi =0 \text{ for any } \varphi \in C^\infty_c (\phi (\Omega)) .
		$$
            Since $g_p^A \in W^{1,s} (\Omega)$ and $g_{\phi(p)}^{Id}\in W^{1,s} (\phi(\Omega))$ for all $1\leq s < 2$ by \rf{sobolev regularity of green function}, in particular $g_p^A \circ \phi^{-1}, g_{\phi(p)}^{Id}\in W^{1,1} (\phi (\Omega))$, see \cite[Theorem 2.7]{Oliva-Prats-2017-JMAA} for instance. The existence of weak derivatives in $L^1 (\phi(\Omega))$ implies that the left-hand side is $-\int_{\phi(\Omega)} \varphi \Delta (g_{\phi(p)}^{Id} - g_p^A \circ \phi^{-1})$, and therefore
            $$
            \int_{\phi (\Omega)} \varphi \Delta (g_{\phi(p)}^{Id} - g_p^A \circ \phi^{-1}) =0 \text{ for any } \varphi \in C^\infty_c (\phi (\Omega)).
            $$
		By Weyl's lemma, $g_{\phi(p)}^{Id} - g_p^A \circ \phi^{-1} \in C^\infty (\phi(\Omega))$ and is harmonic in $\phi(\Omega)$. Since both functions $g_{\phi(p)}^{Id}$ and $g_p^A \circ \phi^{-1}$ vanish on $\partial (\phi (\Omega))$, in particular $g_{\phi(p)}^{Id} - g_p^A \circ \phi^{-1}$ vanishes on $\partial (\phi(\Omega))$. By the maximum principle for harmonic functions, the function $g_{\phi(p)}^{Id} - g_p^A \circ \phi^{-1}$ must be zero in $\phi(\Omega)$, and therefore $g_{\phi(p)}^{Id} = g_p^A \circ \phi^{-1}$ in $\phi(\Omega)$, as claimed.
	\end{proof}
	
	Given a function $\phi$ as in \cref{relation green functions} for the matrix $A^T$, using the notation in \rf{matrix pushforward} and \cref{linear change of variables qc}, recall that $\widetilde{Id}_{\phi}$ denotes the matrix such that
	$$
	u \text{ is harmonic in }\phi(\Omega) \iff u\circ \phi \text{ is } L_{\widetilde{Id}_\phi}\text{-harmonic in } \Omega .
	$$
    With this definition, in the previous lemma we saw that $g_p^{A^T}$ is the Green function in $\Omega$ for both operators $L_{A^T}$ and $L_{\widetilde{Id}_\phi}$. Consequently, by \cref{lemma:green function determines elliptic measure} we are now ready to prove \cref{lemma:elliptic measure factorization in general wiener regular domains}.

    \begin{proof}[Proof of \cref{lemma:elliptic measure factorization in general wiener regular domains}]
        We extend $A=Id$ in $\Omega^c$. Furthermore, by \cref{coro:elliptic measure only depends on how is the matrix around the boundary}, we can assume that $A=Id$ in an open neighborhood of $p$.
        
        Let $\phi_{A^T,g_p^{A^T}}\in W_{\loc}^{1,2}(\C)$ be the normalized solution of the Beltrami equation
        \begin{equation}\label{eq:beltrami eq for the lemma2 about elliptic harmonic measure}
        \bar\partial\phi_{A^T,g_p^{A^T}} = \mu_{A^T,g_p^{A^T}}\partial \phi_{A^T,g_p^{A^T}} \text{ a.e.\ in }\C, \text{ where }\mu_{A^T,g_p^{A^T}} \text{ as in }\rf{coefficients classical beltrami equation in terms of u and A},
        \end{equation}
        given by the measurable Riemann mapping theorem, see \cref{riemann mapping thm}. In particular $\phi_{A^T,g_p^{A^T}}$ is a $K_\lambda$-quasiconformal homeomorphism with $K_\lambda$ as in \rf{K-lambda ellipticity condition beltrami coefficient} by \cref{ellipticity condition beltrami coefficient}, and conformal in $\overline{\Omega}^c$. We remark that the dependence of the function $\phi_{A^T,g_p^{A^T}}$ on $g_p^{A^T}$ and $A^T$ is due to the fact that $\mu_{A^T,g_p^{A^T}}$ depends on (the derivatives of) $g_p^{A^T}$ and the coefficients of $A^T$.

        By \cref{relation green functions}, we have that $g_p^{A^T}$ is the Green function in $\Omega$ for both operators $L_{A^T}$ and $L_{\widetilde{Id}_\phi}$. The lemma then follows from this and \cref{lemma:green function determines elliptic measure}.
    \end{proof}

	\subsection{Continuous matrices. Proof of Theorem~\ref{main thm general domains and continuous matrices}}\label{proof of main thm general domains and contiuous matrices}
	
	In this section, we prove \cref{main thm general domains and continuous matrices} via a localization argument. More precisely, we use the continuity of the matrix to see that locally, it is as close as we want to a constant matrix.
	
	\begin{proof}[Proof of \cref{main thm general domains and continuous matrices}]
		First, we prove the upper bound for general domains, and then the lower bound for finitely connected domains.
		
		\emph{``Upper bound''}. It suffices to see that for every $\tau >0$ there exists $F_\tau\subset \partial \Omega$ with $\dim_\HH F_\tau \leq 1+\tau$ and $\omega_{\Omega,A} (F_\tau)=1$. Indeed, $F\coloneqq\bigcap_{n\geq 1} F_{1/n}$ satisfies $\dim_\HH F \leq 1$ and $\omega_{\Omega,A} (F)=1$.
		
		Since $A$ has continuous coefficients in $\overline{\Omega}$, for each $\xi \in \partial\Omega$ and every $\varepsilon > 0$ there is $\delta_{\xi, \varepsilon} >0$ such that
		\begin{equation}\label{continuity of the matrix C0}
			z\in \overline{4B_{\delta_{\xi,\varepsilon}} (\xi) \cap \Omega} \implies |a_{ij} (z) - a_{ij} (\xi)| < \varepsilon \text{ for any } i,j\in \{1,2\}.
		\end{equation}
		
		Fix $\varepsilon >0$. Let $\{B_i\}_i \subset \{B_s (\xi)\}_{\xi \in \partial \Omega, 0<s\leq \delta_{\xi, \varepsilon}}$ be a countable subfamily of disjoint balls such that $\omega^p (\partial \Omega \setminus \bigcup_i B_i) = 0$, by Vitali's covering theorem.
		
		We claim that it suffices to see that for each index $i$ there exists a set $F_i \subset \partial (\Omega \cap 4B_i)$ with $\omega_{\Omega \cap 4B_i,A} (F_i) = 1$ and $\dim_\HH F_i \leq 1+\tau$ provided $\varepsilon>0$ is small enough. Indeed, this implies that the set $F_\tau\coloneqq\partial \Omega \cap \bigcup_i F_i$ satisfies $\dim_\HH F_\tau \leq 1+\tau$ by definition, and $\omega_{\Omega,A} (F_\tau)=1$ by \cref{localization argument}.
		
		Applying \cref{perturbation of constant matrix} for each index $i$, by \rf{continuity of the matrix C0} we have that there exists a set $F_i\subset \partial (\Omega \cap 4B_i)$ with $\omega_{\Omega \cap 4B_i,A} (F_i) = 1$ and $\dim_\HH F_i \leq \overline{\dim}_\HH ((1-2\varepsilon\lambda)^{-1})$. By \cref{main thm general domains} and taking $\varepsilon>0$ small enough respectively we have
		$$
		\overline\dim_\HH ((1-2\varepsilon\lambda)^{-1}) \leq \frac{2K_{(1-2\varepsilon\lambda)^{-1}}}{K_{(1-2\varepsilon\lambda)^{-1}}+1}\leq 1+\tau,
		$$
		and the upper bound is proved.
		
		\emph{``Lower bound''}. We turn now to the proof of the lower bound for finitely connected domains. Let $F\subset \partial\Omega$ with $\omega_{\Omega,A}(F)=1$ and fix $\tau >0$. Given $\varepsilon >0$, for each $\xi \in \partial \Omega$ let $\delta_{\xi, \varepsilon}>0$ be small enough to satisfy \rf{continuity of the matrix C0} and that $\Omega\cap B(\xi,\delta_{\xi,\varepsilon})$ is simply connected, see \cref{claim:candidate for each index}. Let $\{B_i\}_i \subset\{B_s (\xi)\}_{\xi \in \partial \Omega, 0<s\leq \delta_{\xi, \varepsilon}}$ be a countable subfamily with finite overlapping by Besicovitch covering theorem. For each ball $B_i$, by \cref{claim:candidate for each index} we have $\omega_{\Omega \cap B_i} (\partial \Omega \setminus F) =0$, which implies
		$$
		\omega_{\Omega \cap B_i} ((\partial B_i \cap \Omega)\cup(F\cap B_i)) = 1.
		$$
		From this, $\dim_\HH \partial B=1$, \rf{continuity of the matrix C0} and \cref{perturbation of constant matrix} we have
		$$
		\dim_\HH F \geq \dim_\HH (F\cap B_i) \geq \underline\dim_\HH^{\textrm{sc}} ((1-2\varepsilon\lambda)^{-1}).
		$$
		By \cref{main thm general domains} and taking $\varepsilon>0$ small enough respectively we have
		$$
		\underline\dim_\HH^{\rm{sc}} ((1-2\varepsilon\lambda)^{-1}) \geq \frac{2}{K_{(1-2\varepsilon\lambda)^{-1}}+1}\geq 1-\tau.
		$$
		As this can be done for every $\tau>0$ we have that $\dim_\HH F\geq 1$ and the lower bound of $\dim_\HH \omega_{\Omega,A}$ for finitely connected domains is proved.
	\end{proof}

	\section{Symmetric matrices with determinant 1}\label{part:sym + det 1}
	
	Let $\Omega \subset \R^2$ be a domain and $A \in \R^{2\times 2} (\overline \Omega)$ be a symmetric and uniformly elliptic (with ellipticity constant $\lambda \geq 1$) matrix with $\det A = 1$. Note that during all \cref{part:sym + det 1} we will use $K_\lambda = \lambda$ since the matrix is symmetric, see \rf{K-lambda ellipticity condition beltrami coefficient}.
	
	In the following sections, we will assume that the matrix $A$, and therefore also the Beltrami coefficient $\mu_A$, enjoys more regularity. By regularity theory, we will have that the quasiconformal mapping $\phi_A$ given in \cref{harmonic factorization symmetric matrices with determinant 1} has better estimates than the Hölder continuity in \cref{qc are locally holder} or the distortion of Hausdorff dimension (resp. measure) in \rf{dimension distortion theorem} (resp. \cref{distortion of hausdorff measure thm} or \cref{distortion zero measure of makarov gauge function}). The proof of the results in the following sections will have the same structure as the proof of \cref{main thm general domains} in \cref{sec:main thm general domains}, replacing the use of \cref{lemma:elliptic measure factorization in general wiener regular domains} with \cref{lemma:relation elliptic measure for sym and det 1}. The difference is that we will now use the fact that the change of variables $\phi_A$ is not only $\lambda$-quasiconformal, but it is more regular.
	
	\subsection{Mean oscillation under quasiconformal mappings}
	
	In this subsection, we define the space BMO functions and collect some properties to control the regularity of the coefficients arising in the following sections.
	
	\begin{definition}\label{p-mean oscillation}
		Let $1\leq p <\infty$, $f\in L^p_{\loc} (\Omega)$ and a set $S \subset \Omega$. Denote the $p$-mean oscillation of $f$ in $S$ the quantity
		$$
		[f]_{S, p} \coloneqq \left(\avint_S \left| f(z) - m_S f \right|^p \, dz\right)^{1/p},
		$$
		where $m_S f \coloneqq \avint_S f(y)\, dy \coloneqq \frac{1}{|S|} \int_S f(y) \, dy$.
	\end{definition}
	
	Bounded functions satisfy $[f]_{S, p} \leq 2\|f\|_{L^\infty (S)}$. In this lemma, we collect some fundamental bounds of the $p$-mean oscillation. We omit its proof.
	\begin{lemma}\label{properties of p-mean oscillation}
		Let $1\leq p < \infty$. The following holds:
		\begin{enumerate}
			\item \label{sum in vmo} $[f+g]_{S,p}^p \lesssim [f]_{S,p}^p + [g]_{S,p}^p$.
			
			\item \label{product in vmo} $[fg]_{S,p}^p \lesssim \|g\|_{L^\infty (S)}^p [f]_{S,p}^p + \|f\|_{L^\infty (S)}^p [g]_{S,p}^p$.
			
			\item \label{reciproc in vmo} $[1/f]_{S,p}^p \lesssim \left\| 1/f \right\|_{L^\infty (S)}^{2p} [f]_{S,p}^p$.
			
			\item \label{fraction in vmo} $[f/g]_{S,p}^p 
			\lesssim \|1/g\|_{L^\infty (S)}^p [f]_{S,p}^p + \|f\|_{L^\infty (S)}^p  \left\| 1/g \right\|_{L^\infty (S)}^{2p} [g]_{S,p}^p$.
		\end{enumerate}
	\end{lemma}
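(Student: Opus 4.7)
The plan is to base all four estimates on a single, well-known double-integral representation of the $p$-mean oscillation. By Jensen's inequality applied to the inner average $m_S f = \avint_S f(y)\, dy$, for any $f \in L^p(S)$ one has
$$
[f]_{S,p}^p \leq \avint_S \avint_S |f(z) - f(y)|^p \, dy\, dz \leq 2^p\, [f]_{S,p}^p,
$$
the right inequality coming from the triangle inequality $|f(z) - f(y)| \leq |f(z) - m_S f| + |f(y) - m_S f|$ together with the elementary bound $|a+b|^p \leq 2^{p-1}(|a|^p + |b|^p)$. I would also use the variational characterization $[f]_{S,p} \leq 2\left(\avint_S |f - c|^p \right)^{1/p}$ for any constant $c$, which follows because $|m_S f - c| = |m_S(f-c)| \leq \left(\avint_S |f-c|^p\right)^{1/p}$.

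With these tools, parts \emph{(1)} and \emph{(2)} are essentially algebraic. For \emph{(1)} I would expand $(f+g) - m_S(f+g) = (f - m_S f) + (g - m_S g)$ and apply $|a+b|^p \leq 2^{p-1}(|a|^p + |b|^p)$ directly inside the average. For \emph{(2)} I would use the product identity
$$
fg - (m_S f)(m_S g) = (f - m_S f)\, g + (m_S f)(g - m_S g),
$$
then apply the variational characterization with the constant $c = (m_S f)(m_S g)$: the first term is controlled by $\|g\|_{L^\infty(S)}^p [f]_{S,p}^p$, and the second by $|m_S f|^p [g]_{S,p}^p \leq \|f\|_{L^\infty(S)}^p [g]_{S,p}^p$.

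Part \emph{(3)} is the only mildly delicate step, because one cannot in general bound $1/|m_S f|$ by $\|1/f\|_{L^\infty(S)}$ when $f$ changes sign, so passing through the obvious choice $c = 1/m_S f$ is fragile. I would sidestep this issue by applying the double-integral representation directly: using the pointwise identity
$$
\left| \frac{1}{f(z)} - \frac{1}{f(y)} \right|^p = \frac{|f(y)-f(z)|^p}{|f(z)|^p |f(y)|^p} \leq \|1/f\|_{L^\infty(S)}^{2p}\, |f(z) - f(y)|^p,
$$
the two-sided double integral bound yields
$$
[1/f]_{S,p}^p \leq \avint_S \avint_S \left|\tfrac{1}{f(z)} - \tfrac{1}{f(y)}\right|^p dy\, dz \leq 2^p \|1/f\|_{L^\infty(S)}^{2p} [f]_{S,p}^p.
$$
Finally, \emph{(4)} follows by writing $f/g = f \cdot (1/g)$ and chaining \emph{(2)} with \emph{(3)}: applying \emph{(2)} gives $[f/g]_{S,p}^p \lesssim \|1/g\|_{L^\infty(S)}^p [f]_{S,p}^p + \|f\|_{L^\infty(S)}^p [1/g]_{S,p}^p$, and then \emph{(3)} absorbs the last factor. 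The main obstacle is the one addressed in \emph{(3)}: avoiding any lower bound on $|m_S f|$ by working with the symmetric double-integral form throughout.
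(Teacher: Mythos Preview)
Your proposal is correct and complete. The paper itself omits the proof of this lemma entirely (it states ``We omit its proof'' immediately after the statement), so there is no argument to compare against; your approach via the double-integral representation and the variational characterization is the standard one and handles the only subtle point, namely avoiding any reliance on a lower bound for $|m_S f|$ in part~(3).
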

	
	Let us now recall the definition of bounded mean oscillation functions.
	
	\begin{definition}
		A function $f\in L_{\loc}^1 (\Omega)$ is of bounded mean oscillation in $\Omega$, $f\in \bmo (\Omega)$, if
		$$
		\|f\|_{\Omega,*} \coloneqq \sup_{B \subset \Omega\text{ ball}} \avint_B |f(z)-m_B f | \, dz  <\infty.
		$$
		If $\Omega = \R^{n+1}$ we write $\|f\|_{*}$.
	\end{definition}
	
	With the notation in \cref{p-mean oscillation}, $\|f\|_{\Omega,\ast} = \sup_{B\subset \Omega \text{ ball}} [f]_{B,1}$. Recall that $f\in \bmo$ implies $|f|\in\bmo$. Note also that it is equivalent considering cubes $Q$ instead of balls $B$ in the $\bmo (\R^{n+1})$ norm.
	
	Given a function $f\in \bmo (\R^{n+1})$, the John-Nirenberg inequality, see \cite[Theorem 6.11]{Duoandikoetxea2000} for example, states that there exist dimensional constants $C_1$ and $C_2$ such that for every cube $Q \subset \R^{n+1}$ and every $\lambda > 0$,
	$$
	\left| \{ z\in Q : |f(z) - m_Q f| > \lambda \} \right| \leq C_1 e^{-C_2 \lambda / \|f\|_*} |Q| .
	$$
	Consequently, see \cite[Corollary 6.12]{Duoandikoetxea2000} for a proof, for all $1<p<\infty$,
	$$
	\|f\|_* \approx_p \|f\|_{*,p} \coloneqq \sup_{B \subset \Omega\text{ ball}} \left( \avint_B |f(z)-m_B f |^p \, dz \right)^{1/p} .
	$$
	
	A close look at the proof of the John-Nirenberg inequality in \cite[Theorem 6.11]{Duoandikoetxea2000} reveals that, in fact, for every cube $Q\subset \R^{n+1}$ and every $\lambda > 0$, if
	$$
	\|f\|_{Q,*}^{\mathrm{cubes}} \coloneqq \sup_{R\subset Q \text{ cube}} \avint_R |f(z)-m_R f | \, dz 
	$$
	then
	$$
	\left| \{ z\in Q : |f(z) - m_Q f| > \lambda \} \right| \leq C_1 e^{-C_2 \lambda / \|f\|_{Q,*}^{\mathrm{cubes}}} |Q|.
	$$
	Following the steps in the proof of \cite[Corollary 6.12]{Duoandikoetxea2000}, given a cube $Q\subset \R^{n+1}$, we have
	\begin{equation}\label{local equivalence bmo norm}
		\sup_{R\subset Q \text{ cube}}  \avint_R |f(z)-m_R f | \, dz \approx_p \sup_{R\subset Q \text{ cube}} \left( \avint_R |f(z)-m_R f |^p \, dz \right)^{1/p} .
	\end{equation}
	
	Functions of bounded mean oscillation are invariant under quasiconformal mappings. This is a result of Reimann in \cite{Reimann1974}, which, in fact, also proved that if a $W^{1,1}_{\loc}$-homeomorphism $f:\C \to \C$ preserves the $\bmo$ space, then $f$ is $K$-quasiconformal, with $K=K(\|f\|_*)$. In the following lemma, we control the mean oscillation under global quasiconformal mappings. As the statement of the lemma and its proof are slightly different from the one in \cite{Reimann1974} and \cite[Theorem 13.4.1]{Astala2009}, for the sake of completeness we include the details.
	
	\begin{lemma}\label{bmo under quasiconformal homeo}
		Let $K\geq 1$, $\phi \in W_{\loc}^{1,2} (\C)$ be a $K$-quasiconformal homeomorphism of $\C$, and $\widetilde B \subset \R^2$ be an open ball. Given an open ball $B \subset \widetilde B$ (with center $c_B$ and radius $r_B$), consider
		\begin{equation}\label{ball in the image vmo}
			B_- \coloneqq B \left( \phi^{-1} (c_B) , \max_{|\xi-c_B|=r_B} |\phi^{-1} (\xi)-\phi^{-1} (c_B)| \right) ,
		\end{equation}
		Then $r_{B_-} \leq C_{K,\widetilde B,\phi} \cdot r_B^{1/K}$, and for any $p\in (K,\infty)$,
		\begin{equation}\label{distortion of mean oscillation under qc}
			\avint_B |f (\phi^{-1} (x)) - m_B (f \circ \phi^{-1})| \, dx
			\lesssim_{p,K} \left( \avint_{ B_-} |f (x)- m_{ B_-} f |^p \, dx \right)^{1/p} .
		\end{equation}
		In particular $\|f\circ \phi^{-1}\|_* \lesssim_K \|f\|_*$.
		\begin{proof}
			The notation ``$-$'' in $B_-$ is used to specify that the ball $B_-$ ``lives'' in the image of $\phi^{-1}$.
			
			We control the radius of $B_-$. From the definition of $B_-$ and the Hölder continuity with exponent $1/K$ of quasiconformal mappings in \cref{qc are locally holder} applied to $\phi^{-1}$, we conclude
			\begin{equation}\label{vmo new radi}
				r_{B_-} = \max_{|\xi-c_B|=r_B} |\phi^{-1} (\xi)-\phi^{-1} (c_B)| 
				\leq C_{K,\widetilde B,\phi} \max_{|\xi-c_B|=r_B} |\xi- c_B |^{1/K} =  C_{K,\widetilde B,\phi} \cdot r_B^{1/K} .
			\end{equation}
			
			It remains to see \rf{distortion of mean oscillation under qc}. We can change from $m_B (f \circ \phi^{-1})$ to $ m_{ B_-} f$ because
			\begin{equation}\label{vmo to other constant}
				\begin{aligned}
					\avint_B |f (\phi^{-1} (x)) - m_B (f \circ \phi^{-1})| \, dx
					&\leq \avint_B \avint_B |f (\phi^{-1} (x)) - f (\phi^{-1} (y))|  \, dy \, dx \\
					&\leq 2 \avint_B |f (\phi^{-1} (x)) - m_{ B_-} f | \, dx .
				\end{aligned}
			\end{equation}
			Let $q$ be the Hölder conjugate of $p$, i.e., $1/p+1/q=1$. In particular $p\in (K,\infty)$ if and only if $q\in \left(1,K/(K-1)\right)$. By the change of variables $x=\phi (y)$, $\phi^{-1} (B) \subset  B_-$ and Hölder's inequality, we bound the last term in (\ref{vmo to other constant}) by
			\begin{equation}\label{vmo and holder ineq}
                \begin{aligned}
				\avint_B |f (\phi^{-1} (x)) - m_{ B_-} f | \, dx 
				&= \frac{1}{|B|} \int_{\phi^{-1} (B)} |f (y)- m_{ B_-} f | |\det D\phi (y)| \, dy \\
				&\leq \frac{1}{|B|} 
				\left( \int_{ B_-} |f (x)- m_{ B_-} f |^p \, dx \right)^{1/p}
				\left( \int_{ B_-} \left| \det D\phi (x) \right|^q \, dx \right)^{1/q} .
                \end{aligned}
			\end{equation}
			Since $q\in (1,K/(K-1)) \subset [0,K/(K-1))$, the higher integrability of $\left| \det D \phi \right|$, see \cite[(13.24)]{Astala2009}, implies that the last term in (\ref{vmo and holder ineq}) is controled by
			\begin{equation}\label{higher integrability determinant}
				\left( \int_{ B_-} \left| \det D\phi (x) \right|^q \, dx \right)^{1/q} \lesssim_{p,K} | B_-|^{1/q} \frac{|\phi ( B_-)|}{| B_-|} .
			\end{equation}
			The inequality in \rf{higher integrability determinant} implies that (\ref{vmo and holder ineq}) reduces to
			\begin{equation}\label{vmo preimage}
				\begin{aligned}
					\avint_B |f (\phi^{-1} (x)) - m_{ B_-} f | \, dx 
					&\lesssim_{p,K} \frac{| B_-|^{1/p}}{|B|} 
					\left( \avint_{ B_-} |f (x)- m_{ B_-} f |^p \, dx \right)^{1/p}
					| B_-|^{1/q} \frac{|\phi ( B_-)|}{| B_-|} \\
					&= \frac{|\phi ( B_-)|}{|B|} 
					\left( \avint_{ B_-} |f (x)- m_{ B_-} f |^p \, dx \right)^{1/p},
				\end{aligned}
			\end{equation}
			where we used $1/p + 1/q = 1$ in the last inequality.
			
			Now we want to bound $|\phi ( B_-)|/|B|$. We have the inclusion
			$$
			\phi ( B_-) \subset  B_+ \coloneqq B \left( c_B , \max_{|\xi-\phi^{-1}(c_B)|=r_{B_-}} |\phi (\xi)-c_B|\right).
			$$
			Here the notation $B_+$ is used to specify that the ball $B_+$ ``lives'' in the image of $\phi$. By \cite[Lemma 3.4.5]{Astala2009}, there exists a constant $\delta = \delta (K)$ such that $\delta B_+ \subset \phi (B_-) \subset B_+$. By definition $\delta B_+ \subset B$, and hence
			\begin{equation}\label{comparable balls}
				\frac{|\phi ( B_-)|}{|B|}  \leq \frac{|B_+|}{|\delta B_+|} \approx_K 1 .
			\end{equation}
			Inequality in \rf{distortion of mean oscillation under qc} follows from \rf{vmo to other constant}, and \rf{comparable balls} applied to \rf{vmo preimage}. Therefore, \cref{bmo under quasiconformal homeo} is proved.
        \end{proof}
	\end{lemma}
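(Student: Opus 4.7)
The statement decomposes into two independent assertions: the radius bound $r_{B_-}\lesssim r_B^{1/K}$, and the BMO-type inequality \rf{distortion of mean oscillation under qc}; the last sentence is a cosmetic consequence of the latter, so I would focus on those two.

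For the radius bound, my plan is simply to apply the local H\"older regularity of $K$-quasiconformal mappings in \cref{qc are locally holder} to $\phi^{-1}$, which is itself $K$-quasiconformal. The supremum defining $r_{B_-}$ ranges over $\xi\in\partial B\subset\widetilde B$; provided $\widetilde B$ is contained in a fixed larger disk (which we may assume after enlarging the constant), the H\"older estimate gives $|\phi^{-1}(\xi)-\phi^{-1}(c_B)|\lesssim_{K,\widetilde B,\phi} r_B^{1/K}$, and the supremum over $\xi$ preserves the bound.

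For the BMO estimate, the plan is to transport everything back to the source side and exploit the higher integrability of $|\det D\phi|$. First, using the trivial observation that $\avint_B|g-m_B g|\leq 2\avint_B|g-c|$ for any constant $c$, I would replace $m_B(f\circ\phi^{-1})$ by $m_{B_-}f$ at the cost of a factor $2$. Then I would change variables $x=\phi(y)$, noting that $\phi^{-1}(B)\subset B_-$ by the very definition of $B_-$, so that the Jacobian $|\det D\phi|$ appears inside the integral. Applying H\"older with exponent $p$ on $|f-m_{B_-}f|$ and the conjugate exponent $q=p/(p-1)$ on $|\det D\phi|$ separates the two factors. The restriction $p>K$ translates to $q<K/(K-1)$, which is precisely the range for which the higher integrability of the Jacobian of a $K$-quasiconformal mapping (see e.g.\ [13.24] in \cite{Astala2009}) supplies the bound $\int_{B_-}|\det D\phi|^q\lesssim |B_-|^{1-q}|\phi(B_-)|^q$.

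What remains is to show $|\phi(B_-)|/|B|\lesssim_K 1$, and this is the step I expect to be the most delicate. Since $B_-$ is round and $\phi$ is $K$-quasiconformal, the image $\phi(B_-)$ is \emph{quantitatively} round: by Lemma 3.4.5 of \cite{Astala2009}, if $B_+$ denotes the smallest concentric (at $c_B$) disk containing $\phi(B_-)$, then there is $\delta=\delta(K)>0$ with $\delta B_+\subset\phi(B_-)\subset B_+$. By construction of $B_-$ some point of $\phi(\partial B_-)$ lies at distance exactly $r_B$ from $c_B$, which forces $\delta r_{B_+}\leq r_B$, hence $\delta B_+\subset B$ and $|\phi(B_-)|/|B|\leq |B_+|/|\delta B_+|\approx_K 1$. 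Plugging this back into the H\"older estimate gives \rf{distortion of mean oscillation under qc}. The BMO bound $\|f\circ\phi^{-1}\|_*\lesssim_K \|f\|_*$ then follows by taking the supremum over balls $B$ and invoking the local John–Nirenberg equivalence \rf{local equivalence bmo norm} to pass from the $L^p$ oscillation on $B_-$ to the $L^1$ oscillation controlled by $\|f\|_*$.
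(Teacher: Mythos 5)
Your proposal is correct and follows essentially the same approach as the paper: the Hölder bound for $r_{B_-}$, the substitution of $m_{B_-}f$ for $m_B(f\circ\phi^{-1})$, the change of variables $x=\phi(y)$, Hölder with exponent $p>K$ paired with the higher integrability of $|\det D\phi|$ for $q<K/(K-1)$, and the quantitative roundness $\delta B_+\subset \phi(B_-)\subset B_+$ to conclude $|\phi(B_-)|/|B|\lesssim_K 1$. If anything, your explanation of why $\delta B_+\subset B$ — because $\phi(\partial B_-)$ contains the point $\xi_0\in\partial B$, forcing $\delta r_{B_+}\leq r_B$ — is slightly more explicit than the paper's terse ``by definition,'' and your closing use of the John–Nirenberg equivalence to deduce the BMO bound is the intended route.
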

	
	\subsection{Proof of Theorems~\ref{sym + det 1 Holder} and \ref{sym + det 1 all cases} via quasiconformal regularity}\label{sec:quasiconformal regularity}
	
	In this section, via quasiconformal theory, we will see that assuming regularity of the matrix, the global $\lambda$-quasiconformal homeomorphism $\phi_A$ in \cref{harmonic factorization symmetric matrices with determinant 1} gains more regularity.
	
	Let us present the sketch of this. If $\phi_A$ is a solution of $\bar \partial \phi_A = \mu_A \partial\phi_A$, then $\phi_A$ and $\phi_A^{-1}$ are $\lambda$-quasiconformal, in particular $\phi_A, \phi_A^{-1} \in C^{1/\lambda}_{\loc}$, and moreover $\phi_A^{-1}$ satisfies the Beltrami equation $\bar\partial (\phi_A^{-1}) = -(\mu_A \circ \phi_A^{-1}) \overline{\partial (\phi_A^{-1})}$. From $\phi_A^{-1} \in C^{1/\lambda}_{\loc}$, assuming regularity on the matrix $A$ will not only provide regularity on $\mu_A$, but also on $-\mu_A \circ \phi_A^{-1}$. As a consequence, we will obtain extra regularity on the functions $\phi_A$ and $\phi_A^{-1}$. For example, if $A\in C^\alpha$, then $\mu_A \in C^\alpha$ and $-\mu_A \circ \phi_A^{-1}\in C^{\alpha/\lambda}_{\loc}$, whence we obtain that $\phi_A\in C^{1,\alpha}_{\loc}$ and $\phi_A^{-1} \in C^{1,\alpha/\lambda}_{\loc}$, see \cite[Theorem 15.6.2]{Astala2009}.
	
	\subsubsection{Hölder continuous coefficients}\label{sec:holder continous}

        In this subsection we prove \cref{sym + det 1 Holder}. The sketch is what is already explained above. For completeness, we provide the proof in full detail.
 
        Although we saw in \cref{coro:elliptic measure only depends on how is the matrix around the boundary} that the dimension of elliptic measures only depends on the nature of the matrix around the boundary, during the following proof we emphasize that the regularity of the matrix is only needed around the boundary. Recall $U_t (\partial \Omega)$ denotes the $t$-neighborhood of the boundary, i.e.,
	$$
	U_t (\partial \Omega) \coloneqq \{x\in \R^2 : \dist (x,\partial \Omega) < t\} .
	$$
	
	\begin{proof}[Proof of \cref{sym + det 1 Holder}]\label{proof of sym + det 1 holder}
		Recall we can assume that $\Omega\subset B_{1/4}(0)$ is bounded Wiener regular by \cref{reductions all}, and we can replace finitely connected by simply connected by \cref{claim:candidate for each index}.
		
		Assume that $A\in C^\alpha (U_\varepsilon(\partial\Omega))$ for $\varepsilon >0$. Let $\phi_A \in W^{1,2}_{\loc}(\C)$ be the normalized solution of the Beltrami equation
		$$
		\bar\partial\phi_A = \mu_A \partial\phi_A \text{ a.e.\ in }\C, \text{ where }\mu_A \text{ as in }\rf{beltrami coefficients for sym and det 1 matrices},
		$$
		given by the measurable Riemann mapping theorem, see \cref{riemann mapping thm}. In particular $\phi_A$ is a $\lambda$-quasiconformal homeomorphism. Since A is uniformly elliptic, in particular $a_{11} , a_{22} \approx 1$ and hence $2+a_{11} + a_{22} \geq 2$, and therefore $A\in C^\alpha (U_\varepsilon (\partial \Omega))$ implies $\mu_A \in C^\alpha (U_\varepsilon (\partial \Omega))$.
		
		By the Hölder regularity of quasiconformal mappings in \cref{qc are locally holder} we have that $\phi_A^{-1} \in C^{1/\lambda}_{\loc}$, and hence $-\mu_A \circ\phi_A^{-1}\in C^{\alpha/\lambda}_{\loc}$ in an open neighborhood of $\partial (\phi_A (\Omega))$. Therefore, we get that $\phi_A \in C^{1,\alpha}_{\loc}$ (resp. $\phi_A^{-1} \in C^{1,\alpha/\lambda}_{\loc}$) in an open neighborhood of $\partial \Omega$ (resp. $\partial (\phi_A (\Omega))$), see \cite[Theorem 15.6.2]{Astala2009}. In particular both $\phi_A$ and $\phi_A^{-1}$ are Lipschitz in an open neighborhood of $\partial \Omega$ and $\partial (\phi_A (\Omega))$ respectively.
		
		The rest of the proof follows by the same proof of \cref{main thm general domains} on \cpageref{proof of main thm general domains} using \cref{lemma:relation elliptic measure for sym and det 1} and the fact that $\phi_A$ is bi-Lipschitz in an open neighborhood of $\partial \Omega$.
	\end{proof}
	
	\subsubsection{Only one variable dependence coefficients}\label{sec:only one variable dependence}
	
	In this subsection, we consider matrices with no regularity, but satisfying $A(z) = A(\real(z))$ (or $A(z) = A(\imag(z))$). Even with no regularity on the coefficients (only uniformly elliptic), in this case, the quasiconformal change of variables is bi-Lipschitz.
	
	\begin{lemma}[See {\cite[Proposition 5.23]{Bojarski2013}}]\label{regularity when mu only depends on x}
		Let $\mu : \C\to \C$ be an arbitrary measurable function with $\|\mu\|_\infty \leq k < 1$ that depends on $x=\real(z)$ only and let
		$$
		f(x) = \int_0^x \frac{1+\mu(t)}{1-\mu(t)} \, dt .
		$$
		Then $\phi(z) = f(x) + iy$ represents a unique quasiconformal mapping on $\overline \C$ onto itself with complex dilatation $\mu$ and normalizations $\phi(0)=0$, $\phi(i)=i$ and $\phi(\infty)=\infty$. Moreover, $\phi$ is bi-Lipschitz.
	\end{lemma}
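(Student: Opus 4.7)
The plan is to verify directly that the formula $\phi(z)=f(x)+iy$ defines a bi-Lipschitz mapping whose Beltrami coefficient is $\mu$, and then to appeal to the measurable Riemann mapping theorem (\Cref{riemann mapping thm}) for uniqueness after adjusting the normalization.

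First I would analyze the integrand. Since $\|\mu\|_\infty \le k<1$, the Möbius map $w\mapsto \frac{1+w}{1-w}$ sends the disk $\{|w|\le k\}$ to a disk contained in the right half plane whose intersection with the real axis is the segment $\left[\frac{1-k}{1+k},\frac{1+k}{1-k}\right]$. Consequently, for a.e.\ $t\in\R$,
\[
\frac{1-k}{1+k}\;\le\;\real\frac{1+\mu(t)}{1-\mu(t)}\;\le\;\left|\frac{1+\mu(t)}{1-\mu(t)}\right|\;\le\;\frac{1+k}{1-k}.
\]
Hence $f$ is well defined and Lipschitz as a map $\R\to\C$, and $\real f:\R\to\R$ is strictly increasing with $\frac{1-k}{1+k}\le(\real f)'\le \frac{1+k}{1-k}$ a.e., so $\real f$ is a bi-Lipschitz bijection of $\R$.

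Next I would check that $\phi(z)=f(x)+iy$ is a bi-Lipschitz homeomorphism of $\C$ by exhibiting its inverse. The equation $\phi(z)=w$ decouples into $\real f(x)=\real w$ and $y=\imag w-\imag f(x)$; since $\real f$ is a bi-Lipschitz bijection of $\R$, these formulas define a Lipschitz inverse $\phi^{-1}$. Then I would compute $\partial_x\phi=f'(x)=\frac{1+\mu(x)}{1-\mu(x)}$ and $\partial_y\phi=i$ a.e., so that
\[
\partial\phi=\tfrac{1}{2}(\partial_x\phi-i\partial_y\phi)=\frac{1}{1-\mu},\qquad \bar\partial\phi=\tfrac{1}{2}(\partial_x\phi+i\partial_y\phi)=\frac{\mu}{1-\mu}.
\]
Thus $\bar\partial\phi=\mu\,\partial\phi$ a.e., and since $\phi\in W^{1,\infty}_{\loc}(\C)$ is an orientation-preserving homeomorphism (the Jacobian $\real\frac{1+\mu}{1-\mu}$ is strictly positive), $\phi$ is $K$-quasiconformal with $K=(1+k)/(1-k)$ and complex dilatation $\mu$.

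Finally I would verify the normalizations $\phi(0)=f(0)=0$, $\phi(i)=f(0)+i=i$, and $\phi(\infty)=\infty$ (immediate from $\phi$ being bi-Lipschitz), and settle uniqueness. Any other normalized quasiconformal solution differs from $\phi$ by post-composition with an affine map $w\mapsto aw+b$ (the only conformal automorphisms of $\C$); the condition $\phi(0)=0$ forces $b=0$ and $\phi(i)=i$ forces $a=1$. No step is really an obstacle; the only point that requires a little care is decoupling the inverse of $\phi$ to obtain the lower Lipschitz bound, which is why it is convenient that $\mu$ depends on only one real variable.
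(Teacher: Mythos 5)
Your proof is correct, and it fills in content that the paper itself does not supply: the paper proves nothing here and simply cites \cite[Proposition 5.23]{Bojarski2013}. Your self-contained verification is sound. The key computations check out: the Möbius map $w\mapsto\frac{1+w}{1-w}$ does send $\{|w|\le k\}$ to the disk centred at $\frac{1+k^2}{1-k^2}$ of radius $\frac{2k}{1-k^2}$, giving precisely the bounds $\frac{1-k}{1+k}\le\real\frac{1+\mu}{1-\mu}\le\left|\frac{1+\mu}{1-\mu}\right|\le\frac{1+k}{1-k}$; from $\partial_x\phi=\frac{1+\mu}{1-\mu}$ and $\partial_y\phi=i$ one gets $\partial\phi=\frac{1}{1-\mu}$, $\bar\partial\phi=\frac{\mu}{1-\mu}$, hence complex dilatation $\mu$; and the Jacobian $|\partial\phi|^2-|\bar\partial\phi|^2=\frac{1-|\mu|^2}{|1-\mu|^2}=\real\frac{1+\mu}{1-\mu}$ is indeed bounded away from zero. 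The decoupled inverse ($x=(\real f)^{-1}(\real w)$, $y=\imag w-\imag f(x)$) is the right way to get the lower Lipschitz bound. The uniqueness argument via Stoilow factorization and affine normalization ($b=0$ from $\phi(0)=0$, $a=1$ from $\phi(i)=i$) is also correct, since any two global $\mu$-quasiconformal homeomorphisms of $\overline\C$ fixing $\infty$ differ by an affine map. The only cosmetic point: you should note explicitly that $W^{1,\infty}_{\loc}\subset W^{1,2}_{\loc}$ to match the regularity required by the definition of quasiconformality in the paper, but this is immediate.
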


        \begin{proof}[Proof of \cref{sym + det 1 all cases}: $A(z)=A(\real(z))$ case]\label{proof of main thm only one variable dependence}
            Recall we can assume that $\Omega$ is Wiener regular by \cref{reductions all}, and we can replace finitely connected by simply connected by \cref{claim:candidate for each index}.
			
			We assume without loss of generality that $A(z) = A(\real(z))$. In particular, $\mu_A$ as in \rf{beltrami coefficients for sym and det 1 matrices} satisfies $\mu_A(z) = \mu_A(\real(z))$ and hence $\phi_A$, given by \cref{regularity when mu only depends on x}, is bi-Lipschitz. The proof follows as in the proof of \cref{main thm general domains} on \cpageref{proof of main thm general domains} using \cref{lemma:relation elliptic measure for sym and det 1} and the fact that $\phi_A$ is bi-Lipschitz.
        \end{proof}
	
	\subsubsection{VMO coefficients}
	
	Let us introduce the space of functions with vanishing mean oscillation.
	
	\begin{definition}\label{def:vmo}
		A functions $f$ has vanishing mean oscillation, written $f\in \vmo$, if $f\in \bmo$ and
		$$
		\lim_{r\to 0} \sup_{x\in \R^2} \avint_{B_r (x)} |f(z)-m_{B_r(x)} f | \, dz =0.
		$$
	\end{definition}

        For the classical Beltrami equation with $\vmo_c$ coefficient, its quasiconformal solution preserves the dimension of sets.

	\begin{lemma}\label{homeo with vmo coeff preserve dimension}
		Let $\mu \in \vmo_c$ be a Beltrami coefficient and $\phi \in W^{1,2}_{\loc} (\C)$ be a $K$-quasiconformal homeomorphism of $\C$ satisfying $\bar \partial \phi = \mu \partial \phi$. Then both $\phi$ and $\phi^{-1}$ are $C^\alpha_{\loc}$ for every $\alpha \in (0,1)$. In particular
		$$
		\dim_\HH (\phi (S)) = \dim_\HH (S) \text{ for any set } S.
		$$
	\end{lemma}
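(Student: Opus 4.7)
The plan has two ingredients: (i) Hölder regularity of both $\phi$ and $\phi^{-1}$ with every exponent $\alpha\in(0,1)$, and (ii) the standard fact that such regularity forces dimension preservation.

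For (i), the key input is the classical result that a principal $K$-quasiconformal solution of $\bar\partial\phi=\mu\,\partial\phi$ with $\mu\in\vmo_c$ lies in $C^\alpha_{\loc}(\C)$ for every $\alpha\in(0,1)$; see e.g.\ \cite{Astala2009} and the Iwaniec--Martin theory of asymptotically conformal mappings. The underlying mechanism is that VMO symbols generate compact commutators with the Beurling--Ahlfors transform $S$ on $L^p(\C)$, so the fixed-point identity $\bar\partial\phi=\mu(1+S\bar\partial\phi)$ upgrades the local $L^p$-integrability of $\bar\partial\phi$ to every $p<\infty$; Sobolev embedding then improves the Hölder exponent from the generic $1/K$ given by \cref{qc are locally holder} to any $\alpha<1$.

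To transfer this regularity to $\phi^{-1}$, note that $\phi^{-1}$ is itself a $K$-quasiconformal homeomorphism of $\C$ solving a Beltrami equation with coefficient $\tilde\mu$ satisfying $|\tilde\mu|=|\mu|\circ\phi^{-1}$; in particular $\|\tilde\mu\|_\infty\le\|\mu\|_\infty<1$, and $\tilde\mu$ is compactly supported because $\phi$ is a homeomorphism. Quasiconformal composition preserves VMO: this follows by combining \cref{bmo under quasiconformal homeo} with the observation that $\vmo$ is the BMO-closure of the bounded uniformly continuous functions, a class stable under composition with quasiconformal homeomorphisms (which are locally Hölder by \cref{qc are locally holder}). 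Hence $|\tilde\mu|\in\vmo_c$. Since the Hölder estimate in the previous paragraph depends only on $\|\mu\|_\infty$ and the VMO modulus of $|\mu|$, it applies verbatim to $\phi^{-1}$, giving $\phi^{-1}\in C^\alpha_{\loc}(\C)$ for every $\alpha\in(0,1)$.

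Finally, for (ii), a standard covering argument shows that if $f\in C^\alpha_{\loc}(\C)$ then $\dim_\HH f(S)\le\alpha^{-1}\dim_\HH S$ for every bounded $S\subset\C$, and hence for every $S\subset\C$ by $\sigma$-subadditivity of Hausdorff dimension. Applied to $\phi$ and letting $\alpha\nearrow 1$ this yields $\dim_\HH\phi(S)\le\dim_\HH S$; the analogous bound for $\phi^{-1}$ produces the reverse inequality, completing the proof. The principal obstacle is step (i), which rests on the nontrivial commutator-compactness theory for VMO symbols; once that ingredient is available, the remaining arguments are routine.
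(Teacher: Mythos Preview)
Your argument for $\phi\in C^\alpha_{\loc}$ via Iwaniec's $\vmo$ theory is fine and matches the paper's approach. The gap is in the treatment of $\phi^{-1}$.

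The claim that ``the H\"older estimate in the previous paragraph depends only on $\|\mu\|_\infty$ and the VMO modulus of $|\mu|$'' is false. The commutator compactness of $[\mu,S]$ on $L^p$ requires $\mu\in\vmo$, not merely $|\mu|\in\vmo$; a function like $c\cdot\mathrm{sgn}(\real z)$ has constant modulus but is not in $\vmo$. So knowing $|\tilde\mu|=|\mu|\circ\phi^{-1}\in\vmo_c$ is not enough to rerun the argument for $\phi^{-1}$.

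What you actually need is that the Beltrami coefficient of $\phi^{-1}$ itself lies in $\vmo_c$. The inverse satisfies the \emph{conjugate} equation $\bar\partial(\phi^{-1})=-(\mu\circ\phi^{-1})\,\overline{\partial(\phi^{-1})}$, so the classical Beltrami coefficient is
\[
\tilde\mu=-\bigl(\mu\circ\phi^{-1}\bigr)\,\frac{\overline{\partial(\phi^{-1})}}{\partial(\phi^{-1})}.
\]
Quasiconformal invariance of $\vmo$ (your argument, or the paper's \cref{vmo compact and bounded are invariant under qc}) gives $\mu\circ\phi^{-1}\in\vmo_c$, but the unimodular factor $\overline{\partial(\phi^{-1})}/\partial(\phi^{-1})$ is not obviously in $\vmo$. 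The paper closes this via Hamilton's theorem \cite{Hamilton1989}: $\log\partial\phi\in\vmo$, hence $\partial\phi/\overline{\partial\phi}=e^{2i\,\imag\log\partial\phi}\in\vmo$, and then \cref{general to classical beltrami also is vmo} shows $\tilde\mu\in\vmo_c$. Alternatively one can invoke the generalized-Beltrami version of Iwaniec's result (the paper cites Koski and Clop--Cruz) directly on the conjugate equation with coefficient $-\mu\circ\phi^{-1}\in\vmo_c$. Either route requires an ingredient your proposal does not supply.
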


        This is well-known in the literature. For the sake of completeness, a proof of \cref{homeo with vmo coeff preserve dimension} can be found below, as the author did not find any proof in the literature.
	
        Using the result above, for $\vmo$ matrices we recover the Jones and Wolff theorem in \cite{Jones1988} and the lower bound of the dimension of elliptic measure in simply connected domains.
	
		\begin{proof}[Proof of \cref{sym + det 1 all cases}: $\vmo$ case]\label{proof of vmo matrix everywhere}
			Recall we can assume that $\Omega\subset B_{1/4}(0)$ is bounded Wiener regular by \cref{reductions all}, and we can replace finitely connected by simply connected by \cref{claim:candidate for each index}.
			
			Let us fix a function $\varphi \in C^\infty_c (\R^2)$ satisfying
			\begin{equation}\label{definition of the support function}
				0\leq \varphi \leq 1,\,  \varphi|_{B_{1/2}(0)}=1,\, \varphi|_{B_{1}(0)^c}=0 \text{ and } |\nabla \varphi|\lesssim 1,
			\end{equation}
			and let $\phi_A \in W^{1,2}_{\loc}(\C)$ be the normalized solution of the Beltrami equation
			$$
			\bar\partial\phi_A = \varphi \mu_A \partial\phi_A \text{ a.e.\ in }\C, \text{ where }\mu_A \text{ as in }\rf{beltrami coefficients for sym and det 1 matrices},
			$$
			given by the measurable Riemann mapping theorem, see \cref{riemann mapping thm}. Since A is uniformly elliptic, in particular $a_{11} , a_{22} \approx 1$ and hence $2+a_{11} + a_{22} \geq 2$. Since $A\in \vmo (\R^2)$, by \cref{fraction in vmo} in \cref{properties of p-mean oscillation} we have $\mu_A \in \vmo (\R^2)$ and so $\varphi\mu_A\in \vmo_c (\R^2)$. Therefore, by \cref{homeo with vmo coeff preserve dimension}, the function $\phi_A$ preserves the dimension of sets. The theorem follows from this by the same proof of \cref{main thm general domains} on \cpageref{proof of main thm general domains}, using \cref{lemma:relation elliptic measure for sym and det 1}.
		\end{proof}

	We now turn to the proof of \cref{homeo with vmo coeff preserve dimension}. First, in the following result we see the regularity of a quasiconformal mapping with $\vmo_c$ coefficient.
	
	\begin{theorem}[See {\cite[p.~42-43]{Iwaniec1992}}]\label{regularity for vmo matrices}
		Let $\mu \in \vmo_c$ be such that $\|\mu\|_\infty = k < 1$, and $p \in (0,\infty)$. Then for each $h\in L^p$ the equation
		$$
		\bar \partial f - \mu \partial f = h
		$$
		admits a solution $f$ with $\nabla f \in L^p$ which is unique up to an additive constant.
	\end{theorem}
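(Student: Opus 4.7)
My plan is to reformulate the equation $\bar\partial f - \mu\partial f = h$ via the Beurling--Ahlfors transform $\mathcal{S}$, the Calder\'on--Zygmund operator on $\C$ characterized by $\mathcal{S}\bar\partial u = \partial u$ on test functions, bounded on every $L^p(\C)$ for $1<p<\infty$ with $\|\mathcal{S}\|_{L^2\to L^2}=1$. Setting $g \coloneqq \partial f$ and applying $\mathcal{S}$, solving the PDE reduces to solving the linear operator equation
$$
(I - \mathcal{S} M_\mu)\, g = \mathcal{S} h \quad \text{in } L^p(\C),
$$
where $M_\mu$ denotes multiplication by $\mu$. Once $g$ is obtained, I recover $f$ via the Cauchy transform, $f \coloneqq \mathcal{C} g$, which satisfies $\partial f = g$ and, by construction, gives $\bar\partial f - \mu \partial f = h$ with $\nabla f\in L^p$. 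Uniqueness up to an additive constant is equivalent to injectivity of $I-\mathcal{S} M_\mu$ on $L^p$: if $f_1,f_2$ solve the PDE with $\nabla f_i\in L^p$, then $g \coloneqq \partial(f_1-f_2)\in L^p$ satisfies $(I-\mathcal{S} M_\mu)g=0$; once $g=0$ is forced, $\bar\partial(f_1-f_2)=\mu\cdot 0=0$ makes $f_1-f_2$ constant.

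The case $p=2$ is immediate from the Neumann series, since $\|\mathcal{S} M_\mu\|_{L^2\to L^2}\leq k<1$; by Riesz--Thorin interpolation and continuity of $p\mapsto\|\mathcal{S}\|_{L^p\to L^p}$ near $p=2$, the same argument gives invertibility on $L^p$ for $p$ in some open interval around $2$ where $k\|\mathcal{S}\|_{L^p\to L^p}<1$. Extending solvability to arbitrary $p\in(1,\infty)$ is where the VMO hypothesis becomes essential: by the Coifman--Rochberg--Weiss theorem, refined by Uchiyama, for $\mu\in\vmo$ the commutator $[M_\mu,\mathcal{S}]$ is compact on every $L^p(\C)$, $1<p<\infty$, and the compact support of $\mu$ further localizes $\mathcal{S} M_\mu$. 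The next step is a Fredholm argument: I would deform $\mu$ continuously to $0$ inside $\vmo_c$ (say by $t\mapsto t\mu$, $t\in[0,1]$), observe that $I-\mathcal{S} M_{t\mu}$ stays Fredholm of constant index zero along the homotopy thanks to commutator compactness, and conclude Fredholmness of index zero at $\mu$ itself. Combined with injectivity, already established on $L^2$ and transferred to $L^p$ by the fact that any $L^p$ null solution $g$ can be bootstrapped into $L^2$ using the same compactness input, this yields invertibility on every $L^p$, $1<p<\infty$.

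The main obstacle is precisely this extension outside the Neumann regime: the bound $\|\mu\|_\infty<1$ alone fails to give solvability beyond a bounded range of $p$ around $2$ (in line with Astala's sharp $L^p$-bounds for quasiregular maps), so some qualitative input on $\mu$ is needed, and VMO is exactly the right one because it is the characterization of symbols producing compact commutators with Calder\'on--Zygmund operators. For the endpoint range $p\in(0,1]$ I would run the same scheme in the Hardy spaces $H^p(\C)$: $\mathcal{S}$ is bounded, the relation $\mathcal{S}\bar\partial=\partial$ persists, and compactness of $[M_\mu,\mathcal{S}]$ for compactly supported $\mu\in\vmo$ extends via atomic decomposition, so the Fredholm-plus-injective argument transfers and the Cauchy-transform reconstruction produces $f$ with $\nabla f\in H^p\subset L^p$, unique up to a constant.
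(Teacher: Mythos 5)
Your overall strategy---reduce to a Beurling--Ahlfors fixed-point equation, use the Neumann series at $p=2$, then leverage compactness of $[M_\mu,\mathcal{S}]$ for $\mu\in\vmo$---is the right circle of ideas and matches Iwaniec's approach, which the paper only cites and does not reproduce. The crucial gap is in the Fredholm step: compactness of $[M_\mu,\mathcal{S}]$ shows that $I-\mathcal{S}M_{t\mu}$ and $I-M_{t\mu}\mathcal{S}$ are Fredholm \emph{simultaneously} with equal index, but it does not show that either one is Fredholm, and the homotopy $t\mapsto t\mu$ preserves index only once Fredholmness along the whole path is established independently. Establishing Fredholmness on $L^p$ for all $p\in(1,\infty)$ requires a parametrix built by localization, and this is exactly where VMO earns its keep: cover $\supp\mu$ by small balls $B_j$ on which $\mu$ has small mean oscillation, freeze $\mu$ to its average $\mu_j$ there, use that $I-\mu_j\mathcal{S}$ is an invertible Mihlin multiplier on every $L^p$ because its symbol $1-\mu_j\,\bar\xi/\xi$ stays bounded away from zero whenever $|\mu_j|<1$, and patch the local inverses together, with the commutator compactness making the patching errors compact. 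Without this localization the Fredholm claim is unsupported, and the homotopy argument has nothing to stand on.

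Two further issues. The Cauchy transform satisfies $\bar\partial\,\mathcal{C}=I$, not $\partial\,\mathcal{C}=I$, so $f:=\mathcal{C}g$ yields $\bar\partial f=g$ rather than $\partial f=g$; the correct reconstruction is $f:=\mathcal{C}(\mu g+h)$, for which $\bar\partial f=\mu g+h$ and $\partial f=\mathcal{S}(\mu g+h)=g$ by the fixed-point equation, hence $\bar\partial f-\mu\partial f=h$. And the claim that an $L^p$ null solution ``can be bootstrapped into $L^2$ using the same compactness input'' is unjustified for $p<2$, since $L^p(\supp\mu)\not\subset L^2(\supp\mu)$ there; the standard route is duality, noting that $(I-\mathcal{S}M_\mu)^*=I-M_{\bar\mu}\mathcal{S}^*$ acts on the conjugate exponent $p'>2$ which is already handled. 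Finally, the Hardy-space range $p\le 1$---which Iwaniec does treat and which the stated theorem claims---is the most delicate part, and a one-sentence sketch leaning on atomic decompositions is far from a proof.
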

        This result also holds for the generalized Beltrami equation, see \cite[Theorem 4.6]{Koski2011} and \cite[Theorem 8]{Clop-Cruz-2013}.
        
	\begin{rem}\label{beltrami in vmo particular case}
	    Note that if $\bar\partial\phi = \mu \partial\phi$ with $\mu \in \vmo_c$, then $\phi \in L^p_{\loc}$ for all $p\in (1,\infty)$. Thus, $\phi \in C_{\loc}^\alpha$ for every $\alpha \in (0,1)$, and as a consequence, $\dim_\HH (\phi (S))\leq \dim_\HH (S)$ for any set $S$.
	\end{rem}
        
        Next we want to see the regularity of the Beltrami coefficient of $\phi^{-1}$. Before that, as a consequence of the estimate \rf{distortion of mean oscillation under qc} in \cref{bmo under quasiconformal homeo}, we see that $\vmo_c\cap L^\infty$ is invariant under quasiconformal mappings.

        \begin{coro}\label{vmo compact and bounded are invariant under qc}
            $\vmo_c\cap L^\infty$ is invariant under global homeomorphic quasiconformal mappings.
            \begin{proof}
                Let $f\in \vmo_c\cap L^\infty$ be non-identically zero, otherwise we are done. By \cref{def:vmo}, for all $\varepsilon>0$ there is $r_0 = r_0 (\varepsilon)$ such that if $r<r_0$ then
                $$
                \sup_{x\in \R^2} \avint_{B_r(x)} |f-m_{B_r(x)}f|<\varepsilon.
                $$
                Let $\phi \in W^{1,2}_{\loc}(\C)$ be a $K$-quasiconformal homomorphism of $\C$, with $K\geq 1$, $\widetilde B$ be a ball such that $\supp (f\circ\phi^{-1}) \subset \widetilde B/2$ and fix $\widetilde C \coloneqq C_{K,2\widetilde B,\phi}$, the constant in \cref{bmo under quasiconformal homeo}. We want to see that for all $\varepsilon^\prime>0$ there is $r_0^\prime = r_0^\prime (\varepsilon^\prime) \leq r(\widetilde B)/100$ such that if $r<r_0^\prime$ then
                $$
                \sup_{x\in \R^2} \avint_{B_r(x)} |f\circ\phi^{-1}-m_{B_r(x)}(f\circ\phi^{-1})|<\varepsilon^\prime.
                $$
                Given $x\in \widetilde B$ and $r<r_0^\prime$ (otherwise the integral above is zero), let $B=B_r(x)$ and $B_-$ as in \rf{ball in the image vmo}. By \rf{distortion of mean oscillation under qc} (with $p=K+1$) we have
                $$
                \begin{aligned}
                \avint_B |f\circ\phi^{-1}-m_B(f\circ\phi^{-1})|
                &\leq C_K \left(\avint_{B_-} |f-m_{B_-}f|^{K+1}\right)^{1/(K+1)}\\
                &\leq C_K (2\|f\|_{L^\infty})^{K/(K+1)} \left(\avint_{B_-} |f-m_{B_-}f|\right)^{1/(K+1)},
                \end{aligned}
                $$
                where $r(B_-)\leq \widetilde C r(B)^{1/K}$. So, given $\varepsilon^\prime$, take $\varepsilon = (\varepsilon^\prime /(C_K (2\|f\|_{L^\infty})^{K/(K+1)}))^{K+1}$ and $r_0^\prime = \min\{(r_0 (\varepsilon)/\widetilde C)^K,r(\widetilde B)/100\}$. With this choice we have $r(B_-) < \widetilde C (r_0^\prime)^{1/K}\leq r_0 (\varepsilon)$ and therefore
                $$
                C_K (2\|f\|_{L^\infty})^{K/(K+1)} \left(\avint_{B_-} |f-m_{B_-}f|\right)^{1/(K+1)} < \varepsilon^\prime,
                $$
                as claimed.
            \end{proof}
        \end{coro}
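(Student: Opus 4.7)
The plan is to verify each of the three defining properties of $\vmo_c \cap L^\infty$ for the function $f \circ \phi^{-1}$, assuming $f \in \vmo_c \cap L^\infty$ and $\phi$ is a global homeomorphic $K$-quasiconformal mapping of $\C$. The $L^\infty$ property is immediate since $\|f \circ \phi^{-1}\|_{L^\infty} = \|f\|_{L^\infty}$. The compact-support property is also easy: $\supp(f \circ \phi^{-1}) = \phi(\supp f)$, and the continuous image of a compact set under the homeomorphism $\phi$ is compact.

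The substantive part is the VMO property. First I would fix a ball $\widetilde B$ large enough that $\supp(f \circ \phi^{-1}) \subset \widetilde B/2$, which uses the compact support established above. This way I can restrict attention to balls $B \subset \widetilde B$, since for any ball $B$ of sufficiently small radius that does not meet $\widetilde B/2$, both $f \circ \phi^{-1}$ and its average over $B$ vanish identically, so the mean oscillation is zero.

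Next I would apply Lemma \ref{bmo under quasiconformal homeo} with, say, $p = K+1 > K$, to obtain the estimate
\begin{equation*}
\avint_B |f \circ \phi^{-1} - m_B(f \circ \phi^{-1})|\, dx \lesssim_K \left( \avint_{B_-} |f - m_{B_-} f|^{K+1}\, dx \right)^{1/(K+1)},
\end{equation*}
where $B_-$ is defined as in \rf{ball in the image vmo} and satisfies $r_{B_-} \leq C_{K,\widetilde B, \phi}\, r_B^{1/K}$. Then I would interpolate between $L^1$ and $L^\infty$ on the right-hand side: since $|f - m_{B_-} f| \leq 2\|f\|_{L^\infty}$ pointwise,
\begin{equation*}
\avint_{B_-} |f - m_{B_-}f|^{K+1} \leq (2\|f\|_{L^\infty})^K \avint_{B_-} |f - m_{B_-}f|.
\end{equation*}

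Finally, given $\varepsilon' > 0$, I would use the VMO hypothesis on $f$ to pick $r_0 = r_0(\varepsilon)$ so that $\sup_x \avint_{B_r(x)} |f - m_{B_r(x)} f| < \varepsilon$ for $r < r_0$, with $\varepsilon$ chosen in terms of $\varepsilon'$, $K$, and $\|f\|_{L^\infty}$ to make the combined constants work. Choosing $r_0' \coloneqq \min\{(r_0/C_{K,\widetilde B,\phi})^K,\, r(\widetilde B)/100\}$ guarantees that whenever $r_B < r_0'$, the corresponding radius $r_{B_-}$ is below $r_0$, and chaining the inequalities gives $\avint_B |f \circ \phi^{-1} - m_B(f \circ \phi^{-1})| < \varepsilon'$, proving membership in $\vmo$. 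The main technical point — and essentially the only non-routine step — is arranging the dependence between $\varepsilon'$ and $\varepsilon$ correctly so that the exponent $1/(K+1)$ from the $L^p$-to-$L^1$ reduction still yields smallness; this is straightforward once the interpolation step is inserted.
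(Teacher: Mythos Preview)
Your proposal is correct and follows essentially the same approach as the paper: apply Lemma~\ref{bmo under quasiconformal homeo} with $p=K+1$, interpolate the $(K+1)$-mean oscillation between $L^1$ and $L^\infty$ using $|f-m_{B_-}f|\leq 2\|f\|_{L^\infty}$, and choose $r_0'=\min\{(r_0/C_{K,\widetilde B,\phi})^K,\,r(\widetilde B)/100\}$ so that $r_{B_-}<r_0$. The only addition is that you spell out the (trivial) preservation of the $L^\infty$ and compact-support properties, which the paper leaves implicit.
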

	
	In the following lemma we prove that if $\phi$ solves the Beltrami equation $\bar\partial\phi=\mu\partial\phi$ with $\mu \in \vmo_c$, then $\phi^{-1}$ not only solves the generalized Beltrami equation $\bar\partial(\phi^{-1})=-(\mu\circ\phi^{-1})\overline{\partial(\phi^{-1})}$ with $\mu\circ\phi^{-1}\in \vmo_c$, but it also satisfies $\bar\partial(\phi^{-1})=\widetilde\mu \partial\phi$ with $\widetilde\mu\in\vmo_c$. In particular, we can also apply \cref{regularity for vmo matrices} to $\phi^{-1}$, see \cref{beltrami in vmo particular case}.
	
	\begin{lemma}\label{general to classical beltrami also is vmo}
		Let $\mu \in \vmo_c$ be a Beltrami coefficient and $\phi \in W^{1,2}_{\loc} (\C)$ be a $K$-quasiconformal homeomorphism of $\C$ satisfying $\bar \partial \phi = \mu \partial \phi$. Then $\phi^{-1}$ satisfies
		\begin{equation}\label{general to classical beltrami equation}
			\bar \partial (\phi^{-1})
			= - (\mu \circ \phi^{-1}) \overline{\partial (\phi^{-1})} 
			= - \left(\left(\mu \frac{\partial \phi}{\overline{\partial \phi}} \right) \circ \phi^{-1}\right)  \partial (\phi^{-1})
		\end{equation}
		with $\mu \circ \phi^{-1}, (\mu \partial \phi / \overline{\partial \phi}) \circ \phi^{-1} \in \vmo_c$.
		\begin{proof}
			The equalities in \rf{general to classical beltrami equation} are proved in \cref{all relations beltrami and pde} and \cite[Proposition 5]{Clop2009}.
			
			From \cref{product in vmo} in \cref{properties of p-mean oscillation} and \cref{vmo compact and bounded are invariant under qc} it suffices to see $\partial \phi / \overline{\partial \phi} \in \vmo$. The strategy is the same as in the proof of \cite[Proposition 5]{Clop2009}. Due to the work of Hamilton in \cite{Hamilton1989} we have $\lambda = \log \partial \phi \in \vmo$. Writing $\partial \phi = e^\lambda$, in particular we have
			$$
			\frac{\partial \phi}{\overline{\partial \phi}} 
			= \frac{e^\lambda}{\overline{e^\lambda}} 
			= \frac{e^{\real \lambda} e^{i \imag \lambda}}{e^{\real \lambda} e^{-i \imag \lambda}}
			= e^{2i \imag \lambda} .
			$$
			The function $\imag \lambda$ belongs in $\vmo$ as $\lambda \in \vmo$. Since $\R\ni t\mapsto e^{it}$ is $1$-Lipschitz, for any ball $B$ we have
			\begin{equation*}
                \begin{aligned}
				\avint_B \left| e^{2i\imag \lambda (x)} - m_B e^{2i\imag \lambda} \right| \, dx 
				&\leq  \avint_B \avint_B \left| e^{2i\imag \lambda (x)}-e^{2i\imag \lambda (y)} \right| \, dy \, dx \\
				&\leq 2\avint_B \avint_B \left| \imag \lambda (x)-\imag \lambda (y) \right| \, dy \, dx 
				\leq  4\avint_B \left| \imag \lambda (x) - m_B \imag \lambda \right| \, dx,
                \end{aligned}
			\end{equation*}
			implying $\partial \phi / \overline{\partial \phi} = e^{2i\imag \lambda} \in \vmo$ as $\imag \lambda \in \vmo$.
		\end{proof}
	\end{lemma}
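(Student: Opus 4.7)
The two equalities in \rf{general to classical beltrami equation} are essentially standard in quasiconformal theory. The plan is to obtain the first one from the classical relation between the Beltrami coefficient of a quasiconformal homeomorphism and that of its inverse (see \cite[Theorem 5.5.6]{Astala2009}), and to derive the second by clearing denominators: substituting $\bar\partial\phi=\mu\partial\phi$ into $\bar\partial(\phi^{-1})=-(\mu\circ\phi^{-1})\overline{\partial(\phi^{-1})}$ and rewriting $\overline{\partial(\phi^{-1})}$ in terms of $\partial(\phi^{-1})$. The substance of the lemma is therefore the $\vmo_c$ assertion for the two coefficients, and my plan is to reduce both to a single, simpler statement about $\phi$ itself.

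For the reduction I would invoke \cref{vmo compact and bounded are invariant under qc}, which guarantees that $\vmo_c\cap L^\infty$ is stable under global quasiconformal homeomorphisms of $\C$. Since $\mu\circ\phi^{-1}$ and $(\mu\,\partial\phi/\overline{\partial\phi})\circ\phi^{-1}$ have compact support, inherited from the compact support of $\mu$ and the continuity of $\phi,\phi^{-1}$, it is enough to check that $\mu$ and $\mu\,\partial\phi/\overline{\partial\phi}$ themselves lie in $\vmo\cap L^\infty$. The first is immediate from the hypothesis; for the second I note that $|\partial\phi/\overline{\partial\phi}|\equiv 1$, which gives the $L^\infty$ bound, and then apply the product rule \cref{properties of p-mean oscillation}\rf{product in vmo} to reduce the problem to showing $\partial\phi/\overline{\partial\phi}\in\vmo$.

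To settle this last point my plan is to appeal to Hamilton's regularity theorem \cite{Hamilton1989}, which tells me that when $\mu\in\vmo$ one has $\log\partial\phi\in\vmo$. Setting $\lambda=\log\partial\phi$, a direct algebraic manipulation yields $\partial\phi/\overline{\partial\phi}=e^{\lambda}/\overline{e^{\lambda}}=e^{2i\imag\lambda}$. Since $\imag\lambda\in\vmo$ and $t\mapsto e^{it}$ is $1$-Lipschitz on $\R$, on any ball $B$ the pointwise inequality $|e^{2i\imag\lambda(x)}-e^{2i\imag\lambda(y)}|\leq 2|\imag\lambda(x)-\imag\lambda(y)|$, followed by double-averaging over $B\times B$, will transfer the mean-oscillation control from $\imag\lambda$ to $e^{2i\imag\lambda}$ and preserve the vanishing of oscillation in the limit $r\to 0$.

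The main obstacle I expect is the appeal to Hamilton's theorem, which is the deep regularity input linking $\vmo$ Beltrami coefficients with the $\vmo$ regularity of $\log\partial\phi$. Granted that result, the rest of the argument is bookkeeping via the elementary product/composition properties of $\vmo$ recorded earlier in \cref{properties of p-mean oscillation} and the quasiconformal invariance of $\vmo_c\cap L^\infty$ from \cref{vmo compact and bounded are invariant under qc}, together with the trivial observation that $|\partial\phi/\overline{\partial\phi}|=1$ to keep everything in $L^\infty$.
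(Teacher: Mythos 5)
Your proposal matches the paper's argument step for step: both reduce, via the quasiconformal invariance of $\vmo_c\cap L^\infty$ and the product rule for mean oscillation, to showing $\partial\phi/\overline{\partial\phi}\in\vmo$, and both settle that by Hamilton's theorem ($\log\partial\phi\in\vmo$) combined with the $1$-Lipschitzness of $t\mapsto e^{it}$ and a double-averaging estimate. The only cosmetic difference is that you cite Astala's inverse-Beltrami identity directly, whereas the paper routes the same fact through its own Corollary~\ref{all relations beltrami and pde} and \cite{Clop2009}.
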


        We are now ready to prove \cref{homeo with vmo coeff preserve dimension}.

        \begin{proof}[Proof of \cref{homeo with vmo coeff preserve dimension}]
            It follows from \cref{regularity for vmo matrices,beltrami in vmo particular case,general to classical beltrami also is vmo}.
        \end{proof}
	
	\subsubsection{\texorpdfstring{$W_{\loc}^{1,p}$}{Sobolev} coefficients\texorpdfstring{ with $p>\frac{2\lambda^2}{\lambda^2+1}$}{}}
	
	In \cref{sec:holder continous,sec:only one variable dependence}, we had that the quasiconformal mapping $\phi_A$ was bi-Lipschitz, which allowed us to prove the $\sigma$-finite length property of the elliptic measure. In this section we assume $A\in W_{\loc}^{1,p}$ with $p>\frac{2\lambda^2}{\lambda^2+1}$. Although this Sobolev regularity on $A$, and so also for the Beltrami coefficient $\mu_A$, does not provide a bi-Lipschitz quasiconformal solution in general, it still preserves sets with $\sigma$-finite length, see \cref{regularity for sobolev matrices} below. In particular, this will give the Wolff's \cref{wolff thm} in this situation.
	
	\begin{theorem}[See {\cite[Corollary 11 and Proposition 21(c)]{Clop2009}}]\label{regularity for sobolev matrices}
		Let $K\geq 1$ and $\frac{2K^2}{K^2+1} < p \leq 2$. Let $\mu \in W^{1,p}_c (\R^2)$ be a compactly supported Beltrami coefficient with $\|\mu\|_\infty \leq \frac{K-1}{K+1}$, and $\phi$ any $\mu$-quasiconformal mapping. For every compact set $E$,
		$$
		\HH^1 (E) \text{ is }\sigma\text{-finite} \iff \HH^1 (\phi(E)) \text{ is }\sigma\text{-finite}.
		$$
	\end{theorem}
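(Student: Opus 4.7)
The plan follows the strategy of Clop--Faraco--Mateu--Orobitg--Zhong: improved Sobolev regularity of the Beltrami coefficient yields improved regularity of the quasiconformal mapping, which in turn controls the distortion of $1$-dimensional Hausdorff measure.

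First, some reductions. By Stoilow factorization, any $\mu$-quasiconformal $\phi$ decomposes as $h\circ\phi_0$, where $\phi_0$ is the principal solution of $\bar\partial\phi_0=\mu\,\partial\phi_0$ normalized by $\phi_0(z)=z+\OO(1/z)$ at infinity, and $h$ is conformal on $\phi_0(\C)$, hence locally bi-Lipschitz. Since bi-Lipschitz maps preserve $\sigma$-finite length, it suffices to prove the statement for $\phi_0$. Moreover, arguing as in \cref{general to classical beltrami also is vmo}, the inverse $\phi_0^{-1}$ solves a Beltrami equation with coefficient $\widetilde\mu=(\mu\,\partial\phi_0/\overline{\partial\phi_0})\circ\phi_0^{-1}$; using the chain rule together with the improved regularity of $\phi_0$ established in the next step, one shows $\widetilde\mu\in W^{1,p}_c$ with the same ellipticity bound. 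Hence only one implication needs to be proved.

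Second, I would establish the key regularity statement that $D\phi_0-\mathrm{Id}\in W^{1,p}(\R^2)$. This is obtained from the Cauchy--Pompeiu representation $\partial\phi_0 = 1 + B(\mu\,\partial\phi_0)$, where $B$ is the Beurling transform, combined with Iwaniec's $L^q$-theory ensuring invertibility of $I-\mu B$ on the appropriate spaces. Applying $\nabla$ to the Beltrami equation and using the product rule $\nabla(\mu\,\partial\phi_0)=(\nabla\mu)\,\partial\phi_0+\mu\,\nabla\partial\phi_0$, one recasts the problem as solving $(I-\mu B)(\nabla\partial\phi_0)=(\nabla\mu)\,\partial\phi_0$ in $L^p$. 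Feasibility of this requires the right-hand side to be in $L^p$, which by H\"older's inequality pairs $\nabla\mu\in L^p$ with the higher integrability $D\phi_0\in L^s_{\loc}$ for $s$ close to $\frac{2K}{K-1}$ (Astala); the critical exponent $\frac{2K^2}{K^2+1}$ is precisely the bookkeeping threshold at which these two exponents are compatible.

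Third, one translates this regularity into preservation of $\sigma$-finite $\HH^1$. Since $D\phi_0\in W^{1,p}_{\loc}$ with $p>1$, it admits a quasicontinuous representative defined outside an exceptional set of zero $p$-capacity, which has $\sigma$-finite $\HH^1$. Given a compact $E$ with $\HH^1(E)$ $\sigma$-finite, one reduces to a piece $F\subset E$ with $\HH^1(F)<\infty$ and covers $F$ by balls $B(x_i,r_i)$ with $\sum_i r_i\lesssim\HH^1(F)+\varepsilon$. For $x_i$ outside the exceptional set, one bounds $\diam(\phi_0(B(x_i,r_i)))\lesssim r_i\cdot M|D\phi_0|(x_i)$ by a maximal-function argument, and concludes via the integrability of $M|D\phi_0|$ inherited from step two, together with a Vitali-type extraction. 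The contribution from the exceptional set is absorbed by its $\sigma$-finite length.

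The main obstacle is this last step. General $K$-quasiconformal mappings can send sets of finite $\HH^1$ to sets of dimension $\frac{2K}{K+1}>1$, as Astala's distortion is sharp, so preservation of length-$\sigma$-finiteness fails without extra regularity. The hypothesis $p>\frac{2K^2}{K^2+1}$ is exactly the threshold at which the Sobolev regularity of $\mu$ suffices to control the distortion of the principal solution via a balance between $p$-capacity estimates for $D\phi_0$ and the Astala exponent; below this threshold, explicit counter-examples show the conclusion fails.
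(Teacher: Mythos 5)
The paper offers no proof of this theorem: it is cited directly from \cite{Clop2009} (their Corollary~11 and Proposition~21(c)) and used as a black box. So the only comparison to make is between your sketch and the argument in that reference.

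The central gap is in your Step~2. Your own Hölder bookkeeping shows that the claim $D\phi_0-\mathrm{Id}\in W^{1,p}(\R^2)$ does not follow from the equation you write down: the right-hand side $(\nabla\mu)\,\partial\phi_0$ pairs $\nabla\mu\in L^p$ against $\partial\phi_0\in L^s_{\loc}$, $s<\frac{2K}{K-1}$, and Hölder then lands it only in $L^r$ for $r<p$, never in $L^p$ (equality would require $\partial\phi_0\in L^\infty_{\loc}$, which fails for discontinuous $\mu$). So $(I-\mu B)^{-1}$ cannot return $\nabla\partial\phi_0\in L^p$, and the interpretation of $\frac{2K^2}{K^2+1}$ as "the threshold at which these two exponents are compatible" is not correct. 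The object that \cite{Clop2009} actually controls is the logarithmic derivative: differentiating the Beltrami equation and dividing by $\partial\phi_0$ gives $\bar\partial(\log\partial\phi_0)-\mu\,\partial(\log\partial\phi_0)=\partial\mu$, whose right-hand side is genuinely in $L^p$, and the conclusion is $\log\partial\phi_0\in W^{1,p}(\R^2)\cap W^{1,2}_{\loc}(\R^2)$. The resulting exponential (Trudinger-type) integrability of $|\partial\phi_0|^{\pm1}$ is what drives the refined length-distortion estimates; it cannot be recovered from a weaker $D\phi_0\in W^{1,q}_{\loc}$ for some $q<p$.

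Two further steps do not close as stated. In Step~3, the pointwise Sobolev estimate is two-sided, $|\phi_0(x)-\phi_0(y)|\lesssim|x-y|\left(M|D\phi_0|(x)+M|D\phi_0|(y)\right)$, so at best $\diam\phi_0(B_i)\lesssim r_i\sup_{B_i}M|D\phi_0|$, not $r_iM|D\phi_0|(x_i)$; and an area-measure $L^q$ bound on $M|D\phi_0|$ does not by itself give summability of $\sum_i r_i\sup_{B_i}M|D\phi_0|$ over a Vitali cover of a set of finite $\HH^1$ — this is exactly where the capacity and exponential-integrability input from the logarithmic regularity enters. Finally, the reduction to one implication via the claim that $\widetilde\mu=-(\mu\,\partial\phi_0/\overline{\partial\phi_0})\circ\phi_0^{-1}$ is again in $W^{1,p}_c$ "by the chain rule" is unjustified: composition with the merely H\"older-continuous $\phi_0^{-1}$ does not preserve $W^{1,p}$ for $p\le 2$, and \cite{Clop2009} does not obtain the equivalence this way.
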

	
	For a non-trivial example of non-bi-Lipschitz $\mu$-quasiconformal mapping with $\mu\in W^{1,2}$, see the function in \cite[(8)]{Clop2009}. We remark that there are also bi-Lipschitz $\mu$-quasiconformal mappings with $\mu\not\in W^{1,2}$. In fact, for $L\geq 1$, $L$-bi-Lipschitz mappings are $\mu$-quasiconformal with $\|\mu\|_\infty\leq (L^2-1)/(L^2+1)$, and in general $\mu\not\in W^{1,2}$. Also, $\mu(z)=\frac{1}{2}\chi_{\mathbb{D}} (z) \not \in W^{1,2}$ provides a bi-Lipschitz $\mu$-quasiconformal mapping. The latter is a particular case in \cite{Mateu-Orobitg-Verdera-2009}.
	
	The result above is precisely what we need to obtain the $\sigma$-finite length property when we invoke Wolff's \cref{wolff thm}.
	
		\begin{proof}[Proof of \cref{sym + det 1 all cases}: Sobolev case]\label{proof of main thm with sobolev}
			Recall we can assume that $\Omega \subset B_{1/4}(0)$ is bounded Wiener regular by \cref{reductions all}. When $p>2$ we have the Sobolev embedding into Hölder continuous functions $W^{1,p} \subset C^{1-2/p}$ by Morrey's inequality, and hence the result follows from \cref{sym + det 1 Holder}. So, assume $p\in \left( \frac{2\lambda^2}{\lambda^2+1} , 2\right]$.
			
			Let $\varphi$ be as in \rf{definition of the support function} and $\phi_A \in W^{1,2}_{\loc}(\C)$ be the normalized solution of the Beltrami equation
			$$
			\bar\partial\phi_A = \varphi \mu_A \partial\phi_A \text{ a.e.\ in }\C, \text{ where }\mu_A \text{ as in }\rf{beltrami coefficients for sym and det 1 matrices},
			$$
			given by the measurable Riemann mapping theorem, see \cref{riemann mapping thm}. Since A is uniformly elliptic, in particular $a_{11} , a_{22} \approx 1$ and hence $2+a_{11} + a_{22} \geq 2$. Since $A\in W_{\loc}^{1,p} (\R^2)$, we have that $\mu_A \in W_{\loc}^{1,p} (\R^2)$ and therefore $\varphi\mu_A\in W_c^{1,p}(\R^2)$. The theorem follows by the same proof of \cref{main thm general domains} on \cpageref{proof of main thm general domains}, using \cref{lemma:relation elliptic measure for sym and det 1} and the fact that the function $\phi_A$ preserve sets of $\sigma$-finite length, see \cref{regularity for sobolev matrices}. That is, if $F\subset \partial(\phi_A (\Omega))$ is the set given by Wolff's \cref{wolff thm}, then $\phi_A^{-1} (F)\subset \partial\Omega$ has full elliptic measure and $\sigma$-finite length.
		\end{proof}
	
	\subsection{Proof of Theorem~\ref{sym + det 1 all cases} via PDE regularity}
	
	Similarly as we did in the previous \cref{sec:quasiconformal regularity}, but using a PDE approach instead of applying quasiconformal theory, in this section we deduce better regularity of the quasiconformal change of variables assuming regularity on the matrix $A$, which allows proving the remaining cases of \cref{sym + det 1 all cases}.

    Let us sketch this when the matrix has Hölder coefficients. As in the proof of \cref{sym + det 1 Holder} on \cpageref{proof of sym + det 1 holder}, we can assume that $A\in C^\alpha (U_\varepsilon (\partial\Omega))$ for $0<\alpha<1$ and $\varepsilon>0$. Let $\phi_A \in W^{1,2}_{\loc}(\C)$ be the normalized solution of the Beltrami equation
    $$
    \bar\partial\phi_A = \mu_A \partial\phi_A \text{ a.e.\ in }\C, \text{ where }\mu_A \text{ as in }\rf{beltrami coefficients for sym and det 1 matrices},
    $$
    given by the measurable Riemann mapping theorem, see \cref{riemann mapping thm}. By \cref{all relations beltrami and pde}, $u\coloneqq \real \phi_A$, $v\coloneqq \imag \phi_A$, $R\coloneqq \real \phi_A^{-1}$ and $I\coloneqq \imag \phi_A^{-1}$ satisfy $\divv A \nabla u = 0$, $\divv A \nabla v = 0$, $\divv B \nabla u = 0$ and $\divv B^* \nabla v = 0$ respectively, where $B$ and $B^*$ are the uniformly elliptic matrices in \rf{matrix of inverse function beltrami}. By the Hölder regularity of quasiconformal mappings we have that $\phi^{-1}_A \in C^{1/\lambda}_{\loc}$, and hence $A\in C^\alpha (U_\varepsilon (\partial\Omega))$ implies $B, B^*\in C^{\alpha/\lambda}_{\loc}$ in an open neighborhood of $\partial(\phi_A (\Omega))$. All in all, $u,v,I,R$ satisfy a PDE with Hölder coefficients and therefore $\phi_A$ is bi-Lipschitz in an open neighborhood of $\partial\Omega$. \Cref{sym + det 1 Holder} follows from the same proof of \cref{main thm general domains} on \cpageref{proof of main thm general domains} using \cref{lemma:relation elliptic measure for sym and det 1} and the fact that $\phi_A$ is bi-Lipschitz in an open neighborhood of $\partial\Omega$.
	
	\subsubsection{Dini mean oscillation}\label{sec:dini mean oscillation}
	
	We consider matrices with $p$-Dini mean oscillation.

        \begin{definition}\label{def:dini mean oscillation}
		For $1\leq p <\infty$ and $f\in L_{\loc}^p(\R^{n+1})$, denote
		$$
		f_{{\rm MO},p} (r) \coloneqq \sup_{x\in \R^{n+1}} \left(\avint_{B(x,r)} \left|f(z)-m_{B(x,r)} f\right|^p \, dz\right)^{1/p},
		$$
		where $\rm MO$ stands for ``mean oscillation''. We say that $f$ is of $p$-Dini mean oscillation, $f\in {\rm DMO}_p$, if $f_{{\rm MO},p}$ satisfies the Dini condition
		$$
		\int_0^1 f_{{\rm MO},p} (t) \, \frac{dt}{t} < \infty .
		$$
	\end{definition}

        \begin{rem}\label{DMO inclusions + DINI and VMO}
            From Hölder's inequality, we have the inclusion ${\rm DMO}_q \subset {\rm DMO}_p$ if $1\leq p < q <\infty$. The Dini condition of the mean oscillation directly implies ${\rm DMO}_p \subset \vmo$ for every $1\leq p <\infty$. Moreover, every ${\rm DMO}_p$ ($1\leq p <\infty$) contains the (pointwise) Dini space, that is, functions $f$ satisfying the Dini condition
            \begin{equation}\label{pointwise dini condition}
            \int_0^1 \tau (t) \, \frac{dt}{t} < \infty,\text{ where } \tau (t) = \sup_{|x-y|\leq t} |f(x)-f(y)|.
            \end{equation}
            All in all,
            $$
            \text{Hölder} \subset {\rm Dini} \subset {\rm DMO}_q \subset {\rm DMO}_p \subset \vmo, \text{ for any }1\leq p < q <\infty.
            $$
        \end{rem}
	
	Under the $1$-Dini mean oscillation regularity on the matrix coefficients, $L_A$-harmonic functions are $C^1$. More precisely, we have the following result.
	
	\begin{theorem}[See {\cite[Theorem 1.5]{Dong2017}}]\label{regularity for dini mean oscillation matrices}
		Suppose the coefficients of $A(x)=(a_{ij}(x))_{i,j=1}^{n+1}$ satisfy
		$$
		\max_{1\leq i,j\leq n+1}\int_0^1 \eta_{a_{ij}} (t) \, \frac{dt}{t} < \infty,\text{ where } \eta_{a_{ij}} (r) \coloneqq \sup_{x\in B_3 (0)} \avint_{B(x,r)} \left|a_{ij}(z)-m_{B(x,r)} a_{ij}\right| \, dz,
		$$
		and let $u \in W^{1,2} (B_4 (0))$ be a $L_A$-solution in $B_4 (0)$. Then, we have $u \in C^1 (\overline{B_1 (0)})$.
	\end{theorem}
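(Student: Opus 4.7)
The plan is to establish $C^1$ regularity of $u$ on $\overline{B_1(0)}$ via Campanato's classical perturbation scheme: for each $x_0 \in \overline{B_1(0)}$, I would compare $u$ on small balls $B(x_0,r)$ to the solution of the constant-coefficient equation obtained by freezing $A$ at $x_0$, then iterate the resulting excess decay to extract a modulus of continuity for $\nabla u$ from the Dini modulus $\eta(r) \coloneqq \max_{i,j}\eta_{a_{ij}}(r)$.

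First I would build the two standard blocks. Fix $x_0 \in \overline{B_1}$ and $r$ small, and let $v \in W^{1,2}(B(x_0,r))$ solve $L_{A(x_0)} v = 0$ with $v = u$ on $\partial B(x_0,r)$. Since $A(x_0)$ is constant, by \cref{sym of operator L_A constant} we may assume it is also symmetric, and a linear change of variables as in the proof of \cref{perturbation of constant matrix} reduces the equation to the Laplacian, for which one has the interior gradient-oscillation estimate
$$
\avint_{B(x_0,\rho)} |\nabla v - (\nabla v)_{x_0,\rho}|^2 \, dz \leq C(\rho/r)^2 \avint_{B(x_0,r)} |\nabla v|^2 \, dz, \quad 0 < \rho \leq r.
$$
The difference $w \coloneqq u - v$ satisfies $L_{A(x_0)} w = \divv((A(x_0) - A)\nabla u)$, and testing against $w$ produces the energy estimate
$$
\avint_{B(x_0,r)} |\nabla w|^2 \, dz \lesssim \left(\avint_{B(x_0,r)} |A - A(x_0)|^2 \, dz\right) \|\nabla u\|_{L^\infty(B(x_0,r))}^2 .
$$
A John--Nirenberg argument in the spirit of \rf{local equivalence bmo norm} upgrades the $L^1$ Dini bound $\eta(r)$ to an $L^2$ bound of the same order, so after adding and subtracting $m_{B(x_0,r)}A$ the coefficient integral is $\leq C\eta(r)^2$.

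Combining the two pieces and writing $\Phi(x_0,r) \coloneqq \avint_{B(x_0,r)} |\nabla u - (\nabla u)_{x_0,r}|^2 \, dz$, I would arrive at a decay inequality of the form
$$
\Phi(x_0,\rho) \leq C_0 (\rho/r)^2 \Phi(x_0,r) + C_0 \, \eta(r)^2 \, M^2,
$$
where $M$ is an a priori $L^\infty$ bound on $\nabla u$. Fixing $\theta \in (0,1)$ with $C_0 \theta^2 \leq \theta/2$ and iterating along $r_k \coloneqq \theta^k r_0$ yields
$$
\Phi(x_0, r_k) \leq 2^{-k} \Phi(x_0,r_0) + C M^2 \sum_{j=0}^{k-1} 2^{-j} \, \eta(r_{k-j-1})^2,
$$
and a routine discrete-to-continuous comparison using the monotonicity of $\eta$ bounds the right-hand side by a quantity tending to $0$ as $r\to 0$, uniformly in $x_0$, thanks to the Dini condition. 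Summing $|(\nabla u)_{x_0,r_{k+1}} - (\nabla u)_{x_0,r_k}|$ shows the averages $(\nabla u)_{x_0,r}$ form a uniform Cauchy net and hence converge to a limit $G(x_0)$, and Campanato's characterization of continuity identifies $\nabla u$ with $G \in C(\overline{B_1(0)})$.

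The main technical obstacle is that the Dini hypothesis is just barely strong enough to close the iteration: in the Hölder case $\eta(r) = r^\alpha$ the tail $\sum 2^{-j} \eta(r_{k-j-1})^2$ is automatically of the same order as the current step, immediately delivering $C^{1,\alpha}$; here one only obtains a modulus of continuity rather than a quantitative rate, and must take care not to lose logarithms when converting the dyadic sum into $\int_0^r \eta(t) \, dt/t$. A related subtlety is the a priori Lipschitz bound $M$ entering the error term: without it the iteration does not close, so one must either bootstrap it from inside the scheme (replacing $\|\nabla u\|_{L^\infty}$ by an averaged surrogate that can be reabsorbed) or establish Lipschitz regularity by a separate and slightly weaker Campanato-type iteration before running the $C^1$ argument.
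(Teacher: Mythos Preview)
The paper does not prove this result; it is quoted from \cite[Theorem~1.5]{Dong2017} and used as a black box. Your Campanato perturbation scheme has the right architecture, and you correctly flag the a priori Lipschitz bound as a circularity that must be resolved by a preliminary iteration.

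There is, however, a genuine gap at the John--Nirenberg step. The local inequality \rf{local equivalence bmo norm} compares $\sup_{R\subset Q}[f]_{R,p}$ with $\sup_{R\subset Q}[f]_{R,1}$, where both suprema range over \emph{all} subcubes of $Q$ of \emph{all} scales. Applied with $Q=B(x_0,r)$, the right-hand side is the local BMO norm on $B(x_0,r)$, controlled by $\sup_{0<s\leq r}\eta(s)$ but not by $\eta(r)$ alone; the single-ball bound $[A]_{B(x_0,r),2}\lesssim\eta(r)$ is false in general, and the Dini hypothesis on $\eta$ does not pass to $\sup_{s\leq r}\eta(s)$ nor to the $L^2$ modulus $\sup_x[A]_{B(x,r),2}$. (This is exactly why the paper keeps the spaces ${\rm DMO}_p$ of \cref{def:dini mean oscillation} distinct for different $p$, and why the ${\rm DMO}$ case of \cref{sym + det 1 all cases} needs a separate localization to reduce $p>1$ to $p>\lambda$.) Without the squared error $\eta(r)^2$, your recursion only yields $\Phi(r_k)^{1/2}\lesssim 2^{-k/2}+\bigl(\sum_{j\leq k}2^{-(k-j)}\eta(r_{j-1})\bigr)^{1/2}$, and summing this over $k$ requires $\sum_j\eta(r_j)^{1/2}<\infty$, which is strictly stronger than Dini.

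The proof in \cite{Dong2017} sidesteps this by abandoning the $L^2$ Campanato excess: after a preliminary Lipschitz estimate, an $L^\infty$-type oscillation of $\nabla u$ is iterated, and the comparison with the frozen-coefficient solution is arranged so that only the $L^1$ mean oscillation of $A$ enters the error, linearly. The Dini sum $\sum_k\eta(\theta^k r_0)\approx\int_0^{r_0}\eta(t)\,dt/t$ then appears directly, with no self-improvement needed. A minor additional point: freezing at $A(x_0)$ rather than at $m_{B(x_0,r)}A$ introduces the extra term $|A(x_0)-m_{B(x_0,r)}A|$, which is indeed dominated by $\int_0^r\eta(t)\,dt/t$ under the ${\rm DMO}_1$ hypothesis, but it is cleaner to freeze at the average.
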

	
	In the following lemma, we see how the $1$-Dini mean oscillation regularity changes under $K$-quasiconformal mappings.
	
	\begin{lemma}\label{dini condition under qc}
		Let $K\geq 1$ and let $\phi\in W^{1,2}_{\loc} (\C)$ be a $K$-quasiconformal homeomorphism of $\C$. For any function $f$ and any $K<p<\infty$, there exists a constant $C_{K,\phi}$ such that
		$$
		\int_0^1 \eta_{(f\circ \phi^{-1})} (t) \frac{dt}{t} \lesssim_{p,K} \int_0^{C_{K,\phi}} f_{{\rm MO},p} (t) \frac{dt}{t}.
		$$
		\begin{proof}
			Fix $\widetilde B=B_{100} (0)$. Recall that $\phi^{-1}$ is $\eta_K$-quasisymmetric as it is global $K$-quasiconformal, see \rf{quasisymmetric condition}. Take $T=T_K\geq 2$ big enough so that $\eta_K (1/T) \leq 1/2$, and denote $r_m = T^{-m}$. Let $x\in B_3 (0)$. First note that for $r_{m+1} \leq r \leq r_m$, and changing from $m_{B(x,r)} (f \circ \phi^{-1})$ to $m_{B(x,r_m)} (f \circ \phi^{-1})$ as we did in \rf{vmo to other constant}, we have
			$$
			\eta_{(f\circ \phi^{-1})} (x,r) \coloneqq \avint_{B(x,r)} \left|(f\circ \phi^{-1} )(z)-m_{B(x,r)} (f \circ \phi^{-1})\right| \, dz \lesssim \eta_{(f\circ \phi^{-1})} (x,r_m) .
			$$
			Thus, $\int_0^1 \eta_{(f\circ \phi^{-1})} (x,t) \frac{dt}{t} \lesssim \sum_{m=0}^{\infty} \eta_{(f\circ \phi^{-1})} (x,r_m)$.
			
			Denote now $\widetilde r_m (x) \coloneqq \max_{|\xi-x|=r_m} |\phi^{-1} (\xi) - \phi^{-1} (x)|$, recall the ball defined in \rf{ball in the image vmo}. We simply write $\widetilde r_m$ as $x\in B_3 (0)$ is fixed in this proof and the constants below do not depend on $x$. By \rf{distortion of mean oscillation under qc} in \cref{bmo under quasiconformal homeo} we have
			$$
			\eta_{(f\circ \phi^{-1})} (x,r_m) \lesssim_{p,K}
			\left(\avint_{B(\phi^{-1}(x),\widetilde r_m)} \left|f(z)-m_{B(\phi^{-1}(x),\widetilde r_m (x))} f\right|^p \right)^{1/p} \eqqcolon \eta_f (\phi^{-1} (x),\widetilde r_m,p).
			$$
			
			By the quasisymmetry condition we have $1/\eta_K (T) \leq \widetilde r_{m} / \widetilde r_{m-1} \leq \eta_K (1/T) \leq 1/2$, and hence $1 \leq 2 \int_{\widetilde r_m}^{\widetilde r_{m-1}} \frac{dt}{t}$. As before, for every $\widetilde r_m \leq r \leq \widetilde r_{m-1}$ we have $\eta_f (\phi^{-1} (x), \widetilde r_m,p) \lesssim \eta_f (\phi^{-1} (x),r,p)$. By \cref{bmo under quasiconformal homeo} we have $\widetilde r_m \leq C_{K,\widetilde B,\phi} r_m^{1/K}$, and in particular $\widetilde r_{-1} \leq C_{K,\widetilde B,\phi} T^{1/K}$. As a consequence of these observations respectively, we get 
			\begin{equation*}
				\begin{aligned}
					\sum_{m=0}^{\infty} \eta_{(f\circ \phi^{-1})} (x,r_m)
					&\lesssim \sum_{m=0}^{\infty} \eta_{f} (\phi^{-1} (x),\widetilde r_m, p) \lesssim
					\sum_{m=0}^\infty \int_{\widetilde r_m}^{\widetilde r_{m-1}} \eta_{f} (\phi^{-1} (x),\widetilde r_m, p) \frac{dt}{t} \\
					&\lesssim \sum_{m=0}^\infty \int_{\widetilde r_m}^{\widetilde r_{m-1}} \eta_{f} (\phi^{-1} (x),t, p) \frac{dt}{t}
					= \int_{0}^{C_{K,\widetilde B,\phi} T^{1/K}} \eta_{f} (\phi^{-1} (x),t, p) \frac{dt}{t}.
				\end{aligned}
			\end{equation*}
			The last element is bounded by $\int_{0}^{C_{K,\widetilde B,\phi} T^{1/K}} f_{{\rm MO},p} (t) \frac{dt}{t}$. Taking the supremum over all the points $x\in B_3 (0)$ we obtain the lemma with the constant $C_{K,\widetilde B,\phi} T^{1/K}$.
		\end{proof}
	\end{lemma}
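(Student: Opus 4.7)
The plan is to combine the pointwise bound from \cref{bmo under quasiconformal homeo} with a geometric-scale discretization of the Dini integral on the target side, and then transfer the discretization to the source side using the quasisymmetry of $\phi^{-1}$.

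First I would fix an auxiliary ball $\widetilde B = B_{100}(0)$, so that every $B(x,r)$ with $x\in B_3(0)$ and $r\leq 1$ is contained in $\widetilde B$. \cref{bmo under quasiconformal homeo} applied to each such $B$ produces a companion ball $B_- = B(\phi^{-1}(x), \widetilde r(x,r))$ with $\widetilde r(x,r) \leq C_{K,\phi}\,r^{1/K}$ and the pointwise bound
\begin{equation*}
\eta_{f\circ \phi^{-1}}(x,r) \;\lesssim_{p,K}\; \left(\avint_{B_-} |f - m_{B_-} f|^p\right)^{1/p},
\end{equation*}
valid for any $K<p<\infty$. This reduces the problem to controlling a Dini-type integral of an $L^p$ mean oscillation of $f$ on the source side.

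Next I would discretize at $T$-adic scales. Fix $x\in B_3(0)$, pick a large $T=T_K\geq 2$ to be chosen, and set $r_m = T^{-m}$, $\widetilde r_m \coloneqq \widetilde r(x,r_m)$. An enlargement-of-ball argument as in \eqref{vmo to other constant} shows that $\eta_{f\circ\phi^{-1}}(x,\cdot)$ is comparable, up to a universal factor, to its value at $r_m$ throughout $[r_{m+1},r_m]$, hence
\begin{equation*}
\int_0^1 \eta_{f\circ\phi^{-1}}(x,t)\,\frac{dt}{t} \;\lesssim\; \sum_{m\geq 0}\eta_{f\circ\phi^{-1}}(x,r_m) \;\lesssim_{p,K}\; \sum_{m\geq 0}\left(\avint_{B(\phi^{-1}(x),\widetilde r_m)}|f-m_{B(\phi^{-1}(x),\widetilde r_m)}f|^p\right)^{1/p}.
\end{equation*}

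Then I would push the sum back into an integral on the source side. Since $\phi^{-1}$ is $\eta_K$-quasisymmetric by \eqref{quasisymmetric condition}, applying quasisymmetry to the maximizers defining $\widetilde r_m$ and $\widetilde r_{m-1}$ in both directions gives $1/\eta_K(T)\leq \widetilde r_m/\widetilde r_{m-1}\leq \eta_K(1/T)$. Choosing $T$ so large that $\eta_K(1/T)\leq 1/2$ forces each ratio into a fixed interval bounded away from $0$ and $1$, so $\int_{\widetilde r_m}^{\widetilde r_{m-1}}\frac{dt}{t} \geq \log 2$, and a second enlargement argument (now on the source side, using that $\widetilde r_{m-1}/\widetilde r_m\leq \eta_K(T)$ is a fixed constant) shows that for every $t\in[\widetilde r_m,\widetilde r_{m-1}]$ the $L^p$-mean oscillation at scale $\widetilde r_m$ centred at $\phi^{-1}(x)$ is controlled by the one at scale $t$, with a constant depending only on $K$. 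Telescoping and then bounding the pointwise quantity by $f_{{\rm MO},p}(t)$ yields
\begin{equation*}
\sum_{m\geq 0}\left(\avint_{B(\phi^{-1}(x),\widetilde r_m)}|f-m_{B(\phi^{-1}(x),\widetilde r_m)}f|^p\right)^{1/p} \;\lesssim\; \int_0^{\widetilde r_{-1}} f_{{\rm MO},p}(t)\,\frac{dt}{t},
\end{equation*}
with $\widetilde r_{-1}\leq C_{K,\phi}\,T^{1/K}$. Taking the supremum over $x\in B_3(0)$ concludes the lemma.

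The hard part will be the third step: the Hölder-type bound $\widetilde r_m\leq C\,r_m^{1/K}$ controls $\widetilde r_m$ only from above, and by itself does not prevent the ratios $\widetilde r_m/\widetilde r_{m-1}$ from clustering near $1$, which would make the sum-to-integral conversion fail. Quasisymmetry is precisely what supplies a uniform two-sided gap between consecutive $\widetilde r_m$'s, and the quantitative choice of $T=T_K$ so that $\eta_K(1/T)\leq 1/2$ is what makes the argument go through. Everything else (the two enlargement arguments and the geometric summation) is routine bookkeeping once this gap is in place.
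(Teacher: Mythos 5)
Your proposal follows the same route as the paper's proof: reduce via \cref{bmo under quasiconformal homeo} to an $L^p$ mean oscillation of $f$ at the companion radii $\widetilde r_m$, discretize the Dini integral at $T$-adic scales $r_m=T^{-m}$ with $T=T_K$ chosen so that $\eta_K(1/T)\leq 1/2$, use quasisymmetry of $\phi^{-1}$ to pin the ratios $\widetilde r_m/\widetilde r_{m-1}$ in a two-sided band away from $0$ and $1$, convert the sum back into an integral over $[\widetilde r_m,\widetilde r_{m-1}]$, and bound by $f_{\mathrm{MO},p}$ up to the scale $\widetilde r_{-1}\lesssim C_{K,\phi}T^{1/K}$. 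You also correctly single out the crucial point — the Hölder bound on $\widetilde r_m$ alone is insufficient, and quasisymmetry is what supplies the uniform geometric gap between consecutive $\widetilde r_m$'s that makes both the sum-to-integral step and the enlargement comparison work; only the word ``telescoping'' is a slight misnomer (nothing cancels; the intervals are simply disjoint and exhaust $(0,\widetilde r_{-1}]$).
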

	
	Using the previous lemma we deduce the $C^1$ regularity $\phi_A^{-1}$ in the proof of the following result when $A\in {\rm DMO}_p$ with $p>\lambda$.
 
		\begin{proof}[Proof of \cref{sym + det 1 all cases}: ${\rm DMO}$ case]\label{proof of dini mean oscillation elliptic measure}
			Recall we can assume that $\Omega \subset B_{1/4} (0)$ is bounded Wiener regular by \cref{reductions all}, and we can replace finitely connected by simply connected by \cref{claim:candidate for each index}.
			
			Let us see first the simply connected case. Assume $A\in {\rm DMO}_1$ and let $\phi_A \in W^{1,2}_{\loc}(\C)$ be the normalized solution of the Beltrami equation
			$$
			\bar\partial\phi_A = \mu_A \partial\phi_A \text{ a.e.\ in }\C, \text{ where }\mu_A \text{ as in }\rf{beltrami coefficients for sym and det 1 matrices},
			$$
			given by the measurable Riemann mapping theorem, see \cref{riemann mapping thm}. In particular $u\coloneqq \real \phi_A$ and $v\coloneqq \imag \phi_A$ satisfy $\nabla v = *A\nabla u$, and $\divv A \nabla u = 0$, see \cref{all relations beltrami and pde}. Now, since $A \in \rm DMO_1$ we have that $|\nabla u|$ is bounded in $B_{1/2} (0)$, see \cref{regularity for dini mean oscillation matrices}, and by the relation $\nabla v = *A\nabla u$ we also have that $|\nabla v|$ is bounded in $B_{1/2} (0)$. In particular, $u$ and $v$ are Lipschitz and hence $\phi_A$ is Lipschitz in $B_{1/2} (0) \supset \partial \Omega$. This implies $\omega_{\Omega,A}\ll \HH^{\varphi_{1,C}}$ by the same proof of \cref{main thm general domains} on \cpageref{proof of main thm general domains}, using \cref{lemma:relation elliptic measure for sym and det 1}.
			
			We now turn to the $\sigma$-finite length for general domains. Suppose first $A\in {\rm DMO}_p$ with $p > \lambda$, and later we will see we can always reduce to this case. Let $\phi_A$ be as above. Recall from \cref{all relations beltrami and pde} that the functions $R\coloneqq \real (\phi_A^{-1})$ and $I\coloneqq \imag (\phi_A^{-1})$ satisfy $\nabla I = * B \nabla R$, and $\divv B \nabla R = 0$, where $B$ is the uniformly elliptic matrix in \rf{matrix of inverse function beltrami}. By \cref{dini condition under qc}, \cref{fraction in vmo} in \cref{properties of p-mean oscillation}, and $A\in {\rm DMO}_p$ with $p>\lambda$ we obtain that $B\in {\rm DMO}_1$. By \cref{regularity for dini mean oscillation matrices} we have that $R\in C^1$, i.e., $|\nabla R|$ is bounded around $\partial\phi_A(\Omega)$, and by the relation $\nabla I = * B \nabla R$ we also get that $|\nabla I|$ is bounded there. Hence, $R=\real (\phi_A^{-1})$ and $I=\imag (\phi_A^{-1})$ are Lipschitz, and in particular $\phi_A^{-1}$ is Lipschitz. The same proof of \cref{main thm general domains} on \cpageref{proof of main thm general domains}, using \cref{lemma:relation elliptic measure for sym and det 1}, implies the $\sigma$-finite length.
			
			Let us see now the case $A\in \mathrm{DMO}_p$ with $p>1$ follows from the previous reduction. As $A\in \mathrm{DMO}_p \subset \mathrm{DMO}_1$, by \cite[Appendix]{Hwang-Kim-2020} we have that it agrees a.e.\ with a matrix with (uniformly) continuous coefficients, which by the $L_A$ weak form we can assume that we are in this latter case. In particular, given $\varepsilon>0$, for each $\xi \in \overline\Omega$ there is $\delta_{\xi,\varepsilon}>0$ such that
			$$
			z\in \overline{6B_{\delta_{\xi,\varepsilon}}(\xi)} \implies |a_{ij}(z)-a_{ij}(\xi)|<\varepsilon \text{ for any }i,j\in \{1,2\}.
			$$ 
			Let $\{B_i\}_i\subset \{B_s(\xi)\}_{\xi\in \partial\Omega,0<s\leq \delta_{\xi,\varepsilon}}$ be a countably subfamily of disjoint balls such that $\omega^p (\partial\Omega\setminus \bigcup_i B_i)=0$, by Vitali's covering theorem. The goal is to prove that for each index $i$ there exists a set $F_i\subset \partial(\Omega\cap 4B_i)$ satisfying $\omega_{\Omega\cap 4B_i,A}(F_i)=1$ and with $\sigma$-finite $\HH^1(F_i)$, because by \cref{localization argument} we would have that the set $F=\bigcup_i F_i$ satisfies $\omega_{\Omega,A} (F)=1$ and of course it has $\sigma$-finite length.
			
			Let us now fix an index $i$. Let $\xi_i$ denote the center of the ball $B_i$, and $r_i$ its radius. Let $S=\sqrt{A(\xi_i)}$ be the matrix satisfying $S^T S=A(\xi_i)$, and $M$ as in \rf{matrix after change of constant sym change of variables}, i.e.,
			$$
			M(z)=Id + (S^T)^{-1}(A(S^T)-A(\xi))S^{-1} \overset{\text{\rf{matrix after change of constant sym change of variables - sym case}}}{=} (S^T)^{-1}A(S^T)S^{-1},
			$$
			which is symmetric and with determinant $1$, see \cref{matrix after change of constant sym change of variables - sym case - is sym and det 1}. Then, by \cref{elliptic measure deformation qc} we have
			$$
			\omega^{q}_{\Omega\cap 4B_i, A} (E) = \omega_{(S^T)^{-1}(\Omega\cap 4B_i), M}^{(S^T)^{-1}q} ((S^T)^{-1}E),
			$$
			for any $q\in \Omega\cap 4B_i$ and $E\in \partial (\Omega\cap 4B_i)$. Given $\varphi \in C^\infty_c (6B_i)$ such that $0\leq \varphi\leq 1$, $\varphi|_{5B_i}=1$ and $|\nabla \varphi |\lesssim 1/r_i$, consider the function $\widetilde \varphi = \varphi \circ S^T$. Using this function $\widetilde \varphi$, we can directly replace the matrix $M$ in the right-hand side above by simply the uniformly elliptic, symmetric and with determinant $1$ matrix $\widehat M$ such that $\widetilde\varphi\mu_M = \mu_{\widehat M}$, see the precise definition in \cref{rem:interpolation to preserve determinant}. That is,
			$$
			\omega^{q}_{\Omega\cap 4B_i, A} (E) = \omega_{(S^T)^{-1}(\Omega\cap 4B_i), \widehat M}^{(S^T)^{-1}q} ((S^T)^{-1}E)
			$$
			for any $q\in \Omega\cap 4B_i$ and $E\in \partial (\Omega\cap 4B_i)$.
			
			From the proof of \cref{perturbation of constant matrix}, $M$ has ellipticity constant $\lambda_i=(1-2\varepsilon\lambda)^{-1}$ in $(S^T)^{-1}(6B_i)$. This implies that $|\mu_M|\leq (\lambda_i-1)/(\lambda_i+1)$ in $(S^T)^{-1}(6B_i)$. In particular, $|\mu_{\widehat M}| = |\widetilde\varphi\mu_M|\leq |\characteristic_{(S^T)^{-1}(6B_i)}\mu_M|\leq (\lambda_i-1)/(\lambda_i+1)$ everywhere, implying that $\widehat M$ has ellipticity constant $\lambda_i=(1-2\varepsilon\lambda)^{-1}$ everywhere. If $\varepsilon>0$ is small enough then $1\leq\lambda_i=(1-2\varepsilon\lambda)^{-1} <p$; recall that $p>1$ is the value so that $A\in \mathrm{DMO}_p$.
			
			Let us see $\widehat M \in\mathrm{DMO}_p$. As $A\in \mathrm{DMO}_p$, we have that $(S^T)^{-1}A (\cdot)S^{-1}\in \mathrm{DMO}_p$, and as $z\mapsto S^T z$ is a linear mapping we get $(S^T)^{-1}A(S^T \cdot)S^{-1}\in \mathrm{DMO}_p$. Using now that $\widetilde \varphi = \varphi \circ S^T$ is smooth, bounded and with compact support, applying \cref{properties of p-mean oscillation} we conclude that $\widehat M\in \mathrm{DMO}_p$.
			
			As the ellipticity constant of $\widehat M$ is $1\leq \lambda_i <p$ and $\widehat M \in \mathrm{DMO}_{p}$, we are in the case of the second paragraph in this proof, and hence there is a set $F_i\subset \partial(\Omega\cap 4B_i)$ satisfying $\omega_{(S^T)^{-1}(\Omega\cap 4B_i), \widehat M}(F)=1$ and with $\sigma$-finite $\HH^1(F_i)$. In particular $\omega_{\Omega\cap 4B_i, A} (S^T(F))=1$ and $S^T (F)$ has $\sigma$-finite length, and the theorem follows.
		\end{proof}
	
	\subsubsection{Mean oscillation with logarithm decay}
	
	In this section, we consider matrices with the quantitative $\log$ decay of the mean oscillation in \cite[(1.3)]{Acquistapace1992}.
	\begin{definition}\label{def:log mean oscillation}
		We say that $f\in L_{\loc}^1 (\R^{n+1})$ is of vanishing mean oscillation with logarithmic decay, $f \in \vmo_{\log}$ for short, if
		\begin{equation}\label{log decay}
			\sup_{\substack{x\in \R^{n+1}\\ s\in (0,r]}} \left[\left(1+|{\log s}|\right)\avint_{Q_s(x)} \left|f-m_{Q_s(x)} f\right| \right] \xrightarrow{r\to 0} 0,
		\end{equation}
		recall $Q_s(x)$ denotes the cube with center $x$ and side length $2s$.
	\end{definition}

        \begin{rem}\label{DMO subset VMOlog subset VMO}
            The logarithmic decay in \rf{log decay} implies $\vmo_{\log} \subset \vmo$. On the other hand, for a function $f\in {\rm DMO}_1$, given $\varepsilon >0$ we can take $s_0 =s_0(\varepsilon)$ such that $\int_0^{s_0} f_{{\rm MO},1} (t) \, \frac{dt}{t} < \varepsilon$, and hence $(1+|{\log t}|)f_{{\rm MO},1} (t) < t (1+|{\log t}|) \varepsilon / s_0$ for all $0<t< s_0$. Thus, we can take $\delta = \delta (\varepsilon)$ small enough so that $0<t<\delta$ implies $(1+|{\log t}|)f_{{\rm MO},1} (t) <\varepsilon$. Interchanging balls $B(x,t)$ by cubes $Q_t(x)$ in the definition of $f_{{\rm MO},1} (t)$ we obtain that $f$ satisfies \rf{log decay}. That is, ${\rm DMO}_1 \subset \vmo_{\log}$. All in all,
            $$
            {\rm DMO}_1 \subset \vmo_{\log} \subset \vmo.
            $$
        \end{rem}
	
	The fundamental property \rf{local equivalence bmo norm} of $\bmo$ functions implies that, for $1\leq p <\infty$, $f\in \bmo$ and $r\in(0,1)$, we have
	\begin{equation}\label{eq:vmo log only matters one exponent}
		\sup_{\substack{x\in \R^{n+1}\\ s\in (0,r]}} \left[\left(1+|{\log s}|\right) \avint_{Q_s(x)} |f-m_{Q_s(x)} f| \right]
        \approx_p \sup_{\substack{x\in \R^{n+1}\\ s\in (0,r]}} \left[\left(1+|{\log s}|\right) \left(\avint_{Q_s(x)} |f-m_{Q_s(x)} f|^p\right)^{1/p} \right],
	\end{equation}
	and therefore $f\in \vmo_{\log}$ if and only if for some (and hence for all) $1\leq p <\infty$ the right-hand side term above goes to $0$ as $r\to 0$. Indeed, for $x\in \R^2$ and $s\in (0,r]$, using \rf{local equivalence bmo norm} we have
	\begin{equation}\label{eq:vmo log only matters one exponent step 1}
		\left(\avint_{Q_s(x)} |f-m_{Q_s(x)} f|^p\right)^{1/p}
		\leq \sup_{Q \subset Q_s(x)} \left(\avint_{Q} |f-m_{Q} f|^p\right)^{1/p}
		\approx_p \sup_{Q \subset Q_s(x)} \avint_{Q} |f-m_{Q} f|.
	\end{equation}
	Since $|{\log t}|$ is decreasing in $(0,1)$ and every $Q\subset Q_s(x)$ satisfies $\ell (Q) \leq 2s$, we obtain
	\begin{equation}\label{eq:vmo log only matters one exponent step 2}
		\begin{aligned}
			\sup_{Q\subset Q_s(x)} (1+|{\log s}|) \avint_{Q} |f-m_{Q} f| 
			&\leq \sup_{Q\subset Q_s(x)} \left(1+\left|{\log \frac{\ell(Q)}{2}}\right|\right) \avint_{Q} |f-m_{Q} f| \\
			&\leq \sup_{\substack{x\in \R^2\\ s\in (0,r]}} \left[\left(1+|{\log s}|\right) \avint_{Q_s(x)} |f-m_{Q_s(x)} f| \right],
		\end{aligned}
	\end{equation}
	and inequality $\gtrsim_p$ in \rf{eq:vmo log only matters one exponent} follows from \rf{eq:vmo log only matters one exponent step 1} and \rf{eq:vmo log only matters one exponent step 2} by taking the supremum over $x\in \R^2$ and $s\in(0,r]$. The other inequality in \rf{eq:vmo log only matters one exponent} is just Hölder's inequality.
	
	Assuming that the matrix has $\vmo_{\log}$ coefficients, the following result states that $L_A$-harmonic solutions have the gradient in $\bmo$.
	
	\begin{theorem}[See {\cite[Theorem 4.1]{Acquistapace1992}}]\label{regularity for log mean oscillation matrices}
		Let $A \in L^\infty (Q_3(0)) \cap \vmo_{\log}$ and $u \in W^{1,2} (Q_3 (0))$ be a $L_A$-solution in $Q_3 (0)$. Then $\nabla u \in \bmo (Q_1 (0))$.
	\end{theorem}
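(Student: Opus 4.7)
\smallskip
\noindent\textbf{Proof plan.}
My plan is to establish a Campanato-type excess bound on $\nabla u$. Write
$$
E(x_0, r) := \left( \avint_{Q_r(x_0)} \left|\nabla u - m_{Q_r(x_0)}\nabla u\right|^2 \right)^{1/2}, \quad M(x_0, r) := |m_{Q_r(x_0)}\nabla u|,
$$
and $\Phi(x_0, r) := (\avint_{Q_r(x_0)} |\nabla u|^2)^{1/2}$. The goal is to prove $\sup_{x_0 \in Q_1(0),\, 0 < r < 1} E(x_0, r) < \infty$, which by \rf{local equivalence bmo norm} (and the interchangeability of balls and cubes in the $\bmo$ norm) is equivalent to $\nabla u \in \bmo(Q_1(0))$.

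The central step is a freezing-of-coefficients comparison. Fix $x_0 \in Q_1(0)$ and $r \in (0,1)$ small; set $\bar A := m_{Q_r(x_0)} A$ (still uniformly elliptic with the same constant $\lambda$) and let $v \in u + W^{1,2}_0(Q_r(x_0))$ solve $\operatorname{div}(\bar A \nabla v) = 0$. The difference $w := u - v$ satisfies $\operatorname{div}(\bar A \nabla w) = \operatorname{div}((\bar A - A)\nabla u)$ in $Q_r(x_0)$ with zero boundary trace; testing with $w$, then applying Hölder's inequality and Gehring's self-improvement of $\nabla u$ (a reverse Hölder inequality with some exponent $2+\delta > 2$), I obtain for some $q = q(\lambda) < \infty$
$$
\left( \avint_{Q_r(x_0)} |\nabla w|^2 \right)^{1/2} \leq C \, \eta_q(r) \, \Phi(x_0, 2r),
$$
where $\eta_q(r) := \sup_y \left(\avint_{Q_r(y)} |A - m_{Q_r(y)} A|^q\right)^{1/q}$, which by \rf{eq:vmo log only matters one exponent} and the $\vmo_{\log}$ hypothesis satisfies $(1 + |\log r|)\eta_q(r) \to 0$ as $r \to 0$. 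On the other hand, $\nabla v - c$ is $\bar A$-harmonic for every constant $c$, so by the classical $C^{1,\alpha}$ interior estimates for constant-coefficient operators, for each $\theta \in (0, 1/4)$,
$$
E_v(x_0, \theta r) \leq C_1 \theta \, E_v(x_0, r),
$$
where $E_v$ denotes the excess of $\nabla v$. Combining these via the triangle inequality on $\nabla u = \nabla v + \nabla w$ yields the master recursion
$$
E(x_0, \theta r) \leq C_1 \theta \, E(x_0, r) + C_2 \theta^{-(n+1)/2} \eta_q(r) \, \Phi(x_0, 2r). \quad (\star)
$$

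The main obstacle is closing $(\star)$: since $\Phi(x_0, 2r) \leq M(x_0, 2r) + E(x_0, 2r)$, the recursion involves the running mean $M$, which may grow as $r \to 0$ (consistently with the logarithmic growth admissible for $\bmo$ functions). Setting $r_k := \theta^k r_0$, the elementary telescoping
$$
M(x_0, r_k) \leq M(x_0, r_0) + C \sum_{j=0}^{k-1} E(x_0, r_j)
$$
shows that if $E(x_0, r_j) \leq B$ for every $j \leq k$, then $M(x_0, r_k) \leq M(x_0, r_0) + CkB$. Substituting this into $(\star)$ at $r = r_k$ and choosing $\theta$ so that $C_1 \theta < 1/2$ gives
$$
E(x_0, r_{k+1}) \leq \tfrac{1}{2} E(x_0, r_k) + C \eta_q(r_k) \bigl( M(x_0, r_0) + k B \bigr).
$$
Here the $\vmo_{\log}$ hypothesis is used decisively: it forces $k \eta_q(r_k) \to 0$, so after choosing $r_0$ small enough that $C \eta_q(r_k)(k+1) \leq 1/4$ for every $k \geq 0$, a straightforward induction yields $E(x_0, r_k) \leq B$ for all $k$, provided $B$ is chosen sufficiently large in terms of $\|\nabla u\|_{L^2(Q_3(0))}$ (which bounds the base cases $E(x_0, r_0)$ and $M(x_0, r_0)$ via Caccioppoli's inequality). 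A single further application of $(\star)$ interpolates across non-dyadic scales, and taking the supremum over $x_0 \in Q_1(0)$ and $r \in (0,1)$ yields $\nabla u \in \bmo(Q_1(0))$.
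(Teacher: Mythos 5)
The paper does not prove this theorem: it is cited verbatim from Acquistapace (1992), so there is no internal proof to compare against. Your reconstruction is correct and, as far as I can tell, follows the same route as the cited reference: a Campanato-type excess iteration with frozen coefficients. The key insight you identify correctly is that $\vmo_{\log}$ is exactly the threshold at which one can absorb the logarithmic growth of the running mean $M(x_0,r_k)$, and the mechanism — telescoping $M$, feeding the bound back into the recursion, and observing that $\eta_q(r_k)\lesssim\varepsilon_0/(1+k|\log\theta|)$ tames the factor $k$ — is right.

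A few small points, none fatal. First, the statement "it forces $k\eta_q(r_k)\to 0$" is a slight misphrasing: for a fixed $r_0$, the quantity $k\eta_q(r_k)$ does not tend to zero in $k$; rather $\sup_k k\eta_q(r_k)\lesssim \varepsilon_0/|\log\theta|$, and it is this uniform-in-$k$ smallness (achieved by sending $r_0\to 0$) that closes the induction — which is what your next sentence actually uses, so the logic is sound. Second, as written the recursion $(\star)$ has $\Phi(x_0,2r)$ on the right, but the iteration runs over the dyadic scales $r_k=\theta^k r_0$; one needs a small remark that $Q_{2r_k}\subset Q_{r_{k-1}}$ for $\theta\le 1/2$, so $\Phi(x_0,2r_k)\lesssim_\theta M(x_0,r_{k-1})+E(x_0,r_{k-1})$, and the induction hypothesis at step $k-1$ covers it. Third, the induction also requires $B\gtrsim M(x_0,r_0)$, which is fine uniformly in $x_0\in Q_1(0)$ since $r_0$ is fixed and Caccioppoli bounds both base quantities by $\|u\|_{L^2(Q_3)}$; you state this correctly. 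Finally, the theorem implicitly assumes $A$ is uniformly elliptic (otherwise "being $L_A$-harmonic" and the constant $\lambda$ appearing throughout are meaningless), and your use of $\bar A=m_{Q_r(x_0)}A$ being elliptic "with the same constant $\lambda$" is where this enters; worth stating explicitly. None of these affect the validity of the argument.
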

	
	In the following lemma, we see that functions with derivatives in the space $\bmo$ satisfy the so-called $\log$-Lip regularity.
	
	\begin{lemma}\label{log lip when grad in bmo}
		Let $f : B_1(0) \subset \R^{n+1} \to \R$ be a function with $\nabla f \in L^1 (B_1 (0)) \cap \bmo (B_1(0))$, and let $x,y\in B_{1/2} (0)$. If $|x-y|$ is small enough, only depending on the $L^1$ and $\bmo$ norms, then
		$$
		|f(x)-f(y)| \lesssim \|\nabla f\|_* |x-y| \log \frac{1}{|x-y|}.
		$$
		\begin{proof}
			Consider $B\coloneqq B(\frac{x+y}{2}, |x-y|) \subset B_1 (0)$, and write $r_B = |x-y|$. By triangle inequality $|f(x)-f(y)| \leq |f(x)-m_B f|+|f(y)-m_B f|$, and by symmetry, it suffices to control one of them. By \cite[Lemma 7.16]{Gilbarg2001},
			$$
			|f(x)-m_B f| \lesssim \int_B \frac{|\nabla f (z)|}{|x-z|} \, dz .
			$$
			Splitting the domain of integration $B$ of the right-hand side integral in annulus, we obtain
			\begin{align*}
				\int_B \frac{|\nabla f (z)|}{|x-z|} \, dz 
				&= \sum_{k\geq 0} \int_{2^{-k}B \setminus 2^{-(k+1)}B} \frac{|\nabla f (z)|}{|x-z|} \, dz 
				\approx \frac{2^k}{r_B}\sum_{k\geq 0} \int_{2^{-k}B \setminus 2^{-(k+1)}B} |\nabla f (z)| \, dz \\
				& \lesssim r_B \sum_{k\geq 0} 2^{-k} \avint_{2^{-k} B} |\nabla f (z)| \, dz.
			\end{align*}
			
			Let us bound each element on the right-hand side in terms of $k\geq 0$. For each $k\geq0$, let $N_k \in \N$ the largest index such that $2^{N_k} r_{2^{-k} B} = 2^{N_k} 2^{-k} r_B \leq 1/2$, i.e., $N_k \coloneqq \lfloor \log_2 \frac{2^k}{2|x-y|} \rfloor$. With this choice,
			\begin{equation*}
				\avint_{2^{-k} B} |\nabla f (z)| \, dz
				= \sum_{m=0}^{N_k -1} \left(\avint_{2^m 2^{-k} B} |\nabla f (z)|\, dz - \avint_{2^{m+1} 2^{-k} B} |\nabla f (z)| \, dz\right) 
				+\avint_{2^{N_k} B} |\nabla f (z)| \, dz .
			\end{equation*}
			Each element in the sum is controlled in absolute value by $\lesssim \|\nabla f\|_*$. The term $\avint_{2^{N_k} B} |\nabla f (z)| \, dz$ is controlled by $\lesssim\|\nabla f\|_{L^1 (B_1 (0))} $ as $2^{N_k} B \subset B_1 (0)$ and $2^{N_k}r_B \approx 1$ by the choice of $N_k$. In particular
			$$
			\avint_{2^{-k} B} |\nabla f (z)| \, dz \lesssim N_k \|\nabla f\|_* + \|\nabla f\|_{L^1 (B_1 (0))} .
			$$
			If $|x-y|$ is small enough depending on $L^1$ and $\bmo$ norm, more precisely, take small enough $|x-y|$ to satisfy $ \|\nabla f\|_{L^1 (B_1 (0))} \leq \lfloor \log_2 \frac{1}{2|x-y|} \rfloor \|\nabla f\|_*$, then $\|\nabla f\|_{L^1 (B_1 (0))} \leq N_k \|\nabla f\|_*$, and therefore
			$$
			\avint_{2^{-k} B} |\nabla f (z)| \, dz \lesssim N_k \|\nabla f\|_* .
			$$
			
			Summing over $k\geq 0$,
			$$
			|f(x)-m_B f| \lesssim \int_B \frac{|\nabla f (z)|}{|x-z|} \, dz 
			\lesssim r_B \sum_{k\geq 0} 2^{-k} \avint_{2^{-k} B} |\nabla f (z)| \, dz 
			\lesssim r_B \|\nabla f\|_* \sum_{k\geq 0} 2^{-k} N_k .
			$$
			Recall $N_k =  \lfloor \log_2 \frac{2^k}{2|x-y|} \rfloor \approx \log \frac{2^k}{|x-y|}$. Hence, the sum in the right-hand side is controlled by
			$$
			\sum_{k\geq 0} 2^{-k} \log \frac{2^k}{|x-y|} 
			= \sum_{k\geq 0} 2^{-k} \log 2^k 
			+ \log \frac{1}{|x-y|}  \sum_{k\geq 0} 2^{-k} .
			$$
			As $\sum_{k\geq 0} 2^{-k} \log 2^k < \infty$, for small enough $|x-y|$ we have $\sum_{k\geq 0} 2^{-k} \log 2^k \leq \log \frac{1}{|x-y|}$, and hence $\sum_{k\geq 0} 2^{-k} \log \frac{2^k}{|x-y|}  \lesssim \log \frac{1}{|x-y|}$.
		\end{proof}
	\end{lemma}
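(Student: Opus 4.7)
The plan is to use the standard pointwise representation $|f(x)-m_B f| \lesssim \int_B |\nabla f(z)|/|x-z|^n\,dz$ (as in \cite[Lemma 7.16]{Gilbarg2001}), applied on the ball $B = B\!\bigl(\tfrac{x+y}{2}, |x-y|\bigr)$, and split by dyadic annuli centered at $x$. By the triangle inequality it suffices to control $|f(x) - m_B f|$, with the estimate for $y$ being symmetric.

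First I would decompose $B = \bigcup_{k\geq 0} (2^{-k}B \setminus 2^{-(k+1)}B)$. On each annulus $A_k = 2^{-k}B \setminus 2^{-(k+1)}B$ one has $|x-z|^{-n} \approx (2^{-k}|x-y|)^{-n}$ and $|A_k| \approx (2^{-k}|x-y|)^{n+1}$, hence
\[
\int_{A_k} \frac{|\nabla f(z)|}{|x-z|^n}\,dz \lesssim 2^{-k}|x-y|\,\avint_{2^{-k}B} |\nabla f|.
\]
So the whole estimate reduces to controlling $\sum_{k\geq 0} 2^{-k}\,\avint_{2^{-k}B}|\nabla f|$.

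Next, to bound $\avint_{2^{-k}B}|\nabla f|$, I would use a telescoping argument with BMO. Choose $N_k$ to be the largest integer with $2^{N_k}\cdot 2^{-k}|x-y|\leq \tfrac12$, so $N_k \approx k + \log(1/|x-y|)$, and write
\[
\avint_{2^{-k}B}|\nabla f|
= \avint_{2^{N_k}2^{-k}B}|\nabla f| + \sum_{m=0}^{N_k-1}\Bigl(\avint_{2^m 2^{-k}B}|\nabla f| - \avint_{2^{m+1}2^{-k}B}|\nabla f|\Bigr).
\]
Each term in the telescoping sum is $\lesssim \|\nabla f\|_*$ by the standard BMO estimate on consecutive concentric balls of comparable radius, and the final averaged term is $\lesssim \|\nabla f\|_{L^1(B_1(0))}$ (since $2^{N_k}\cdot 2^{-k}B \subset B_1(0)$ has radius $\approx 1$). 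So $\avint_{2^{-k}B}|\nabla f| \lesssim N_k \|\nabla f\|_* + \|\nabla f\|_{L^1(B_1(0))}$, and the smallness assumption on $|x-y|$ (depending on the two norms) lets one absorb the $L^1$ term into $N_k\|\nabla f\|_*$.

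Putting everything together gives
\[
|f(x)-m_B f| \lesssim |x-y|\,\|\nabla f\|_*\sum_{k\geq 0} 2^{-k} N_k,
\]
and since $N_k \approx k + \log(1/|x-y|)$ the sum is $\lesssim \log(1/|x-y|)$ for small $|x-y|$ (using $\sum 2^{-k}k <\infty$ and $\sum 2^{-k}<\infty$). The main technical obstacle is the bookkeeping to ensure that the $L^1$ remainder is dominated by $N_k\|\nabla f\|_*$, which is exactly why one needs a smallness hypothesis on $|x-y|$; everything else is the routine dyadic/BMO telescoping.
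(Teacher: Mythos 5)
Your proposal follows essentially the same route as the paper's proof: the Gilbarg--Trudinger representation, dyadic annuli around $x$, BMO telescoping with the stopping time $N_k$, and absorbing the $L^1$ remainder via the smallness of $|x-y|$. Indeed your version is slightly cleaner, as you correctly write the kernel as $|x-z|^{-n}$ in $\R^{n+1}$, whereas the paper's display writes $|x-z|^{-1}$ (the $n=1$ case); the final bound is unaffected.
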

	
	Here we see the distortion of Hausdorff measures under $\log$-Lip functions.
	
	\begin{lemma}\label{distortion under log lip functions}
		If $f : \R^{n+1}\to \R^{n+1}$ satisfies $|f(x)-f(y)| \leq C_{\mathrm{logLip}} |x-y| \log \frac{1}{|x-y|}$ for $|x-y|$ small enough, $g(t) = t/\log \frac{1}{t}$, $h(t) = t\log\frac{1}{t}$, and $A \subset \R^{n+1}$, then
		\begin{enumerate}
			\item $\HH^g (f(A)) \leq 2C_{\mathrm{logLip}} \HH^1 (A)$, and
			\item $\HH^{\varphi_{\rho,C}} (f(A)) \leq 2C_{\mathrm{logLip}}^{\frac{2}{\rho+1}} \HH^{(\varphi_{\rho,C}) \circ h} (A)$.
		\end{enumerate}
		\begin{proof}
			For small enough $\delta >0$, the function $t \mapsto t\log \frac{1}{t}$ is increasing in $t\in (0,\delta)$. Let $\eta = C_{\mathrm{logLip}} \delta \log \frac{1}{\delta}>0$, and note that $\eta \to 0$ as $\delta \to 0$.
			
			For $\varepsilon >0$, let $\{E_i\}_i$ such that $A\subset \bigcup_i E_i$, $\diam\, E_i \leq \delta$ and $\sum_i \diam\, E_i \leq \HH^1_\delta (A) + \varepsilon$. Then $f(A) \subset \bigcup_i f(E_i)$ and $\diam(f(E_i)) \leq C_{\mathrm{logLip}}\diam\, E_i \log \frac{1}{\diam\, E_i} \leq C_{\mathrm{logLip}}\delta \log \frac{1}{\delta} = \eta$. As $g(t) = t / \log \frac{1}{t}$ is increasing in $(0,\eta)$ if $\eta$ is small enough, we have
			\begin{align*}
				g(\diam f(E_i)) & \leq g\left(C_{\mathrm{logLip}}\diam\, E_i \log \frac{1}{\diam\, E_i}\right) = \frac{C_{\mathrm{logLip}}\diam\, E_i \log \frac{1}{\diam\, E_i}}{\log \left(\frac{1}{C_{\mathrm{logLip}}\diam\, E_i \log \frac{1}{\diam\, E_i}}\right)} \\
				& = \frac{C_{\mathrm{logLip}}\diam\, E_i \log \frac{1}{\diam\, E_i}}{\log \frac{1}{C_{\mathrm{logLip}}} + \log \frac{1}{\diam\, E_i} + \log \log \frac{1}{\diam\, E_i}} 
				\leq 2C_{\mathrm{logLip}} \diam\, E_i ,
			\end{align*}
			where we used that $\diam\, E_i \leq \delta$ is small enough. All in all,
			$$
			\HH^g_\eta (f(A)) \leq  \sum_i g(\diam f(E_i)) \leq 2C_{\mathrm{logLip}} \sum_i \diam\, E_i \leq 2C_{\mathrm{logLip}} (\HH^1_\delta (A) + \varepsilon) \leq 2C_{\mathrm{logLip}} (\HH^1 (A) + \varepsilon) .
			$$
			Letting $\varepsilon \to 0$ and $\delta \to 0$, i.e., $\eta \to 0$, we get $\HH^g (f(A)) \leq 2C_{\mathrm{logLip}} \HH^1 (A)$ as claimed.
			
			Similarly, for $\varepsilon >0$, let $\{E_i\}_i$ such that $A\subset \bigcup_i E_i$, $\diam\, E_i \leq \delta$ and $\sum_i ((\varphi_{\rho,C}) \circ h) (\diam\, E_i) \leq \HH_\delta^{(\varphi_{\rho,C}) \circ h} (A) + \varepsilon$. As before,
			\begin{align*}
				\HH^{\varphi_{\rho,C}}_\eta (f(A)) & \leq \sum_i \varphi_{\rho,C} (\diam f(E_i)) \leq \sum_i \varphi_{\rho,C} (C_{\mathrm{logLip}} h(\diam\, E_i)) \\
				&\leq 2C_{\mathrm{logLip}}^{\frac{2}{\rho+1}} \sum_i ((\varphi_{\rho,C}) \circ h) (\diam\, E_i)
				\leq 2C_{\mathrm{logLip}}^{\frac{2}{\rho+1}} (\HH^{(\varphi_{\rho,C}) \circ h} (A) + \varepsilon),
			\end{align*}
			where we used $\varphi_{\rho,C} (C_{\mathrm{logLip}} h(\diam\, E_i))\leq 2C_{\mathrm{logLip}}^{\frac{2}{\rho+1}}\varphi_{\rho,C}(h(\diam\, E_i))$, see the proof of \cref{distortion zero measure of makarov gauge function}. Letting $\varepsilon, \delta \to 0$, and hence also $\eta \to 0$, we obtain $\HH^{\varphi_{\rho,C}} (f(A))\leq 2C_{\mathrm{logLip}} \HH^{(\varphi_{\rho,C}) \circ h} (A)$.
		\end{proof}
	\end{lemma}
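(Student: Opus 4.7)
The plan is to use the standard prescription for estimating Hausdorff measures under a map: take an almost-optimal cover of $A$ by sets of small diameter, push it forward via $f$ to get a cover of $f(A)$, and then estimate the target gauge applied to the image diameters by the source gauge applied to the preimage diameters. The log-Lipschitz hypothesis gives the pointwise bound $\diam(f(E))\le C_{\mathrm{logLip}}\, h(\diam E)$ whenever $\diam E$ is sufficiently small, where $h(t)=t\log\frac{1}{t}$, so this is exactly the right substitution to make.

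Concretely, fix $\delta>0$ small enough that the log-Lipschitz estimate applies, that $t\mapsto h(t)$ is increasing on $(0,\delta)$, and that all the ``$\log$-dominates-$\log\log$'' estimates below are valid. Set $\eta\coloneqq C_{\mathrm{logLip}}\,h(\delta)$, so $\eta\to 0$ as $\delta\to 0$. For part (1), given $\varepsilon>0$, pick a $\delta$-cover $\{E_i\}_i$ of $A$ with $\sum_i\diam E_i\le \HH^1_\delta(A)+\varepsilon$. Then $\{f(E_i)\}_i$ covers $f(A)$ and each has diameter at most $C_{\mathrm{logLip}}\,h(\diam E_i)\le\eta$. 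The key arithmetic step is the pointwise bound
\[
g\bigl(C_{\mathrm{logLip}}\,h(t)\bigr)=\frac{C_{\mathrm{logLip}}\,t\log\frac{1}{t}}{\log\frac{1}{C_{\mathrm{logLip}}}+\log\frac{1}{t}+\log\log\frac{1}{t}}\le 2C_{\mathrm{logLip}}\,t,
\]
which holds for all sufficiently small $t$ because the denominator is asymptotic to $\log\frac{1}{t}$ while the numerator carries the same $\log\frac{1}{t}$ factor. Summing over $i$ gives $\HH^g_\eta(f(A))\le 2C_{\mathrm{logLip}}(\HH^1(A)+\varepsilon)$, and letting $\varepsilon\to 0$ then $\delta\to 0$ (so $\eta\to 0$) yields the first inequality.

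For part (2), the argument is parallel but uses a different ``gauge-composition'' estimate. Given $\varepsilon>0$, pick a $\delta$-cover $\{E_i\}_i$ of $A$ with $\sum_i(\varphi_{\rho,C}\circ h)(\diam E_i)\le \HH^{(\varphi_{\rho,C})\circ h}_\delta(A)+\varepsilon$. By monotonicity of $\varphi_{\rho,C}$ and the log-Lipschitz bound, $\varphi_{\rho,C}(\diam f(E_i))\le \varphi_{\rho,C}(C_{\mathrm{logLip}}\,h(\diam E_i))$. I would then invoke the scaling estimate already established in the proof of \cref{distortion zero measure of makarov gauge function}, namely that for any $S>0$ there is $t_0=t_0(S)$ such that for $0<t<t_0$,
\[
\varphi_{\rho,C}(St)\le 1.1\,S^{\frac{2}{\rho+1}}\,\varphi_{\rho,C}(t),
\]
applied with $S=C_{\mathrm{logLip}}$ and $t$ replaced by $h(\diam E_i)$ (note $h(\diam E_i)\to 0$ as $\delta\to 0$, so we may assume $h(\diam E_i)<t_0$). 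This gives $\varphi_{\rho,C}(\diam f(E_i))\le 2 C_{\mathrm{logLip}}^{2/(\rho+1)}(\varphi_{\rho,C}\circ h)(\diam E_i)$. Summing and passing $\varepsilon\to 0$, $\delta\to 0$ produces $\HH^{\varphi_{\rho,C}}(f(A))\le 2C_{\mathrm{logLip}}^{2/(\rho+1)}\HH^{(\varphi_{\rho,C})\circ h}(A)$.

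The main obstacle, such as it is, is the bookkeeping around small-$t$ asymptotics: verifying $g(C_{\mathrm{logLip}}h(t))\le 2C_{\mathrm{logLip}}\,t$ (which reduces to showing $\log\frac{1}{C_{\mathrm{logLip}}}+\log\log\frac{1}{t}$ is negligible compared to $\log\frac{1}{t}$), and ensuring that the iterated-log factor appearing in $\varphi_{\rho,C}$ behaves tamely under the substitution $t\mapsto C_{\mathrm{logLip}}\,h(t)$. Both estimates are straightforward once one unwinds the definition of $\varphi_{\rho,C}$ given in \rf{makarov gauge function} and uses that $\log\log\log$ of a $h(t)$-type expression differs from $\log\log\log\frac{1}{t}$ by a quantity $o(1)$ as $t\to 0$; this is the same calculation performed in \rf{eq:function Q_T}--\rf{eq:limit involving Q_T}, which can be cited rather than repeated.
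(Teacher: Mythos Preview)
Your proposal is correct and follows essentially the same approach as the paper's proof: take an almost-optimal $\delta$-cover, push it forward, expand $g(C_{\mathrm{logLip}}\,h(t))$ explicitly to get the bound $2C_{\mathrm{logLip}}\,t$, and for part (2) invoke the scaling estimate $\varphi_{\rho,C}(St)\le 1.1\,S^{2/(\rho+1)}\varphi_{\rho,C}(t)$ from the proof of \cref{distortion zero measure of makarov gauge function}. The arguments are essentially identical in structure and detail.
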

	
	\begin{lemma}\label{vmo log invariant under qc}
		$\vmo_{\log,c}$ is invariant under global homeomorphic quasiconformal mappings.
		\begin{proof}
			Let $f\in \vmo_{\log}$ be non-identically zero (otherwise we are done) with compact support and $\phi\in W^{1,2}_{\loc} (\C)$ be a $K$-quasiconformal homeomormism of $\C$, with $K\geq 1$. Let $\widetilde B$ be a ball such that $\supp (f\circ \phi^{-1})\subset \widetilde B/2$, $r < r(\widetilde B)/100$ and fix $\widetilde C \coloneqq C_{K,2\widetilde B, \phi}$, the constant in \cref{bmo under quasiconformal homeo} so that $r_{B_-} \leq \widetilde C r_B^{1/K}$ for every ball $B\subset 2\widetilde B$. For $x\in \widetilde B$ and $s\in (0,r]$ (otherwise the first integral below is zero), by \cref{bmo under quasiconformal homeo} and \rf{local equivalence bmo norm}\footnote{The last step using \rf{local equivalence bmo norm} is unnecessary since the rest of the proof works with the exponent $2K$ by \cref{eq:vmo log only matters one exponent}. However, to simplify the notation, we remove the exponent $2K$.} we have
			\begin{equation}\label{log decay new matrix B step 1}
				\begin{aligned}
					\avint_{Q_s(x)} \left|f\circ\phi^{-1}-m_{Q_s(x)} (f\circ\phi^{-1})\right|
					&\leq \sup_{Q \subset Q_s(x)} \avint_{Q} \left|f\circ\phi^{-1}-m_{Q} (f\circ\phi^{-1})\right| \\
					& \lesssim
					\sup_{Q \subset Q(\phi(x),Cs^{1/K})} \left(\avint_{Q} \left|f-m_{Q} f\right|^{2K}\right)^{\frac{1}{2K}}\\
					&\approx \sup_{Q \subset Q(\phi(x),Cs^{1/K})} \avint_{Q} \left|f-m_{Q} f\right|.
				\end{aligned}
			\end{equation}
			As we did in \rf{eq:vmo log only matters one exponent step 2}, we obtain
			\begin{equation}\label{log decay new matrix B step 2}
				(1+|{\log (Cs^{1/K})}|) \sup_{Q \subset Q(\phi(x),Cs^{1/K})} \avint_{Q} \left|f-m_{Q} f\right|
				\leq \sup_{\substack{x\in \R^2\\ s\in (0,Cr^{1/K}]}} \left[\left(1+\left|{\log s}\right|\right) \avint_{Q_s(x)} \left|f-m_{Q_s(x)} f\right| \right].
			\end{equation}
			The lemma follows from \rf{log decay new matrix B step 1}, \rf{log decay new matrix B step 2}, the fact that $(1+|{\log s}|) \leq 2K (1+|{\log (Cs^{1/K})}|)$ for small enough $s$, and finally taking $r\to 0$.
		\end{proof}
	\end{lemma}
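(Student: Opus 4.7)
The plan is to transfer the logarithmic vanishing of mean oscillation from $f$ to $f\circ\phi^{-1}$ by combining the $\bmo$-distortion estimate of \cref{bmo under quasiconformal homeo} with the local Hölder continuity of $\phi^{-1}$ from \cref{qc are locally holder}. Since $\phi$ is a global homeomorphism of $\C$ and $f$ has compact support, $\supp(f\circ\phi^{-1}) = \phi(\supp f)$ is also compact, so we can fix a ball $\widetilde B$ with $\supp(f\circ\phi^{-1})\subset \widetilde B/2$; only cubes $Q_s(x)$ meeting $\widetilde B/2$ contribute to the $\vmo_{\log}$ supremum, and we may restrict to $x\in \widetilde B$ with $s$ small enough that $Q_s(x)\subset 2\widetilde B$.

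For such $x$ and $s$, I would first bound $\avint_{Q_s(x)}|f\circ\phi^{-1}-m_{Q_s(x)}(f\circ\phi^{-1})|$ by $\sup_{Q\subset Q_s(x)}\avint_Q|f\circ\phi^{-1}-m_Q(f\circ\phi^{-1})|$ (trivial, since $Q_s(x)$ is itself a subcube). On each subcube $Q$, apply \cref{bmo under quasiconformal homeo} (passing from balls to comparable enclosing/inscribed cubes at the cost of dimensional constants) with the choice $p=2K$. This produces an $L^{2K}$-mean oscillation of $f$ on an enclosing ball $Q_-$ of radius $r(Q_-) \leq \widetilde C\,\ell(Q)^{1/K}$, where $\widetilde C = C_{K,2\widetilde B,\phi}$. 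By the local John–Nirenberg equivalence \rf{local equivalence bmo norm} this $L^{2K}$-average is comparable to the corresponding $L^1$-average oscillation of $f$.

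The heart of the argument is the logarithmic bookkeeping. Every subcube satisfies $\ell(Q)\leq 2s$, so the Hölder estimate forces all resulting radii into $(0,\widetilde C(2s)^{1/K}]$. Because $|\log t^{1/K}| = K^{-1}|\log t|$, one has $1+|\log s|\leq C_K(1+|\log(\widetilde C s^{1/K})|)$ for small $s$. Combining the previous step with this weight comparison, taking the supremum over $x\in\widetilde B$ and $s\in(0,r]$, and then recognizing $\sup_{Q\subset Q_\sigma(y)}(1+|\log\ell(Q)|)\avint_Q|f-m_Q f|$ on the right as controlled by the $\vmo_{\log}$ quantity on scales below $\widetilde C(2r)^{1/K}$ (via \rf{eq:vmo log only matters one exponent} and its cube variant), yields
$$
\sup_{\substack{x\in \widetilde B\\ s\in(0,r]}}(1+|\log s|)\avint_{Q_s(x)}|f\circ\phi^{-1}-m_{Q_s(x)}(f\circ\phi^{-1})|
\lesssim_K \sup_{\substack{y\in\R^2\\ \sigma\in(0,\widetilde C(2r)^{1/K}]}}(1+|\log \sigma|)\avint_{Q_\sigma(y)}|f-m_{Q_\sigma(y)}f|.
$$
As $r\to 0$, $\widetilde C(2r)^{1/K}\to 0$ as well, so the right-hand side vanishes by hypothesis $f\in\vmo_{\log}$.

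The main obstacle is the logarithmic weight under the Hölder distortion: quasiconformality only yields $\phi^{-1}\in C^{1/K}_{\loc}$, so a scale $s$ in the image side corresponds to scale $\sim s^{1/K}$ in the domain side, and one must verify that $|\log s|$ compresses only by a multiplicative constant to $|\log s^{1/K}|$—this is where the specific form of the weight $1+|\log\cdot|$ is essential (a power weight would not survive). A minor technical point is reconciling the $L^1$-average in the definition of $\vmo_{\log}$ with the $L^{2K}$-average naturally produced by \cref{bmo under quasiconformal homeo}; this is handled cleanly via the local John–Nirenberg equivalence \rf{local equivalence bmo norm} together with \cref{eq:vmo log only matters one exponent}, which guarantees that the logarithmic decay condition is independent of the exponent $p\in[1,\infty)$.
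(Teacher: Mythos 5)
Your proposal is correct and follows essentially the same route as the paper's proof: both invoke Lemma~\ref{bmo under quasiconformal homeo} with exponent $p=2K$, convert back to $L^1$-oscillation via the local John--Nirenberg equivalence~\eqref{local equivalence bmo norm} (and the exponent-independence~\eqref{eq:vmo log only matters one exponent}), and rely on the Hölder distortion $r\mapsto r^{1/K}$ together with the observation that $1+|\log s|\lesssim_K 1+|\log(Cs^{1/K})|$ for small $s$ before letting $r\to 0$. The only differences are cosmetic bookkeeping of the $2K$-exponent and the ball/cube passage, both of which the paper also addresses.
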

	
	Using the previous results, we conclude this section with the following result about the dimension of the elliptic measure for $\vmo_{\log}$ matrices.
 
		\begin{proof}[Proof of \cref{sym + det 1 all cases}: $\vmo_{\log}$ case]\label{proof of square mean oscillation elliptic measure}
			Recall we can assume that $\Omega \subset B_{1/4} (0)$ is bounded Wiener regular by \cref{reductions all}, and we can replace finitely connected by simply connected by \cref{claim:candidate for each index}. In particular $\Omega \subset Q_{1/3} (0)$ (cube centered at $0$ with side length $2/3$).
			
			Let $\widetilde \varphi (\cdot) = \varphi(\cdot/100)$, with $\varphi$ as in \rf{definition of the support function}, and let $\phi_A \in W^{1,2}_{\loc}(\C)$ be the normalized solution of the Beltrami equation
			$$
			\bar\partial\phi_A = \widetilde\varphi\mu_A \partial\phi_A \text{ a.e.\ in }\C, \text{ where }\mu_A \text{ as in }\rf{beltrami coefficients for sym and det 1 matrices},
			$$
			given by the measurable Riemann mapping theorem, see \cref{riemann mapping thm}. Let $\widehat A$ be the uniformly elliptic, symmetric and with determinant $1$ matrix such that $\widetilde\varphi \mu_A = \mu_{\widehat A}$, see \cref{rem:interpolation to preserve determinant}. Therefore, the Beltrami equation above reads as
                $$
			\bar\partial\phi_A = \mu_{\widehat A} \partial\phi_A \text{ a.e.\ in }\C, \text{ with }\mu_{\widehat A} = \mu_A \text{ in } B_{50} (0).
			$$
            In particular $u\coloneqq \real \phi_A$ and $v\coloneqq \imag \phi_A$ satisfy $\divv A \nabla u = 0$ and $\divv A \nabla v = 0$ in $B_{50} (0)$, see \cref{all relations beltrami and pde}. Now, since $A\in \vmo_{\log}$ we have that $\nabla u, \nabla v \in \bmo$, see \cref{regularity for log mean oscillation matrices}.
			
			Now we want to see the same property for $\phi_A^{-1}$. By \cref{all relations beltrami and pde}, the functions $R\coloneqq  \real (\phi_A^{-1})$ and $I\coloneqq \imag (\phi_A^{-1})$ satisfy $\divv B\nabla R=0$ and $\divv B^* \nabla I=0$ in $\phi_A(B_{50}(0))$, where $B$ and $B^*$ are the uniformly elliptic matrices in \rf{matrix of inverse function beltrami}. Since $A\in \vmo_{\log}$, by \cref{properties of p-mean oscillation} we have that $\mu_A \in \vmo_{\log}$ and hence $\mu_{\widehat A}\in \vmo_{\log,c}$. By the invariance of $\vmo_{\log,c}$ under quasiconformal mappings in \cref{vmo log invariant under qc}, we have that $-\mu_{\widehat A}\circ \phi_A^{-1}\in \vmo_{\log,c}$, the coefficient of the conjugated Beltrami equation in \cref{thm qc pde cond3} of \cref{all relations beltrami and pde}. Therefore, by \cref{properties of p-mean oscillation} the uniformly elliptic matrices $\widehat B$ and ${\widehat B}^*$ in \rf{matrix of inverse function beltrami} (in terms of $\widehat A$) belong to $\vmo_{\log}$. By \cref{regularity for log mean oscillation matrices} and since $\widehat B = B$ and ${\widehat B}^* = B^*$ in $\phi_A(B_{50}(0))$, we conclude $\nabla R, \nabla I\in \bmo$.
			
			Since both $\nabla \phi_A, \nabla \phi_A^{-1} \in \bmo$, by the $\log$-Lip regularity in \cref{log lip when grad in bmo} and the distortion under these type of functions in \cref{distortion under log lip functions}, continuing as in the end of the proof of \cref{main thm general domains} on \cpageref{proof of main thm general domains}, using now \cref{lemma:relation elliptic measure for sym and det 1}, we have that there is a set $F\subset \partial \Omega$ satisfying $\omega_{\Omega, A} (F)=1$ and with $\sigma$-finite $\HH^{t/\log \frac{1}{t}}$ measure; and if the domain is simply connected, by Makarov's \cref{makarov thm} there exists $C_M>0$ such that $\omega_{\Omega, A} \ll \HH^{(\varphi_{1,C_M}) \circ (t\log \frac{1}{t})}$.
		\end{proof}
	
	
	\vv
	
	\vv

	\bibliographystyle{alpha}
	\bibliography{references-phd.bib} 

\end{document}